\documentclass[12pt]{amsart}
\usepackage[latin1]{inputenc}
\usepackage[british]{babel}
\usepackage[T1]{fontenc}
\usepackage{amssymb,amsmath,amsthm,amsfonts,enumerate}
\usepackage[autostyle]{csquotes}
\usepackage{alphalph,cite}
\usepackage{nameref}
\usepackage{verbatim}
\usepackage{mathrsfs}
\usepackage{manfnt}
\usepackage{pifont}
\usepackage{yfonts}
\usepackage{enumitem}
\usepackage[all]{xy}
\usepackage{wasysym}

\usepackage{stmaryrd}

\usepackage{apptools}
\usepackage{stackengine}
\usepackage{scalerel}
\usepackage{array}
\usepackage{datetime}
\usepackage{xparse}
\usepackage{tikz-cd}
\usepackage{rotating}
\usepackage{esvect}
\usepackage[colorlinks=true, linkcolor={blue}, citecolor={magenta}, urlcolor={green}]{hyperref}
\usepackage{mathtools}
\usepackage{dutchcal}
\usepackage{sansmath}
\usepackage{orcidlink}
\usepackage{chessfss}
\usepackage{caption}
\usepackage{makecell}
\usepackage{float}
\usepackage{diagbox}
\usepackage{lmodern}
\usepackage{epigraph}

\makeatletter
\@namedef{subjclassname@2010}{ \textup{2010} Mathematics Subject Classification}
\makeatother
\theoremstyle{definition}
\newtheorem{theorem}{Theorem}[subsection]

\newtheorem{theoremletters}{Theorem}

\newtheorem{claimletters}{Claim}

\theoremstyle{definition}
\newtheorem{lemma}[theorem]{Lemma}
\newtheorem{corollary}[theorem]{Corollary}
\newtheorem{proposition}[theorem]{Proposition}

\newtheorem{remark}[theorem]{Remark}
\newtheorem{example}[theorem]{Example}
\newtheorem{question}[theorem]{Question}
\newtheorem{definition}[theorem]{Definition}
\numberwithin{equation}{section}

\newtheorem{notation}[theorem]{Notation}
\newtheorem{scholium}[theorem]{Scholium}

\newtheorem{digression}[theorem]{Digression}

\DeclareMathOperator{\supp}{supp}
\DeclareMathOperator{\sign}{sign}

\DeclareMathOperator*{\esssup}{ess\,sup}

\newcommand{\norm}[1]{\left\lVert#1\right\rVert}
\newcommand{\vertiii}[1]{{\left\vert\kern-0.25ex\left\vert\kern-0.25ex\left\vert #1 
		\right\vert\kern-0.25ex\right\vert\kern-0.25ex\right\vert}}
\newcommand{\abs}[1]{\left\lvert#1\right\rvert}

\newcommand{\cembed}[1]{\overset{\mathrm{#1}}{\hookrightarrow}_{c}}

\newcommand{\ces}{\mathcal{k}}
\newcommand{\Ces}{\mathcal{K}}

\makeatletter
\def\author@andify{
	\nxandlist {\unskip ,\penalty-1 \space\ignorespaces}
	{\unskip {} \@@and~}
	{\unskip \penalty-2 \space \@@and~}
}
\makeatother

\newcommand\xqed[1]{%
	\leavevmode\unskip\penalty9999 \hbox{}\nobreak\hfill
	\quad\hbox{#1}}
\newcommand\demo{\xqed{{\Large $\blacktriangle$}}}

\makeatletter
\def\l@subsection{\@tocline{2}{0pt}{2pc}{6pc}{}}
\makeatother

\let\OLDthebibliography\thebibliography
\renewcommand\thebibliography[1]{
	\OLDthebibliography{#1}
	\setlength{\parskip}{0pt}
	\setlength{\itemsep}{5pt plus 0.3ex}
}

\begin{document}
	\title{On the isomorphism of the spaces $\ces$ and $\Ces$}
	
	\author{Tomasz Kiwerski~\orcidlink{0000-0002-8349-8785}}
	\address[Tomasz Kiwerski]{Pozna\'{n} University of Technology, Institute of Mathematics, Piotrowo 3A, 61-138 Pozna\'{n} (Poland)}
	\email{\href{mailto:tomasz.kiwerski@gmail.com}{\tt tomasz.kiwerski@gmail.com}}
	
	\author{Jakub Tomaszewski~\orcidlink{0000-0002-0715-5553}}
	\address[Jakub Tomaszewski - corresponding author]{Pozna\'{n} University of Technology, Institute of Mathematics, Piotrowo 3A, 61-138 Pozna\'{n} (Poland)}
	\email{\href{mailto:tomaszewskijakub@protonmail.com}{\tt tomaszewskijakub@protonmail.com}}
	
	\date{\longdate{\today}}
	\keywords{separable $\mathscr{L}_p$-spaces;
		spaces of bounded linear operators;
		Pe{\l}czy{\' n}ski's isomorphism;
        Banach--Mazur distance;
		$\ell_p$-sums of Banach spaces;
		Schur property.}
	\subjclass[2020]{Primary: 46B03, 
 46B04, 
 46B07; 
 Secondary: 46B20, 
 46E30, 
 47B01
 }
	
	\maketitle
	
	\begin{center}
		{\it Dla Juliana}
	\end{center}
	
	\begin{abstract}
		Let $1 \leqslant p \leqslant q < \infty$, let $\mathsf{X}$ be a separable $\mathcal{L}_p$-space and let $\mathsf{Y}$ be a separable $\mathcal{L}_q$-space.
		We show that the space $\mathfrak{L}(\mathsf{X},\mathsf{Y})$ of bounded linear operators from $\mathsf{X}$ into $\mathsf{Y}$
		is isomorphic to $\mathfrak{L}(\ell_p,\ell_q)$ if, and only if, $\mathsf{Y}$ is complemented in its bidual;
		in particular, this is always the case when $q > 1$.
		We also show that $\mathfrak{L}(\mathsf{Z},\ell_{\infty})$ is isomorphic to $\mathfrak{L}(c_0,\ell_{\infty})$ for every separable $\mathcal{L}_{\infty}$-space $\mathsf{Z}$,
		and that the analogous statement for $\mathfrak{L}(\mathsf{X},\mathsf{Y})$ fails in general when $p > q$.
		This extends and complements the earlier result of Arias and Farmer
		[{\it On the structure of tensor products of $\ell_p$-spaces}, Pacific J. Math. {\bf 175} (1996), no.~1, 13--37],
		a special case of which was already known to Haagerup.
		
		Based on this general result, we give a completely different proof of the main result of Astashkin, Le{\' s}nik and Maligranda
		[{\it Isomorphic structure of Ces{\` a}ro and Tandori spaces}, Canad. J. Math. {\bf 71} (2019), no.~3, 501--532],
		which states that the space $\Ces$ and its sequence counterpart $\ces$ are isomorphic.
		Furthermore, we complete the picture by showing that the spaces $\ces$ and $\Ces$,
		although isomorphic, are not isometric.
		All this is a direct analogue of Pe{\l}czy{\' n}ski's classical result that the space $L_{\infty}$
		of essentially bounded measurable functions is isomorphic, but not isometric, to its sequence counterpart $\ell_{\infty}$.
		
		Along the way, we show that for every infinite compact metrizable space $K$ the $\ell_{\infty}$-sum of copies of the space $\mathscr{M}(K)$ of regular Borel measures
		on $K$ is isomorphic to $( \bigoplus_{n=1}^{\infty} L_1[0,1])_{\ell_{\infty}} \approx \mathfrak{L}(\ell_1,L_1[0,1])$.
		This is somewhat surprising, because $\mathscr{M}(K)$, although an $\mathcal{L}_1$-space, is not separable.
		This provides an abstract blueprint for another result of Astashkin, Le{\' s}nik and Maligranda, namely that $\Ces$ is isomorphic
		to $( \bigoplus_{n=1}^{\infty} \mathscr{M}[0,1])_{\ell_{\infty}}$.
		Moreover, we show that $\ces$ has a unique isometric predual and, using a Szlenk index argument, at least continuum many pairwise non-isomorphic isomorphic preduals.
		
		Finally, we show that there are models of set theory without the Axiom of Choice in which the spaces $\ces$ and $\Ces$ are no longer isomorphic.
		This last part was inspired by V{\" a}th's paper
		[{\it The dual space of $L_{\infty}$ is $L_1$}, Indag. Math. (N.S.) {\bf 9} (1998), no.~4, 619--625].
	\end{abstract}
	
	\tableofcontents
	
	\section{{\bf Introduction}} \label{SECTION: Introduction}
	
	This work outgrowths from several interwoven threads which, in various combinations, were the main catalysts for the results obtained in the body of the paper.
	The glue that binds all these stories together is a pair of rather niche  Banach spaces which --- for historical reasons --- we denote by $\ces$ and $\Ces$.
	Their norms\footnote{Let us note right away that, up to the averages, these norms are simply the sup norms of the spaces $\ell_{\infty}$ and $L_{\infty}$.
		Hence the title of our work, which is an evident allusion to Pe{\l}czy{\' n}ski's classical paper \cite{Pel58}.},
	like those of the overwhelming majority of spaces appearing in analysis, are generated by positive sublinear operators and $L_p$-norms:
	\begin{equation*}
		\norm{x}_{\ces} = \sup_{n \in \mathbb{N}} \,\, \frac{1}{n} \sum_{k=1}^n \abs{x_k}
		\quad\text{and}\quad
		\norm{f}_{\Ces} = \sup_{x > 0} \,\, \frac{1}{x} \int_0^x \abs{f(t)}dt.
	\end{equation*}
	One could say that they are the {\bf optimal domains} of the Hardy operator
	(the general framework for this type of construction was developed in \cite{ORS08}; {\it cf.} \cite{CR16}, \cite{DS07}, \cite{KKM21}, \cite{Mas91} and \cite{NP11}).
	This is probably the most popular point of view on $\ces$ and $\Ces$, but only one of many;
	for example, Sinnamon \cite{Si94} (see also \cite{KMS07} and \cite{MS06}) prefers to see them as the so-called {\bf down spaces},
	while Bennett in his Memoir \cite{Be96} regards them as the missing pieces that turn inequalities into equalities
	(they, so to speak, {\bf factorize} Hardy's inequality).
	The reader who wants to know more about the metamorphic properties of these spaces, their applications and relationship to other (maybe more recognizable)
	constructions is referred to \cite{FLM16}, \cite{GE98}, \cite{KT24} and \cite{KMP07}.
	
	However, it seems to us that one possible interpretation of $\ces$ and $\Ces$ has been overlooked so far.
	Our point is that --- up to a Banach space isomorphism --- $\ces$ and $\Ces$ coincide with the spaces of bounded linear operators from $\ell_1$ into $\ell_1$
	and from $\ell_1$ into $L_1$, respectively; symbolically,
	\begin{equation} \label{2}
		\ces \approx \mathfrak{L}(\ell_1,\ell_1)
		\quad\text{and}\quad
		\Ces \approx \mathfrak{L}(\ell_1,L_1).
	\end{equation}
	At its heart, this observation is a simple juxtaposition of Grosse-Erdmann's {\bf blocking technique} from \cite{GE98}
	({\it cf.} \cite{KT24}, where his ideas were transmuted through the prism of interpolation theory) and the following relation:
	\begin{equation*}
		\ell_{\infty}(\mathsf{X}) \cong \mathfrak{L}(\ell_1,\mathsf{X}),
	\end{equation*}
	which most likely belongs to folklore ({\it cf.} Lemma~\ref{LEMMA : 1}).
	Nonetheless, representation \eqref{2} has far-reaching consequences for the isomorphic structure of these spaces, which we now describe.
	
	\subsection{Prologue: Astashkin and Maligranda's question} \label{SUBSECTION : AM's question}
	
	The question \enquote{{\it Are the spaces $\ces$ and $\Ces$ isomorphic?}} (in our notation)
	goes back to Astashkin and Maligranda's paper \cite[Problem~1]{AM09} ({\it cf.} \cite[Problem~4]{AM14}).
	We should mention that from the perspective of \cite{AM09} the spaces $\ces$ and $\Ces$ are not isolated
	objects but rather the end-points of a whole scale of spaces called {\bf Ces{\` a}ro spaces}.
	In fact, there are two scales, $\{ ces_p \}_{1\leqslant p \leqslant \infty}$ and $\{ Ces_p \}_{1\leqslant p \leqslant \infty}$,
	corresponding to Ces{\` a}ro sequence and function spaces, respectively.
	As the following diagram shows, the scale $\{ ces_p \}_{1\leqslant p \leqslant \infty}$
	is in a sense \enquote{parallel} to the classical scale $\{ \ell_p \}_{1 \leqslant p \leqslant \infty}$: 
	\begin{equation*}
		\begin{tikzcd}
			\ell_1 \arrow[r, hook] \arrow[d, no head, dashed] & \ell_q \arrow[r, hook] \arrow[d, no head, dashed] & \ell_2 \arrow[r, hook] \arrow[d, no head, dashed] & \ell_r \arrow[r, hook] \arrow[d, no head, dashed] & \ell_{\infty} \arrow[d, no head, dashed] \\
			ces_1 \arrow[r, hook]                             & ces_q \arrow[r, hook]                             & ces_2 \arrow[r, hook]                             & ces_r \arrow[r, hook]                             & ces_{\infty}                                    
		\end{tikzcd}
	\end{equation*}
	Here,
	\begin{enumerate}
		\item[$\bullet$] $ces_p$ is the Banach space endowed with the norm
		\begin{equation*}
			\norm{x}_{ces_p} = \left( \sum_{n=1}^{\infty} \left( \frac{1}{n} \sum_{i=1}^n \abs{x_i} \right)^p \right)^{1/p} \quad (x = \{x_n\}_{n=1}^{\infty} \in \mathbb{K}^{\mathbb{N}});
		\end{equation*}
		\item[$\bullet$] the indices $q$ and $r$ are chosen so that $1 < q < 2 < r < \infty$;
		\item[$\bullet$] the horizontal hooked arrows denote continuous inclusions;
		\item[$\bullet$] the vertical dashed lines connect the corresponding points of both scales;
		\item[$\bullet$] the space $ces_{\infty}$ in the lower right corner is nothing other than $\ces$;
		\item[$\bullet$] the space $ces_1$ is\footnote{Because, according to ancient wisdom, the harmonic series $\sum_{n=1}^{\infty} \frac{1}{n}$
			diverges.} trivial, that is, it contains only the zero sequence
		(this flaw slightly distorts the analogy, but only in the neighbourhood of the left end-point).
	\end{enumerate}
	The benevolent reader can easily imagine that a similar picture holds for the scales $\{ Ces_p \}_{1\leqslant p \leqslant \infty}$ and $\{ L_p \}_{1 \leqslant p \leqslant \infty}$.
	So, even for purely heuristic reasons dictated by the positions of the spaces in the diagrams above, AM's question is an evident allusion to Pe{\l}czy{\' n}ski's result from \cite{Pel58},
	which states that the classical spaces $\ell_{\infty}$ and $L_{\infty}$ are isomorphic.
	
	\subsection{General variant of Arias and Farmer's result}
	
	In the very first lines of their paper \cite{CS88}, Christensen and Sinclair noted that --- as they had been informed by Haagerup ---
	the following isomorphism of Banach spaces holds:
	\begin{equation*}
		\mathfrak{L}(\mathsf{H}) \approx ( \bigoplus_{n=1}^{\infty} \boldsymbol{M}_n(\mathbb{C}) )_{\ell_{\infty}},
	\end{equation*}
	where $\mathsf{H}$ stands for the infinite-dimensional separable Hilbert space and $\boldsymbol{M}_n(\mathbb{C})$
	denotes the algebra of $(n \times n)$-matrices over $\mathbb{C}$ endowed with the operator norm inherited from $\mathfrak{L}(\mathsf{H})$.
	It soon became apparent that this result is not exclusive to the realm of Hilbert spaces:
	In the 1990s Arias and Farmer \cite[Theorem~2.1]{AF96} (compare with Propositions~8 and 15 in Wark's PhD thesis \cite{War97}) showed that $\mathfrak{L}(\mathsf{X},\mathsf{Y})$,
	where $\mathsf{X}$ is a separable $\mathcal{L}_p$-space and $\mathsf{Y}$ is a separable $\mathcal{L}_q$-space with $1 < p \leqslant q < \infty$,
	is isomorphic --- as a Banach space --- to $( \bigoplus_{n=1}^{\infty} \boldsymbol{M}_n(\mathbb{C}) )_{\ell_{\infty}}$,
	where $\boldsymbol{M}_n(\mathbb{C})$ inherits the norm from $\mathfrak{L}(\ell_p,\ell_q)$.
	
	The observant reader can guess where we are heading:
	If Arias and Farmer's result could be extended to the case $p=q=1$ (that is, to the situation where both $\mathsf{X}$ and $\mathsf{Y}$ are separable $\mathcal{L}_1$-spaces),
	then, thanks to representation \eqref{2}, it would immediately give a positive answer to AM's question.
	One of the main achievements of this work is the following structural theorem, which extends the above-mentioned results to the end-point $p=1$
	and, at the same time, describes precisely what happens at $q = 1$ and beyond the range $p \leqslant q$.
	
	\begin{theoremletters}[Theorems~\ref{LEMMA : 6} and \ref{LEMMA : 6infty}, Remarks~\ref{REMARK : p>q} and \ref{REMARK : L1 not complemented in bidual}] \label{THEOREM A}
		\hfill
		\begin{enumerate}[label={\normalfont{(\arabic*)}}]
			\item {\it Let $1 \leqslant p \leqslant q < \infty$, let $\mathsf{X}$ be a separable $\mathcal{L}_p$-space and let $\mathsf{Y}$ be a separable $\mathcal{L}_q$-space.
				Then the following conditions are equivalent:}
			\begin{enumerate}
				\item $\mathfrak{L}(\mathsf{X},\mathsf{Y}) \approx \mathfrak{L}(\ell_p,\ell_q)$;
				\item $\mathfrak{L}(\mathsf{X},\mathsf{Y}) \approx ( \bigoplus_{n=1}^{\infty} \boldsymbol{M}_n(\mathbb{K}) )_{\ell_{\infty}}$,
				{\it where $\boldsymbol{M}_n(\mathbb{K})$ carries the norm of $\mathfrak{L}(\ell_p^n,\ell_q^n)$;}
				\item {\it $\mathsf{Y}$ is complemented in its bidual $\mathsf{Y}^{**}$.}
			\end{enumerate}
			{\it In particular, these conditions hold whenever $q > 1$, and also when $q = 1$ and $\mathsf{Y}$ is $\ell_1$ or $L_1(\mu)$.}
			\item {\it For every separable $\mathcal{L}_\infty$-space $\mathsf{Z}$ we have $\mathfrak{L}(\mathsf{Z}, \ell_\infty) \approx \mathfrak{L}(c_0,\ell_\infty)$.}
			\item {\it For $1 \leqslant q < p < \infty$ the conclusion of {\normalfont (1)} fails in general; for instance, the space $\mathfrak{L}(L_p[0,1],L_q[0,1])$ is not isomorphic to $\mathfrak{L}(\ell_p,\ell_q)$.}
		\end{enumerate}
	\end{theoremletters}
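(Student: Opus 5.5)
The strategy is a Pełczyński decomposition argument. The finite-dimensional building block is $\mathsf{E}_p=(\bigoplus_{n}\boldsymbol{M}_n(\mathbb{K}))_{\ell_\infty}$, where $\boldsymbol{M}_n(\mathbb{K})$ carries the norm of $\mathfrak{L}(\ell_p^n,\ell_q^n)$.

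\emph{Step 1: (a)$\Leftrightarrow$(b).} First show that $\mathfrak{L}(\ell_p,\ell_q)\approx \mathsf{E}_p$. Writing $\ell_p=(\bigoplus_k \ell_p^{n_k})_{\ell_p}$, the diagonal-block compressions $T\mapsto P_kTP_k$ embed $\mathsf{E}_p$ complementably into $\mathfrak{L}(\ell_p,\ell_q)$. Conversely, $\mathfrak{L}(\ell_p,\ell_q)$ embeds complementably into $\mathsf{E}_p$ through $T\mapsto (P_nTP_n)_n$. The point is that $\|T\|=\sup_n\|P_nTP_n\|$, and the space of all bounded families of compatible finite sections can be shown to be complemented; here one uses a Banach limit, i.e.\ weak$^*$ limits of the finite sections, which exist because $\ell_q$ is a dual space for $q>1$. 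For $q=1$, $\ell_1$ is still a dual space ($c_0^*$), so the same argument works.

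Next, $\mathsf{E}_p\approx(\bigoplus \mathsf{E}_p)_{\ell_\infty}$, since we may regroup the blocks. Pełczyński decomposition then gives $\mathfrak{L}(\ell_p,\ell_q)\approx\mathsf{E}_p$.

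\emph{Step 2: (b)$\Rightarrow$(c) and (c)$\Rightarrow$(b).} Assume (b) and suppose $\mathsf{Y}$ is not complemented in its bidual. Then $\mathsf{E}_p$ is a dual space: it is the dual of the $\ell_1$-sum of the preduals of $\boldsymbol{M}_n$. Fixing a norm-one $x\in\mathsf{X}$ and a norming functional, the map $y\mapsto x^*\otimes y$ realises $\mathsf{Y}$ as a complemented subspace of $\mathfrak{L}(\mathsf{X},\mathsf{Y})$. A complemented subspace of a dual space is complemented in its bidual, which is a contradiction.

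For (c)$\Rightarrow$(b), use that a separable $\mathcal{L}_p$-space is $\bigcup_n \mathsf{X}_n$ with $\mathsf{X}_n$ uniformly isomorphic to $\ell_p^{k_n}$ and uniformly complemented (for $p=1$ without complements in general, but the local structure still yields $\ell_1^{k_n}$ up to a constant). Similarly $\mathsf{Y}=\overline{\bigcup}\mathsf{Y}_m$. For the map $\mathfrak{L}(\mathsf{X},\mathsf{Y})\to(\bigoplus_{n}\mathfrak{L}(\mathsf{X}_n,\mathsf{Y}_n))_{\ell_\infty}$, one has $\mathfrak{L}(\mathsf{X}_n,\mathsf{Y}_n)\sim\boldsymbol{M}$-blocks uniformly, and the map $T\mapsto(Q_nT|_{\mathsf{X}_n})_n$ is an isomorphic embedding. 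The reverse direction, which is the main obstacle, is to show that the image is complemented. Given a bounded family $(T_n)$, take a weak$^*$ limit along an ultrafilter of $T_nR_n$, where $R_n$ is a projection or extension onto $\mathsf{X}_n$ (for $p=1$ use the $\mathscr{L}_1$ lifting/extension through $\ell_1$, and exploit $\mathsf{X}$ separable to get operators defined on the dense union). This limit lands in $\mathsf{Y}^{**}$, and composing with the projection $\mathsf{Y}^{**}\to\mathsf{Y}$ from (c) gives a bounded operator $\mathsf{X}\to\mathsf{Y}$. Thus $\mathfrak{L}(\mathsf{X},\mathsf{Y})$ and $\mathsf{E}_p$ are complementably embedded in each other. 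Combined with $\mathsf{E}_p\approx \ell_\infty(\mathsf{E}_p)$, Pełczyński's decomposition finishes the proof.

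\emph{Remaining parts.} When $q>1$, $\mathsf{Y}$ is reflexive, hence trivially complemented in its bidual. For $\ell_1$ and $L_1$ the claim is classical, since $L_1$ is an $L$-summand in its bidual.

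For (2), the same scheme applies with $\ell_\infty$ injective, so no bidual projection is needed; extensions from finite-dimensional $\ell_\infty^k$-pieces of $\mathsf{Z}$ are automatic.

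For (3), with $q<p$ we exhibit an invariant that distinguishes the spaces. By Pitt's theorem, every operator $\ell_p\to\ell_q$ is compact, whereas $\mathfrak{L}(L_p,L_q)$ contains noncompact operators such as those factoring through $\ell_2$ via Rademacher functions. More robustly, $\mathfrak{L}(L_p,L_q)$ contains a complemented copy of $\mathfrak{L}(\ell_2,\ell_2)$. Showing that this copy cannot be complemented in $\mathsf{E}_p$ would use the fact that the blocks there are uniformly of Pitt type. I expect this last step to need a local argument, for instance on the uniform factorization constants of the identity on $\ell_2^n$ through these blocks.
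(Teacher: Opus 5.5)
Your arguments for (a)$\Leftrightarrow$(b) and (b)$\Rightarrow$(c) are sound. Observing that $\mathsf{E}_p$ is trivially a dual space is even slightly simpler than the paper's use of $\mathfrak{L}(\ell_p,\ell_q)\cong(\ell_p\widetilde{\otimes}_{\pi}\ell_{q'})^{*}$. However, part (3) is not proved, and the route you propose is wrong.

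For $q<p$, $\mathsf{E}_p$ is not a model of $\mathfrak{L}(\ell_p,\ell_q)$ at all. Your Step~1 uses $p\leqslant q$ to identify block-diagonal operators with an $\ell_\infty$-sum, and $\mathsf{E}_p$ contains $\ell_\infty$. Worse, for $1<q<p$ the statement you hope to prove is false. Using Rademachers on $\{-1,1\}^m$, $\ell_2^m$ is uniformly complemented in both $\ell_p^{2^m}$ and $\ell_q^{2^m}$, so $\mathfrak{L}(\ell_2^m)$ is uniformly complemented in $\boldsymbol{M}_{2^m}$. The case $p=q=2$ of (1) then gives $\mathfrak{L}(\ell_2)\cembed{}(\bigoplus_m\mathfrak{L}(\ell_2^m))_{\ell_\infty}\cembed{}\mathsf{E}_p$, so no uniform Pitt-type property of finite-dimensional blocks can help.

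The missing idea is separability, and you already hold both halves. By Pitt's theorem every operator in $\mathfrak{L}(\ell_p,\ell_q)$ is compact, hence (as $\ell_q$ has a basis) a norm limit of finite-rank operators, so $\mathfrak{L}(\ell_p,\ell_q)$ is separable. On the other side, take $P\colon L_p\to\ell_2$ the Rademacher coefficient map (bounded since $p>1$) and $J\colon\ell_2\to L_q$ the Khintchine embedding. Then $S\mapsto JSP$ embeds the non-separable space $\mathfrak{L}(\ell_2)$ isomorphically into $\mathfrak{L}(L_p,L_q)$. This is the paper's argument in Remark~\ref{REMARK : p>q}. Only an isomorphic embedding is needed, which matters for $q=1$, where the Rademacher span is not complemented in $L_1$.

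In (c)$\Rightarrow$(b) your construction (finite-dimensional pieces, weak$^{*}$-limit along an ultrafilter in $\mathsf{Y}^{**}$, then $\pi$) is the paper's. Defining the limit operator on the dense union $\bigcup_m\mathsf{X}_m$ and extending by continuity is a legitimate way to avoid projections onto the domain pieces. Your worry about $p=1$ is unfounded, though: the Lindenstrauss--Rosenthal characterisation behind Lemma~\ref{LEMMA : pieces} gives uniformly complemented $\ell_1^k$-pieces in every $\mathcal{L}_1$-space. You in fact need such projections $Q_n$ on the target side when $q=1$, and you use them without comment.

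The real omission is that your construction yields only $\mathfrak{L}(\mathsf{X},\mathsf{Y})\cembed{}\mathsf{E}_p$. Your \enquote{Thus \dots\ in each other} skips the converse. That direction requires $\ell_p\cembed{}\mathsf{X}$ and $\ell_q\cembed{}\mathsf{Y}$ (Lindenstrauss--Pe{\l}czy{\'n}ski), which give $\mathsf{E}_p\approx\mathfrak{L}(\ell_p,\ell_q)\cembed{}\mathfrak{L}(\mathsf{X},\mathsf{Y})$.

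In part (2) this converse is not a formality, and \enquote{the same scheme} does not apply verbatim, for two reasons. First, a separable $\mathcal{L}_\infty$-space need not contain $c_0$ (Bourgain--Delbaen), so the domain-side analogue fails. The paper instead uses $\mathfrak{L}(\mathsf{Z},\ell_\infty)\cong\ell_\infty(\mathsf{Z}^{*})$ together with a complemented $\ell_1$ in the $\mathcal{L}_1$-space $\mathsf{Z}^{*}$. Second, $\ell_\infty$ is not the closed union of finite-dimensional pieces, so the norm convergence $Q_nTx\to Tx$ must be replaced. The paper uses coordinate truncations, which converge only boundedly coordinatewise, i.e.\ weak$^{*}$; this suffices because the ultrafilter limit is a weak$^{*}$ limit anyway.
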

	
	Since $L_1[0,1]$ is complemented in its bidual, Theorem~\ref{THEOREM A} gives, in particular, $\mathfrak{L}(\ell_1,\ell_1) \approx \mathfrak{L}(\ell_1,L_1[0,1])$, and therefore:
	
	\begin{theoremletters}[Theorem~\ref{COR: Ces and ces isomorphic}] \label{THEOREM BB}
		{\it $\ces$ and $\Ces$ are isomorphic.}
	\end{theoremletters}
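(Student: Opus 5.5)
The plan is to chain three isomorphisms: \eqref{2} for the sequence space, Theorem~\ref{THEOREM A}(1) with $p=q=1$, and \eqref{2} for the function space. This gives
\begin{equation*}
	\ces \approx \mathfrak{L}(\ell_1,\ell_1) \approx \mathfrak{L}(\ell_1,L_1[0,1]) \approx \Ces .
\end{equation*}
The middle isomorphism needs two checks. First, $\ell_1$ is a separable $\mathcal{L}_1$-space and $L_1[0,1]$ is a separable $\mathcal{L}_1$-space. Second, $L_1[0,1]$ is complemented in its bidual, since it is an $L$-summand of $L_1^{**}$ via the Yosida--Hewitt decomposition (equivalently, it is $1$-complemented as an $L$-embedded space). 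Theorem~\ref{THEOREM A}(1), (a)$\Leftrightarrow$(c), then gives $\mathfrak{L}(\ell_1,L_1[0,1]) \approx \mathfrak{L}(\ell_1,\ell_1)$. Note that (a) is stated with $\mathfrak{L}(\ell_p,\ell_q)$ on the right, so no symmetry issue arises.

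The real work is establishing \eqref{2} rigorously. The first ingredient is the folklore isometry $\ell_\infty(\mathsf{X}) \cong \mathfrak{L}(\ell_1,\mathsf{X})$: the map $T \mapsto (Te_n)_n$ is isometric onto, because $\|T\| = \sup_n \|Te_n\|$ for operators on $\ell_1$. Hence $\mathfrak{L}(\ell_1,\ell_1)\cong\ell_\infty(\ell_1)$ and $\mathfrak{L}(\ell_1,L_1)\cong\ell_\infty(L_1)$.

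The second ingredient is Grosse-Erdmann's blocking technique, which identifies $\ces$ and $\Ces$ up to equivalent norms with such $\ell_\infty$-sums.

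For $\ces$, split $\mathbb{N}$ into dyadic blocks $I_k=[2^k,2^{k+1})$. The supremum over $n$ of the Ces\`aro means is equivalent, with constants around $2$, to
\begin{equation*}
	\sup_k 2^{-k}\sum_{i\le 2^{k+1}}|x_i|.
\end{equation*}
This in turn is equivalent to
\begin{equation*}
	\sup_k 2^{-k}\sum_{j\le k}\ \sum_{i\in I_j}|x_i|.
\end{equation*}
Now set $a_j=\sum_{I_j}|x_i|$. The norm $\sup_k 2^{-k}\sum_{j\le k}a_j$ is equivalent to $\sup_k 2^{-k}a_k$. One direction is trivial. For the other, note that
\begin{equation*}
	\sum_{j\le k}a_j\le \sum_{j\le k}2^{j}\sup_m 2^{-m}a_m \le 2^{k+1}\sup_m 2^{-m}a_m .
\end{equation*}
Therefore $\ces\approx(\bigoplus_k \ell_1^{2^k})_{\ell_\infty}$, with the blocks rescaled by $2^{-k}$.

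The same argument with intervals $[2^k,2^{k+1})$ on $(0,\infty)$ gives $\Ces\approx(\bigoplus_{k\in\mathbb{Z}} L_1[2^k,2^{k+1}))_{\ell_\infty}$. Here one must also control the part near $0$. On $(0,1]$, the supremum over $x\le1$ is handled by the same geometric-sum estimate over $k\to-\infty$; convergence follows because $a_k\le 2^k C$ makes the sum $\sum_{j\le k}a_j$ converge.

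Each $L_1$ of an interval is isometric to $L_1[0,1]$ after rescaling. Hence $\Ces\approx\ell_\infty(L_1[0,1])$, where the $\mathbb{Z}$-indexing is irrelevant.

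For $\ces$, it remains to show $(\bigoplus_k\ell_1^{2^k})_{\ell_\infty}\approx\ell_\infty(\ell_1)$. One inclusion realizes each $\ell_1^{2^k}$ as $1$-complemented in $\ell_1$. Conversely, $\ell_\infty(\ell_1)$ embeds complementably in $(\bigoplus_k\ell_1^{2^k})_{\ell_\infty}$: by a standard truncation argument in $\ell_\infty$-sums, $\ell_1$ is complemented in $(\bigoplus_k\ell_1^{2^k})_{\ell_\infty}$ through the natural maps, and the blocks can be repeated infinitely often. Both spaces are isomorphic to their own $\ell_\infty$-sums, so Pe\l czy\'nski's decomposition method concludes.

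I expect this last complementation step to be the main obstacle. One could instead sidestep it by noting that Theorem~\ref{THEOREM A}(1)(b) already describes $\mathfrak{L}(\ell_1,\ell_1)$ as $(\bigoplus_n \boldsymbol{M}_n)_{\ell_\infty}$ with the $\mathfrak{L}(\ell_1^n,\ell_1^n)$ norm, which is $(\bigoplus_n \ell_\infty^n(\ell_1^n))_{\ell_\infty}$. One would then compare that directly with $(\bigoplus_k\ell_1^{2^k})_{\ell_\infty}$ by the decomposition method.
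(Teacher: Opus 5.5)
Your proposal is correct and is essentially the paper's proof. Your blocking estimates are Claims~\ref{CLAIM C} and~\ref{CLAIM D}, the identification $\ell_\infty(\mathsf{X})\cong\mathfrak{L}(\ell_1,\mathsf{X})$ is Lemma~\ref{LEMMA : 1}, and the middle step is Theorem~\ref{LEMMA : 6} applied to the pairs $(\ell_1,\ell_1)$ and $(\ell_1,L_1[0,1])$, exactly as in Theorem~\ref{THM : isomorphism direct sums}.

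The step you flag is the paper's Lemma~\ref{LEMMA : 2}, used together with Lemma~\ref{LEMMA : 3} to pass between $2^k$-blocks and $n$-blocks. The idea your sketch is missing is that the left inverse of the truncation embedding $x\mapsto(x\chi_{[1,2^k]})_k$ is not a natural map. It is $Q[x_{k,i}]\coloneqq(\lim_{\mathcal{U}}x_{k,i})_i$ for a free ultrafilter $\mathcal{U}$ on the block index. It has norm one because $\sum_{i\leqslant M}\abs{u_i}=\lim_{\mathcal{U}}\sum_{i\leqslant M}\abs{x_{k,i}}$ for each $M$. Splitting the block indices into infinitely many infinite sets then gives $\ell_\infty(\ell_1)\cembed{1}(\bigoplus_k\ell_1^{2^k})_{\ell_\infty}$.

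Some such non-constructive device is unavoidable here. Using the results of Section~\ref{SUBSECTION : Vath}, a projection provable in $\mathsf{ZF}+\mathsf{DC}$ would, after dualizing under $\mathsf{PM}_\omega$, embed $\ell_\infty=\ell_1^*$ into the Schur space $(\bigoplus_k\ell_\infty^{2^k})_{\ell_1}$. But in that model $\ell_\infty^*=\ell_1$, so the unit vectors of $\ell_\infty$ are weakly null, which is a contradiction.

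Your fallback is also sound, and it actually bypasses Lemma~\ref{LEMMA : 2}. Since $\mathfrak{L}(\ell_1^n,\ell_1^n)\cong\ell_\infty^n(\ell_1^n)$, the space in Theorem~\ref{THEOREM A}(1)(b) for $p=q=1$ is an $\ell_\infty$-sum of spaces $\ell_1^m$ with unbounded $m$. Lemma~\ref{LEMMA : 3} (with $p=\infty$, $q=1$) then gives a $4$-isomorphism with $(\bigoplus_k\ell_1^{2^k})_{\ell_\infty}$ directly, with no decomposition argument. You may even apply (b) straight to the pair $(\ell_1,L_1[0,1])$ and never mention $\mathfrak{L}(\ell_1,\ell_1)$. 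On that route the ultrafilter appears only inside the second step of the proof of Theorem~\ref{LEMMA : 6}.
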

	
	We must emphasise that our solution to AM's question is {\bf not} the first;
	priority belongs to \cite[Theorem~5.1]{ALM19}.
	Our approach, however, is more general and unifies certain aspects of \cite{ALM19}.
	
	\subsection{Anatomy of the isomorphism} \label{SUBSECTION : Grand unification}
	
	According to \cite[Theorem~5.5]{ALM19}, the space $\Ces$ is also isomorphic to $( \bigoplus_{n=1}^{\infty} \mathscr{M}[0,1] )_{\ell_{\infty}}$,
	where $\mathscr{M}[0,1]$ denotes the space of regular Borel measures on $[0,1]$ with finite variation.
	This (almost) falls into the pattern that we used above to show that $\ces \approx \Ces$:
	The space $( \bigoplus_{n=1}^{\infty} \mathscr{M}[0,1] )_{\ell_{\infty}}$ is isometric to $\mathfrak{L}(\ell_1,\mathscr{M}[0,1])$,
	and $\mathscr{M}[0,1]$ is an $\mathcal{L}_1$-space, so we can apply Theorem~\ref{THEOREM A}\dots\
	Not so fast: $\mathscr{M}[0,1]$ is {\bf not} separable, so the use of Theorem~\ref{THEOREM A}(1) is not justified.
	This slight mismatch was a puzzle
	that plagued at least one of the authors.
	We can bypass this obstacle by passing to the predual of $\mathscr{M}[0,1]$:
	\begin{align*}
		( \bigoplus_{n=1}^{\infty} \mathscr{M}[0,1] )_{\ell_{\infty}}
		& \cong \mathfrak{L}(\ell_1,\mathscr{M}[0,1]) \quad \quad \, (\text{by Lemma~\ref{LEMMA : 1}}) \\
		& \cong \mathfrak{L}(\mathcal{C}[0,1],\ell_\infty) \quad \quad \, (\text{since $\mathfrak{L}(\mathsf{X},\mathsf{Y}^{*}) \cong \mathfrak{L}(\mathsf{Y},\mathsf{X}^{*})$}) \\
		&\approx \mathfrak{L}(c_0,\ell_\infty) \quad \quad \quad \quad \, (\text{by Theorem~\ref{THEOREM A}(2)}) \\
		& \cong \mathfrak{L}(\ell_1,\ell_1) \quad \quad \quad \quad \, \, \, (\text{again, since $\mathfrak{L}(\mathsf{X},\mathsf{Y}^{*}) \cong \mathfrak{L}(\mathsf{Y},\mathsf{X}^{*})$}) \\
		& \approx \mathfrak{L}(\ell_1,L_1[0,1]) \quad \quad \, (\text{by Theorem~\ref{THEOREM A}(1)}) \\
		& \approx \Ces \quad \quad \quad \quad \quad \, \, \, \, \, \, \quad (\text{by representation \eqref{2}}).
	\end{align*}
	To sum up, we have the following.
	
	\begin{theoremletters}[Theorem~\ref{COROLLARY : Michau}] \label{THEOREM BBB}
		{\it $\Ces$ is isomorphic to $( \bigoplus_{n=1}^{\infty} \mathscr{M}[0,1] )_{\ell_{\infty}}$.}
	\end{theoremletters}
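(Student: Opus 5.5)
The plan is to follow the chain of identifications displayed just before the statement, and to justify each link separately.

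\textbf{Step 1: the isometric links.} By Lemma~\ref{LEMMA : 1}, $\ell_\infty(\mathsf{X}) \cong \mathfrak{L}(\ell_1,\mathsf{X})$. The isometry sends an operator $T$ to the bounded sequence $(Te_n)_n$, and it is isometric because the closed unit ball of $\ell_1$ is the closed absolutely convex hull of the unit vectors. Applied with $\mathsf{X}=\mathscr{M}[0,1]$, this gives $( \bigoplus_{n=1}^{\infty} \mathscr{M}[0,1] )_{\ell_{\infty}} \cong \mathfrak{L}(\ell_1,\mathscr{M}[0,1])$. By the Riesz representation theorem, $\mathscr{M}[0,1] = \mathcal{C}[0,1]^*$. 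For Banach spaces $\mathsf{X},\mathsf{Y}$ there is the standard isometry $\mathfrak{L}(\mathsf{X},\mathsf{Y}^*) \cong \mathfrak{L}(\mathsf{Y},\mathsf{X}^*)$, given by $T \mapsto T^*|_{\mathsf{Y}}$; equivalently, both spaces are the space of bounded bilinear forms on $\mathsf{X}\times\mathsf{Y}$. Hence $\mathfrak{L}(\ell_1,\mathcal{C}[0,1]^*) \cong \mathfrak{L}(\mathcal{C}[0,1],\ell_\infty)$.

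\textbf{Step 2: the main isomorphism.} The space $\mathcal{C}[0,1]$ is a separable $\mathcal{L}_\infty$-space, so Theorem~\ref{THEOREM A}(2) (that is, Theorem~\ref{LEMMA : 6infty}) yields $\mathfrak{L}(\mathcal{C}[0,1],\ell_\infty)\approx\mathfrak{L}(c_0,\ell_\infty)$. This is the only non-trivial step; everything else is a formal identification.

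\textbf{Step 3: back to $\Ces$.} Applying the duality isometry again, now with $c_0^*=\ell_1$ and $\ell_\infty = \ell_1^*$, gives $\mathfrak{L}(c_0,\ell_\infty)=\mathfrak{L}(c_0,\ell_1^*) \cong \mathfrak{L}(\ell_1,c_0^*) = \mathfrak{L}(\ell_1,\ell_1)$. Next, $L_1[0,1]$ is a separable $\mathcal{L}_1$-space that is complemented in its bidual, being an $L$-summand there. Therefore Theorem~\ref{THEOREM A}(1), applied with $p=q=1$, gives $\mathfrak{L}(\ell_1,\ell_1)\approx\mathfrak{L}(\ell_1,L_1[0,1])$. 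Finally, representation \eqref{2} identifies $\mathfrak{L}(\ell_1,L_1[0,1])$ with $\Ces$.

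Composing Steps 1--3 proves that $\Ces$ is isomorphic to $( \bigoplus_{n=1}^{\infty} \mathscr{M}[0,1] )_{\ell_{\infty}}$.

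\textbf{Expected obstacle.} Since all the heavy results are quoted, the main care goes into making representation \eqref{2} rigorous, namely $\Ces\approx\ell_\infty(L_1)$. For this I would use Grosse-Erdmann's blocking. Decompose $(0,\infty)$ into dyadic blocks $I_k=[2^k,2^{k+1})$ for $k\in\mathbb{Z}$. One checks the two-sided estimate
\[
\norm{f}_{\Ces}\asymp \sup_k 2^{-k}\int_{0}^{2^{k+1}}|f| ,
\]
and it remains to pass from these cumulative averages to a sup over individual blocks. The natural target is the equivalence $\Ces \approx \ell_\infty\bigl(L_1(I_k, 2^{-k}dt)\bigr)$, which is isometric to $\ell_\infty(L_1)$. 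Because the sums $\int_0^{2^{k+1}}|f|$ are cumulative, the map to block restrictions is not diagonal. I would first try a telescoping-difference map, sending $f$ to weighted differences between consecutive block averages, and verify that it is a bounded bijection onto $\ell_\infty(L_1)$. If that fails, I would fall back on quoting \cite{GE98} or \cite{KT24} for the blocking step.
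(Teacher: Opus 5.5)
Your argument is correct and is essentially the paper's: Theorem~\ref{COROLLARY : Michau} is obtained by combining Claim~\ref{CLAIM C}, $\Ces\approx_4(\bigoplus_{n=1}^{\infty}L_1[0,1])_{\ell_\infty}$, with Theorem~\ref{THM : suma L1 isomorficzna z miarami}, whose proof is exactly your Steps~1--3 (Lemmas~\ref{LEMMA : 1} and~\ref{LEMMA : 5}, then Theorems~\ref{LEMMA : 6infty} and~\ref{LEMMA : 6}). Your ``expected obstacle'' does not arise, because restriction to dyadic blocks is already diagonal: with $a_j(f)\coloneqq 2^{-j}\int_{[2^j,2^{j+1})}\abs{f}$ one has $\frac{1}{x}\int_0^x\abs{f}\leqslant 2^{-k}\sum_{j\leqslant k}2^j a_j(f)\leqslant 2\sup_j a_j(f)$ for $x\in[2^k,2^{k+1})$ and $a_k(f)\leqslant 2\norm{f}_{\Ces}$, so no telescoping map is needed.
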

	
	It seems instructive to compare the above argument with that of \cite[Theorem~5.5]{ALM19}:
	While our argument is again a manifestation of the mechanism behind Theorem~\ref{THEOREM A},
	the proof in \cite{ALM19} relies\footnote{The authors attribute this idea to Micha{\l} Wojciechowski; see p.~529 in {\it loc. cit.}}
	on an approximation by convolutions with Fej{\' e}r's kernel and apparently has nothing to do with \cite[Theorem~5.1]{ALM19} (Theorem~\ref{THEOREM BB}).
	We believe that the bridge connecting Theorems~\ref{THEOREM BB} and \ref{THEOREM BBB} is one of the most important findings of this work.
	
	Theorem~\ref{THEOREM BBB} in its natural generality also seems interesting in itself.
	
	\begin{theoremletters}[Theorem~\ref{THM : suma L1 isomorficzna z miarami}] \label{THEOREM BBBB}
		{\it Let $K$ be an infinite compact metrizable space, and let $\mu$ be a finite regular Borel measure on $K$ such that $L_1(\mu)$ is infinite-dimensional.
			Then the spaces
			$( \bigoplus_{n=1}^{\infty} L_1(\mu) )_{\ell_{\infty}}$,
			$( \bigoplus_{n=1}^{\infty} \mathscr{M}(K) )_{\ell_{\infty}}$ and $\ces$
			are all isomorphic.}
	\end{theoremletters}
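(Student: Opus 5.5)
We will link all three spaces to $\mathfrak{L}(\ell_1,\ell_1)\cong\ces$, using the folklore identification $\ell_\infty(\mathsf{X})\cong\mathfrak{L}(\ell_1,\mathsf{X})$ (Lemma~\ref{LEMMA : 1}) to rewrite each $\ell_\infty$-sum as an operator space.

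\emph{First space.} Since $K$ is compact metrizable, $L_1(\mu)$ is separable. By Lemma~\ref{LEMMA : 1}, $( \bigoplus_{n=1}^{\infty} L_1(\mu) )_{\ell_{\infty}}\cong\mathfrak{L}(\ell_1,L_1(\mu))$. Being infinite-dimensional, $L_1(\mu)$ is a separable $\mathcal{L}_1$-space complemented in its bidual (it is an $L$-summand there, hence $1$-complemented). Theorem~\ref{THEOREM A}(1) with $p=q=1$ then gives $\mathfrak{L}(\ell_1,L_1(\mu))\approx\mathfrak{L}(\ell_1,\ell_1)$. The latter is isomorphic to $\ces$ by representation \eqref{2}; I take this as already established in the paper, via Grosse-Erdmann's blocking and Lemma~\ref{LEMMA : 1}.

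\emph{Second space.} Repeat the chain displayed before Theorem~\ref{THEOREM BBB}, with $[0,1]$ replaced by $K$. By the Riesz representation theorem, $\mathscr{M}(K)=\mathcal{C}(K)^*$, so
\begin{equation*}
( \bigoplus_{n=1}^{\infty} \mathscr{M}(K) )_{\ell_{\infty}}\cong\mathfrak{L}(\ell_1,\mathcal{C}(K)^*)\cong\mathfrak{L}(\mathcal{C}(K),\ell_1^*)=\mathfrak{L}(\mathcal{C}(K),\ell_\infty),
\end{equation*}
using the isometric identity $\mathfrak{L}(\mathsf{X},\mathsf{Y}^*)\cong\mathfrak{L}(\mathsf{Y},\mathsf{X}^*)$ given by $T\mapsto T^*|_{\mathsf{Y}}$. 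Since $K$ is infinite compact metrizable, $\mathcal{C}(K)$ is a separable infinite-dimensional $\mathcal{L}_\infty$-space. Theorem~\ref{THEOREM A}(2) therefore yields $\mathfrak{L}(\mathcal{C}(K),\ell_\infty)\approx\mathfrak{L}(c_0,\ell_\infty)$. Applying the duality identity again gives $\mathfrak{L}(c_0,\ell_1^*)\cong\mathfrak{L}(\ell_1,c_0^*)=\mathfrak{L}(\ell_1,\ell_1)\approx\ces$.

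The content of the argument lies in Theorem~\ref{THEOREM A}(1),(2); granting those, the remaining work is bookkeeping. The one point to check carefully is the set of hypotheses. We need infiniteness of $K$ so that $\mathcal{C}(K)$ is infinite-dimensional. If $\mathcal{C}(K)$ were finite-dimensional, $\mathfrak{L}(\mathcal{C}(K),\ell_\infty)$ would be isomorphic to $\ell_\infty$, which is not isomorphic to $\ces$. We need infinite-dimensionality of $L_1(\mu)$ so that it is a genuine infinite-dimensional $\mathcal{L}_1$-space, as required in Theorem~\ref{THEOREM A}(1). Finally, we must verify that the dualities used are isometric identifications compatible with the $\ell_\infty$-sum norms. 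Composing these isomorphisms shows that all three spaces are isomorphic.
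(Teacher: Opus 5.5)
Your proposal is correct and is essentially the paper's own proof. It uses the same chain $( \bigoplus_{n=1}^{\infty} \mathscr{M}(K) )_{\ell_{\infty}} \cong \mathfrak{L}(\ell_1,\mathcal{C}(K)^{*}) \cong \mathfrak{L}(\mathcal{C}(K),\ell_\infty) \approx \mathfrak{L}(c_0,\ell_\infty) \cong \mathfrak{L}(\ell_1,\ell_1)$ via Lemmas~\ref{LEMMA : 1}, \ref{LEMMA : 5} and Theorem~\ref{LEMMA : 6infty}, together with Theorem~\ref{LEMMA : 6} for $L_1(\mu)$ (using the same hypotheses you check) and Lemmas~\ref{LEMMA : 1}, \ref{LEMMA : 2} and Claim~\ref{CLAIM D} for $\mathfrak{L}(\ell_1,\ell_1) \approx \ces$.
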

	
	\subsection{Vademecum} \label{SUBSECTION : Vademaecum}
	
	Our aim in the rest of this paper is to explore what more can be said about the picture in which the pair $\ces$ and $\Ces$
	is seen as a non-classical analogue of the pair $\ell_{\infty}$ and $L_{\infty}$.
	
	Apart from Theorems~\ref{THEOREM BB} and \ref{THEOREM BBB}, this seems to be completely uncharted territory, and this paper is a first guide to it.
	We summarise our findings about the isomorphic and isometric structure of $\ces$ and $\Ces$ as follows.
	
	\begin{theoremletters}[Omnibus theorem] \label{THEOREM B}
		\hfill
		\begin{enumerate}[label={\normalfont{(\arabic*)}}]
			\item\label{ALM19} {\it {\bf Alternative solution to AM's question:} $\ces$ is isomorphic to $\Ces$}
			(Theorem~\ref{COR: Ces and ces isomorphic}).
			\vspace{5pt}
			\item\label{BM-distance}  {\it {\bf Banach--Mazur distance:} $d_{\mathrm{BM}}(\ces,\Ces) \leqslant 1296$} (Remark~\ref{REMARK : d_BM(k,K)<=}).
			\vspace{5pt}
			\item\label{NOT-isometric} {\it {\bf Isometric variant of AM's question:} $\ces$ and $\Ces$ are not isometric}
			(Theorem~\ref{THEOREM : ces and Ces NOT isometric}).
			\vspace{5pt}
			\item\label{Hagler-Stegall} {\it {\bf Hagler--Stegall\footnote{We chose this name for the following reason:
						$\Ces$ has a complemented subspace isomorphic to $L_1[0,1]$, so --- due to Hagler and Stegall \cite{HS73} --- it has a complemented subspace
						isomorphic to $\mathscr{M}[0,1] \cong \mathcal{C}[0,1]^{*}$ (by the Riesz--Markov--Kakutani representation theorem).} style result:}
				$\Ces$ is isomorphic to $( \bigoplus_{n=1}^{\infty} \mathscr{M}[0,1] )_{\ell_{\infty}}$}
			(Theorem~\ref{COROLLARY : Michau}).
			\vspace{5pt}
			\item\label{isometric-preduals} {\it {\bf Unique isometric predual:} $\ces$ has a unique isometric predual, namely the space $\widetilde{\ell}_1$}
			(Theorem~\ref{THEOREM : ces_infty has UNIQUE predual}).
			\vspace{5pt}
			\item {\it {\bf Number of pairwise non-isomorphic preduals:} $\ces$ has at least continuum many pairwise non-isomorphic preduals}
			(Corollary~\ref{COROLLARY : c many preduals}; {\it cf.} Theorem~\ref{PROP : ces has uncountably many preduals}).
			\vspace{5pt}
			\item\label{Strong-Schur} {\it {\bf Schur property:} $\widetilde{\ell}_1$ has the strong Schur property}
			(Lemma~\ref{PROPOSITION: Schur Tandori l1 ZF}).
		\end{enumerate}
	\end{theoremletters}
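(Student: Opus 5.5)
This omnibus statement collects results proved later, so the plan is to handle the seven items in turn, reducing each to a few standard tools.

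\textbf{Items (1) and (4).} Item (1) follows from representation \eqref{2} and Theorem~\ref{THEOREM A}(1) with $p=q=1$, $\mathsf{X}=\ell_1$, $\mathsf{Y}=L_1[0,1]$. This applies because $L_1[0,1]$ is complemented in its bidual: it is an $L$-summand of $L_1^{**}$ via the Yosida--Hewitt decomposition. Item (4) is the chain of isomorphisms displayed before Theorem~\ref{THEOREM BBB}. That chain uses Lemma~\ref{LEMMA : 1}, the duality $\mathfrak{L}(\mathsf{X},\mathsf{Y}^*)\cong\mathfrak{L}(\mathsf{Y},\mathsf{X}^*)$, and Theorem~\ref{THEOREM A}(2) for $\mathsf{Z}=\mathcal{C}[0,1]$.

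\textbf{Item (2).} We track constants through the chain
\[
\ces\approx\mathfrak{L}(\ell_1,\ell_1)\approx(\textstyle\bigoplus\boldsymbol{M}_n)_{\ell_\infty}\approx\mathfrak{L}(\ell_1,L_1)\approx\Ces .
\]
\begin{enumerate}
\item The blocking isomorphisms of Grosse-Erdmann type, $\ces\approx\ell_\infty(\ell_1)$ via dyadic blocks, each cost a factor of about $2$ in each direction, so roughly $4$ per space.
\item The Pełczyński decomposition step in Theorem~\ref{THEOREM A} costs constants depending on the projection norms. For $L_1$ the projection onto it in the bidual has norm $1$, and the Haar/dyadic martingale approximations involve contractive conditional expectations.
\end{enumerate}
The product should then come out as some explicit number like $1296=6^4$. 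I expect this bookkeeping to be routine but fiddly.

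\textbf{Item (3).} The plan is to find an isometric invariant distinguishing the two spaces, most naturally extreme points of the unit ball or smooth/exposed points.
\begin{enumerate}
\item In $\Ces$ the norm is $\sup_x \frac1x\int_0^x|f|$. Given $f$ in the unit sphere, I would perturb $f$ on a small interval near a point where the averages are strictly below $1$, keeping the norm $\le1$. This should show that the unit ball of $\Ces$ has no extreme points.
\item In $\ces$, sequences such as $e_1$ ($\|e_1\|=1$ attained at $n=1$) should be extreme points. To check $e_1$: if $e_1=\frac12(x+y)$ with norms $\le1$, then $|x_1|,|y_1|\le1$ forces $x_1=y_1=1$. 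The averages $\frac1n\sum_{k\le n}|x_k|\le 1$ then bound the tails but do not kill them, so the argument must use all $n$ simultaneously, or another candidate point must be chosen.
\end{enumerate}
I expect the main obstacle to be this point: the geometry of $\ces$ may be flatter than it looks, so the candidate extreme point must be chosen carefully. If extreme points fail, I would fall back on counting exposed faces, or on the weak$^*$ structure in item (5).

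\textbf{Items (5), (6) and (7).}
\begin{enumerate}
\item For (5), identify the Köthe dual $\widetilde{\ell}_1$, with norm $\sum_n \sup_{k\ge n}|x_k|$, as an isometric predual. Uniqueness would follow from a Godefroy-type criterion: show that $\widetilde{\ell}_1$ is an $L$-summand in its bidual, or more simply has property (X), so the weak$^*$ topology on the dual is determined isometrically.
\item For (6), write $\ces\approx\ell_\infty(\ell_1)=(\bigoplus\mathcal{C}(\alpha)^*)_{\ell_\infty}$. Taking preduals $(\bigoplus \mathcal{C}(\omega^{\omega^\alpha}))_{\ell_1}$ for countable $\alpha$ gives spaces whose Szlenk indices are pairwise distinct. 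Distinct indices rule out isomorphism, which produces continuum many pairwise non-isomorphic preduals after refining the family.
\item For (7), let $x^{(m)}\to0$ weakly with $\|x^{(m)}\|=1$. Pass to a subsequence that is a small perturbation of a block sequence with decreasing majorants, and show it is equivalent to the $\ell_1$ basis with uniform constant. This gives the strong Schur property.
\end{enumerate}
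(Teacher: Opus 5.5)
Items (1) and (4) follow the paper. Your route to (5) is a valid alternative. $\widetilde{\ell}_1$ is indeed an $L$-summand in $\ces^{*}=\ces^{\times}\oplus\ces^{s}$: a singular functional is normed by vectors supported beyond any $M$, where the K\"othe part is negligible, and gluing such a vector to a finitely supported one on $[1,N]$ costs only a factor $1+N/M$ in $\norm{\cdot}_{\ces}$. $L$-embedded spaces are unique preduals (Pfitzner). The paper instead combines Godefroy's criterion with the density of Fr\'echet points in $S(\ces)$, whose derivatives must lie in every predual.

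\textbf{Item (3) fails.} The unit ball $\mathrm{Ball}(\Ces)$ \emph{does} have extreme points. Suppose $\chi_{\mathbb{I}}=\frac12(g+h)$ with $\norm{g}_{\Ces},\norm{h}_{\Ces}\leqslant 1$. Then $2x=\int_0^x(g+h)\leqslant\int_0^x\abs{g}+\int_0^x\abs{h}\leqslant 2x$ for every $x$. This forces $\abs{g}=\abs{h}=1$ and $g,h\geqslant0$ a.e., so $g=h=\chi_{\mathbb{I}}$. The same works for every $f$ with $\abs{f}=1$ a.e., on both $[0,1]$ and $[0,\infty)$. Your perturbation has no room at exactly such $f$, since $\mathscr{H}\abs{f}\equiv1$ there. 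Also, $\boldsymbol{e}_1=\frac12[(1,1,0,\dots)+(1,-1,0,\dots)]$ is not extreme in $\ces$, while $(1,1,1,\dots)$ is, by the same computation; so extreme points exist on both sides. The invariant the paper uses is Fr\'echet differentiability of the norm, which holds on a dense subset of $S(\ces)$ and nowhere on $S(\Ces)$. For $\Ces$, \v{S}mulyan's test is the quickest route. Put $\sigma=\mathrm{sgn}\,f$ (with $\sigma=1$ where $f=0$) and $\phi_t(g)=\frac1t\int_0^t g\sigma$. These functionals lie in $\mathrm{Ball}(\Ces^{*})$ and satisfy $\norm{\phi_t-\phi_s}\geqslant 1$ for $s\neq t$ (test on $\sigma\chi_{(s,t]}$). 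Continuity of $t\mapsto\mathscr{H}\abs{f}(t)$ gives distinct $t_k$ with $\phi_{t_k}(f)\to\norm{f}$.

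\textbf{Items (6), (7) and (2) also have gaps.} In (6), every $(\bigoplus_n\mathcal{C}(\omega^{\omega^{\alpha}}+1))_{\ell_1}$ contains $\ell_1$. Hence each has a non-separable dual and the same (infinite) Szlenk index. The paper first uses Cembranos--Mendoza to move a copy of $\mathcal{C}(\omega^{\omega^{\beta}}+1)$, which contains no $\ell_1$, into $\mathcal{C}(\omega^{\omega^{\alpha}}+1)$ itself, and compares Szlenk indices only there. Moreover, this family has just $\aleph_1$ members. Szlenk indices of separable spaces with separable dual are countable ordinals, so no refinement reaches $\mathfrak{c}$ without the Continuum Hypothesis. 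The paper gets $\mathfrak{c}$ from the Bourgain--Delbaen spaces $\mathsf{Y}_{a,b}$: Lewis--Stegall gives $\mathsf{Y}_{a,b}^{*}\approx\ell_1$, and the non-embeddability of $\mathsf{Y}_{a,c}$ into $\mathsf{Y}_{a,b}$ for $c>b$ replaces the Szlenk index.

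In (7), a Schur space has no normalised weakly null sequences. So your argument yields at most the plain Schur property (by contradiction), and the ``uniform constant'' carries no information. The strong Schur property concerns normalised $\delta$-separated sequences and demands a lower $\ell_1$-estimate of order $\delta/2c$. Such sequences in general converge coordinatewise to some $\mathbf{f}_{\bullet}\neq0$, so they are not perturbed block sequences. The paper blocks $\widetilde{\ell}_1$ into $(\bigoplus_n\ell_\infty^{2^n})_{\ell_1}$ (constant $4$). It then splits $\mathbf{f}_i\approx\mathbf{c}+\mathbf{g}_i$ with $\mathbf{c}$ and the $\mathbf{g}_i$ disjointly supported, and transfers the $1$-strong Schur property of $\ell_1$ through $h_i=\lfloor\mathbf{c}\rceil+\lfloor\mathbf{g}_i\rceil$.

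In (2), the value is guessed rather than derived, and your chain cannot reach it. $1296=4\cdot81\cdot4$ is exactly the two blocking constants times a single application of Theorem~\ref{THM : QPDM}, whose bound is never below $3^2\cdot3^2=81$. Dyadic blocking gives $(\bigoplus_n\ell_1^{2^n})_{\ell_\infty}$, not $\ell_\infty(\ell_1)$ or $(\bigoplus_n\boldsymbol{M}_n)_{\ell_\infty}$, and getting there costs an extra factor (Lemma~\ref{LEMMA : 2} or~\ref{LEMMA : 3}). The paper applies Theorem~\ref{THM : QPDM}(2) once, to $\mathsf{X}=\ell_\infty(L_1[0,1])$ and $\mathsf{Y}=(\bigoplus_{n\geqslant0}\ell_1^{2^n})_{\ell_\infty}$. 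It uses three facts:
$\mathsf{Y}\cembed{1}\mathsf{X}$, via dyadic conditional expectations;
$\mathsf{X}\cembed{1}\mathsf{Y}$, from $L_1[0,1]\cembed{1}\mathsf{Y}$ \cite[Theorem~3.7]{ALM19} and a splitting of $\mathbb{N}_0$;
and $\mathsf{X}\cong\ell_\infty(\mathsf{X})$.
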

	
	Theorem~\ref{THEOREM B}\ref{Strong-Schur} strengthens \cite[Theorem~3.1]{ALM19}, where it was shown that $\widetilde{\ell}_1$ has the Schur property
	(see Remark~\ref{REMARK : Schur} for a more detailed discussion).
	Again, it is a manifestation of a more general mechanism:
	A quantitative version of the Schur property, called the strong Schur property, is preserved by direct sums of finite-dimensional Banach spaces.
	
	\begin{theoremletters}[Lemma~\ref{PROPOSITION: Schur Tandori l1 ZF}] \label{THEOREM CCC}
		{\it Let $\mathsf{X}$ be a Banach sequence space with the $c$-strong Schur property and the Fatou property,
			and let $\{\mathsf{F}_n\}_{n=1}^{\infty}$ be a sequence of finite-dimensional Banach spaces.
			Then the space $(\bigoplus_{n=1}^{\infty} \mathsf{F}_n)_{\mathsf{X}}$ has the $c$-strong Schur property.}
	\end{theoremletters}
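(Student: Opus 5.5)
Here $c$-strong Schur property means: every $\delta$-separated bounded sequence (i.e. $\|x_n-x_m\|\ge\delta$ for $n\ne m$) has a subsequence that is $(c/\delta)$-equivalent to the unit vector basis of $\ell_1$. Equivalently, after normalising, for every $\varepsilon>0$ and every bounded sequence with $\|x_n - x_m\|\geqslant \delta$, some subsequence satisfies $\|\sum a_k x_{n_k}\|\geqslant (\delta/c-\varepsilon)\sum|a_k|$ (up to the paper's precise normalisation). I would prove the statement for $\mathsf{Z}=(\bigoplus_{n}\mathsf{F}_n)_{\mathsf{X}}$ by reduction to $\mathsf{X}$ through a gliding hump. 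The plan is to use finite-dimensionality to kill the \enquote{local} part, and then transfer the separation to the sequence of norms, which lives in $\mathsf{X}$.

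\textbf{Step 1 (coordinatewise convergence).} Let $(z_j)$ be bounded in $\mathsf{Z}$ with $\|z_j-z_i\|_{\mathsf{Z}}\geqslant\delta$. Each coordinate $z_j(n)$ lies in the finite-dimensional space $\mathsf{F}_n$ and is bounded there. By a diagonal argument I pass to a subsequence such that $z_j(n)\to w(n)$ in $\mathsf{F}_n$ for every $n$. Define the candidate limit $w=(w(n))$. By the Fatou property, $w\in\mathsf{Z}$ with $\|w\|\leqslant\liminf\|z_j\|$, since $(\|w(n)\|)$ is the coordinatewise limit of $(\|z_j(n)\|)$.

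\textbf{Step 2 (differences and the gliding hump).} Set $u_j=z_{2j}-z_{2j+1}$. These satisfy $\|u_j\|\geqslant\delta$ and converge coordinatewise to $0$. The main obstacle is to control how much of the norm of $u_j$ escapes to infinity. Coordinatewise null convergence does not by itself give disjointness, so I would use the Fatou property again: it forces $\|u_j\chi_{[1,N]}\|$ to be small for fixed $N$ and large $j$. This lets me pick a subsequence and blocks $N_1<N_2<\dots$ so that $u_j=v_j+r_j$, with $v_j$ supported in $(N_j,N_{j+1}]$ and $\|r_j\|$ small and summable. The delicate point is controlling the tail beyond $N_{j+1}$. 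I would use order continuity on finite blocks together with a Fatou/lattice argument; if that fails, I would work instead with the scalar sequences $x_j=(\|u_j(n)\|_{\mathsf{F}_n})_n\in\mathsf{X}$. These also converge coordinatewise to $0$, and I would apply the strong Schur property of $\mathsf{X}$ to $(x_j)$, which is still $\delta$-separated up to a subsequence because $\|x_j\|\geqslant\delta$ while the $x_j$ tend to zero coordinatewise.

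\textbf{Step 3 (transfer the $\ell_1$ estimate).} For the scalars $a_k$, the lattice norm gives
\[
\Bigl\|\sum a_k v_k\Bigr\|_{\mathsf{Z}}=\Bigl\|\sum |a_k| x'_k\Bigr\|_{\mathsf{X}},
\]
where $x'_k=(\|v_k(n)\|)_n$ are disjointly supported. The strong Schur property of $\mathsf{X}$, applied to the separated sequence $(x'_k)$, yields the $\ell_1$ lower estimate with constant governed by $c/\delta$. Perturbing back by the summable $r_k$ then gives the same estimate for $(u_k)$.

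\textbf{Step 4 (from differences to the original sequence).} I would avoid Steps 2–3 losing a factor $2$ in the constant $c$ by running the whole argument directly on $z_j-w$ in place of $u_j$. Since $\|z_j-w\|\geqslant\delta/2$ eventually, this gives an $\ell_1$ lower estimate for $(z_j-w)$. To pass from this to $(z_j)$ with the same $c$, I would use the difference-quotient formulation for sequences of coefficients with $\sum a_k=0$, in which $w$ cancels. I expect that exactly matching $c$ (rather than $2c$) is the technical bottleneck. Matching it would require the definition of the $c$-strong Schur property to be phrased consistently for $\mathsf{X}$ and for $\mathsf{Z}$, so that the identical reduction applies to both.
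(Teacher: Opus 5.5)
There is a genuine gap. Your first step is right: the diagonal extraction gives a coordinatewise limit $w$, and $w$ lies in the sum by the Fatou property. However, the two points you flag as delicate are exactly where the proof lives, and neither is resolved.

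\textbf{Tail control.} The gliding hump needs, for each \emph{fixed} $j$, that $\lVert (z_j-w)\chi_{(M,\infty)}\rVert\to 0$ as $M\to\infty$, and likewise for $w$. That is order continuity. Coordinatewise convergence only controls the heads $\chi_{[1,N]}$, and it does so by finite-dimensionality, not by the Fatou property.

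Your fallback does not avoid this. Write $\lfloor f\rceil\coloneqq(\lVert f(n)\rVert_{\mathsf{F}_n})_n$. An $\ell_1$-estimate for the $\lfloor u_j\rceil$ in $\mathsf{X}$ says nothing about $\lVert\sum_j a_ju_j\rVert=\lVert\lfloor\sum_j a_ju_j\rceil\rVert_{\mathsf{X}}$, because $f\mapsto\lfloor f\rceil$ is linear only on disjointly supported vectors.

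The missing lemma is that a Banach sequence space with the Schur property (implied by the strong one) and the Fatou property is order continuous. Suppose not: then some $z\geqslant 0$ has $\lVert z\chi_{[n,\infty)}\rVert_{\mathsf{X}}>2\delta$ for all $n$. The Fatou property lets you cut consecutive finite blocks $y_i$ of $z$ with $\lVert y_i\rVert_{\mathsf{X}}\geqslant\delta$. These are weakly null, since $\sum_i\langle y_i,\varphi\rangle\leqslant\langle z,\varphi\rangle$ for positive $\varphi$, contradicting the Schur property. Alternatively, $\lVert\sum_{i\leqslant N}y_i\rVert_{\mathsf{X}}\leqslant\lVert z\rVert_{\mathsf{X}}$ contradicts an $\ell_1$-lower estimate directly.

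Since $\lVert f\chi_A\rVert=\lVert\lfloor f\rceil\chi_A\rVert_{\mathsf{X}}$, order continuity passes to the sum. After a further subsequence you get $z_j=w_0+g_j+r_j$, where $w_0\coloneqq w\chi_{I_0}$, $g_j\coloneqq(z_j-w)\chi_{I_j}$ for intervals $I_0<I_1<I_2<\cdots$, and $\lVert r_j\rVert<2\eta$.

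\textbf{The constant.} The factor $2$ you lose in Step~4 is not a matter of phrasing the definition; the same definition applies to $\mathsf{X}$ and to the sum. The loss comes from drawing the separation from $\lVert z_j-w\rVert\geqslant\delta/2$. Your repairs do not help either: cancelling $w$ only handles $\sum_k a_k=0$, and the differences $z_{2j}-z_{2j+1}$ only handle special coefficient patterns.

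The point is that the common part cancels in differences: $\lVert g_i-g_j\rVert\geqslant\lVert z_i-z_j\rVert-4\eta>\delta-4\eta$. The paper exploits this by applying the $c$-strong Schur property of $\mathsf{X}$ to $h_j\coloneqq\lfloor w_0\rceil+\lfloor g_j\rceil$. By disjointness of $w_0,g_1,g_2,\dots$ and $\lVert z_j\rVert=1$, one has:
\begin{itemize}
\item $\lVert h_j\rVert_{\mathsf{X}}=\lVert w_0+g_j\rVert\in[1-2\eta,1+2\eta]$;
\item $\lVert h_i-h_j\rVert_{\mathsf{X}}=\lVert g_i-g_j\rVert>\delta-4\eta$;
\item $\lVert\sum_ja_jh_j\rVert_{\mathsf{X}}=\lVert\sum_ja_j(w_0+g_j)\rVert$ for \emph{all} scalars.
\end{itemize}
Normalise the $h_j$, apply the property with separation close to $\delta$, absorb $\sum_ja_jr_j$, and let $\eta\to0$. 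This gives exactly the lower constant $(2c/\delta+\varepsilon)^{-1}$.

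Discarding $w_0$ would also work, via $\lVert\sum_ja_j(w_0+g_j)\rVert\geqslant\lVert\sum_ja_jg_j\rVert$. For that, normalise by $\beta=\lim_j\lVert g_j\rVert$ (after passing to a subsequence) and use the separation $\delta$ just obtained instead of the crude bound $\delta/2$.
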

	
	\subsection{Solovayan functional analysis}
	
	Finally, we allow ourselves a certain extravagance in the spirit of V{\" a}th's paper \cite{Vat98}:
	we show that there are models of set theory without\footnote{It should be noted that we do not
		abandon the Axiom of Choice $\mathsf{AC}$ completely, but retain part of it in the form of the
		Principle of Dependent Choice $\mathsf{DC}$, which is the most recognisable and probably the most natural
		weakening of $\mathsf{AC}$.} the Axiom of Choice
	(for example, Solovay's model, in which every subset of the real line $\mathbb{R}$ is Lebesgue measurable)
	in which $\ces$ and $\Ces$ are {\bf not} isomorphic.
	
	\begin{theoremletters}[Theorem~\ref{THM : In ZF ces_infty and Ces_infty are NOT isomorphic}] \label{THEOREM C}
		{\it Assume $\mathsf{ZF}$, $\mathsf{DC}$ and $\mathsf{PM}_{\omega}$.
			Then $\ces$ and $\Ces$ are not isomorphic.}
	\end{theoremletters}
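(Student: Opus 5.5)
The plan is to separate $\ces$ and $\Ces$ by a property of their \emph{duals} that survives in the model. In $\mathsf{ZF}+\mathsf{DC}+\mathsf{PM}_\omega$ the dual of $\ces$ should collapse to the Schur space $\widetilde{\ell}_1$, while the dual of $\Ces$ still contains $c_0$. I read $\mathsf{PM}_\omega$ as the principle that every bounded finitely additive measure on $\mathcal{P}(\omega)$ is countably additive, which is the same as $\ell_\infty^{*}=\ell_1$. If the paper's definition differs, this reformulation is what I would derive from it first. An isomorphism $\ces\approx\Ces$ would give $\ces^{*}\approx\Ces^{*}$, and the Schur property is an isomorphic invariant that passes to subspaces, so it suffices to prove two facts:
\begin{enumerate}
\item[(a)] $\ces^{*}\cong\widetilde{\ell}_1$, which has the Schur property;
\item[(b)] $\Ces^{*}$ contains a copy of $c_0$.
\end{enumerate}

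\textbf{Step 1 (blocking, in $\mathsf{ZF}$).} I would use Grosse-Erdmann's blocking to write $\ces\approx(\bigoplus_n \ell_1^{2^n})_{\ell_\infty}$ and $\widetilde{\ell}_1\approx(\bigoplus_n\ell_\infty^{2^n})_{\ell_1}$. The finite-dimensional dualities $(\ell_1^{m})^{*}=\ell_\infty^{m}$ are choice-free. I would check that these identifications use no choice; this is plausible because they are explicit block-averaging estimates.

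\textbf{Step 2 (the key step, computing $\ces^{*}$).} Take $\varphi\in\big((\bigoplus_n\mathsf{F}_n)_{\ell_\infty}\big)^{*}$ with $\mathsf{F}_n=\ell_1^{2^n}$, and let $\varphi_n=\varphi|_{\mathsf{F}_n}\in\mathsf{F}_n^{*}$.
\begin{itemize}
\item[$\bullet$] Testing $\varphi$ on finitely supported elements of norm at most one, with each coordinate chosen to nearly norm $\varphi_n$, gives $\sum_n\|\varphi_n\|\le\|\varphi\|$. So $u=(\varphi_n)_n\in(\bigoplus\mathsf{F}_n^{*})_{\ell_1}$, and $\psi=\varphi-u$ vanishes on the $c_0$-sum.
\item[$\bullet$] To show $\psi=0$, fix $x=(x_n)_n$ and set $\nu_x(A)=\psi(x\mathbf 1_A)$ for $A\subseteq\omega$, where $x\mathbf 1_A$ keeps the blocks indexed by $A$. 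Then $\nu_x$ is bounded, finitely additive, and zero on finite sets.
\item[$\bullet$] By $\mathsf{PM}_\omega$, $\nu_x$ is countably additive, so $\nu_x(\omega)=\sum_k\nu_x(\{k\})=0$, that is, $\psi(x)=0$.
\end{itemize}
Hence $\ces^{*}\cong(\bigoplus\ell_\infty^{2^n})_{\ell_1}\approx\widetilde{\ell}_1$. By Lemma~\ref{PROPOSITION: Schur Tandori l1 ZF} (Theorem~\ref{THEOREM CCC}), proved in $\mathsf{ZF}$, $\widetilde{\ell}_1$ has the (strong) Schur property. I expect this step to be the main obstacle: making sure the finitely additive measure argument and the norm estimate use only $\mathsf{DC}$.

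\textbf{Step 3 (the dual of $\Ces$, and conclusion).} By \eqref{2} and Lemma~\ref{LEMMA : 1}, $\Ces\approx\ell_\infty(L_1)$, which contains a complemented copy of $L_1[0,1]$, namely one coordinate. Dualising the projection shows that $L_1^{*}$ embeds complementably into $\Ces^{*}$. Radon--Nikodym for the separable space $L_1$ works under $\mathsf{DC}$, so $L_1^{*}=L_\infty$.

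$L_\infty$ contains $c_0$ isometrically, spanned by indicators of disjoint intervals. Since $c_0$ is separable, Hahn--Banach holds for it in $\mathsf{ZF}+\mathsf{DC}$ and $c_0^{*}=\ell_1$, so the unit vectors are weakly null in $c_0$ but have norm one. A Schur space cannot contain such a subspace, because the Schur property passes to subspaces by restricting functionals. Thus $\Ces^{*}$ is not Schur, while $\ces^{*}$ is, and therefore $\ces\not\approx\Ces$.
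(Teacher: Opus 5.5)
Your proof is correct, and it takes a genuinely different route from the paper's.

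\textbf{The paper's route.} It applies V{\"a}th's theorem (no singular functionals on Banach function spaces under $\mathsf{ZF}+\mathsf{DC}+\mathsf{PM}_{\omega}$) to both spaces, giving $\ces^{*}=\ces^{\times}$ and $\Ces^{*}=\Ces^{\times}$. It identifies $\ces^{\times}\equiv\widetilde{\ell}_1$ by Alexiewicz's duality (Lemma~\ref{LEMMA : dual to ces_infty is t_1 in ZF+DC}). It then shows that $\Ces^{\times}$ fails the Schur property using the normalised sequence $\chi_{[1-1/n,1]}$. Proving that this sequence is weakly null again needs V{\"a}th, to get $(\Ces^{\times})^{*}=\Ces$, together with dominated convergence.

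\textbf{Your route, $\ces$ side.} You use $\mathsf{PM}_{\omega}$ only once, on the sequence side, by dualising the blocked $\ell_\infty$-sum by hand. This avoids both Alexiewicz's lemma and V{\"a}th's function-space theorem. The paper's formulation is \enquote{no finitely additive probability measure vanishing on singletons}. Passing from it to \enquote{every bounded finitely additive signed measure vanishing on finite sets is zero} takes one choice-free step, the Jordan decomposition $\nu^{+}(A)=\sup_{B\subseteq A}\nu(B)$. You should write that step out.

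\textbf{Your route, $\Ces$ side.} This part is pure $\mathsf{ZF}$, since the coordinate projection onto one copy of $L_1$ in $\ell_\infty(L_1)$ is explicit. You do not even need Radon--Nikodym: $g\mapsto\int(\cdot)\,g$ embeds $L_\infty$ isometrically into $L_1^{*}$. Alternatively, $\ell_1$ is complemented in $L_1$, so $\ell_\infty=\ell_1^{*}$ already sits inside $\Ces^{*}$.

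\textbf{What each approach buys.} Yours is more elementary and pinpoints the real asymmetry. $\Ces$ contains a choice-free complemented $\ell_1$. The complemented $\ell_1$ in $\ces$, by contrast, comes from the free-ultrafilter projection in the proof of Lemma~\ref{LEMMA : 2}, which $\mathsf{PM}_{\omega}$ rules out. The paper's argument yields more information under these axioms: the isometric identification $\ces^{*}\cong\widetilde{\ell}_1$ and the equality $\Ces^{*}=\Ces^{\times}$, in keeping with its K{\"o}the-duality theme.

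One cosmetic point: your computation only gives $\ces^{*}\approx\widetilde{\ell}_1$, not $\cong$. That is all the Schur argument needs.
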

	
	Roughly speaking, Theorem~\ref{THEOREM C} says that any isomorphism between $\ces$ and $\Ces$ must be somewhat \enquote{exotic}.
	The same happens for $\ell_{\infty}$ and $L_{\infty}$, which supports the main message of Section~\ref{SUBSECTION : Vademaecum}.
	
	The argument behind Theorem~\ref{THEOREM C} is simple:
	if $\ces$ and $\Ces$ were isomorphic, then their duals $\ces^{*}$ and $\Ces^{*}$ would also be isomorphic;
	this is, however, impossible, because $\ces^{*}$ has the Schur property while $\Ces^{*}$ does not.
	The lack of $\mathsf{AC}$ removes some convenient tools from our toolbox,
	and this makes analysis a slightly stranger\footnote{But perhaps also a more interesting one.} place (we recommend Karagila's essay \cite{Kar20}
	on the interactions between various forms of $\mathsf{AC}$ and functional analysis).
	Therefore, on the way to Theorem~\ref{THEOREM C}, we cannot rely entirely on the results available in the literature
	(Scholium~\ref{SCHOLIUM : reinventing the wheel}).
	We encounter three scenarios:
	\begin{enumerate}
		\item[$\bullet$] some results remain true with the same proofs (Lemma~\ref{LEMMA : tandori_1 = direct sum});
		\item[$\bullet$] some results remain true, but their proofs require a different argument (Lemmas~\ref{LEMMA : dual to ces_infty is t_1 in ZF+DC} and~\ref{PROPOSITION: Schur Tandori l1 ZF});
		\item[$\bullet$] some results are no longer true (Theorem~\ref{COR: Ces and ces isomorphic}).
	\end{enumerate}
	The resolution of these issues is the content of Section~\ref{SUBSECTION : Vath}.
	
	\subsection{Quick recap and outline}
	
	Below we give a condensed summary of the main goals and ideas.
	
	\vspace{5pt}
	\noindent
	$\bigstar$ Our primary goal was to build an abstract framework in which the two main results about $\ces$ and $\Ces$ from \cite{ALM19}
	follow as simple consequences (Theorems~\ref{THEOREM BB} and \ref{THEOREM BBB}).
	This approach has at least two advantages.
	On the one hand, it produces results that may be of interest in general Banach space theory
	(Theorems~\ref{THEOREM A}, \ref{THEOREM BBBB} and \ref{THEOREM CCC}).
	On the other hand, it allows us to view certain phenomena as realisations of the same mechanism (see Section~\ref{SUBSECTION : Grand unification}).
	
	\vspace{5pt}
	\noindent
	$\bigstar$ Since the spaces $\ces$ and $\Ces$ seem to be rather little known (or known under many different guises),
	we do not assume any prior knowledge of their isomorphic structure.
	Instead, we have tried to make this work self-contained in this respect (Section~\ref{SUBSECTION : aide})
	and to present them as natural, and perhaps even interesting, objects closely related to their classical counterparts
	$\ell_{\infty}$ and $L_{\infty}$.
	
	\vspace{5pt}
	\noindent
	$\bigstar$ The idea that $\ces$ and $\Ces$ behave in many ways like the classical pair $\ell_{\infty}$ and $L_{\infty}$
	is one of the main motifs developed in Sections~\ref{SECTION : Main results} and \ref{SECTION : complementary material}
	(this is summarised in Theorem~\ref{THEOREM B}).
	
	\vspace{5pt}
	\noindent
	$\bigstar$ Our intention was to gather all the necessary ideas and arguments under one roof.
	This has, of course, affected the length of this work.
	Nevertheless, we hope that the accessibility of the ideas and the completeness of the presentation make the paper reasonably easy to read.

	The paper is divided into seven sections, followed by the bibliography;
	informally, the logical road map of the paper can be depicted as follows:
	\begin{equation*}
		\begin{tikzcd}
			\ref{SECTION : Gateway} \arrow[rd, dashed] &             &                     &   \\
			& \ref{SECTION : Applications} \arrow[r] & \ref{SECTION : complementary material} \arrow[r, dashed] & \ref{SECTION : Open questions} \\
			\ref{SECTION : Main results} \arrow[ru] &             &                     &  
		\end{tikzcd}
	\end{equation*}
	
	Each section begins with a short vignette explaining its contents, so we do not dwell on this here.
	
	Sections~\ref{SECTION : Gateway} and \ref{SECTION : Main results} can be read independently of the rest of the paper and
	require nothing but a general knowledge of Banach space theory
	(Section~\ref{SUBSECTION : Notation} fixes the notation).
	We emphasise, however, that Section~\ref{SECTION : Main results} contains the main results of this work, and the role of Section~\ref{SECTION : Gateway} is only auxiliary.
	
	The results concerning the spaces $\ces$ and $\Ces$ occupy Sections~\ref{SECTION : Applications} and \ref{SECTION : complementary material}
	(and we recommend reading them in that order).
	We briefly introduce the spaces $\ces$ and $\Ces$ in Section~\ref{SUBSECTION : aide}.
	The results of Section~\ref{SECTION : Applications} rely critically on Section~\ref{SECTION : Main results}.
	
	Reading Section~\ref{SECTION : Open questions} is optional: We have collected there several open questions arising from
	Sections~\ref{SECTION : Main results}, \ref{SECTION : Applications} and \ref{SECTION : complementary material}.
	
	\section{{\bf Declarations}}
	\addtocontents{toc}{\protect\setcounter{tocdepth}{0}}
	
	\subsection{Acknowledgements}
	Some of the main results of this work were presented (in a preliminary form) by the first-named author at the conference Function Spaces~13 held in Pozna{\' n} in 2024.
	We heartily thank Marek C{\' u}th for pointing out an inaccuracy in one of our earlier arguments, which eventually
	led us to revise our views.
	We would also like to thank Hugh Wark for a very pleasant conversation about \cite{War97} (and many other matters) at the FAME 2026 conference banquet.
	Last, but not least, we thank Karol Le{\' s}nik and Lech Maligranda for some clarifications regarding \cite{ALM19} and \cite{AM09}.
	
	We also apologise for the long delay in completing this work, which resulted mostly from the birth of the first author's second son,
	to whom this work is dedicated.
	
	\subsection{Funding}
	Our research was carried out at Pozna{\' n} University of Technology (grant number 0213/SBAD/0116).
	
	\subsection{Open access}
	
	For the purpose of open access, the authors have applied a Creative Commons Attribution
	(CC-BY) licence to any Author Accepted Manuscript version arising.
	
	\subsection{Conflict of interest}
	On behalf of both authors, the corresponding author declares that they have no known competing financial interests or personal
	relationships that could have appeared to influence the work reported in this paper.
	
	\subsection{Data availability}
	This work is purely theoretical; no datasets were generated or analysed.
	
	\subsection{Use of artificial intelligence}
	We have been working on this project since March 2022
	(that was more or less the moment when the first-named author found \cite{War97}, but both of us had been aware of the main result from \cite{ALM19} since at least 2015).
	A draft of the paper containing our most interesting findings
	(notably Theorems~\ref{LEMMA : 6}, \ref{LEMMA : 6infty}, \ref{THM : suma L1 isomorficzna z miarami}, \ref{THEOREM : ces and Ces NOT isometric},
	\ref{PROP : ces has uncountably many preduals}
	and \ref{THM : In ZF ces_infty and Ces_infty are NOT isomorphic})
	was ready before June 2024
	(but it was never made publicly available).
	Since then, through conversations with various mathematicians, these results have undergone some transformation
	(mainly concerning Theorem~\ref{LEMMA : 6}).
	The first complete version was written only around September 2026 without any use of artificial intelligence.
	
	In the final stage of preparation, Claude Opus 5.5 (developed by Anthropic) was utilized to assist in:
	\begin{enumerate}
		\item[$\bullet$] language editing;
		\item[$\bullet$] reviewing the structural logic of the proofs and identifying potential gaps;
		\item[$\bullet$] checking existing literature and comparing it with our results.
	\end{enumerate}
	All suggestions provided by AI were independently reconstructed, checked, and validated by the authors.
	Both authors retain sole and absolute responsibility for the correctness, logical mathematical validity, adequacy of all arguments,
	and the accuracy of all prior citations presented in this work.
	However, we would like to emphasize that all the ideas and core concepts contained in this work are the authors' own and they were devised without the slightest use of AI.
	
	\addtocontents{toc}{\protect\setcounter{tocdepth}{2}}
	
	\section{{\bf Gateway}} \label{SECTION : Gateway}
	
	In this section, we collect a number of auxiliary lemmas that will be crucial later.
	Some of them are certainly well known, some may be familiar, and others are probably folklore among specialists.
	Be that as it may, for the sake of completeness we present all the details.
	
	We do not hide the obvious inspirations, and sometimes even borrowings, from \cite{AF96}, \cite{CG94}, \cite{CKL77}, \cite{HS73} and \cite{War97}
	(see also \cite{AA17}, \cite{AK26}, \cite[p.~317]{CS88}, \cite[Remark after Theorem~1]{Joh72}, \cite{Kan13}, \cite[Section~4]{Lus96} and \cite{War07}).
	
	\subsection{Setup} \label{SUBSECTION : Notation}
	
	We follow the principle that definitions and concepts are introduced only where they are actually needed.
	It seems to us that the only objects going beyond the canon of Banach space theory are the spaces $\ces$ and $\Ces$ themselves
	(Section~\ref{SUBSECTION : aide} is devoted to them).
	This eliminates the need for an extensive preliminary section.
	However, for the reader's convenience, we fix the most important notation and conventions below.
	
	\vspace{5pt}
	\paragraph{{\bf General notation}} \label{PARAGRAPH : General notation}
	
	\begin{enumerate}
		\item[$\bullet$] As usual, $\mathbb{N} = \{1,2, ...\}$, $\mathbb{Z}$, $\mathbb{R}$ and $\mathbb{C}$ denote the sets of natural numbers,
		integers, reals and complex numbers, respectively, and $\mathbb{N}_0 = \mathbb{N} \cup \{0\}$.
		\item[$\bullet$] The ground field is $\mathbb{R}$, unless otherwise stated.
		However, up to some potential changes in the constants, all results presented in this paper remain valid in both the real and complex settings.
		\item[$\bullet$] The letter $\mathbb{K}$ stands for either $\mathbb{R}$ or $\mathbb{C}$.
		\item[$\bullet$] The symbol $\coloneqq$ means \enquote{is defined as}.
		\item[$\bullet$] We use the convention that the symbol $1/p$ means $0$ when $p = \infty$.
	\end{enumerate}
	
	\vspace{5pt}
	\paragraph{{\bf Axioms and cardinals}}
	
	\begin{enumerate}
		\item[$\bullet$] Throughout, $\mathsf{ZFC}$ denotes the usual setting of set theory, consisting of the Zermelo--Fraenkel axioms $\mathsf{ZF}$
		together with the Axiom of Choice ($\mathsf{AC}$, for short).
		Unless explicitly stated otherwise (as in Section~\ref{SUBSECTION : Vath}), we work in $\mathsf{ZFC}$.
		\item[$\bullet$] We follow the presentation of ordinal and cardinal numbers in \cite[Appendix~A]{Kec95}.
		In particular, the cardinality of $\mathbb{N}$ is denoted by $\aleph_0$, $\mathfrak{c} = 2^{\aleph_0}$ denotes the cardinality of the {\bf continuum},
		and $\aleph_1$ is the first uncountable cardinal.
		\item[$\bullet$] By $\mathrm{card}\, S$ we denote the {\bf cardinality} of a set $S$.
	\end{enumerate}
	
	\vspace{5pt}
	\paragraph{{\bf Measurable functions}}
	
	\begin{enumerate}
		\item[$\bullet$] By $\lambda$ we denote the {\bf Lebesgue measure} on the real line $\mathbb{R}$,
		and by $\#$ the {\bf counting measure} on $\mathbb{N}$, that is, $\#(A) = \sum_{n \in A} 1$ for $A \subset \mathbb{N}$.
		\item[$\bullet$] The {\bf characteristic function} $\chi_A$ of a measurable set $A \subset \Omega$ (here, $\Omega$ is the ambient measure space)
		takes the value one on $A$ and zero on $\Omega \setminus A$.
		\item[$\bullet$] The {\bf support} $\supp f$ of a measurable function $f$ on $\Omega$ is
		the set of those points of $\Omega$ at which $f$ is non-zero.
	\end{enumerate}
	
	\vspace{5pt}
	\paragraph{{\bf Geometry of Banach spaces}}
	
	Our general references for the geometry of Banach spaces are the monographs of
	Albiac and Kalton \cite{AK06},
	Fabian, Habala, H{\' a}jek, {\it et al.} \cite{FHHMSPZ01},
	and Megginson \cite{Meg98}.
	
	\begin{enumerate}
		\item[$\bullet$] We denote Banach spaces by capital sans serif letters such as $\mathsf{X,Y,Z,}\ldots$ to ty\-po\-graph\-i\-cal\-ly distinguish them from other objects.
		\item[$\bullet$] Unless explicitly stated otherwise, a Banach space $\mathsf{X}$ is always an infinite-dimensional real Banach space,
		and a subspace of $\mathsf{X}$ is a closed linear subspace of $\mathsf{X}$.
		\item[$\bullet$] For a Banach space $\mathsf{X}$, we write $\text{Ball}(\mathsf{X})$ for the {\bf closed unit ball} of $\mathsf{X}$, that is,
		$\text{Ball}(\mathsf{X}) \coloneqq \{ x \in \mathsf{X} \colon \norm{x}_{\mathsf{X}} \leqslant 1 \}$,
		and $S(\mathsf{X}) \coloneqq \{ x \in \mathsf{X} \colon \norm{x}_{\mathsf{X}} = 1 \}$ for the {\bf unit sphere} of $\mathsf{X}$.
		\item[$\bullet$] The symbol $\boldsymbol{e}_{n}$ with $n \in \mathbb{N}$ stands for the $n^{\mathrm{th}}$ element of the unit vector basis of $c_0$,
		that is,
		\begin{equation*}
			\boldsymbol{e}_n \coloneqq (0,0,..., 0, \underbrace{1}_{n^{\mathrm{th}} \text{ position}}, 0, ...) \qquad (n \in \mathbb{N}).
		\end{equation*}
		\item[$\bullet$] As usual, $\mathfrak{L}(\mathsf{X},\mathsf{Y})$, or just $\mathfrak{L}(\mathsf{X})$ when $\mathsf{X} = \mathsf{Y}$, stands for the space
		of all {\bf bounded linear operators} from $\mathsf{X}$ into $\mathsf{Y}$ equipped with the {\bf operator norm}
		$\norm{T \colon \mathsf{X} \rightarrow \mathsf{Y}} \coloneqq \sup \{ \norm{Tx}_{\mathsf{Y}} \colon x \in \text{Ball}(\mathsf{X}) \}$.
		Depending on the context, we also abbreviate $\norm{T \colon \mathsf{X} \rightarrow \mathsf{Y}}$
		to $\lVert T \rVert_{\mathsf{X} \rightarrow \mathsf{Y}}$ or even $\norm{T}$.
		By an operator we always mean a bounded linear operator.
		\item[$\bullet$] For a Banach space $\mathsf{X}$, by $\mathsf{X}^{*}$ we denote the {\bf topological dual} of $\mathsf{X}$,
		that is, $\mathsf{X}^{*} \coloneqq \mathfrak{L}(\mathsf{X},\mathbb{K})$.
		We write $\langle x, x^{*} \rangle = \langle x^{*}, x \rangle \coloneqq x^{*}(x)$ for $x \in \mathsf{X}$ and $x^{*} \in \mathsf{X}^{*}$.
		We always regard $\mathsf{X}$ as a subspace of its bidual $\mathsf{X}^{**}$ via the canonical embedding.
	\end{enumerate}
	
	\vspace{5pt}
	\paragraph{{\bf Special Banach lattices}}
	
	Except for
	Remark~\ref{REMARK : K^p},
	Section~\ref{SUBSECTION : Ces_infty and ces_infty are NOT isometric},
	Section~\ref{SUBSECTION : Vath}
	and Section~\ref{SUBSECTION : On the Schur property},
	we require no knowledge of Banach lattices,
	Banach ideal spaces
	or rearrangement-invariant spaces.
	Nevertheless, in some situations --- such as those mentioned above --- it is most convenient to regard $\ces$ and $\Ces$
	as a Banach sequence space and a Banach function space, respectively.
	
	Our general references for function space theory are
	Bennett and Sharpley's monograph \cite{BS88},
	Lindenstrauss and Tzafriri's two volumes \cite{LT77,LT79},
	and Zaanen's classical book \cite{Zaa67}
	(see \cite{Vat97} whenever we work with $\mathsf{DC}$ instead of $\mathsf{AC}$).
	
	\begin{enumerate}
		\item[$\bullet$] We never assume {\it a priori} that a given Banach ideal space $\mathsf{X}$ has the Fatou property
		(therefore, the use of \cite{BS88} requires some caution).
		\item[$\bullet$] We consider either {\bf Banach sequence spaces}, that is, Banach ideal spaces on $\mathbb{N}$
		with the counting measure $\#$, or {\bf Banach function spaces}, that is, Banach ideal spaces on $\mathbb{I}$,
		where $\mathbb{I}=[0,1]$ or $\mathbb{I}=[0,\infty)$, with the Lebesgue measure $\lambda$
		(but see Remark~\ref{REMARK : measure theory without choice}).
		\item[$\bullet$] By the Open Mapping Theorem, the (set-theoretic) inclusion $\mathsf{X} \subset \mathsf{Y}$
		between two Banach ideal spaces $\mathsf{X}$ and $\mathsf{Y}$ is always continuous, that is,
		\begin{equation*}
			\sup \left\{ \norm{f}_{\mathsf{Y}} : f \in \mathrm{Ball}(\mathsf{X}) \right\}
			= \norm{\mathrm{id} \colon \mathsf{X} \rightarrow \mathsf{Y}}
			< \infty.
		\end{equation*}
		For this reason, we prefer to write $\mathsf{X} \hookrightarrow \mathsf{Y}$ rather than $\mathsf{X} \subset \mathsf{Y}$.
		Abusing notation a little, we write $\mathsf{X} = \mathsf{Y}$ if the spaces $\mathsf{X}$ and
		$\mathsf{Y}$ coincide as sets and their norms are equivalent,
		and $\mathsf{X} \equiv \mathsf{Y}$ if, moreover, the norms $\norm{\cdot}_{\mathsf{X}}$
		and $\norm{\cdot}_{\mathsf{Y}}$ are equal.
		\item[$\bullet$] Given a Banach ideal space $\mathsf{X}$, the symbol $\mathsf{X}^{\times}$ stands for the {\bf K{\" o}the dual} of $\mathsf{X}$.
		Note that $\mathsf{X}^{\times} = \mathsf{X}^{*}$ provided $\mathsf{X}$ is separable.
		\item[$\bullet$] An element $x$ of a Banach sequence space $\mathsf{X}$ is {\bf order continuous} (or has an {\bf absolutely continuous norm})
		if for every $\delta > 0$ there is $n \in \mathbb{N}$ such that $\lVert x\chi_{[n,\infty)} \rVert_{\mathsf{X}} < \delta$.
		The space $\mathsf{X}$ is {\bf order continuous} provided $\mathsf{X} = \mathsf{X}_o$, where $\mathsf{X}_o$ is the ideal of all order continuous
		elements of $\mathsf{X}$.
	\end{enumerate}
	
	\vspace{5pt}
	\paragraph{{\bf Pointwise multipliers}}
	
	The {\bf space of pointwise multipliers} $\mathscr{M}(\mathsf{X},\mathsf{Y})$ between two Banach ideal spaces $\mathsf{X}$
	and $\mathsf{Y}$ consists of those measurable functions $f$ for which $fg \in \mathsf{Y}$
	for all $g \in \mathsf{X}$.
	It is natural to equip $\mathscr{M}(\mathsf{X},\mathsf{Y})$ with the operator norm
	\begin{equation*}
		\norm{f}_{\mathscr{M}(\mathsf{X},\mathsf{Y})} \coloneqq \norm{M_f \colon \mathsf{X} \rightarrow \mathsf{Y}},
	\end{equation*}
	where $M_f$ is the multiplication operator with symbol $f$, that is, $M_f(g) = fg$ for $g \in \mathsf{X}$.
	This makes $\mathscr{M}(\mathsf{X},\mathsf{Y})$ a Banach ideal space.
	Note that $\mathsf{X}^{\times} = \mathscr{M}(\mathsf{X},L_1)$, so the construction $\mathscr{M}(\mathsf{X},\mathsf{Y})$
	can be seen as an abstraction of K{\" o}the duality.
	This leads, among other things, to the following general variant of the {\bf H{\" o}lder--Rogers inequality}:
	for any two measurable functions $f$ and $g$,
	\begin{equation*}
		\norm{fg}_{\mathsf{Y}} \leqslant \norm{g}_{\mathsf{X}} \norm{f}_{\mathscr{M}(\mathsf{X},\mathsf{Y})}.
	\end{equation*}
	For more information about this construction we refer to \cite{KT24}, \cite{KT25}, \cite{KT26} and the references therein.
	
	\vspace{5pt}
	\paragraph{{\bf Direct sums}}
	
	Given a sequence $\{ \mathsf{X}_n \}_{n=1}^{\infty}$ of Banach spaces and a Banach sequence space $\mathsf{Z}$,
	we denote by $( \bigoplus_{n=1}^{\infty} \mathsf{X}_n )_{\mathsf{Z}}$ the {\bf $\mathsf{Z}$-sum} of $\{ \mathsf{X}_n \}_{n=1}^{\infty}$,
	that is, the vector space of those ${\bf f} \in \prod_{n \in \mathbb{N}} \mathsf{X}_n$ for which the (extended) norm
	\begin{equation*}
		\norm{ {\bf f} }_{( \bigoplus_{n=1}^{\infty} \mathsf{X}_n )_{\mathsf{Z}}}
		\coloneqq \norm{\sum_{n=1}^{\infty} \norm{{\bf f}(n)}_{\mathsf{X}_n} \boldsymbol{e}_{n}}_{\mathsf{Z}}
	\end{equation*}
	is finite.
	Endowed with the coordinatewise operations,
	$( \bigoplus_{n=1}^{\infty} \mathsf{X}_n )_{\mathsf{Z}}$ is a Banach space.
	The $\mathsf{Z}$-sum $( \bigoplus_{n=1}^{\infty} \mathsf{F}_n )_{\mathsf{Z}}$ of finite-dimensional Banach spaces $\mathsf{F}_n$
	can be naturally understood as a Banach sequence space.
	For the most part (except for Lemma~\ref{PROPOSITION: Schur Tandori l1 ZF}
	and Section~\ref{SUBSECTION : On the Schur property}) we use only $\ell_p$-sums with $1 \leqslant p \leqslant \infty$,
	but it is convenient to have the general construction at hand.
	For two Banach spaces we write $\mathsf{X} \oplus_p \mathsf{Y}$ for their $\ell_p$-sum, and $\mathsf{X} \oplus \mathsf{Y} \coloneqq \mathsf{X} \oplus_\infty \mathsf{Y}$.
	
	An important special case arises when the sequence $\{ \mathsf{X}_n \}_{n=1}^{\infty}$ is {\bf constant}, that is,
	$\mathsf{X}_n = \mathsf{X}$ for all $n \in \mathbb{N}$.
	In this situation the $\mathsf{Z}$-sum $( \bigoplus_{n=1}^{\infty} \mathsf{X} )_{\mathsf{Z}}$ is abbreviated to $\mathsf{Z}(\mathsf{X})$
	and called the {\bf K{\" o}the--Bochner space}.
	Members of the families $\{ \ell_p(\ell_q) : 1 \leqslant p,q \leqslant \infty \}$
	and $\{ (\bigoplus_{n=1}^{\infty} \ell_p^n)_{\ell_q} : 1 \leqslant p,q \leqslant \infty \}$ are sometimes called {\bf matrix spaces}
	or {\bf mixed-norm sequence spaces} (they are isomorphic to Besov spaces; see \cite{AA17} and the references therein).
	
	\subsection{Some notes on the Banach--Mazur distance}
	
	\begin{notation}[Banach--Mazur distance]
		Two Banach spaces $\mathsf{X}$ and $\mathsf{Y}$ are {\bf $c$-isomorphic}, written $\mathsf{X} \approx_{c} \mathsf{Y}$,
		if there is a bounded linear bijection $T \colon \mathsf{X} \rightarrow \mathsf{Y}$ with distortion $\norm{T} \lVert T^{-1}\rVert$ at most $c$.
		When the precise value of $c$ is irrelevant,
		we simply say that $\mathsf{X}$ and $\mathsf{Y}$ are isomorphic and write $\mathsf{X} \approx \mathsf{Y}$.
		The symbol $\mathsf{X} \cong \mathsf{Y}$ is reserved for the case where $\mathsf{X}$ and $\mathsf{Y}$ are {\bf isometrically isomorphic}
		(briefly, {\bf isometric}),
		that is, there is a bijection $T$ as above with $\norm{T} = \lVert T^{-1} \rVert = 1$.
		Following \cite[Chapter~9]{TJ89}, the {\bf Banach--Mazur
			distance} between two isomorphic Banach spaces $\mathsf{X}$ and $\mathsf{Y}$ is
		\begin{equation*}
			d_{\mathrm{BM}}(\mathsf{X},\mathsf{Y}) \coloneqq \inf \left\{ \norm{T} \lVert T^{-1} \rVert \right\},
		\end{equation*}
		where the infimum runs over all isomorphisms of $\mathsf{X}$ onto $\mathsf{Y}$
		(with the natural convention that $d_{\mathrm{BM}}(\mathsf{X},\mathsf{Y}) = \infty$ if $\mathsf{X}$ and $\mathsf{Y}$ are not isomorphic).
		This notion first appeared in Banach's monograph \cite{Ban32}
		(see the remarks to \cite[Chapter~XI]{Ban32}).
		
		Roughly speaking, the Banach--Mazur distance measures how far $\text{Ball}(\mathsf{X})$ is from a linear image of $\text{Ball}(\mathsf{Y})$.
		Plainly, $\mathsf{X} \cong \mathsf{Y}$ implies $d_{\mathrm{BM}}(\mathsf{X},\mathsf{Y}) = 1$;
		however, in general $d_{\mathrm{BM}}(\mathsf{X},\mathsf{Y}) = 1$ does {\bf not} imply that $\mathsf{X}$ and $\mathsf{Y}$ are isometric
		(see \cite[\S37]{TJ89} and the references therein).
		Moreover, even though we call $d_{\mathrm{BM}}$ \enquote{distance}, only its logarithm satisfies the triangle inequality.
		Anyway, $d_{\mathrm{BM}}$ is symmetric, that is, $d_{\mathrm{BM}}(\mathsf{X},\mathsf{Y}) = d_{\mathrm{BM}}(\mathsf{Y},\mathsf{X})$.
	\end{notation}
	
	\begin{remark}[Banach--Mazur distance between finite-dimensional $\ell_p$- and $\ell_q$-spaces] \label{REMARK : d_BM(l_p^n,l_q^n) <=}
		Let $1\leqslant p,q\leqslant\infty$ and $n \in \mathbb{N}$.
		We will repeatedly use the well-known fact that we have the following estimate:
		\begin{equation*}
			d_{\mathrm{BM}}(\ell_p^n,\ell_q^n) \leqslant 2^{\abs{1/p-1/q}}.
		\end{equation*}
		Indeed, by the symmetry of $d_{\mathrm{BM}}$ we may assume that $p \leqslant q$, so that $\abs{1/p-1/q} = 1/p-1/q$.
		Let $T$ be the formal identity from $\ell_p^n$ into $\ell_q^n$.
		Then, it is clear that $\norm{T} \leqslant 1$ and $\lVert T^{-1} \rVert \leqslant n^{1/p-1/q}$.
		Hence $d_{\mathrm{BM}}(\ell_p^n,\ell_q^n) \leqslant \norm{T}\lVert T^{-1} \rVert \leqslant n^{1/p-1/q}$.
		
		Note that the above inequality is optimal if $p$ and $q$ lies on the same side of 2, that is, if either $1 \leqslant p \leqslant q \leqslant 2$
		or $2 \leqslant p \leqslant q \leqslant \infty$:
		By a classical result of Gurari{\u \i}, Kadec and Macaev \cite{GKM65}, we have
		\begin{equation*}
			d_{\mathrm{BM}}(\ell_p^n,\ell_q^n) = 2^{\abs{1/p-1/q}}.
		\end{equation*}
		In general, when $p < 2 < q$, the correct order of $d_{\mathrm{BM}}(\ell_p^n,\ell_q^n)$ is $\max\{n^{1/p-1/2},n^{1/2-1/q}\}$,
		up to some constant (see, once again, \cite{GKM65}).
		\demo
	\end{remark}
	
	We now record a quantitative version of Pe{\l}czy{\' n}ski's decomposition method from \cite{Pel60} ({\it cf.} \cite[Theorem~2.2.3, p.~34]{AK06}).
	Several ready-to-use versions are available in the literature (see \cite[Theorem~A.1]{AK26}, \cite[Theorem~5.1]{KL26} and \cite[Theorem~5.1]{KP25}).
	However, for the reader's convenience, we present a short self-contained version adapted to our needs.

	\begin{definition} \label{DEF : complementably embedded}
		Let $\lambda \geqslant 1$, and let $\mathsf{X}$ and $\mathsf{Y}$ be two Banach spaces.
		We say that $\mathsf{Y}$ is {\bf $\lambda$-complementably embedded} in $\mathsf{X}$, written $\mathsf{Y} \cembed{\lambda} \mathsf{X}$,
		if there are operators $J \colon \mathsf{Y} \rightarrow \mathsf{X}$ and $Q \colon \mathsf{X} \rightarrow \mathsf{Y}$ with $QJ = \mathrm{id}_{\mathsf{Y}}$
		and $\norm{J}\norm{Q} \leqslant \lambda$.
		Moreover, we write $\mathsf{Y} \cembed{} \mathsf{X}$ if $\mathsf{Y} \cembed{\lambda} \mathsf{X}$ for some $\lambda>0$.
	\end{definition}
	
	Note that if $\mathsf{X}$ has a $\lambda$-complemented subspace which is $\mu$-isomorphic to $\mathsf{Y}$,
	then $\mathsf{Y} \cembed{\lambda\mu} \mathsf{X}$.
	Moreover, the relation $\cembed{}$ is transitive, that is, $\mathsf{Z} \cembed{\lambda} \mathsf{Y} \cembed{\mu} \mathsf{X}$ implies $\mathsf{Z} \cembed{\lambda\mu} \mathsf{X}$.
	(Recall, referring to Section~\ref{SUBSECTION : Notation}, that the symbol $\oplus$ stands for the $\ell_\infty$-sum.)
	
	\begin{lemma} \label{LEMMA : splitting}
		{\it If $\mathsf{Y} \cembed{\lambda} \mathsf{X}$ via $J$ and $Q$, then $\mathsf{X} \approx_{1+2\lambda} \mathsf{Y} \oplus \mathsf{F}$, where $\mathsf{F} \coloneqq \ker Q$.}
	\end{lemma}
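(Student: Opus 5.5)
We write $P \coloneqq JQ \colon \mathsf{X} \rightarrow \mathsf{X}$. Since $QJ = \mathrm{id}_{\mathsf{Y}}$, $P$ is a projection, $\norm{P} \leqslant \norm{J}\norm{Q} \leqslant \lambda$, and $\ker P = \ker Q = \mathsf{F}$ because $J$ is injective. The natural candidate for the isomorphism is
\begin{equation*}
	T \colon \mathsf{X} \rightarrow \mathsf{Y} \oplus \mathsf{F}, \qquad Tx \coloneqq \bigl( Qx, \, (\mathrm{id}_{\mathsf{X}} - P)x \bigr).
\end{equation*}
It is well defined because $Q(\mathrm{id}_{\mathsf{X}} - P) = Q - QJQ = 0$. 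Its inverse is $S(y,f) \coloneqq Jy + f$.

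We verify $ST = \mathrm{id}_{\mathsf{X}}$ and $TS = \mathrm{id}$ directly: $JQx + x - JQx = x$, and $T(Jy+f) = (QJy + Qf, \, Jy + f - JQJy - JQf) = (y, f)$, using $Qf = 0$.

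The remaining work is estimating norms, and this is the only place where care is needed, because the constant $1+2\lambda$ is not symmetric in $\norm{J}$ and $\norm{Q}$. We first rescale so that $\norm{Q} \leqslant 1$ and $\norm{J} \leqslant \lambda$, replacing $J$ by $\norm{Q}J$ and $Q$ by $Q/\norm{Q}$. This does not change $P$, $\ker Q$ or the relation $QJ = \mathrm{id}_{\mathsf{Y}}$. Then, with the $\ell_\infty$-norm on the sum:
\begin{itemize}
	\item $\norm{T} \leqslant \max\{\norm{Q}, 1 + \norm{P}\} \leqslant 1 + \lambda$;
	\item $\norm{S} \leqslant \norm{J} + 1 \leqslant 1 + \lambda$.
\end{itemize}
This gives $(1+\lambda)^2$, which is worse than $1+2\lambda$.

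To get the stated constant, we instead rescale so that $\norm{J} \leqslant 1$ and $\norm{Q} \leqslant \lambda$. Then $\norm{S} \leqslant 2$ and $\norm{T} \leqslant \max\{\lambda, 1+\lambda\} = 1+\lambda$, so the product is $2+2\lambda$, still not $1+2\lambda$.

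The fix is to weight the $\mathsf{F}$-component: define $T_t x \coloneqq (Qx, \, t(\mathrm{id}_{\mathsf{X}} - P)x)$ with inverse $S_t(y,f) = Jy + f/t$. Taking $\norm{J} = 1$, $\norm{Q} = \lambda$, we get
\begin{equation*}
	\norm{T_t} \leqslant \max\{\lambda, \, t(1+\lambda)\} \quad\text{and}\quad \norm{S_t} \leqslant 1 + \frac{1}{t}.
\end{equation*}
Choosing $t = \lambda/(1+\lambda)$ gives $\norm{T_t} \leqslant \lambda$ and $\norm{S_t} \leqslant 1 + \frac{1+\lambda}{\lambda} = \frac{1+2\lambda}{\lambda}$. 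The product is therefore $1+2\lambda$, as required.

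The main obstacle is exactly this bookkeeping: choosing the normalization of $J$ and $Q$ together with the weight $t$ on the kernel component so that the distortion comes out as $1+2\lambda$ rather than $(1+\lambda)^2$. The algebra verifying that $T_t$ and $S_t$ are mutually inverse is routine.
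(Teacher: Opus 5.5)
Your proof is correct and is essentially the paper's argument. The paper rescales to $\norm{J}=t$, $\norm{Q}\leqslant\lambda/t$ and uses $x\mapsto (Qx,\,x-JQx)$ with inverse $(y,z)\mapsto Jy+z$; in your normalization this map is exactly $t^{-1}T_t$, so the distortion is the same and the same choice $t=\lambda/(1+\lambda)$ gives $1+2\lambda$. The exploratory detours through $(1+\lambda)^2$ and $2+2\lambda$ can simply be dropped.
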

	\begin{proof}
		Note that replacing $J$ and $Q$ by $tJ$ and $t^{-1}Q$, respectively, we may assume that $\norm{J} = t$ and $\norm{Q} \leqslant \lambda/t$
		(the exact value of $t$ will be specified in a moment).
		Let $Sx \coloneqq (Qx, x - JQx)$ for $x \in \mathsf{X}$.
		Then $S \colon \mathsf{X} \rightarrow \mathsf{Y} \oplus \mathsf{F}$ is a bijection with $S^{-1}(y,z) = Jy + z$.
		Moreover,
		\begin{equation*}
			\norm{S} \leqslant \max\{ \lambda/t, 1 + \lambda \}
			\quad\text{and}\quad
			\norm{S^{-1}} \leqslant t + 1.
		\end{equation*}
		Taking $t = \lambda/(1+\lambda)$, we get
		\begin{equation*}
			\norm{S}\norm{S^{-1}} \leqslant (1+\lambda) \cdot \frac{1+2\lambda}{1+\lambda} = 1 + 2\lambda
		\end{equation*}
		which concludes the proof.
	\end{proof}

	We say that $\mathsf{X}$ satisfies {\bf Pe{\l}czy{\' n}ski's accordion property} with constant $h$
	provided $\mathsf{X} \approx_h \ell_p(\mathsf{X})$ for some $1 \leqslant p \leqslant \infty$ or $\mathsf{X} \approx_h c_0(\mathsf{X})$.
	
	\begin{theorem}[Quantitative variant of Pe{\l}czy{\' n}ski's decomposition method] \label{THM : QPDM}
		{\it Let $\mathsf{X}$ and $\mathsf{Y}$ be two Banach spaces such that $\mathsf{Y} \cembed{\lambda} \mathsf{X}$ and $\mathsf{X} \cembed{\mu} \mathsf{Y}$,
			and put $a \coloneqq 1 + 2\mu$ and $b \coloneqq 1 + 2\lambda$.}
		\begin{enumerate}[label={\normalfont{(\arabic*)}}]
			\item\label{1C} {\it If $\mathsf{X} \approx_e \mathsf{X} \oplus \mathsf{X}$ and $\mathsf{Y} \approx_f \mathsf{Y} \oplus \mathsf{Y}$,
			then}
			\begin{equation*}
				d_{\mathrm{BM}}(\mathsf{X},\mathsf{Y}) \leqslant a^2 b^2 e f.
			\end{equation*}
			\item\label{2C} {\it If $\mathsf{X}$ satisfies Pe{\l}czy{\' n}ski's accordion property with constant $h$,
			then
			\begin{equation*}
				d_{\mathrm{BM}}(\mathsf{X},\mathsf{Y}) \leqslant \kappa^4 a^2 b^2 h^4,
			\end{equation*}
			where the constant $\kappa \geqslant 1$ depends only on $\mathsf{X}$.
			Moreover, the proof below gives an explicit value of $\kappa$.}
		\end{enumerate}
	\end{theorem}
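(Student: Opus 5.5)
The proof proceeds by the classical Pe{\l}czy{\'n}ski decomposition scheme, tracking constants at each step with Lemma~\ref{LEMMA : splitting}. Throughout, $\oplus$ denotes the $\ell_\infty$-sum.

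The first tool is a bookkeeping observation. If $T\colon \mathsf{U}\to\mathsf{V}$ is a $c$-isomorphism, rescale it so that $\norm{T^{-1}}=1$ and $\norm{T}\leqslant c$. Then $T\oplus \mathrm{id}_{\mathsf{W}}\colon \mathsf{U}\oplus\mathsf{W}\to\mathsf{V}\oplus\mathsf{W}$ is again a $c$-isomorphism. Hence $\approx$ with constants is compatible with adjoining a common summand, and constants multiply under composition. Lemma~\ref{LEMMA : splitting} applied to $\mathsf{Y}\cembed{\lambda}\mathsf{X}$ and $\mathsf{X}\cembed{\mu}\mathsf{Y}$ gives
\begin{equation*}
\mathsf{X}\approx_b \mathsf{Y}\oplus\mathsf{F}
\quad\text{and}\quad
\mathsf{Y}\approx_a \mathsf{X}\oplus\mathsf{G}.
\end{equation*}

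\textbf{Part (1).} I would chain the following isomorphisms:
\begin{equation*}
\mathsf{X}\approx_b \mathsf{Y}\oplus\mathsf{F}\approx_f (\mathsf{Y}\oplus\mathsf{Y})\oplus\mathsf{F}=\mathsf{Y}\oplus(\mathsf{Y}\oplus\mathsf{F})\approx_b \mathsf{Y}\oplus\mathsf{X}.
\end{equation*}
The second step uses $\mathsf{Y}\approx_f\mathsf{Y}\oplus\mathsf{Y}$ on the first summand. The reassociation in the middle is an isometry, since these are $\ell_\infty$-sums. This yields $\mathsf{X}\approx_{b^2f}\mathsf{Y}\oplus\mathsf{X}$. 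Symmetrically, $\mathsf{Y}\approx_{a^2e}\mathsf{X}\oplus\mathsf{Y}$. Since $\mathsf{X}\oplus\mathsf{Y}\cong\mathsf{Y}\oplus\mathsf{X}$, composing the two gives $d_{\mathrm{BM}}(\mathsf{X},\mathsf{Y})\leqslant a^2b^2ef$.

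\textbf{Part (2).} Here I would use the accordion property in place of $\mathsf{X}\approx\mathsf{X}\oplus\mathsf{X}$. Let $\mathsf{E}$ denote $\ell_p$ or $c_0$, so that $\mathsf{X}\approx_h\mathsf{E}(\mathsf{X})$. Define $\kappa$ as the maximum of the constants in the following isometric-up-to-$\kappa$ identifications:
\begin{equation*}
\mathsf{E}(\mathsf{U}\oplus\mathsf{V})\approx_\kappa \mathsf{E}(\mathsf{U})\oplus\mathsf{E}(\mathsf{V})
\quad\text{and}\quad
\mathsf{E}(\mathsf{U})\approx_\kappa \mathsf{U}\oplus\mathsf{E}(\mathsf{U}).
\end{equation*}
Both come from comparing $\ell_\infty$- and $\ell_p$-sums of two spaces, and either one gives $\kappa\leqslant 2^{1/p}$ (with $\kappa=1$ when $\mathsf{E}=c_0$ or $\ell_\infty$).

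Then I would chain
\begin{equation*}
\mathsf{X}\approx_h \mathsf{E}(\mathsf{X})\approx_b \mathsf{E}(\mathsf{Y}\oplus\mathsf{F})\approx_\kappa \mathsf{E}(\mathsf{Y})\oplus\mathsf{E}(\mathsf{F})\approx_\kappa \mathsf{Y}\oplus\mathsf{E}(\mathsf{Y})\oplus\mathsf{E}(\mathsf{F})\approx_\kappa \mathsf{Y}\oplus\mathsf{E}(\mathsf{X})\approx_b\cdots\approx_h \mathsf{Y}\oplus\mathsf{X}.
\end{equation*}
Here $\mathsf{E}(\cdot)$ of a $c$-isomorphism is again a $c$-isomorphism, which gives the $\approx_b$ step. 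For the other side, I would first derive $\mathsf{X}\approx_{\kappa h^2}\mathsf{X}\oplus\mathsf{X}$ via
\begin{equation*}
\mathsf{X}\oplus\mathsf{X}\approx_h \mathsf{X}\oplus\mathsf{E}(\mathsf{X})\approx_\kappa\mathsf{E}(\mathsf{X})\approx_h\mathsf{X}.
\end{equation*}
Then, as in part~(1),
\begin{equation*}
\mathsf{Y}\approx_a\mathsf{X}\oplus\mathsf{G}\approx\mathsf{X}\oplus\mathsf{X}\oplus\mathsf{G}\approx_a\mathsf{X}\oplus\mathsf{Y}.
\end{equation*}
Multiplying the constants along both chains should give a bound of the form $\kappa^4a^2b^2h^4$, where some of the $\kappa$- and $h$-steps in the $\mathsf{X}$-chain may be absorbed by a more careful grouping.

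The main obstacle is purely the bookkeeping in part~(2). One has to choose the order of the steps, for instance applying $\mathsf{E}(\mathsf{Y})\approx\mathsf{Y}\oplus\mathsf{E}(\mathsf{Y})$ before or after regrouping, so that the total power of $\kappa$ and $h$ is exactly $4$. I would first write the chain with all constants and then look for steps that can be merged, such as combining the two $\kappa$-steps $\mathsf{E}(\mathsf{Y}\oplus\mathsf{F})\to\mathsf{Y}\oplus\mathsf{E}(\mathsf{Y}\oplus\mathsf{F})$ into a single one. The value of $\kappa$ comes out explicitly as $2^{1/p}$, or $1$ in the $c_0$ and $\ell_\infty$ cases.
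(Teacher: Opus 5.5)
Your proposal is correct and follows the paper's proof essentially step for step. You use the same two splittings from Lemma~\ref{LEMMA : splitting}, the same chains for (1), and for (2) the paper's chain $\mathsf{Y}\oplus\mathsf{X}\approx_{\kappa^3b^2h^2}\mathsf{X}$ read in reverse, together with $\mathsf{Y}\approx_{\kappa a^2h^2}\mathsf{X}\oplus\mathsf{Y}$ and $\kappa=2^{1/p}$. Once you place the $\approx_b$ step after regrouping $\mathsf{E}(\mathsf{Y})\oplus\mathsf{E}(\mathsf{F})$ into $\mathsf{E}(\mathsf{Y}\oplus\mathsf{F})$, your chains already multiply out to exactly $\kappa^4a^2b^2h^4$, so the bookkeeping obstacle you anticipate does not arise and no merging of steps is needed.
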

	\begin{proof}
		By Lemma~\ref{LEMMA : splitting}, $\mathsf{X} \approx_b \mathsf{Y} \oplus \mathsf{F}$ and $\mathsf{Y} \approx_a \mathsf{X} \oplus \mathsf{E}$ for some Banach spaces $\mathsf{E}$ and $\mathsf{F}$.
		
		Recall also that, for any Banach spaces $\mathsf{V}$ and $\mathsf{W}$, the identity maps give
		\begin{equation*}
			\ell_p(\mathsf{V} \oplus \mathsf{W}) \approx_{\kappa_p} \ell_p(\mathsf{V}) \oplus \ell_p(\mathsf{W})\text{ and }
			\mathsf{V} \oplus \ell_p(\mathsf{V}) \approx_{\kappa_p} \ell_p(\mathsf{V}),
		\end{equation*}
		where $\kappa_p = d_{\mathrm{BM}}(\ell_p^2,\ell_{\infty}^2) = 2^{1/p}$ for $1 \leqslant p \leqslant \infty$
		(the case of $c_0$-sums is the same as of $\ell_{\infty}$-sums; {\it cf}. Remark~\ref{REMARK : d_BM(l_p^n,l_q^n) <=}).
		
		\ref{1C} We have
		\begin{equation*}
			\mathsf{X} \approx_b \mathsf{Y} \oplus \mathsf{F} \approx_f \mathsf{Y} \oplus (\mathsf{Y} \oplus \mathsf{F}) \approx_b \mathsf{Y} \oplus \mathsf{X}
			\approx_a (\mathsf{X} \oplus \mathsf{X}) \oplus \mathsf{E} \approx_e \mathsf{X} \oplus \mathsf{E} \approx_a \mathsf{Y}.
		\end{equation*}
		
		\ref{2C} Assume that $\mathsf{X} \approx_h \ell_p(\mathsf{X})$ (again, the case of $c_0$-sums is identical as of $\ell_{\infty}$-sums). Then
		\begin{align*}
			\mathsf{Y} \oplus \mathsf{X}
			& \approx_h \mathsf{Y} \oplus \ell_p(\mathsf{X})
			\approx_b \mathsf{Y} \oplus \ell_p(\mathsf{Y} \oplus \mathsf{F})
			\approx_{\kappa_p} \mathsf{Y} \oplus \ell_p(\mathsf{Y}) \oplus \ell_p(\mathsf{F}) \\
			& \approx_{\kappa_p} \ell_p(\mathsf{Y}) \oplus \ell_p(\mathsf{F})
			\approx_{\kappa_p} \ell_p(\mathsf{Y} \oplus \mathsf{F})
			\approx_b \ell_p(\mathsf{X})
			\approx_h \mathsf{X},
		\end{align*}
		so that $\mathsf{Y} \oplus \mathsf{X} \approx_{\kappa_p^3 b^2 h^2} \mathsf{X}$.
		On the other hand,
		\begin{equation*}
			\mathsf{Y}
			\approx_a \mathsf{X} \oplus \mathsf{E}
			\approx_h \ell_p(\mathsf{X}) \oplus \mathsf{E}
			\approx_{\kappa_p} \mathsf{X} \oplus \ell_p(\mathsf{X}) \oplus \mathsf{E}
			\approx_h \mathsf{X} \oplus \mathsf{X} \oplus \mathsf{E}
			\approx_a \mathsf{X} \oplus \mathsf{Y},
		\end{equation*}
		so that $\mathsf{Y} \approx_{\kappa_p a^2 h^2} \mathsf{X} \oplus \mathsf{Y}$.
		Combining both chains, we get $d_{\mathrm{BM}}(\mathsf{X},\mathsf{Y}) \leqslant \kappa_p^4 a^2 b^2 h^4$.
	\end{proof}
	
	\begin{remark}[About Theorem~\ref{THM : QPDM}] \label{REMARK : QPDM}
		\hfill
		\begin{enumerate}[label={\normalfont{(\arabic*)}}]
			\item If all the decompositions involved are {\bf isometric} $\ell_p$-decompositions, that is,
			$\mathsf{X} \cong \mathsf{Y} \oplus_p \mathsf{F}$, $\mathsf{Y} \cong \mathsf{X} \oplus_p \mathsf{E}$ and $\mathsf{X} \cong \ell_p(\mathsf{X})$,
			then the same chains of identifications consist of isometries, so $\mathsf{X} \cong \mathsf{Y}$.
			This mechanism lies behind Lemma~\ref{LEMMA : 3} below.
			\item The constants in Theorem~\ref{THM : QPDM} are certainly not optimal.
			Sharper estimates in various special situations can be found in \cite{AK26}, \cite{KL26} and \cite{KP25}.
		\end{enumerate}
		\demo
	\end{remark}
	
	\subsection{A few facts about $\mathfrak{L}(\mathsf{X},\mathsf{Y})$}
	
	We record three simple lemmas.
	
	\begin{lemma} \label{LEMMA : 1}
		{\it Let $\mathsf{X}$ be a Banach space.
			Then the space $( \bigoplus_{n=1}^{\infty} \mathsf{X} )_{\ell_{\infty}}$ is isometric to the space $\mathfrak{L}(\ell_1,\mathsf{X})$.}
	\end{lemma}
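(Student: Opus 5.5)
The plan is to write down the obvious map and check that it is a surjective isometry. Define
\begin{equation*}
	\Phi \colon \mathfrak{L}(\ell_1,\mathsf{X}) \rightarrow ( \bigoplus_{n=1}^{\infty} \mathsf{X} )_{\ell_{\infty}}, \qquad \Phi(T) \coloneqq \{ T\boldsymbol{e}_n \}_{n=1}^{\infty},
\end{equation*}
that is, we record the images of the unit vector basis of $\ell_1$. Linearity of $\Phi$ is clear. Since $\norm{\boldsymbol{e}_n}_{\ell_1} = 1$, we have $\sup_n \norm{T\boldsymbol{e}_n}_{\mathsf{X}} \leqslant \norm{T}$. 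Hence $\Phi$ is well defined and contractive.

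For the reverse inequality, take $x = \sum_n x_n \boldsymbol{e}_n \in \ell_1$. The series converges in norm, because $\{\boldsymbol{e}_n\}$ is a Schauder basis of $\ell_1$. Continuity of $T$ then gives $Tx = \sum_n x_n T\boldsymbol{e}_n$, and the triangle inequality yields
\begin{equation*}
	\norm{Tx}_{\mathsf{X}} \leqslant \sum_n \abs{x_n} \norm{T\boldsymbol{e}_n}_{\mathsf{X}} \leqslant \norm{x}_{\ell_1} \sup_n \norm{T\boldsymbol{e}_n}_{\mathsf{X}}.
\end{equation*}
So $\norm{T} = \norm{\Phi(T)}$, and $\Phi$ is an isometry. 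In particular $\Phi$ is injective; this also follows directly, since an operator vanishing on the basis vectors vanishes on their dense span.

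For surjectivity, let $\{ f_n \}$ be a bounded sequence in $\mathsf{X}$ and set $Tx \coloneqq \sum_n x_n f_n$. This series converges absolutely, since $\sum_n \abs{x_n}\norm{f_n} \leqslant \norm{x}_{\ell_1}\sup_n\norm{f_n} < \infty$, and it converges in $\mathsf{X}$ because $\mathsf{X}$ is complete. The resulting $T$ is linear and bounded, and $T\boldsymbol{e}_n = f_n$, so $\Phi(T) = \{f_n\}$.

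There is no genuine obstacle in this argument. The only point needing care is the use of completeness of $\mathsf{X}$ to sum the absolutely convergent series, which is exactly where the universal property of $\ell_1$ enters.
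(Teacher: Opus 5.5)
Your proof is correct and follows the paper's argument: the same map $T \mapsto \{T\boldsymbol{e}_n\}_{n=1}^{\infty}$, with the same two norm inequalities. The only difference is that you justify bijectivity explicitly, namely surjectivity by summing the absolutely convergent series $\sum_n x_n f_n$ using completeness of $\mathsf{X}$, whereas the paper just calls $\Phi$ plainly a linear bijection.
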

	\begin{proof}
		For $T \in \mathfrak{L}(\ell_1,\mathsf{X})$, let $\Phi(T) \coloneqq \left\{ T(\boldsymbol{e}_n) \right\}_{n=1}^{\infty}$.
		Plainly, $\Phi \colon \mathfrak{L}(\ell_1,\mathsf{X}) \rightarrow ( \bigoplus_{n=1}^{\infty} \mathsf{X} )_{\ell_{\infty}}$ is a linear bijection.
		For any $x = \sum_{n=1}^{\infty} a_n \boldsymbol{e}_n$ in $\text{Ball}(\ell_1)$, we have
		\begin{equation*}
			\norm{T(x)}_{\mathsf{X}}
			\leqslant \sum_{n=1}^{\infty} \abs{a_n} \norm{ T(\boldsymbol{e}_n) }_{\mathsf{X}}
			\leqslant \norm{x}_{\ell_1} \norm{\Phi(T)}_{( \bigoplus_{n=1}^{\infty} \mathsf{X} )_{\ell_{\infty}}}
			\leqslant \norm{\Phi(T)}_{( \bigoplus_{n=1}^{\infty} \mathsf{X} )_{\ell_{\infty}}},
		\end{equation*}
		so that
		\begin{equation} \label{EQ : w jedna strone}
			\norm{T}_{\mathfrak{L}(\ell_1,\mathsf{X})} \leqslant \norm{ \Phi(T) }_{( \bigoplus_{n=1}^{\infty} \mathsf{X} )_{\ell_{\infty}}}.
		\end{equation}
	
		Conversely, since $\boldsymbol{e}_n \in \text{Ball}(\ell_1)$ for all $n$,
		\begin{equation} \label{EQ : w druga strone}
			\norm{ \Phi(T) }_{( \bigoplus_{n=1}^{\infty} \mathsf{X} )_{\ell_{\infty}}}
			= \sup_{n \in \mathbb{N}} \norm{T(\boldsymbol{e}_n)}_{\mathsf{X}}
			\leqslant \norm{T}_{\mathfrak{L}(\ell_1,\mathsf{X})}.
		\end{equation}
		Combining \eqref{EQ : w jedna strone} with \eqref{EQ : w druga strone} completes the proof.
	\end{proof}
	
	\begin{digression}[About Lemma~\ref{LEMMA : 1}]
		In the very special case where $\mathsf{X}$ is the scalar field $\mathbb{K}$, the above lemma reduces to the well-known
		duality between $\ell_1$ and $\ell_{\infty}$:
		\begin{equation*}
			\ell_{\infty}
			\cong ( \bigoplus_{n=1}^{\infty} \mathbb{K} )_{\ell_{\infty}}
			\cong \mathfrak{L}(\ell_1,\mathbb{K})
			\cong \ell_1^{*}.
		\end{equation*}
		\demo
	\end{digression}
	
	\begin{lemma} \label{LEMMA : 4}
		{\it Let $\mathsf{X}$, $\mathsf{Y}$, $\mathsf{Z}$ and $\mathsf{W}$ be Banach spaces such that
			$\mathsf{Z} \cembed{\lambda} \mathsf{X}$ and $\mathsf{W} \cembed{\mu} \mathsf{Y}$.
			Then $\mathfrak{L}(\mathsf{Z},\mathsf{W}) \cembed{\lambda\mu} \mathfrak{L}(\mathsf{X},\mathsf{Y})$.}
	\end{lemma}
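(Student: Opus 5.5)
The idea is to transport the complementation data through composition on both sides.

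Fix operators $J_1 \colon \mathsf{Z} \rightarrow \mathsf{X}$ and $Q_1 \colon \mathsf{X} \rightarrow \mathsf{Z}$ with $Q_1 J_1 = \mathrm{id}_{\mathsf{Z}}$ and $\norm{J_1}\norm{Q_1} \leqslant \lambda$. Likewise fix $J_2 \colon \mathsf{W} \rightarrow \mathsf{Y}$ and $Q_2 \colon \mathsf{Y} \rightarrow \mathsf{W}$ with $Q_2 J_2 = \mathrm{id}_{\mathsf{W}}$ and $\norm{J_2}\norm{Q_2} \leqslant \mu$.

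Define
\begin{equation*}
	\mathcal{J} \colon \mathfrak{L}(\mathsf{Z},\mathsf{W}) \rightarrow \mathfrak{L}(\mathsf{X},\mathsf{Y}), \qquad \mathcal{J}(T) \coloneqq J_2 T Q_1 ,
\end{equation*}
\begin{equation*}
	\mathcal{Q} \colon \mathfrak{L}(\mathsf{X},\mathsf{Y}) \rightarrow \mathfrak{L}(\mathsf{Z},\mathsf{W}), \qquad \mathcal{Q}(S) \coloneqq Q_2 S J_1 .
\end{equation*}
Both maps are clearly linear. Submultiplicativity of the operator norm gives $\norm{\mathcal{J}} \leqslant \norm{J_2}\norm{Q_1}$ and $\norm{\mathcal{Q}} \leqslant \norm{Q_2}\norm{J_1}$.

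For $T \in \mathfrak{L}(\mathsf{Z},\mathsf{W})$ we compute
\begin{equation*}
	\mathcal{Q}\mathcal{J}(T) = Q_2 J_2 \, T \, Q_1 J_1 = \mathrm{id}_{\mathsf{W}} \, T \, \mathrm{id}_{\mathsf{Z}} = T,
\end{equation*}
so $\mathcal{Q}\mathcal{J} = \mathrm{id}$. Moreover,
\begin{equation*}
	\norm{\mathcal{J}}\norm{\mathcal{Q}} \leqslant \bigl(\norm{J_1}\norm{Q_1}\bigr)\bigl(\norm{J_2}\norm{Q_2}\bigr) \leqslant \lambda\mu .
\end{equation*}
This is exactly the statement $\mathfrak{L}(\mathsf{Z},\mathsf{W}) \cembed{\lambda\mu} \mathfrak{L}(\mathsf{X},\mathsf{Y})$ in the sense of Definition~\ref{DEF : complementably embedded}.

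There is no genuine obstacle here. The only point needing care is that the constant in Definition~\ref{DEF : complementably embedded} is the product $\norm{J}\norm{Q}$, not the individual norms. For that reason the factors are grouped as $\norm{J_1}\norm{Q_1}$ and $\norm{J_2}\norm{Q_2}$, rather than first rescaling the operators.
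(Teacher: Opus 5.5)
Your proposal is correct and is essentially the paper's own argument: the paper's sketch uses exactly the maps $S \mapsto J_{\mathsf{W}} S Q_{\mathsf{Z}}$ for the embedding and $T \mapsto Q_{\mathsf{W}} T J_{\mathsf{Z}}$ for the left inverse. Your grouping of the four norms as $\bigl(\norm{J_1}\norm{Q_1}\bigr)\bigl(\norm{J_2}\norm{Q_2}\bigr)$ supplies the constant $\lambda\mu$ that the paper's sketch leaves implicit.
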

	\begin{proof}[Sketch of the proof]
		If $J_{\mathsf{Z}}, Q_{\mathsf{Z}}$ and $J_{\mathsf{W}}, Q_{\mathsf{W}}$ witness the two complementable embeddings,
		then $T \mapsto Q_{\mathsf{W}} T J_{\mathsf{Z}}$ witness the projection $ \mathfrak{L}(\mathsf{X},\mathsf{Y})\to \mathfrak{L}(\mathsf{Z},\mathsf{W})$
		and $S \mapsto J_W SQ_Z $ witness the embedding.
	\end{proof}
	
	\begin{lemma} \label{LEMMA : 5}
		{\it Let $\mathsf{X}$ and $\mathsf{Y}$ be two Banach spaces.
			Then the space $\mathfrak{L}(\mathsf{X},\mathsf{Y}^*)$ is isometric to the space $\mathfrak{L}(\mathsf{Y},\mathsf{X}^*)$.}
	\end{lemma}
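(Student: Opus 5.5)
The plan is to write down the natural ``transpose'' map and check that it is a surjective isometry. For $T \in \mathfrak{L}(\mathsf{X},\mathsf{Y}^*)$ define
\begin{equation*}
	\Psi(T) \coloneqq T^{*}|_{\mathsf{Y}} \colon \mathsf{Y} \rightarrow \mathsf{X}^{*}, \qquad \langle \Psi(T)y, x \rangle \coloneqq \langle Tx, y \rangle \quad (x \in \mathsf{X},\ y \in \mathsf{Y}).
\end{equation*}
Here $\mathsf{Y}$ is regarded as a subspace of $\mathsf{Y}^{**}$ via the canonical embedding. Clearly $\Psi(T)y$ is a linear functional on $\mathsf{X}$, and $\Psi$ is linear in $T$.

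The first step is to compute norms through the bilinear form $B_T(x,y) \coloneqq \langle Tx, y\rangle$ on $\mathrm{Ball}(\mathsf{X}) \times \mathrm{Ball}(\mathsf{Y})$. On the one hand,
\begin{equation*}
	\norm{T} = \sup_{x \in \mathrm{Ball}(\mathsf{X})} \norm{Tx}_{\mathsf{Y}^*} = \sup_{x}\sup_{y} \abs{\langle Tx, y\rangle},
\end{equation*}
because the norm of a functional in $\mathsf{Y}^*$ is, by definition, the supremum over $\mathrm{Ball}(\mathsf{Y})$. On the other hand,
\begin{equation*}
	\norm{\Psi(T)} = \sup_{y}\norm{\Psi(T)y}_{\mathsf{X}^*} = \sup_y\sup_x \abs{\langle Tx,y\rangle}.
\end{equation*}
Interchanging the two suprema shows $\norm{\Psi(T)} = \norm{T}$. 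In particular $\Psi(T)$ is bounded, so $\Psi$ is a well-defined linear isometry. No Hahn--Banach argument is needed, since only the definitional suprema over $\mathrm{Ball}(\mathsf{Y})$ and $\mathrm{Ball}(\mathsf{X})$ appear.

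The second step is surjectivity, which is the only point needing any care. Given $S \in \mathfrak{L}(\mathsf{Y},\mathsf{X}^*)$, define $\Psi'(S) \in \mathfrak{L}(\mathsf{X},\mathsf{Y}^*)$ symmetrically by $\langle \Psi'(S)x, y\rangle \coloneqq \langle Sy, x\rangle$. By the same computation, $\Psi'(S)$ is bounded with $\norm{\Psi'(S)} = \norm{S}$. Unwinding the definitions gives $\Psi(\Psi'(S)) = S$ and $\Psi'(\Psi(T)) = T$. Hence $\Psi$ is a bijective linear isometry, which proves the lemma.
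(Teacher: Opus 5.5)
Your proof is correct. The transposition map $T \mapsto T^{*}|_{\mathsf{Y}}$ is linear, and interchanging the two definitional suprema gives $\norm{T^{*}|_{\mathsf{Y}}} = \norm{T}$ without any Hahn--Banach step. The symmetric construction $S \mapsto S^{*}|_{\mathsf{X}}$ is a two-sided inverse, so the map is onto.

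The paper gives no proof of this lemma and treats it as standard. Elsewhere it uses the tensor--hom adjunction $(\mathsf{X}\widetilde{\otimes}_{\pi}\mathsf{Y})^{*} \cong \mathfrak{L}(\mathsf{X},\mathsf{Y}^{*})$, which amounts to identifying both operator spaces with the bounded bilinear forms on $\mathsf{X}\times\mathsf{Y}$. Your argument carries out that identification directly, without any tensor-product machinery.
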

	
	\begin{remark} \label{REMARK : isometries we need}
		Combining Lemmas~\ref{LEMMA : 1} and \ref{LEMMA : 5}, we obtain, for every Banach space $\mathsf{X}$,
		\begin{equation} \label{EQ : L(X,l_infty)}
			\mathfrak{L}(\mathsf{X},\ell_{\infty}) \cong \mathfrak{L}(\ell_1,\mathsf{X}^{*}) \cong \ell_{\infty}(\mathsf{X}^{*}).
		\end{equation}
		\demo
	\end{remark}
	
	We shall also need the following elementary observation.
	
	\begin{lemma} \label{LEMMA : bidual}
		{\it Let $\mathsf{Y}$ and $\mathsf{Z}$ be Banach spaces such that $\mathsf{Y} \cembed{} \mathsf{Z}$ and $\mathsf{Z}$ is complemented in $\mathsf{Z}^{**}$.
			Then $\mathsf{Y}$ is complemented in $\mathsf{Y}^{**}$.
			In particular, every space isomorphic to a dual space is complemented in its bidual.}
	\end{lemma}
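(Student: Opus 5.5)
The plan is to build an explicit projection from $\mathsf{Y}^{**}$ onto $\mathsf{Y}$ by conjugating the given projection $P\colon \mathsf{Z}^{**}\to\mathsf{Z}$ with the second adjoints of the maps that witness $\mathsf{Y} \cembed{} \mathsf{Z}$. Let $J\colon \mathsf{Y}\to\mathsf{Z}$ and $Q\colon\mathsf{Z}\to\mathsf{Y}$ satisfy $QJ=\mathrm{id}_{\mathsf{Y}}$, and let $\iota_{\mathsf{Y}}$ and $\iota_{\mathsf{Z}}$ be the canonical embeddings. Let $P\colon\mathsf{Z}^{**}\to\mathsf{Z}$ be a bounded operator with $P\iota_{\mathsf{Z}}=\mathrm{id}_{\mathsf{Z}}$. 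Define
\begin{equation*}
R \coloneqq Q\,P\,J^{**}\colon \mathsf{Y}^{**}\rightarrow\mathsf{Y}.
\end{equation*}
Clearly $\norm{R}\leqslant\norm{Q}\norm{P}\norm{J}$.

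The key step is the naturality identity $J^{**}\iota_{\mathsf{Y}}=\iota_{\mathsf{Z}}J$. To check it, take $y\in\mathsf{Y}$ and $z^*\in\mathsf{Z}^*$. Then
\begin{equation*}
\langle J^{**}\iota_{\mathsf{Y}}y,z^*\rangle=\langle \iota_{\mathsf{Y}}y,J^*z^*\rangle=\langle J^*z^*,y\rangle=\langle z^*,Jy\rangle,
\end{equation*}
which is exactly $\langle\iota_{\mathsf{Z}}Jy,z^*\rangle$. It follows that
\begin{equation*}
R\iota_{\mathsf{Y}}=QPJ^{**}\iota_{\mathsf{Y}}=QP\iota_{\mathsf{Z}}J=QJ=\mathrm{id}_{\mathsf{Y}}.
\end{equation*}
Hence $\iota_{\mathsf{Y}}R$ is a bounded projection of $\mathsf{Y}^{**}$ onto $\iota_{\mathsf{Y}}(\mathsf{Y})$, so $\mathsf{Y}$ is complemented in its bidual. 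The argument even yields the constant $\norm{J}\norm{Q}\norm{P}$.

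For the ``in particular'' clause, first show that every dual space $\mathsf{V}^*$ is $1$-complemented in its bidual. The projection is $\iota_{\mathsf{V}}^*\colon\mathsf{V}^{***}\to\mathsf{V}^*$, the restriction of functionals to $\iota_{\mathsf{V}}(\mathsf{V})$. The identity $\iota_{\mathsf{V}}^*\iota_{\mathsf{V}^*}=\mathrm{id}_{\mathsf{V}^*}$ is checked by evaluating at $v\in\mathsf{V}$:
\begin{equation*}
\langle \iota_{\mathsf{V}}^*\iota_{\mathsf{V}^*}v^*,v\rangle=\langle\iota_{\mathsf{V}^*}v^*,\iota_{\mathsf{V}}v\rangle=\langle\iota_{\mathsf{V}}v,v^*\rangle=\langle v^*,v\rangle.
\end{equation*}
Now suppose $\mathsf{Y}\approx\mathsf{V}^*$ via an isomorphism $T$. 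Then $\mathsf{Y}\cembed{}\mathsf{V}^*$ with $J=T$ and $Q=T^{-1}$, and the first part applies with $\mathsf{Z}=\mathsf{V}^*$.

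There is no serious obstacle here. The only point needing care is the bookkeeping of the canonical embeddings, namely the identity $J^{**}\iota_{\mathsf{Y}}=\iota_{\mathsf{Z}}J$. This is why the paper's convention of regarding $\mathsf{X}\subset\mathsf{X}^{**}$ should be unpacked explicitly in the write-up.
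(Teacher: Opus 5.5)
Your proof is correct and is essentially the paper's argument. The paper uses the same projection $Q\pi J^{**}\colon \mathsf{Y}^{**}\rightarrow\mathsf{Y}$, the same check that $J^{**}$ agrees with $J$ on $\mathsf{Y}$ (which you spell out as $J^{**}\iota_{\mathsf{Y}}=\iota_{\mathsf{Z}}J$), and Dixmier's restriction projection $\mathsf{V}^{***}\rightarrow\mathsf{V}^{*}$ for the ``in particular'' clause.
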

	\begin{proof}[Sketch of the proof]
		Let $J \colon \mathsf{Y} \rightarrow \mathsf{Z}$ and $Q \colon \mathsf{Z} \rightarrow \mathsf{Y}$ satisfy $QJ = \mathrm{id}_{\mathsf{Y}}$,
		and let $\pi \colon \mathsf{Z}^{**} \rightarrow \mathsf{Z}$ be a bounded projection.
		Then $\rho \coloneqq Q\pi J^{**} \colon \mathsf{Y}^{**} \rightarrow \mathsf{Y}$ is bounded and, for $y \in \mathsf{Y}$, we have $J^{**}y = Jy \in \mathsf{Z}$,
		so $\rho y = QJy = y$.
		The last part follows from Dixmier's observation that $\mathsf{V}^{*}$ is complemented in $\mathsf{V}^{***}$ for every Banach space $\mathsf{V}$
		(to see this, just note that the restriction of functionals from $\mathsf{V}^{**}$ to $\mathsf{V}$ is a norm one projection).
	\end{proof}
	
	\subsection{On the isomorphic structure of matrix spaces}
	
	\begin{lemma} \label{LEMMA : 3}
		{\it Let $1 \leqslant p,q \leqslant \infty$.
			Then, for any two unbounded sequences $\{a_n\}_{n=1}^{\infty}$ and $\{b_n\}_{n=1}^{\infty}$ of positive integers,
			the spaces $( \bigoplus_{n=1}^{\infty} \ell_q^{a_n} )_{\ell_{p}}$ and $( \bigoplus_{n=1}^{\infty} \ell_q^{b_n} )_{\ell_{p}}$ are isomorphic with}
		\begin{equation*}
			d_{\mathrm{BM}}\left( ( \bigoplus_{n=1}^{\infty} \ell_q^{a_n} )_{\ell_{p}},( \bigoplus_{n=1}^{\infty} \ell_q^{b_n} )_{\ell_{p}} \right) \leqslant 4^{\abs{1/p-1/q}}.
		\end{equation*}
		{\it In particular, the space $( \bigoplus_{n=1}^{\infty} \ell_q^{a_n} )_{\ell_{p}}$ is isomorphic to $( \bigoplus_{n=1}^{\infty} \ell_q^n )_{\ell_{p}}$.}
	\end{lemma}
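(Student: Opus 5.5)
The idea is to compare both spaces with a single "universal" matrix space. Let $\{c_k\}_{k=1}^{\infty}$ be a sequence of positive integers in which every positive integer occurs infinitely often, and set $\mathsf{Z} \coloneqq ( \bigoplus_{k=1}^{\infty} \ell_q^{c_k} )_{\ell_p}$. I will show that
\begin{equation*}
d_{\mathrm{BM}}\bigl( ( \textstyle\bigoplus_{n=1}^{\infty} \ell_q^{a_n} )_{\ell_p}, \mathsf{Z} \bigr) \leqslant 2^{\abs{1/p-1/q}}
\end{equation*}
for every unbounded sequence $\{a_n\}_{n=1}^{\infty}$. Applying this to both $\{a_n\}$ and $\{b_n\}$ and composing one isomorphism with the inverse of the other then gives the bound $4^{\abs{1/p-1/q}}$. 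The "in particular" part is the special case $b_n = n$.

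Two isometric facts will be used.
\begin{enumerate}
\item[(i)] Permuting the blocks of an $\ell_p$-sum is an isometry. Hence $( \bigoplus_{j} \ell_q^{m_j} )_{\ell_p}$ depends, up to isometry, only on the multiset $\{m_j\}$.
\item[(ii)] For any countable multiset $M$ of positive integers, the multiset union of $M$ with $\{c_k\}$ is again a multiset in which every positive integer occurs exactly $\aleph_0$ times. By (i), the corresponding $\ell_p$-sum is therefore isometric to $\mathsf{Z}$.
\end{enumerate}

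Now fix an unbounded $\{a_n\}$ and choose a strictly increasing sequence $n_1<n_2<\cdots$ with $a_{n_k} > c_k$; this is possible by unboundedness. Split each block as
\begin{equation*}
\ell_q^{a_{n_k}} = \ell_q^{c_k} \oplus_q \ell_q^{a_{n_k}-c_k},
\end{equation*}
and leave the blocks with $n \notin \{n_k\}$ untouched. Define $T$ to be the block-diagonal operator that, on each split block, is the identity $\ell_q^{c_k} \oplus_q \ell_q^{a_{n_k}-c_k} \to \ell_q^{c_k} \oplus_p \ell_q^{a_{n_k}-c_k}$, and is the identity on the untouched blocks. Then $T$ maps $( \bigoplus_{n} \ell_q^{a_n} )_{\ell_p}$ onto an $\ell_p$-sum of $\ell_q$-blocks whose multiset of sizes is
\begin{equation*}
\{c_k\}_{k} \cup \{a_{n_k}-c_k\}_{k} \cup \{a_n\}_{n \notin \{n_k\}}.
\end{equation*}
By (ii), this target space is isometric to $\mathsf{Z}$.

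The step needing care is the distortion estimate for $T$; it is where the constant arises. Because both the domain and the target are $\ell_p$-sums and $T$ acts blockwise, it suffices to bound a single split block uniformly. For $x = (u,v)$ in one block,
\begin{equation*}
\norm{x} = \norm{(\norm{u},\norm{v})}_{\ell_q^2}
\quad\text{and}\quad
\norm{Tx} = \norm{(\norm{u},\norm{v})}_{\ell_p^2}.
\end{equation*}
The formal identity between $\ell_q^2$ and $\ell_p^2$ has norm $1$ in one direction and $2^{\abs{1/p-1/q}}$ in the other, as in Remark~\ref{REMARK : d_BM(l_p^n,l_q^n) <=}. Taking the supremum (or the $\ell_p$-sum) over blocks therefore gives $\norm{T}\norm{T^{-1}} \leqslant 2^{\abs{1/p-1/q}}$. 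The case $p=\infty$ goes through verbatim, since sup-norms also act blockwise.

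This yields the claimed estimate with the universal space $\mathsf{Z}$, and the triangle-type multiplicativity of $d_{\mathrm{BM}}$ finishes the proof. The only genuine bookkeeping is the multiset argument in (ii), which is where unboundedness is used. No decomposition method is needed, since every identification other than $T$ is an isometry.
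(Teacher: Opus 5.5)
Your proof is correct and is essentially the paper's argument. Both compare each space with a universal $\ell_p$-sum in which every $\ell_q^k$ occurs infinitely often, split blocks and pay $d_{\mathrm{BM}}(\ell_p^2,\ell_q^2)\leqslant 2^{\abs{1/p-1/q}}$ for replacing the inner $\oplus_q$ by $\oplus_p$, absorb the leftover blocks isometrically, and then multiply the two distances. The one cosmetic difference is that you split only an inductively chosen subsequence of blocks and absorb the untouched ones into the universal space, whereas the paper uses a full permutation $\pi$ with $d_{\pi(n)}\geqslant k_n$, which needs a small back-and-forth argument that the paper leaves implicit.
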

	\begin{proof}
		We need a little preparation: Let $\{ \mathcal{N}_k \}_{k=1}^{\infty}$ be a partition of $\mathbb{N}$ into infinitely many (pairwise disjoint) infinite sets, and let
		\begin{equation*}
			\mathfrak{X}
			\coloneqq \Bigl( \bigoplus_{k=1}^{\infty} \bigl( \bigoplus_{n \in \mathcal{N}_k} \ell_q^k \bigr)_{\ell_p} \Bigr)_{\ell_p}.
		\end{equation*}
		Moreover, given any unbounded sequence of positive integers $\{d_n\}_{n=1}^{\infty}$, let $$\mathsf{Y} \coloneqq ( \bigoplus_{n=1}^{\infty} \ell_q^{d_n} )_{\ell_{p}}.$$
		The whole argument is based on the following:
		
		\begin{claimletters} \label{CLAIM o odleglosci BM}
			$d_{\mathrm{BM}}(\mathsf{Y}, \mathfrak{X}) \leqslant 2^{\abs{1/p-1/q}}$.
		\end{claimletters}
		\begin{proof}[Proof of Claim~\ref{CLAIM o odleglosci BM}]
			Clearly, up to isometry, $\mathfrak{X}$ can be written as $( \bigoplus_{n=1}^{\infty} \ell_q^{k_n} )_{\ell_{p}}$ for some unbounded sequence of positive integers $\{k_n\}_{n=1}^{\infty}$.
			Even more obvious is that since $\{d_n\}_{n=1}^{\infty}$ is unbounded, so for every $k$, there are infinitely many $n$ with $d_n \geqslant k$.
			Thus, we can find a permutation $\pi\colon\mathbb{N}\to\mathbb{N}$ such that $d_ {\pi(n)}\geqslant k_n$ for $n\in\mathbb{N}$. 
			Keeping all this data in mind, we have
			\begin{equation*}
				\mathsf{Y}
				\cong ( \bigoplus_{n=1}^{\infty} \ell_q^{d_ {\pi(n)}} )_{\ell_p} 
				\cong ( \bigoplus_{n=1}^{\infty} ( \ell_q^{k_n} \oplus_q \ell_q^{d_ {\pi(n)}-k_n} ) )_{\ell_p}
				\approx_c ( \bigoplus_{n=1}^{\infty} ( \ell_q^{k_n} \oplus_p \ell_q^{d_ {\pi(n)}-k_n} ) )_{\ell_p}
				\cong \mathfrak{X} \oplus_p \mathsf{Z},
			\end{equation*}
			where  $\mathsf{Z} \coloneqq ( \bigoplus_{n=1}^{\infty} \ell_q^{d_ {\pi(n)} - k_n} )_{\ell_p}$ and
			\begin{equation*}
				c = d_{\mathrm{BM}}(\ell_p^2,\ell_q^2) \leqslant 2^{\abs{1/p - 1/q}}
			\end{equation*}
			(see Remark~\ref{REMARK : d_BM(l_p^n,l_q^n) <=}).
			However, since any $\ell_p$-sum does not depend, up to isometry, on the order of its components,
			and every space $\ell_p^k$, $k \in \mathbb{N}$, occurs infinitely many times in $\mathfrak{X}$, 
			\begin{equation*}
				\mathsf{Y} \cong_c \mathfrak{X} \oplus_p \mathsf{Z} \cong \mathfrak{X}.
			\end{equation*}
			Our claim follows.
		\end{proof}
		
		Let $\{a_n\}_{n=1}^{\infty}$ and $\{b_n\}_{n=1}^{\infty}$ be two unbounded sequences of positive integers.
		Remembering that $d_{\mathrm{BM}}$ satisfies the multiplicative $\triangle$-inequality and is symmetric, and applying
		Claim~\ref{CLAIM o odleglosci BM} twice, we have
		\begin{equation*}
			d_{\mathrm{BM}} ( ( \bigoplus_{n=1}^{\infty} \ell_q^{a_n} )_{\ell_{p}},( \bigoplus_{n=1}^{\infty} \ell_q^{b_n} )_{\ell_{p}} )
			\leqslant d_{\mathrm{BM}} ( ( \bigoplus_{n=1}^{\infty} \ell_q^{a_n} )_{\ell_{p}},\mathfrak{X} )
			\cdot d_{\mathrm{BM}} (( \bigoplus_{n=1}^{\infty} \ell_q^{b_n} )_{\ell_{p}},\mathfrak{X} )
			\leqslant  4^{\abs{1/p-1/q}}.
		\end{equation*} 
		This proves our result.
	\end{proof}

	\begin{lemma} \label{LEMMA : 2}
		{\it Let $1 \leqslant p \leqslant \infty$.
			Then the space $( \bigoplus_{n=1}^{\infty} \ell_p^n )_{\ell_{\infty}}$
			is isomorphic to the space $( \bigoplus_{n=1}^{\infty} \ell_p )_{\ell_{\infty}}$.
			Further,}
		\begin{equation*}
			d_{\mathrm{BM}}\left( ( \bigoplus_{n=1}^{\infty} \ell_p^n )_{\ell_{\infty}},( \bigoplus_{n=1}^{\infty} \ell_p )_{\ell_{\infty}} \right) \leqslant 81.
		\end{equation*}
	\end{lemma}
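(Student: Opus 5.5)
The plan is to apply Theorem~\ref{THM : QPDM}\ref{2C}, with the space satisfying Pe{\l}czy{\'n}ski's accordion property taken to be $\mathsf{V} \coloneqq ( \bigoplus_{n=1}^{\infty} \ell_p )_{\ell_{\infty}} = \ell_\infty(\ell_p)$. The other space is $\mathsf{U} \coloneqq ( \bigoplus_{n=1}^{\infty} \ell_p^n )_{\ell_{\infty}}$.

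First, $\mathsf{V}$ has the accordion property isometrically. Indeed, $\ell_\infty(\mathsf{V}) \cong \ell_\infty(\ell_p)$ by re-indexing $\mathbb{N}\times\mathbb{N}$ as $\mathbb{N}$, so $h=1$. The relevant $\kappa$ is $\kappa_\infty = 2^{1/\infty} = 1$. Hence, once we show $\mathsf{U} \cembed{1} \mathsf{V}$ and $\mathsf{V} \cembed{1} \mathsf{U}$, we get $a=b=3$. The theorem then yields $d_{\mathrm{BM}}(\mathsf{U},\mathsf{V}) \leqslant 1^4\cdot 3^2\cdot 3^2 \cdot 1^4 = 81$.

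The embedding $\mathsf{U} \cembed{1} \mathsf{V}$ is immediate. Each $\ell_p^n$ is $1$-complemented in $\ell_p$ via the inclusion on the first $n$ coordinates and the coordinate projection $P_n$. Applying these maps coordinatewise gives norm-one maps $J,Q$ between the $\ell_\infty$-sums with $QJ=\mathrm{id}$.

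The converse embedding $\mathsf{V} \cembed{1} \mathsf{U}$ is the main step, and I would do it in two stages.

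\emph{Stage 1: $\ell_p \cembed{1} ( \bigoplus_{n} \ell_p^n )_{\ell_\infty}$.} Define the embedding $x \mapsto (P_n x)_{n}$, which is an isometry. For the left inverse, fix a free ultrafilter $\mathcal{U}$ on $\mathbb{N}$. Set $Q((y_n)_n) \coloneqq \mathrm{w}^{*}\text{-}\lim_{\mathcal{U}} y_n$, regarding each $y_n$ as an element of $\ell_p$. This uses that $\ell_p$ is a dual space for every $1\leqslant p\leqslant\infty$: it is the dual of $c_0$ when $p=1$, reflexive when $1<p<\infty$, and the dual of $\ell_1$ when $p=\infty$. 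Bounded sets are then weak$^*$ compact, and the weak$^*$ limit along $\mathcal{U}$ exists. $Q$ is linear with $\norm{Q}\leqslant 1$ by weak$^*$ lower semicontinuity of the norm. Moreover $Q((P_nx)_n)=x$, since $P_n x \to x$ coordinatewise and the sequence is bounded.

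\emph{Stage 2: passing to $\ell_\infty$-sums.} Taking $\ell_\infty$-sums of Stage 1 gives $\mathsf{V} \cembed{1} \ell_\infty\bigl(( \bigoplus_{n} \ell_p^n )_{\ell_\infty}\bigr)$. The latter space is isometric to $( \bigoplus_{m} \ell_p^{c_m} )_{\ell_\infty}$ for some sequence $(c_m)$ in which every size occurs infinitely often. Finally, I would choose an injection $m\mapsto n_m$ with $n_m \geqslant c_m$. Then $\ell_p^{c_m}$ is $1$-complemented in $\ell_p^{n_m}$, and padding the unused coordinates of $\mathsf{U}$ with zeros shows $( \bigoplus_{m} \ell_p^{c_m} )_{\ell_\infty} \cembed{1} \mathsf{U}$. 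Transitivity of $\cembed{}$ completes the embedding.

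I expect the only delicate point to be the ultrafilter projection in Stage 1 when $p=1$ or $p=\infty$. The care needed is in checking that weak$^*$ limits are available and that they recover $x$ from its truncations. Everything else is bookkeeping with isometries.
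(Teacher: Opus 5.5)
Your proof is correct and is essentially the paper's argument. The common ingredients are the same norm-one complementations, with the key step being that $\ell_p$ is $1$-complemented in $(\bigoplus_n \ell_p^n)_{\ell_\infty}$ via truncations and an ultrafilter limit; your weak$^*$-limit along $\mathcal{U}$ is exactly the paper's columnwise limit $u_k=\lim_{\mathcal{U}} x_{n,k}$. The differences are cosmetic: the paper applies Theorem~\ref{THM : QPDM}(1) (both spaces isometric to their squares, $e=f=1$) instead of Theorem~\ref{THM : QPDM}(2) with $h=\kappa_\infty=1$, and it reindexes via a partition of $\mathbb{N}$ into infinite sets $\mathcal{N}_k$ rather than your injection $m\mapsto n_m\geqslant c_m$ with zero padding, and both routes give the bound $81$.
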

	\begin{proof}
		Clearly, both $( \bigoplus_{n=1}^{\infty} \ell_p^n )_{\ell_{\infty}}$
		and $( \bigoplus_{n=1}^{\infty} \ell_p )_{\ell_{\infty}}$ are isometric to their squares
		(with respect to the sup norm).
		Further, using the obvious embedding of $\ell_p^n$ into $\ell_p$, we get
		\begin{equation*}
			( \bigoplus_{n=1}^{\infty} \ell_p^n )_{\ell_{\infty}}\cembed{1} ( \bigoplus_{n=1}^{\infty} \ell_p )_{\ell_{\infty}}.
		\end{equation*}
		Thus, by Theorem~\ref{THM : QPDM}(1), it remains to show that
		\begin{equation} \label{EQ : to chcemy}
			( \bigoplus_{n=1}^{\infty} \ell_p )_{\ell_{\infty}} \cembed{1} ( \bigoplus_{n=1}^{\infty} \ell_p^n )_{\ell_{\infty}}.
		\end{equation}	
		
		First, we will explain that $\ell_p \cembed{1} ( \bigoplus_{n \in \mathcal{N}} \ell_p^n )_{\ell_{\infty}}$ for every infinite set $\mathcal{N} \subset \mathbb{N}$.
		Finding a subspace of $( \bigoplus_{n \in \mathcal{N}} \ell_p^n )_{\ell_{\infty}}$ isometrically isomorphic to $\ell_p$ is easy.
		Consider the map $T \colon \ell_p \rightarrow ( \bigoplus_{n \in \mathcal{N}} \ell_p^n )_{\ell_{\infty}}$ which sends
		$x = \{x_k\}_{k=1}^{\infty} \in \ell_p$ to the sequence of its truncations $\{ \sum_{k=1}^n x_k \boldsymbol{e}_k \}_{n \in \mathcal{N}}$,
		that is, $T \colon \sum_{k=1}^{\infty} x_k \boldsymbol{e}_k \mapsto \{ \sum_{k=1}^n x_k \boldsymbol{e}_k \}_{n \in \mathcal{N}}$.
		Clearly, $T$ is an isometric embedding.
		However, finding a projection onto the range of $T$ requires a little more effort.
		Let $\mathcal{U}$ be a free ultrafilter on $\mathcal{N}$.
		Given a matrix $[x_{n,k}]_{n \in \mathcal{N}, k \in \mathbb{N}} \in ( \bigoplus_{n \in \mathcal{N}} \ell_p^n )_{\ell_{\infty}}$,
		let $u = \{u_k\}_{k\in\mathbb{N}} \in \mathbb{K}^{\mathbb{N}}$ be defined as $u_k \coloneqq \lim_{\mathcal{U}} [x_{n,k}]_{n \in \mathcal{N}, k \in \mathbb{N}}$ for $k\in\mathbb{N}$.
		(In other words, $u_k$ is defined as the limit along $\mathcal{U}$ of the $k^{\mathrm{th}}$ column $\{ x_{n,k} \}_{n \in \mathcal{N}}$
		of the matrix $[x_{n,k}]_{n \in \mathcal{N}, k \in \mathbb{N}}$.
		This definition makes sense as $\{ x_{n,k} \}_{n \in \mathcal{N}}$ is bounded for each $k \in \mathbb{N}$).
		Since
		\begin{equation*}
			\norm{\sum_{k=1}^M u_k \boldsymbol{e}_k}_{\ell_p}
				= \lim_{\mathcal{U}} \norm{\sum_{k=1}^M x_{n,k} \boldsymbol{e}_k}_{\ell_p}
				\leqslant \norm{[x_{n,k}]_{n \in \mathcal{N}, k \in \mathbb{N}}}_{( \bigoplus_{n \in \mathcal{N}} \ell_p^n )_{\ell_{\infty}}}
				\quad (M \in \mathbb{N}),
		\end{equation*}
		so the map $Q \colon [x_{n,k}]_{n \in \mathcal{N}, k \in \mathbb{N}} \mapsto u$ defines a norm one operator into $\ell_p$ with $QT = \mathrm{id}_{\ell_p}$.

		Now, let $\{ \mathcal{N}_k \}_{k=1}^{\infty}$ be a partition of $\mathbb{N}$ into infinitely many infinite sets.
		Then
		\begin{equation*}
			( \bigoplus_{n=1}^{\infty} \ell_p^n )_{\ell_{\infty}} \cong ( \bigoplus_{k=1}^{\infty} ( \bigoplus_{n \in \mathcal{N}_k} \ell_p^n )_{\ell_{\infty}} )_{\ell_{\infty}},
		\end{equation*}
		and applying the previous step in each coordinate $k \in \mathbb{N}$, we get
		\begin{equation*}
			( \bigoplus_{n=1}^{\infty} \ell_p )_{\ell_{\infty}} \cembed{1} ( \bigoplus_{n=1}^{\infty} \ell_p^n )_{\ell_{\infty}}.
		\end{equation*}
		This yields \eqref{EQ : to chcemy} and, consequently, confirms that $( \bigoplus_{n=1}^{\infty} \ell_p^n )_{\ell_{\infty}}$
		and $( \bigoplus_{n=1}^{\infty} \ell_p )_{\ell_{\infty}}$ are indeed isomorphic.
		
		It is therefore enough to explain the quantitative part of the result:
		Using Theorem~\ref{THM : QPDM}(1) with parameters $\lambda = \mu = e = f = 1$, we immediately get an upper bound
		\begin{equation*}
			d_{\mathrm{BM}}(( \bigoplus_{n=1}^{\infty} \ell_p^n )_{\ell_{\infty}},( \bigoplus_{n=1}^{\infty} \ell_p )_{\ell_{\infty}}) \leqslant 3^2 \cdot 3^2 = 81,
		\end{equation*}
		as desired.
	\end{proof}
	
	\section{{\bf Main results}} \label{SECTION : Main results}
	
	This section contains the main results of this work, that is, Theorems~\ref{LEMMA : 6} and \ref{LEMMA : 6infty}.
	
	\subsection{Operators into separable $\mathcal{L}_q$-spaces} \label{SUBSECTION : operators Script L_p spaces}
	
	Recall that a Banach space $\mathsf{X}$ is an {\bf $\mathcal{L}_{p,\lambda}$-space} if every finite-dimensional subspace
	$\mathsf{F}$ of $\mathsf{X}$ is contained in a finite-dimensional subspace $\mathsf{G}$ of $\mathsf{X}$ whose Banach--Mazur distance
	to $\ell_p^{\dim(\mathsf{G})}$ is at most $\lambda$.
	A Banach space $\mathsf{X}$ is an {\bf $\mathcal{L}_p$-space} if it is an $\mathcal{L}_{p,\lambda}$-space for some $\lambda \geqslant 1$.
	The class of $\mathcal{L}_p$-spaces was introduced by Lindenstrauss and Pe{\l}czy{\' n}ski in their remarkable paper \cite{LP68}
	as the class of Banach spaces whose local structure resembles that of $\ell_p$.
	Up to isomorphism, there are infinitely many separable infinite-dimensional $\mathcal{L}_p$-spaces
	(see, for example, \cite{Lin70}); in fact, Johnson and Lindenstrauss \cite{JL80} showed that there are at least continuum many of them.
	
	We begin with an approximation lemma.
	
	\begin{lemma} \label{LEMMA : pieces}
		{\it Let $1 \leqslant p < \infty$, and let $\mathsf{X}$ be a separable $\mathcal{L}_{p,\lambda}$-space.
			Then there is a constant $C \geqslant 1$, an increasing sequence $\mathsf{E}_1 \subset \mathsf{E}_2 \subset \dots$ of finite-dimensional subspaces of $\mathsf{X}$
			with dense union, isomorphisms $u_n \colon \ell_p^{k_n} \rightarrow \mathsf{E}_n$ and projections $R_n$ of $\mathsf{X}$ onto $\mathsf{E}_n$ such that}
		\begin{equation*}
			\norm{u_n}\lVert u_n^{-1} \rVert \leqslant C \quad\text{and}\quad \norm{R_n} \leqslant C \quad (n \in \mathbb{N}).
		\end{equation*}
		{\it Moreover, $R_n \rightarrow \mathrm{id}_{\mathsf{X}}$ in the strong operator topology.}
	\end{lemma}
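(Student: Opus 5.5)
The plan is to build the chain $\mathsf{E}_n$ directly from the definition of an $\mathcal{L}_{p,\lambda}$-space and to obtain the projections $R_n$ from the fact that finite-dimensional subspaces well isomorphic to $\ell_p^k$ are uniformly complemented in $\mathcal{L}_p$-spaces.

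Fix a dense sequence $\{x_j\}_{j=1}^{\infty}$ in $\mathsf{X}$. We construct the spaces inductively. Let $\mathsf{E}_1$ be a finite-dimensional subspace containing $x_1$ with $d_{\mathrm{BM}}(\mathsf{E}_1,\ell_p^{\dim \mathsf{E}_1}) \leqslant \lambda$, which exists by the definition. Having $\mathsf{E}_n$, apply the definition to the finite-dimensional space $\mathsf{F} = \mathsf{E}_n + \mathrm{span}\{x_{n+1}\}$. This gives $\mathsf{E}_{n+1} \supset \mathsf{F}$ and isomorphisms $u_{n+1} \colon \ell_p^{k_{n+1}} \to \mathsf{E}_{n+1}$ with $\norm{u_{n+1}}\lVert u_{n+1}^{-1}\rVert \leqslant \lambda + 1$. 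The union of the $\mathsf{E}_n$ then contains every $x_j$, so it is dense. This settles the first half of the statement with $C \geqslant \lambda+1$.

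The main step is the uniform projections. For $1<p<\infty$ we cannot simply extend $u_n^{-1}$ to all of $\mathsf{X}$, since $\ell_p^k$ is not injective. We would instead invoke the Lindenstrauss--Pe{\l}czy{\'n}ski theorem \cite{LP68}: $\mathsf{X}$ is isomorphic to a complemented subspace of $L_p(\mu)$, say via $J \colon \mathsf{X} \to L_p$ and $P \colon L_p \to \mathsf{X}$ with $PJ = \mathrm{id}$. Suppose we have a projection $\Pi_n$ of $L_p$ onto $J\mathsf{E}_n$ of norm at most $C_0$, with $C_0$ depending only on $\lambda$, $p$ and $J$. Then $R_n \coloneqq J^{-1}\Pi_n J$ is well defined, because $\Pi_n$ lands in $J\mathsf{E}_n$. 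It satisfies $R_n x = x$ for $x\in\mathsf{E}_n$, and $\norm{R_n} \leqslant \lVert J^{-1}|_{J\mathsf{X}}\rVert\, C_0\, \norm{J}$.

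The existence of $\Pi_n$ is the genuine obstacle. The fact needed is that a subspace of $L_p$ which is $K$-isomorphic to $\ell_p^k$ is $C(K,p)$-complemented. This does not follow from $K$-isomorphism alone, so it is the real content of the construction, and there are two ways to secure it. One is to cite the known result that every $\mathcal{L}_p$-space admits such a ``$\pi_\lambda$'' filtration (\cite{LP68}; see also Johnson--Rosenthal--Zippin). The other is to re-run the induction choosing $\mathsf{E}_n$ more carefully: take $\mathsf{E}_{n+1}$ inside a finite-dimensional subspace of $L_p$ spanned by disjoint characteristic functions, where conditional expectations are contractive projections, and transport these projections back to $\mathsf{X}$ through $P$ and $J$. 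The first route is the one I would take. For $p=1$, $\mathsf{X}^{**}$ is complemented in some $L_1(\mu)^{**}$, which is an $L_1$-space, and the same citation applies.

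Finally, strong convergence follows from uniform boundedness plus density. For $x\in\bigcup_m\mathsf{E}_m$ we have $R_nx=x$ once $n$ is large, because the $\mathsf{E}_n$ increase. For general $x$ and $\varepsilon>0$, pick $y$ in the union with $\norm{x-y}<\varepsilon$. Then, for large $n$,
\[
\norm{R_nx-x}\leqslant \norm{R_n(x-y)} + \norm{y-x} \leqslant (C+1)\varepsilon .
\]
We then take $C$ to be the maximum of $\lambda+1$ and the bound on $\norm{R_n}$.
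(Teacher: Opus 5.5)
Your density argument for $R_n\to\mathrm{id}_{\mathsf{X}}$ is the paper's, and you correctly identify uniform complementation as the crux. As written, however, the construction does not produce the projections. The $\mathsf{E}_n$ of your first paragraph come from the bare definition. As you yourself observe, a $K$-isomorph of $\ell_p^k$ need not be uniformly complemented, so uniformly bounded projections $\Pi_n$ onto $J\mathsf{E}_n$ cannot be obtained \emph{after} these $\mathsf{E}_n$ have been fixed. Complementation has to be built into the choice at every inductive step.

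Your second route does not do this. First, $\mathsf{E}_{n+1}$ must lie in $\mathsf{X}$, and a span of disjoint indicators in $L_p(\mu)$ is generally not contained in $J\mathsf{X}$. Second, even if $J\mathsf{E}_{n+1}$ sits inside such a $1$-complemented span, that does not make $J\mathsf{E}_{n+1}$ itself complemented. Third, $P\Pi J$ is in general neither idempotent nor onto an $\ell_p^k$-like subspace of $\mathsf{X}$. The detour through $L_p(\mu)$ also fails for $p=1$ as stated: a separable $\mathcal{L}_1$-space need not even embed into $L_1$ (see Remark~\ref{REMARK : L1 not complemented in bidual}), so there are no $J,P$ with $PJ=\mathrm{id}_{\mathsf{X}}$. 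Passing to $\mathsf{X}^{**}$, which is complemented in some $L_1(\nu)$, only relocates the same unresolved complementation problem.

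Your first route, citing a ready-made filtration, is legitimate. But it amounts to citing the lemma itself, and it makes your first paragraph and the $J,P$ machinery superfluous rather than completing them. The paper's proof isolates the precise input, the Lindenstrauss--Rosenthal theorem \cite[Theorem~III(c)]{LR69}: there is $\varrho\geqslant 1$ such that every finite-dimensional subspace of $\mathsf{X}$ is contained in a finite-dimensional $\mathsf{G}$ with $d_{\mathrm{BM}}(\mathsf{G},\ell_p^{\dim\mathsf{G}})\leqslant 2\lambda$ which is the range of a projection of norm at most $\varrho$. Apply this, instead of the bare definition, to $\mathsf{E}_n+\mathrm{span}\{x_{n+1}\}$ at each inductive step. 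That yields $u_{n+1}$ and $R_{n+1}$ together, with $C=\max\{2\lambda,\varrho\}$, and your density argument then completes the proof.
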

	\begin{proof}
		By \cite[Theorem~III(c)]{LR69}, there is a constant $\varrho \geqslant 1$ such that every finite-dimensional subspace of $\mathsf{X}$
		is contained in a finite-dimensional subspace $\mathsf{G}$ with
		$d_{\mathrm{BM}}(\mathsf{G}, \ell_p^{\dim(\mathsf{G})}) \leqslant 2\lambda$
		which is the range of a projection of norm at most $\varrho$.
		Let $\{x_n\}_{n=1}^{\infty}$ be dense in $\mathsf{X}$.
		Applying the above inductively to $\mathsf{E}_{n-1} + \mathrm{span}\{x_n\}$ (with $\mathsf{E}_0 \coloneqq \{0\}$), we obtain the subspaces $\mathsf{E}_n$, isomorphisms $u_n$ and projections $R_n$ with $C \coloneqq \max\{2\lambda, \varrho\}$.
		
		For the last part, fix $x \in \mathsf{X}$ and $\varepsilon > 0$, and choose $N$ and $y \in \mathsf{E}_N$ with $\norm{x - y} \leqslant \varepsilon$.
		Since $\mathsf{E}_N \subset \mathsf{E}_n$ for $n \geqslant N$, we have $R_n y = y$, and hence
		\begin{equation*}
			\norm{R_n x - x} \leqslant \norm{R_n(x-y)} + \norm{y - x} \leqslant (C+1)\varepsilon
		\end{equation*}
		for $n \geqslant N$.
	\end{proof}
	
	\begin{theorem}[General variant of Arias and Farmer's result] \label{LEMMA : 6}
		{\it Let $1 \leqslant p \leqslant q < \infty$, let $\mathsf{X}$ be a separable $\mathcal{L}_p$-space, and let $\mathsf{Y}$ be a separable $\mathcal{L}_q$-space.
			Then the following conditions are equivalent:}
		\begin{enumerate}[label={\normalfont{(\arabic*)}}]
			\item\label{AF3} {\it the space $\mathsf{Y}$ is complemented in its bidual $\mathsf{Y}^{**}$.
				\item\label{AF1} {\it the space $\mathfrak{L}(\mathsf{X},\mathsf{Y})$ is isomorphic to $\mathfrak{L}(\ell_p,\ell_q)$;}
				
			}
		\end{enumerate}
		{\it In particular, condition \ref{AF3} always hold whenever $q > 1$, and also when $q = 1$ and $\mathsf{Y}$ is isomorphic to $L_1(\mu)$ for some measure $\mu$.}
	\end{theorem}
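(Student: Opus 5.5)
The plan is to prove both implications by comparing $\mathfrak{L}(\mathsf{X},\mathsf{Y})$ with the block-diagonal space
\begin{equation*}
\mathsf{M}_{p,q} \coloneqq \bigl( \textstyle\bigoplus_{n=1}^{\infty} \mathfrak{L}(\ell_p^n,\ell_q^n) \bigr)_{\ell_\infty},
\end{equation*}
and then to close the argument with Pe{\l}czy{\'n}ski's decomposition method (Theorem~\ref{THM : QPDM}). Since $\ell_p$ and $\ell_q$ are themselves a separable $\mathcal{L}_p$-space and $\mathcal{L}_q$-space, it is enough to show that $\mathfrak{L}(\mathsf{X},\mathsf{Y}) \approx \mathsf{M}_{p,q}$ for every admissible pair $\mathsf{X},\mathsf{Y}$.

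\emph{The implication \ref{AF1}$\Rightarrow$\ref{AF3}.}
\begin{enumerate}
\item[$\bullet$] Fix $x_0 \in S(\mathsf{X})$ and a norming functional $x_0^*$. The map $y \mapsto x_0^*\otimes y$ embeds $\mathsf{Y}$ into $\mathfrak{L}(\mathsf{X},\mathsf{Y})$, and $T \mapsto Tx_0$ is a left inverse. Hence $\mathsf{Y} \cembed{1} \mathfrak{L}(\mathsf{X},\mathsf{Y})$.
\item[$\bullet$] The space $\mathfrak{L}(\ell_p,\ell_q) \cong (\ell_p \widehat{\otimes} \ell_q^*)^*$ is a dual space when $q>1$. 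When $q=1$ (so $p=1$), it is $\mathfrak{L}(\ell_1,\ell_1) \cong \ell_\infty(\ell_1)$, which is complemented in $\ell_\infty(\ell_1^{**})$. In either case it is complemented in its bidual.
\item[$\bullet$] Lemma~\ref{LEMMA : bidual} then transfers complementation in the bidual to $\mathfrak{L}(\mathsf{X},\mathsf{Y})$ and further to $\mathsf{Y}$.
\end{enumerate}
The ``in particular'' part is also immediate. For $q>1$ the space $\mathsf{Y}$ is reflexive. The space $L_1(\mu)$ is complemented in its bidual via the band projection onto the absolutely continuous part, and this property is stable under isomorphism.

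\emph{The implication \ref{AF3}$\Rightarrow$\ref{AF1}.}
\begin{enumerate}
\item[$\bullet$] \emph{Step 1: $\mathsf{M}_{p,q} \cembed{} \mathfrak{L}(\mathsf{X},\mathsf{Y})$.} By Lindenstrauss--Pe{\l}czy{\'n}ski, $\ell_p \cembed{} \mathsf{X}$ and $\ell_q \cembed{} \mathsf{Y}$, so Lemma~\ref{LEMMA : 4} gives $\mathfrak{L}(\ell_p,\ell_q) \cembed{} \mathfrak{L}(\mathsf{X},\mathsf{Y})$. Next, split $\ell_p$ and $\ell_q$ into consecutive blocks of lengths $1,2,3,\dots$. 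Sending $T$ to its diagonal blocks is a norm-one projection onto a copy of $\mathsf{M}_{p,q}$. This is where $p \leqslant q$ enters: the inequality $(\sum_n a_n^q)^{1/q} \leqslant (\sum_n a_n^p)^{1/p}$ shows that a block-diagonal operator $\ell_p \to \ell_q$ has norm exactly the supremum of the norms of its blocks.
\item[$\bullet$] \emph{Step 2: $\mathfrak{L}(\mathsf{X},\mathsf{Y}) \cembed{} (\bigoplus_n \mathfrak{L}(\ell_p^{k_n},\ell_q^{m_n}))_{\ell_\infty}$.} Apply Lemma~\ref{LEMMA : pieces} to $\mathsf{X}$, obtaining $u_n$ and $R_n$, and to $\mathsf{Y}$, obtaining $v_n \colon \ell_q^{m_n} \to \mathsf{F}_n$ and projections $S_n$. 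Define
\begin{equation*}
\Phi T \coloneqq \bigl( v_n^{-1} S_n T u_n \bigr)_n .
\end{equation*}
This map is bounded. It is bounded below because $S_n T R_n \to T$ in the strong operator topology, so $\norm{T} \leqslant C^3 \sup_n \norm{v_n^{-1}S_nTu_n}$. For a left inverse $\Psi$, take a bounded sequence $(A_n)$, form the uniformly bounded operators $T_n \coloneqq v_n A_n u_n^{-1} R_n \colon \mathsf{X}\to\mathsf{Y}$, and set
\begin{equation*}
\Psi\bigl((A_n)_n\bigr) x \coloneqq \pi\Bigl( w^*\text{-}\lim_{\mathcal{U}} T_n x \Bigr),
\end{equation*}
where $\mathcal{U}$ is a free ultrafilter on $\mathbb{N}$ and $\pi \colon \mathsf{Y}^{**} \to \mathsf{Y}$ is the projection given by \ref{AF3}. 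If $(A_n) = \Phi T$, then $T_n x = S_n T R_n x \to Tx$ in norm, so $\Psi\Phi = \mathrm{id}$.
\item[$\bullet$] \emph{Step 3: change of block sizes.} Show that $(\bigoplus_n \mathfrak{L}(\ell_p^{k_n},\ell_q^{m_n}))_{\ell_\infty} \cembed{} \mathsf{M}_{p,q}$ for unbounded $k_n$ and $m_n$. Embed each rectangular block into a square block of size $\max\{k_n,m_n\}$ by zero padding, and compose with coordinate restrictions. Then use the fact that $\mathsf{M}_{p,q}$ is isometric to an $\ell_\infty$-sum in which each size occurs infinitely often, in the spirit of Lemma~\ref{LEMMA : 3}.
\item[$\bullet$] \emph{Step 4: decomposition.} The space $\mathsf{M}_{p,q}$ satisfies $\mathsf{M}_{p,q} \cong \ell_\infty(\mathsf{M}_{p,q})$ by regrouping blocks. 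Steps~1--3 give mutual complemented embeddings, so Theorem~\ref{THM : QPDM}(2) yields $\mathfrak{L}(\mathsf{X},\mathsf{Y}) \approx \mathsf{M}_{p,q}$. Applying the same conclusion to $\mathsf{X}=\ell_p$, $\mathsf{Y}=\ell_q$ gives \ref{AF1}.
\end{enumerate}

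The main obstacle is Step~2: recovering a genuine operator into $\mathsf{Y}$ from a merely bounded family of finite-dimensional pieces. The weak$^*$ ultralimit naturally lands in $\mathsf{Y}^{**}$, and condition \ref{AF3} is exactly what is needed to pull it back into $\mathsf{Y}$. Care is needed to check that $\Psi$ is linear and bounded; for linearity, the projection $\pi$ must be applied after taking the ultralimit. One must also verify that $\Psi$ restricts correctly on the range of $\Phi$, which relies on the norm convergence $S_nTR_n x \to Tx$ from Lemma~\ref{LEMMA : pieces}.
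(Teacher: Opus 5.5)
Your argument follows the paper's proof closely. The shared ingredients are:
\begin{itemize}
\item the weak$^*$-ultralimit composed with the projection $\pi\colon\mathsf{Y}^{**}\to\mathsf{Y}$, giving $\mathfrak{L}(\mathsf{X},\mathsf{Y})\cembed{}\bigl(\bigoplus_{n}\mathfrak{L}(\ell_p^{k_n},\ell_q^{m_n})\bigr)_{\ell_\infty}$;
\item the contractive block-diagonal projection, which is where $p\leqslant q$ enters;
\item the complemented copies of $\ell_p$ and $\ell_q$ combined with Lemma~\ref{LEMMA : 4};
\item Theorem~\ref{THM : QPDM}(2);
\item for the converse, the dual-space structure of $\mathfrak{L}(\ell_p,\ell_q)$, $\mathsf{Y}\cembed{1}\mathfrak{L}(\mathsf{X},\mathsf{Y})$ and Lemma~\ref{LEMMA : bidual}.
\end{itemize}
The only organisational difference is the pivot. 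You compare every admissible $\mathfrak{L}(\mathsf{X},\mathsf{Y})$ with $\mathsf{M}_{p,q}$ and then specialise to $(\ell_p,\ell_q)$. The paper compares $\mathfrak{L}(\mathsf{X},\mathsf{Y})$ with $\mathfrak{L}(\ell_p,\ell_q)$ directly.

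One step does not hold as written: the accordion property in Step~4. Regrouping only permutes the summands of an $\ell_\infty$-sum. In $\mathsf{M}_{p,q}$ each block $\mathfrak{L}(\ell_p^n,\ell_q^n)$ occurs once, while in $\ell_\infty(\mathsf{M}_{p,q})$ it occurs infinitely often, so no regrouping gives $\mathsf{M}_{p,q}\cong\ell_\infty(\mathsf{M}_{p,q})$. The same objection applies to the claimed isometry in Step~3 with \enquote{an $\ell_\infty$-sum in which each size occurs infinitely often}.

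Theorem~\ref{THM : QPDM}(2) only needs an isomorphism, and that is easy to supply.
\begin{itemize}
\item Zero padding and restriction give $\mathfrak{L}(\ell_p^n,\ell_q^n)\cembed{1}\mathfrak{L}(\ell_p^N,\ell_q^N)$ for $N\geqslant n$.
\item Choose pairwise distinct $N(i,n)\geqslant n$ and send the $(i,n)$-th block of $\ell_\infty(\mathsf{M}_{p,q})$ to the block of size $N(i,n)$. This gives $\ell_\infty(\mathsf{M}_{p,q})\cembed{1}\mathsf{M}_{p,q}$.
\item Trivially $\mathsf{M}_{p,q}\cembed{1}\ell_\infty(\mathsf{M}_{p,q})$, and $\ell_\infty(\mathsf{M}_{p,q})\cong\ell_\infty(\ell_\infty(\mathsf{M}_{p,q}))$ is a genuine relabelling of indices.
\item Theorem~\ref{THM : QPDM}(2) then yields $\mathsf{M}_{p,q}\approx\ell_\infty(\mathsf{M}_{p,q})$.
\end{itemize}
The paper avoids this by proving the accordion property for $\mathfrak{L}(\ell_p,\ell_q)$ itself, via block-diagonal operators along a partition of $\mathbb{N}$ into infinitely many infinite sets.

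In Step~3 the simplest repair is the paper's: place the $n$-th rectangular block into a square block of size $N_n\geqslant\max\{k_n,m_n\}$ with $N_n$ strictly increasing.

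Finally, in the case $q=1$, knowing that $\ell_\infty(\ell_1)$ is complemented in $\ell_\infty(\ell_1^{**})$ gives complementation in its own bidual only after you add that $\ell_\infty(\ell_1^{**})$ is a dual space. It is quicker to note that $\ell_\infty(\ell_1)\cong(\ell_1(c_0))^{*}$.
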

	\begin{proof}	{\bf Zero step: Strategy.}
		First denote
		\begin{equation*}
			\mathfrak{W}_{p,q} \coloneqq \bigl( \bigoplus_{n=1}^{\infty} \mathfrak{L}(\ell_p^n,\ell_q^n) \bigr)_{\ell_{\infty}}.
		\end{equation*}
		The implication \ref{AF3} $\Rightarrow$ \ref{AF1} rests on Pe{\l}czy{\' n}ski's decomposition method.
		Assuming \ref{AF3}, we will show that
		\begin{equation} \label{CONDITION 1} \tag{$\spadesuit$}
			\text{the space $\mathfrak{L}(\mathsf{X},\mathsf{Y})$ is isomorphic to a complemented subspace of $\mathfrak{W}_{p,q}$;}
		\end{equation}
		\begin{equation} \label{CONDITION 2} \tag{$\diamondsuit$}
			\text{the space $\mathfrak{W}_{p,q}$ is isometric to a complemented subspace of $\mathfrak{L}(\ell_p,\ell_q)$;}
		\end{equation}
		\begin{equation}\label{CONDITION 3} \tag{$\clubsuit$}
			\text{the space $\mathfrak{L}(\ell_p,\ell_q)$  is isomorphic to a complemented subspace of $\mathfrak{L}(\mathsf{X},\mathsf{Y})$;}
		\end{equation}
		\begin{equation} \label{CONDITION 4} \tag{$\heartsuit$}
			\text{$\mathfrak{L}(\ell_p,\ell_q)$ satisfies Pe{\l}czy{\' n}ski's accordion property;}
		\end{equation}
		Theorem~\ref{THM : QPDM}\ref{2C} then yields \ref{AF1}. 
		The implication \ref{AF1} $\Rightarrow$ \ref{AF3} is proved in the {\it \enquote{Seventh step}}.
		
		{\bf First step: Finite-dimensional pieces.}
		Let $\mathsf{E}_n$, $u_n \colon \ell_p^{k_n} \rightarrow \mathsf{E}_n$ and $R_n$ be as in Lemma~\ref{LEMMA : pieces} for $\mathsf{X}$,
		and let $\mathsf{F}_n$, $v_n \colon \ell_q^{d_n} \rightarrow \mathsf{F}_n$ and $Q_n$ be their counterparts for $\mathsf{Y}$. We put  $C$ as a greater of constants given by constructions form Lemma~\ref{LEMMA : pieces}.
		We may assume that
		\begin{equation*}
			\lVert u_n^{-1} \rVert = \lVert v_n^{-1} \rVert = 1.
		\end{equation*}
		For $n \in \mathbb{N}$, define
		$\Phi_n \colon \mathfrak{L}(\mathsf{X},\mathsf{Y}) \rightarrow \mathfrak{L}(\ell_p^{k_n},\ell_q^{d_n})$ and
		$\Theta_n \colon \mathfrak{L}(\ell_p^{k_n},\ell_q^{d_n}) \rightarrow \mathfrak{L}(\mathsf{X},\mathsf{Y})$
		so that the diagrams
		\[\begin{tikzcd}
			{\mathsf{X}} & {\mathsf{Y}} & {\mathsf{X}} & {\mathsf{Y}} \\
			{\ell_p^{k_n}} & {\ell_q^{d_n}} & {\ell_p^{k_n}} & {\ell_q^{d_n}}
			\arrow["T", from=1-1, to=1-2]
			\arrow["{v_n^{-1}Q_n}"', from=1-2, to=2-2]
			\arrow["{\Theta_n S}", dotted, from=1-3, to=1-4]
			\arrow["{u_n^{-1}R_n}"', from=1-3, to=2-3]
			\arrow["{u_n}", from=2-1, to=1-1]
			\arrow["{\Phi_n T}", dotted, from=2-1, to=2-2]
			\arrow["S", from=2-3, to=2-4]
			\arrow["{v_n}", from=2-4, to=1-4]
		\end{tikzcd}\]
		commute, that is, $\Phi_n(T) \coloneqq v_n^{-1}Q_nTu_n$ and $\Theta_n(S) \coloneqq v_nSu_n^{-1}R_n$.
		
		Since $Q_n v_n = v_n$ and $R_n u_n = u_n$, we have
		\begin{equation} \label{CONDITION 5}
			\Phi_n\Theta_n = \mathrm{id}_{\mathfrak{L}(\ell_p^{k_n},\ell_q^{d_n})}
			\quad\text{and}\quad
			\Theta_n\Phi_n(T) = Q_n T R_n \quad (T \in \mathfrak{L}(\mathsf{X},\mathsf{Y})),
		\end{equation}
		with $\norm{\Phi_n}, \norm{\Theta_n} \leqslant C^2$ for all $n\in\mathbb{N}$.
		Moreover, by Lemma~\ref{LEMMA : pieces},
		\begin{equation} \label{CONDITION 6}
			\norm{\Theta_n\Phi_n(T)x - Tx}_{\mathsf{Y}}
			\leqslant C \norm{T} \norm{R_n x - x}_{\mathsf{X}} + \norm{Q_n Tx - Tx}_{\mathsf{Y}} \rightarrow 0 \quad (x \in \mathsf{X}).
		\end{equation}
		
		{\bf Second step: Proof of \eqref{CONDITION 1}.}
		Let $\pi \colon \mathsf{Y}^{**} \rightarrow \mathsf{Y}$ be a bounded projection (this is where \ref{AF3} is used) and let $\mathcal{U}$ be a free ultrafilter on $\mathbb{N}$.
		Put
		\begin{equation*}
			\mathfrak{W}' \coloneqq \bigl( \bigoplus_{n=1}^{\infty} \mathfrak{L}(\ell_p^{k_n},\ell_q^{d_n}) \bigr)_{\ell_{\infty}}
		\end{equation*}
		and define $\Phi \colon \mathfrak{L}(\mathsf{X},\mathsf{Y}) \rightarrow \mathfrak{W}'$ and $\Theta \colon \mathfrak{W}' \rightarrow \mathfrak{L}(\mathsf{X},\mathsf{Y})$ by
		\begin{equation*}
			\Phi(T) \coloneqq \{ \Phi_n(T) \}_{n=1}^{\infty}
			\quad\text{and}\quad
			\Theta(\{S_n\}_{n=1}^{\infty})x \coloneqq \pi\Bigl( \mathrm{weak}^{*}\text{-}\lim_{\mathcal{U}} \Theta_n(S_n)x \Bigr) \quad (x \in \mathsf{X}).
		\end{equation*}
		The weak$^{*}$-limit exists in $\mathsf{Y}^{**}$, because the sequence $\{\Theta_n(S_n)x\}_{n=1}^{\infty}$ is bounded by
		$C^2 \lVert \{S_n\}_{n=1}^{\infty} \rVert \, \norm{x}$
		and bounded subsets of $\mathsf{Y}^{**}$ are weak$^{*}$-compact.
		Clearly, $\Theta$ is linear and $\norm{\Theta} \leqslant C^2\norm{\pi}$.
		
		By \eqref{CONDITION 6}, $\Theta_n\Phi_n(T)x \rightarrow Tx$ in norm, so
		\begin{equation*}
			\Theta\Phi(T)x = \pi(Tx) = Tx,
		\end{equation*}
		that is, $\Theta\Phi = \mathrm{id}_{\mathfrak{L}(\mathsf{X},\mathsf{Y})}$.
		Hence $\mathfrak{L}(\mathsf{X},\mathsf{Y}) \cembed{} \mathfrak{W}'$.
		
		Next, choose a strictly increasing sequence $\{m_n\}_{n=1}^{\infty}$ with $m_n \geqslant \max\{k_n,d_n\}$.
		Embed $\mathfrak{L}(\ell_p^{k_n},\ell_q^{d_n})$ into the $m_n^{\mathrm{th}}$ block of $\mathfrak{W}_{p,q}$ via $S \mapsto \jmath S \varrho$,
		where $\varrho \colon \ell_p^{m_n} \rightarrow \ell_p^{k_n}$ restricts to the first $k_n$ coordinates and
		$\jmath \colon \ell_q^{d_n} \rightarrow \ell_q^{m_n}$ is the natural inclusion.
		We see that $\mathfrak{W}' \cembed{1} \mathfrak{W}_{p,q}$.
		
		{\bf Third step: Proof of \eqref{CONDITION 2}.}
		To show that $\mathfrak{W}_{p,q} \cembed{1} \mathfrak{L}(\ell_p,\ell_q)$, put
		\begin{equation*}
			A_n \coloneqq \left\{ \frac{n(n-1)}{2}+1, \dots, \frac{n(n-1)}{2}+n \right\} \qquad (n \in \mathbb{N}),
		\end{equation*}
		that is, $A_n$'s are consecutive subsets of the natural numbers with $\mathrm{card} A_n=n$. Let $E_{A_n}x=x\chi_{A_n}$ for $x \in \mathbb{K}^{\mathbb{N}}$ and
		define
		\begin{equation*}
			\Delta T \coloneqq \sum_{n=1}^\infty E_{A_n} T E_{A_n} \qquad (T \in \mathfrak{L}(\ell_p,\ell_q)).
		\end{equation*}
		Plainly, the following hexagon commutes:
		\begin{equation*}
			\begin{tikzcd}
				& \ell_p \arrow[r, "T"]                                     & \ell_q \arrow[rd, "E_{A_n}"] &                             \\
				\ell_p(A_n) \arrow[ru] &                                                           &                        & \ell_q(A_n) \arrow[ld] \\
				& \ell_p \arrow[lu, "E_{A_n}"] \arrow[r, "\Delta T", dotted] & \ell_q                 &                            
			\end{tikzcd}
		\end{equation*}
		Since the sets $A_n$ are pairwise disjoint and $p \leqslant q$, for $x \in \mathrm{Ball}(\ell_p)$, we have
		\begin{equation} \label{EQ : Delta contraction}
			\begin{aligned}
				\norm{\Delta T x}_{\ell_q}
				& = \Bigl( \sum_{n=1}^{\infty} \norm{ E_{A_n} T E_{A_n} x }_{\ell_q}^q \Bigr)^{1/q}
				\leqslant \norm{T} \Bigl( \sum_{n=1}^{\infty} \norm{ E_{A_n} x }_{\ell_p}^q \Bigr)^{1/q} \\
				& \leqslant \norm{T} \Bigl( \sum_{n=1}^{\infty} \norm{ E_{A_n} x }_{\ell_p}^p \Bigr)^{1/p}
				\leqslant \norm{T}
			\end{aligned}
		\end{equation}
		which means that $\Delta$ is a contractive projection on $\mathfrak{L}(\ell_p,\ell_q)$.
		
		Therefore, it remains to argue that $\Delta \mathfrak{L}(\ell_p,\ell_q)$ is isometric to $\mathfrak{W}_{p,q}$.
		To this end, take $\boldsymbol{W} \coloneqq \{ W_n \colon \ell_p^n \rightarrow \ell_q^n\}_{n=1}^{\infty}$ from $\mathfrak{W}_{p,q}$.
		We can represent each of $W_n$'s as a matrix, in the sense that there is $[a_n(i,j)] \in \mathbb{K}^{n \times n}$ such that
		$W_n = \sum_{i=1}^n \sum_{j=1}^n a_n(i,j) \, \boldsymbol{e}_i \otimes \boldsymbol{e}_j$.
		Given $\boldsymbol{W}$ from $\mathfrak{W}_{p,q}$, let us define the block-diagonal operator $\text{diag} \, \boldsymbol{W} \colon \ell_p \rightarrow \ell_q$
		in the following way
		\begin{equation*}
			\text{diag} \, \boldsymbol{W}
			\coloneq \sum_{n=1}^\infty \sum_{i=1}^n \sum_{j=1}^n a_n(i,j) \, \boldsymbol{e}_{\frac{1}{2}n(n+1)+i} \otimes \boldsymbol{e}_{\frac{1}{2}n(n+1)+j}.
		\end{equation*}
		The same computation as in \eqref{EQ : Delta contraction} shows that the mapping $\Upsilon \colon \mathfrak{W}_{p,q} \rightarrow \mathfrak{L}(\ell_p,\ell_q)$ defined as
		$\Upsilon(\boldsymbol{W}) \coloneqq \text{diag} \, \boldsymbol{W}$
		gives the said isometry between $\mathfrak{W}_{p,q}$ and $\Delta \mathfrak{L}(\ell_p,\ell_q)$.		Thus  \eqref{CONDITION 2} follows.
		
		{\bf Fourth step: Proof of \eqref{CONDITION 4}.}
		Every infinite-dimensional $\mathcal{L}_s$-space with $1 \leqslant s < \infty$ contains a complemented subspace isomorphic to $\ell_s$
		(see \cite[Proposition~7.3]{LP68}; {\it cf.} \cite[Theorem~I(iv)]{LR69}).
		Hence $\ell_p \cembed{} \mathsf{X}$ and $\ell_q \cembed{} \mathsf{Y}$, and Lemma~\ref{LEMMA : 4} gives \eqref{CONDITION 4}.
		
		{\bf Fifth step: Proof of \eqref{CONDITION 3}.}
		We now will show that $\ell_\infty(\mathfrak{L}(\ell_p,\ell_q))\cong \mathfrak{L}(\ell_p,\ell_q)$.
		Write $\mathbb{N} = \bigcup_{n=1}^{\infty} \mathcal{N}_n$, where the sets $\mathcal{N}_n$ are infinite and pairwise disjoint,
		and   let $\sigma_n \colon \mathbb{N} \rightarrow \mathcal{N}_n$ be bijections.   
		Given $\boldsymbol{T} = \{T_n\}_{n=1}^{\infty} \in \ell_{\infty}(\mathfrak{L}(\ell_p,\ell_q))$, let $\Sigma\boldsymbol{T} \in \mathfrak{L}(\ell_p,\ell_q)$ be the operator which,
		for each $n$, maps $\ell_p(\mathcal{N}_n)$ into $\ell_q(\mathcal{N}_n)$
		(note that each of $\sigma_n$'s induce an isometry $\ell_s \cong \ell_s(\mathcal{N}_n)$ for $s \in \{p,q\}$).
		Exactly as in \eqref{EQ : Delta contraction} (here again $p \leqslant q$ is used), $\Sigma$ is an isometry of $\ell_{\infty}(\mathfrak{L}(\ell_p,\ell_q))$ into $\mathfrak{L}(\ell_p,\ell_q)$,
		and $$P T \coloneqq \sum_{n=1}^{\infty} E_{\mathcal{N}_n} T E_{\mathcal{N}_n}$$ is a contractive projection of $\mathfrak{L}(\ell_p,\ell_q)$ onto its range.
		Thus
		\begin{equation*}
			\ell_{\infty}(\mathfrak{L}(\ell_p,\ell_q)) \cembed{1} \mathfrak{L}(\ell_p,\ell_q)
		\end{equation*}
		and, obviously,
		\begin{equation*}
			\mathfrak{L}(\ell_p,\ell_q)\cembed{1} \ell_{\infty}(\mathfrak{L}(\ell_p,\ell_q)).
		\end{equation*}
		Since
		\begin{equation*}
			\ell_{\infty}(\mathfrak{L}(\ell_p,\ell_q)) \cong \ell_{\infty}(\ell_{\infty}(\mathfrak{L}(\ell_p,\ell_q))),
		\end{equation*}
		Theorem~\ref{THM : QPDM}(2) gives \eqref{CONDITION 4}.
		
		{\bf Sixth step: \ref{AF3} implies \ref{AF1}.}
		By \eqref{CONDITION 1}, \eqref{CONDITION 2}, \eqref{CONDITION 3} and \eqref{CONDITION 4}, Theorem~\ref{THM : QPDM}\ref{2C} applied to $\mathfrak{L}(\ell_p,\ell_q)$ and $\mathfrak{L}(\mathsf{X},\mathsf{Y})$
		gives $\mathfrak{L}(\mathsf{X},\mathsf{Y}) \approx \mathfrak{L}(\ell_p,\ell_q)$, that is, the condition \ref{AF1} follows.
		
		{\bf Seventh step: \ref{AF1} implies \ref{AF3}.}
		The space $\mathfrak{L}(\ell_p,\ell_q)$ is a dual space:
		By \cite[Theorem~2.9]{Rya02}, $\mathfrak{L}(\ell_p,\ell_q) \cong (\ell_p \widetilde{\otimes}_{\pi} \ell_{q'})^{*}$ for $q > 1$,
		where $1/q + 1/q' = 1$, and $\mathfrak{L}(\ell_p,\ell_1) \cong (\ell_p \widetilde{\otimes}_{\pi} c_0)^{*}$.
		Hence, if \ref{AF1} holds, then $\mathfrak{L}(\mathsf{X},\mathsf{Y})$ is complemented in its bidual by Lemma~\ref{LEMMA : bidual}.
		On the other hand, $\mathsf{Y} \cembed{1} \mathfrak{L}(\mathsf{X},\mathsf{Y})$. To see this, fix $x_0 \in S(\mathsf{X})$ and $x_0^{*} \in S(\mathsf{X}^{*})$ with $x_0^{*}(x_0) = 1$,
		and consider $y \mapsto x_0^{*}\otimes y$ and $T \mapsto Tx_0$.
		The former provides isometry while the latter projection.
		Now, Lemma~\ref{LEMMA : bidual} gives \ref{AF3}.
		
		{\bf The \enquote{in particular} part.}
		For $q > 1$, every $\mathcal{L}_q$-space is isomorphic to a complemented subspace of an $L_q(\mu)$-space \cite{LR69}, hence reflexive, so \ref{AF3} holds trivially.
		The space $\ell_1 = c_0^{*}$ is a dual space, and $L_1(\mu)$ is the range of a norm one projection on its bidual (it is an $L$-summand in $L_1(\mu)^{**}$).
	\end{proof}

	\begin{remark}[The case $p > q$] \label{REMARK : p>q}
		The restriction $p \leqslant q$ in Theorem~\ref{LEMMA : 6} cannot be dropped.
		 We will show that $\mathfrak{L}(L_p[0,1], L_q[0,1])$ is  \textbf{not} isomorphic to $\mathfrak{L}(\ell_p,\ell_q)$ when $1 \leqslant q < p < \infty$ (so that $p > 1$).		
		
		Recall that, due to Khintchine's inequality, Rademacher functions span a complemented subspace  isomorphic to $\ell_2$ in both $L_p[0,1]$ and $L_q[0,1]$ (see \cite[Theorem~2.b.4]{LT79}).
		Thus, using Lemma \ref{LEMMA : 4}, we get that
		\begin{equation*}
			\mathfrak{L}(\ell_2)\cembed{}\mathfrak{L}(L_p[0,1], L_q[0,1]).
		\end{equation*}
		This, in particular, means that $\mathfrak{L}(L_p[0,1], L_q[0,1])$ cannot be separable.
		
		On the other hand, by Pitt's theorem every operator from $\ell_p$ into $\ell_q$ is compact, and since $\ell_q$ has a basis, $\mathfrak{L}(\ell_p,\ell_q)$ is the closure of the finite-rank operators, hence separable.
		(In the proof of Theorem \ref{LEMMA : 6} the argument breaks down because $\mathscr{M}(\ell_p,\ell_q) = \ell_r$ with $1/r = 1/q - 1/p$, and the sequence $\{ \norm{\Phi_n(T)} \}_{n=1}^{\infty}$ in general is only bounded and does not belong to $\ell_r$.)
		\demo
	\end{remark}
	
	\begin{remark}[$\mathcal{L}_1$-spaces which are not complemented in their biduals] \label{REMARK : L1 not complemented in bidual}
		Condition \ref{AF3} in Theorem~\ref{LEMMA : 6} is not automatic for $q = 1$.
		Indeed, if a separable $\mathcal{L}_1$-space $\mathsf{Y}$ is complemented in $\mathsf{Y}^{**}$, then $\mathsf{Y}$ is isomorphic to a complemented subspace of an $L_1(\mu)$-space
		(since the dual $\mathcal{L}_{\infty}$-space $\mathsf{Y}^{*}$ is injective \cite{Lin64}, $\mathsf{Y}^{**}$ is complemented in some $L_1(\mu)$),
		and hence to a subspace of $L_1[0,1]$.
		However, there are separable $\mathcal{L}_1$-spaces which do not embed into $L_1[0,1]$ (see \cite{Bou81} and \cite{JL80}).
		For such a space $\mathsf{Y}$ and any separable $\mathcal{L}_1$-space $\mathsf{X}$, the space $\mathfrak{L}(\mathsf{X},\mathsf{Y})$ is {\bf not} isomorphic to $\mathfrak{L}(\ell_1,\ell_1)$.
		\demo
	\end{remark}
	
	\subsection{Operators from separable $\mathcal{L}_{\infty}$-spaces} \label{SUBSECTION : operators Script L_infty spaces}
	
	Theorem~\ref{LEMMA : 6} does not cover the case where $\mathsf{X}$ or $\mathsf{Y}$ is a separable $\mathcal{L}_\infty$-space.
	In what follows we fill this gap in the case where the target space is $\ell_{\infty}$.
	From a pragmatic point of view, this is not a big restriction, because this case already exceeds our future needs
	(see the proof of Theorem~\ref{THM : suma L1 isomorficzna z miarami}).
	The general picture, however, remains a mystery (see Question~\ref{QUESTION : general targets}).
	
	\begin{theorem} \label{LEMMA : 6infty}
		{\it Let $\mathsf{X}$ be a separable $\mathcal{L}_\infty$-space.
			Then the space $\mathfrak{L}(\mathsf{X}, \ell_\infty) \cong \ell_{\infty}(\mathsf{X}^{*})$ is isomorphic to the space $\mathfrak{L}(c_0,\ell_\infty) \cong \ell_{\infty}(\ell_1)$.}
	\end{theorem}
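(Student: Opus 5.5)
The plan is to run the Pe{\l}czy{\' n}ski decomposition scheme from the proof of Theorem~\ref{LEMMA : 6}, with $p=\infty$ on the domain side and $\ell_\infty$ on the target side. The target $\ell_\infty = \ell_1^{*}$ is itself a dual space, so the ultrafilter step needs no projection $\pi$ at all. First, the isometries in the statement are just \eqref{EQ : L(X,l_infty)} together with $c_0^{*}\cong\ell_1$. Put $\mathfrak{V} \coloneqq (\bigoplus_{n=1}^{\infty}\ell_1^n)_{\ell_\infty}$. By Lemma~\ref{LEMMA : 2}, $\mathfrak{V} \approx \ell_\infty(\ell_1) \cong \mathfrak{L}(c_0,\ell_\infty)$. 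Moreover $\ell_\infty(\ell_1)\cong\ell_\infty(\ell_\infty(\ell_1))$, so $\mathfrak{L}(c_0,\ell_\infty)$ has Pe{\l}czy{\' n}ski's accordion property. By Theorem~\ref{THM : QPDM}\ref{2C}, it therefore suffices to show two complementations: $\mathfrak{L}(c_0,\ell_\infty) \cembed{} \mathfrak{L}(\mathsf{X},\ell_\infty)$, and $\mathfrak{L}(\mathsf{X},\ell_\infty) \cembed{} \mathfrak{V}$.

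The first complementation is easy. A separable $\mathcal{L}_\infty$-space contains $c_0$ (Lindenstrauss--Pe{\l}czy{\' n}ski), and by Sobczyk's theorem that copy is complemented in the separable space $\mathsf{X}$. Lemma~\ref{LEMMA : 4}, applied with $\ell_\infty \cembed{1} \ell_\infty$, then gives $\mathfrak{L}(c_0,\ell_\infty)\cembed{}\mathfrak{L}(\mathsf{X},\ell_\infty)$.

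For the second, I first need an $\mathcal{L}_\infty$ version of Lemma~\ref{LEMMA : pieces}. I would obtain it directly, without citing \cite{LR69}. Since $\mathsf{X}$ is $\mathcal{L}_{\infty,\lambda}$, one can inductively choose finite-dimensional $\mathsf{E}_1\subset\mathsf{E}_2\subset\cdots$ with dense union and isomorphisms $u_n\colon\ell_\infty^{k_n}\to\mathsf{E}_n$ satisfying $\lVert u_n\rVert\lVert u_n^{-1}\rVert\leqslant\lambda$. The projections come for free, because $\ell_\infty^{k_n}$ is $1$-injective. Extending $u_n^{-1}\colon\mathsf{E}_n\to\ell_\infty^{k_n}$ to all of $\mathsf{X}$ with the same norm and composing with $u_n$ gives a projection $R_n$ onto $\mathsf{E}_n$ with $\lVert R_n\rVert\leqslant\lambda$. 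The argument for $R_n\to\mathrm{id}_{\mathsf{X}}$ strongly is then verbatim the one in Lemma~\ref{LEMMA : pieces}.

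With these pieces, I define $\Phi(T)\coloneqq\{Tu_n\}_{n}$ (normalising so that $\lVert u_n^{-1}\rVert=1$). This maps $\mathfrak{L}(\mathsf{X},\ell_\infty)$ into
\begin{equation*}
\mathfrak{W}'\coloneqq\bigl(\bigoplus_{n=1}^{\infty}\mathfrak{L}(\ell_\infty^{k_n},\ell_\infty)\bigr)_{\ell_\infty}.
\end{equation*}
In the other direction, for a free ultrafilter $\mathcal{U}$ on $\mathbb{N}$ I set
\begin{equation*}
\Theta(\{S_n\}_n)x\coloneqq \mathrm{weak}^{*}\text{-}\lim_{\mathcal{U}} S_nu_n^{-1}R_nx \qquad (x\in\mathsf{X}).
\end{equation*}
This limit exists in $\ell_\infty=\ell_1^{*}$ by weak$^{*}$-compactness of bounded sets, and $\Theta$ is linear and bounded. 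Since $Tu_nu_n^{-1}R_nx=TR_nx\to Tx$ in norm, we get $\Theta\Phi=\mathrm{id}$.

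It remains to identify $\mathfrak{W}'$ with a complemented subspace of $\mathfrak{V}$. Applying Lemma~\ref{LEMMA : 1}/Lemma~\ref{LEMMA : 5} blockwise gives
\begin{equation*}
\mathfrak{L}(\ell_\infty^{k},\ell_\infty)\cong\ell_\infty\bigl((\ell_\infty^{k})^{*}\bigr)=\ell_\infty(\ell_1^{k}),
\end{equation*}
and hence $\mathfrak{W}'\cong\bigl(\bigoplus_{n}\ell_\infty(\ell_1^{k_n})\bigr)_{\ell_\infty}\cong(\bigoplus_{m}\ell_1^{j_m})_{\ell_\infty}$ for a suitable enumeration $j_m$ of the dimensions. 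This is $1$-complementably embedded in $\mathfrak{V}$: pad each $\ell_1^{j_m}$ into a distinct block $\ell_1^{N_m}$ with $N_m\geqslant j_m$ strictly increasing, and use the coordinate restriction as the projection. This completes $\mathfrak{L}(\mathsf{X},\ell_\infty)\cembed{}\mathfrak{V}\approx\mathfrak{L}(c_0,\ell_\infty)$, so Theorem~\ref{THM : QPDM}\ref{2C} finishes the proof.

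The step I expect to require the most care is the $\mathcal{L}_\infty$ analogue of Lemma~\ref{LEMMA : pieces}, namely getting nested subspaces with dense union and uniformly bounded projections. The injectivity of $\ell_\infty^{k}$ should settle the projections, but the nesting has to be done carefully. The weak$^{*}$ limit step is the conceptual heart of the argument, yet it is actually simpler here than in Theorem~\ref{LEMMA : 6}, since no projection from the bidual is needed.
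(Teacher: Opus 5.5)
Most of your argument is sound and is essentially the paper's. This covers the pieces $u_n\colon\ell_\infty^{k_n}\to\mathsf{E}_n$ with projections obtained from the $1$-injectivity of $\ell_\infty^{k_n}$ (the step you flagged as delicate is in fact routine), the weak$^{*}$-limit along $\mathcal{U}$ in the dual target $\ell_\infty$, and the final appeal to Theorem~\ref{THM : QPDM}\ref{2C}. You keep the target untruncated and use $\mathfrak{L}(\ell_\infty^{k},\ell_\infty)\cong\ell_\infty(\ell_1^{k})$. This is a harmless variant of the paper's truncation to $\mathfrak{L}(\ell_\infty^{k_n},\ell_\infty^{k_n})$, and it even gives norm convergence $TR_nx\to Tx$.

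The gap is in the step you call easy: it is \emph{false} that every separable $\mathcal{L}_\infty$-space contains $c_0$. Bourgain and Delbaen, in the same paper used in Corollary~\ref{COROLLARY : c many preduals}, constructed separable $\mathcal{L}_\infty$-spaces containing no copy of $c_0$, even ones with dual isomorphic to $\ell_1$. The Argyros--Haydon space is another example: it is a hereditarily indecomposable separable $\mathcal{L}_\infty$-space, so it contains no $c_0$ either. For such $\mathsf{X}$, Sobczyk's theorem has nothing to act on, and your proof of $\mathfrak{L}(c_0,\ell_\infty)\cembed{}\mathfrak{L}(\mathsf{X},\ell_\infty)$ breaks down. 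Your argument only covers spaces such as $\mathcal{C}(K)$. That happens to suffice for Theorem~\ref{THM : suma L1 isomorficzna z miarami}, but not for the theorem as stated.

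The fix is to work on the dual side, as the paper does. We have $\mathfrak{L}(\mathsf{X},\ell_\infty)\cong\ell_\infty(\mathsf{X}^{*})$, and $\mathsf{X}^{*}$ is an infinite-dimensional $\mathcal{L}_1$-space. Hence it contains a complemented copy of $\ell_1$, so $\ell_\infty(\ell_1)\cembed{}\ell_\infty(\mathsf{X}^{*})$.

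You can even avoid these citations using the pieces you already built. Write $\widetilde{u}_n$ for your norm-preserving extension of $u_n^{-1}$. Then $\widetilde{u}_nu_n=\mathrm{id}_{\ell_\infty^{k_n}}$, so $u_n^{*}\widetilde{u}_n^{*}=\mathrm{id}_{\ell_1^{k_n}}$, which gives $\ell_1^{k_n}\cembed{\lambda}\mathsf{X}^{*}$ uniformly in $n$. Placing the $n^{\mathrm{th}}$ block in the $n^{\mathrm{th}}$ coordinate gives $(\bigoplus_{n=1}^{\infty}\ell_1^{k_n})_{\ell_\infty}\cembed{\lambda}\ell_\infty(\mathsf{X}^{*})$. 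Since the $\mathsf{E}_n$ are nested with dense union in an infinite-dimensional space, $k_n\to\infty$. Lemmas~\ref{LEMMA : 3} and~\ref{LEMMA : 2} then give $(\bigoplus_{n=1}^{\infty}\ell_1^{k_n})_{\ell_\infty}\approx(\bigoplus_{n=1}^{\infty}\ell_1^{n})_{\ell_\infty}\approx\ell_\infty(\ell_1)\cong\mathfrak{L}(c_0,\ell_\infty)$, which is the missing complemented embedding.
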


	\begin{proof}[Proof of Theorem~\ref{LEMMA : 6infty}]
		The argument is quite similar to the proof of Theorem~\ref{LEMMA : 6}, so we will focus primarily
		on the moments in which both arguments differ, only sketching the remaining fragments.
		(We believe that keeping the notation as identical as possible will make it easier for the reader to spot the differences mentioned.)
		
		The isometries in the statement follow from Remark~\ref{REMARK : isometries we need}\eqref{EQ : L(X,l_infty)}.
		Put 
		\begin{equation*}
			\mathfrak{W}_{\infty} \coloneqq \bigl( \bigoplus_{n=1}^{\infty} \mathfrak{L}(\ell_\infty^n,\ell_\infty^n) \bigr)_{\ell_{\infty}}.
		\end{equation*}
		As in the proof of Theorem~\ref{LEMMA : 6}, we show the following chain of complemented embeddings
		\begin{equation} \label{CONDITION 1infty} \tag{$\spadesuit_{\infty}$}
			\mathfrak{L}(c_0,\ell_{\infty}) \cembed{}\mathfrak{L}(\mathsf{X}, \ell_\infty) \cembed{} \mathfrak{W}_{\infty} \cembed{} \mathfrak{L}(c_0,\ell_{\infty}),
		\end{equation}
		and that $\mathfrak{L}(c_0,\ell_{\infty})$ satisfies Pe{\l}czy{\' n}ski's accordion property.
		Invoking Theorem~\ref{THM : QPDM}\ref{2C} finishes the proof.
		
		{\bf First step.}
		Let $\mathsf{X}$ be an $\mathcal{L}_{\infty,\lambda}$-space and let $\{x_n\}_{n=1}^{\infty}$ be dense in $\mathsf{X}$.
		Directly from the definition, we find an increasing sequence $\mathsf{E}_1 \subset \mathsf{E}_2 \subset \dots$ of finite-dimensional subspaces with $x_n \in \mathsf{E}_n$
		and isomorphisms $u_n \colon \ell_{\infty}^{k_n} \rightarrow \mathsf{E}_n$ with $\norm{u_n^{-1}} = 1$ and $\norm{u_n} \leqslant 2\lambda$;
		we may also assume that $k_1 < k_2 < \dots$.
		Since $\ell_{\infty}^{k_n}$ is $1$-injective, $u_n^{-1}$ extends to an operator $\widetilde{u}_n \colon \mathsf{X} \rightarrow \ell_{\infty}^{k_n}$ of norm one,
		and $P_n \coloneqq u_n \widetilde{u}_n$ is a projection of $\mathsf{X}$ onto $\mathsf{E}_n$ with $\norm{P_n} \leqslant 2\lambda$.
		As in Lemma~\ref{LEMMA : pieces}, $P_n \rightarrow \mathrm{id}_{\mathsf{X}}$ in the strong operator topology.
		Let $R_n \colon \ell_\infty \rightarrow \ell_{\infty}^{k_n}$ be the restriction to the first $k_n$ coordinates
		and $\imath_n \colon \ell_\infty^{k_n} \rightarrow \ell_{\infty}$ the corresponding inclusion.
		We define $\Phi_n$ and $\Theta_n$ by the commutative diagrams
		\[\begin{tikzcd}
			{\mathsf{X}} & \ell_\infty & {\mathsf{X}} & \ell_\infty \\
			{\ell_\infty^{k_n}} & {\ell_\infty^{k_n}} & {\ell_\infty^{k_n}} & {\ell_\infty^{k_n}}
			\arrow["{u_n}", from=2-1, to=1-1]
			\arrow["{\Phi_n T}", dashed, from=2-1, to=2-2]
			\arrow["{R_n}", from=1-2, to=2-2]
			\arrow["T", from=1-1, to=1-2]
			\arrow["{\widetilde{u}_n}"', from=1-3, to=2-3]
			\arrow["{\imath_n}"', from=2-4, to=1-4]
			\arrow["S", from=2-3, to=2-4]
			\arrow["{\Theta_n S}", dashed, from=1-3, to=1-4]
		\end{tikzcd}\]
		that is, $\Phi_n(T) \coloneqq R_n T u_n$ and $\Theta_n(S) \coloneqq \imath_n S \widetilde{u}_n$.
		Then
		\begin{equation*}
			\Phi_n\Theta_n = \mathrm{id}_{\mathfrak{L}(\ell_{\infty}^{k_n},\ell_{\infty}^{k_n})}\text{ and } \Theta_n\Phi_n(T) = \imath_n R_n T P_n
		\end{equation*}
		with $\norm{\Theta_n} \leqslant 2\lambda$.
		Moreover, for every $x \in \mathsf{X}$, the sequence $\{\Theta_n\Phi_n(T)x\}_{n=1}^{\infty}$ is bounded and converges to $Tx$ coordinatewise, because
		\begin{equation*}
			\norm{\imath_n R_n T(P_n x - x)}_{\ell_{\infty}} \leqslant \norm{T}\norm{P_n x - x} \rightarrow 0
		\end{equation*}
		and $\imath_n R_n y \rightarrow y$ coordinatewise for every $y \in \ell_{\infty}$.
		Since bounded coordinatewise convergence in $\ell_{\infty}$ is weak$^{*}$-convergence, we get
		\begin{equation} \label{CONDITION 6infty}
			\mathrm{weak}^{*}\text{-}\lim_{n\to\infty} \Theta_n\Phi_n(T)x = Tx \quad (T \in \mathfrak{L}(\mathsf{X}, \ell_\infty),\ x \in \mathsf{X}).
		\end{equation}
		
		{\bf Second step.}
		Since $\ell_{\infty}$ is a dual space, we can repeat the second step of the proof of Theorem~\ref{LEMMA : 6} without projecting from the bidual space (now using \eqref{CONDITION 6infty}).
		The operators
		$$\Phi(T) \coloneqq \{\Phi_n(T)\}_{n=1}^{\infty}\text{ and }
		\Theta(\{S_n\}_{n=1}^{\infty})x \coloneqq \mathrm{weak}^{*}\text{-}\lim_{\mathcal{U}}\Theta_n(S_n)x$$
		satisfy $\Theta\Phi = \mathrm{id}_{\mathfrak{L}(\mathsf{X}, \ell_\infty)}$, and thus
		\begin{equation*}
			\mathfrak{L}(\mathsf{X}, \ell_\infty) \cembed{} \bigl( \bigoplus_{n=1}^{\infty} \mathfrak{L}(\ell_\infty^{k_n},\ell_\infty^{k_n}) \bigr)_{\ell_{\infty}} \cembed{1} \mathfrak{W}_{\infty}.
		\end{equation*}
		The block-diagonal argument from the second step of the proof of Theorem~\ref{LEMMA : 6} (which is even simpler for the sup norm) gives
		\begin{equation*}
			\mathfrak{W}_{\infty} \cembed{1} \mathfrak{L}(c_0,\ell_{\infty}).
		\end{equation*}
		
		{\bf Third step.}
		The dual of an infinite-dimensional $\mathcal{L}_{\infty}$-space is an infinite-dimensional $\mathcal{L}_1$-space \cite[Theorem~III(a)]{LR69},
		so $\mathsf{X}^{*}$ contains a complemented subspace isomorphic to $\ell_1$ \cite[Proposition~7.3]{LP68}.
		Consequently,
		\begin{equation*}
			\mathfrak{L}(c_0,\ell_{\infty}) \cong \mathfrak{L}(\ell_1,\ell_1) \cong \ell_{\infty}(\ell_1) \cembed{} \ell_{\infty}(\mathsf{X}^{*}) \cong \mathfrak{L}(\mathsf{X},\ell_{\infty})
		\end{equation*}
		by Lemmas~\ref{LEMMA : 5} and \ref{LEMMA : 1} and \eqref{EQ : L(X,l_infty)}.
		
		{\bf Fourth step.}
		By Lemmas~\ref{LEMMA : 5} and \ref{LEMMA : 1},
		\begin{equation*}
			\ell_\infty \left(\mathfrak{L}(c_0,\ell_{\infty})\right)
			\cong \ell_\infty \left(\mathfrak{L}(\ell_1,\ell_1)\right)
			\cong \ell_\infty\left(\ell_\infty(\ell_1)\right)
			\cong \ell_\infty(\ell_1)
			\cong \mathfrak{L}(c_0,\ell_{\infty}).
		\end{equation*}
		
		{\bf Fifth step.}
		Theorem~\ref{THM : QPDM}(2), applied to $\mathfrak{L}(c_0,\ell_{\infty})$ and $\mathfrak{L}(\mathsf{X},\ell_{\infty})$, gives $\mathfrak{L}(\mathsf{X}, \ell_\infty) \approx \mathfrak{L}(c_0,\ell_{\infty})$.
	\end{proof}
	\begin{remark}[About the separability assumption in Theorem~\ref{LEMMA : 6infty}] \label{REMARK : duzo bardzo jest scrip L_infty}
		Theorem~\ref{LEMMA : 6infty} breaks down when the separability assumption is dropped.
  
		To see this, consider the simplest possible situation $\mathsf{X} = \ell_{\infty}$.
		We claim that the spaces $\mathfrak{L}(\ell_{\infty},\ell_{\infty})$ and $\mathfrak{L}(c_0,\ell_{\infty})$ are not isomorphic, because they have different cardinalities.
		On the one hand, since every operator $T \in \mathfrak{L}(c_0,\ell_{\infty})$ is determined by its values on a countable
		dense subset of $c_0$,
		\begin{equation*}
			\mathrm{card}\,\mathfrak{L}(c_0,\ell_{\infty})
			\leqslant \mathfrak{c}^{\aleph_0}
			= 2^{\aleph_0 \cdot \aleph_0}
			= \mathfrak{c},
		\end{equation*}
		while on the other hand
		\begin{equation*}
			\mathrm{card}\,\mathfrak{L}(\ell_{\infty},\ell_{\infty})
			\geqslant \mathrm{card}\,\mathfrak{L}(\ell_{\infty},\mathbb{K})
			=  2^{\mathfrak{c}}.
		\end{equation*}
		Our claim follows.
		\demo
	\end{remark}
	\section{{\bf Applications}} \label{SECTION : Applications}
	
	In this section, we first formally introduce the spaces $\ces$ and $\Ces$ (Section~\ref{SUBSECTION : aide}).
	After a brief historical outline touching on duality, we present four simple results (Claims~\ref{CLAIM A}--\ref{CLAIM D}) that will be used later. Then, using Theorem~\ref{LEMMA : 6}, we show that $\ces$ and $\Ces$ are isomorphic
	(Theorem~\ref{COR: Ces and ces isomorphic}), which re-proves \cite[Theorem~5.1]{ALM19},
	and we discuss some natural complements of this result (Remarks~\ref{REMARK : d_BM(k,K)<=} and \ref{REMARK : K^p}).
	Section~\ref{SUBSECTION : Ces_infty isomorphic to Radon measures} begins with a discussion of the density character of
	$\ell_{\infty}$-sums of non-separable $\mathcal{L}_p$-spaces (Example~\ref{EXAMPLE : density of non-separable L_p's})
	and of the space of Borel measures $\mathscr{M}[0,1]$ (Example~\ref{DIGRESSION : M as sum of L1}).
	Then, as an application of Theorem~\ref{LEMMA : 6infty}, we prove Theorem~\ref{THM : suma L1 isomorficzna z miarami},
	which can be seen as an abstract version of \cite[Theorem~5.5]{ALM19} ({\it cf.} Theorem~\ref{COROLLARY : Michau}).
	
	\subsection{Aide-m{\' e}moire} \label{SUBSECTION : aide}
	
	Let us first introduce the Banach space $\Ces = \Ces(\mathbb{I})$ and its sequence counterpart $\ces$.
	They are Banach lattices defined via the (extended) norms
	\begin{equation*}
		\norm{f}_{\Ces(\mathbb{I})} \coloneqq \sup_{x \in \mathbb{I},\, x>0} \, \frac{1}{x} \int_0^x \abs{f(t)}dt \quad (f \in \mathbb{K}^{\mathbb{I}})
	\end{equation*}
	and, respectively,
	\begin{equation*}
		\norm{x}_{\ces} \coloneqq \sup_{n \in \mathbb{N}} \, \frac{1}{n} \sum_{k=1}^n \abs{x_k} \quad (x = \{x_n\}_{n=1}^{\infty} \in \mathbb{K}^{\mathbb{N}}).
	\end{equation*}
	Here and hereafter, $\mathbb{I}$ stands either for the {\bf unit interval} $[0,1]$ or for the {\bf half-line} $[0,\infty)$,
	both endowed with the Lebesgue measure $\lambda$.
	
	As already mentioned in the introduction, the origins of these constructions can be traced back to \cite{KKL48} and \cite{Wie32}.
	For historical reasons only, and contrary to \cite{AM09}, we decided to use the names $\ces$ and $\Ces$;
	in fact, they appeared under these names already in the 1948 paper by Korenblyum, Kre{\u \i}n and Levin \cite{KKL48}
	(see \cite[p.~14]{AM14}, \cite[Exercise~6, p.~31]{BS88}, \cite[Example~1.2]{LZ63} and \cite[pp.~26 and~61]{Wnu09}; {\it cf.} \cite[pp.~330--331]{AM09}).
	Whatever we call them, they have appeared (sometimes implicitly) in many papers;
	see, for example, \cite{AA17}, \cite{ALM19}, \cite{AM09}, \cite{CR16}, \cite{DS07}, \cite{GE98}, \cite{KMS07}, \cite{KKM21}, \cite{KKT22}, \cite{KT17},
	\cite{KT24}, \cite{LM15a}, \cite{Leu15}, \cite{Mas91}, \cite{MS06}, \cite{NP11}, \cite{Si94} and \cite{Wal20}
	(see also \cite{AM14}, \cite{Be96}, \cite{FLM16}, \cite{KMP07}, \cite{ORS08} and the references therein).
	
	In a sense that can be made precise, the constructions dual to $\Ces$ and $\ces$ are the so-called {\bf Tandori spaces} $\widetilde{L}_1$ and $\widetilde{\ell}_1$
	(see \cite{Ta55}; {\it cf.} \cite{LM15a}).
	They are the Banach lattices $\widetilde{L}_1 = \widetilde{L}_1(\mathbb{I})$ and $\widetilde{\ell}_1$ defined via the (extended) norms
	\begin{equation*}
		\norm{f}_{\widetilde{L}_1(\mathbb{I})} \coloneqq \int_{\mathbb{I}} \widetilde{f}(t) dt \quad (f \in \mathbb{K}^{\mathbb{I}})
		\quad\text{and}\quad
		\norm{x}_{\widetilde{\ell}_1} \coloneqq \sum_{n=1}^{\infty} \widetilde{x}_n \quad (x = \{x_n\}_{n=1}^{\infty} \in \mathbb{K}^{\mathbb{N}}),
	\end{equation*}
	respectively.
	Here, $\widetilde{f}(x) \coloneqq \esssup_{t \in \mathbb{I},\, t \geqslant x} \abs{f(t)}$, $x \in \mathbb{I}$, is the so-called
	{\bf decreasing majorant} of $f$, while $\widetilde{x}_n \coloneqq \sup_{k \geqslant n} \abs{x_k}$, $n \in \mathbb{N}$,
	is the decreasing majorant of a sequence $x$.
	Unlike $\widetilde{\ell}_1$, the space $\widetilde{L}_1$ plays only a minor r{\^ o}le in this work; we mention it for completeness
	(this is, of course, not the case in general - consult the papers mentioned above and \cite{Beu49}).
	
	Providentially, the very appearance of $\ces$ and $\Ces$ is inextricably linked to a {\it prijsvraag} posed in 1968 by the Dutch Mathematical Society
	\cite{NAW68}, asking for a description of the K{\" o}the duals of Ces{\` a}ro spaces (see Section~\ref{SUBSECTION : AM's question}).
	Already in the late 1950s, an isometric description of the K{\" o}the dual of weighted $\ces$-spaces
	was implicitly given by Andrzej Alexiewicz in his rather obscure paper \cite{Ale57}
	(see \cite[Theorem~1]{Ale57}; {\it cf.} \cite[p.~62]{Be96} and \cite[Theorem~6]{LM15a}).
	Much better sources of information on this subject are \cite{AM09} and, especially, \cite{LM15a}.
	
	As a preparation for the next sections, let us explicitly mention the following simple results.
	
	\begin{claimletters} \label{CLAIM A}
		{\it The space $\ces$ is not isomorphic to $\ell_{\infty}$.
			More generally, neither $\ces$ nor $\Ces$ is isomorphic to $\mathcal{C}(K)$ for any compact Hausdorff space $K$.}
	\end{claimletters}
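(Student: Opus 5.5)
The plan is to find an isomorphic invariant that every $\mathcal{C}(K)$-space has and that $\ces$ and $\Ces$ lack. Every $\mathcal{C}(K)$ is an $\mathcal{L}_\infty$-space, so the natural invariant is local: $\mathcal{L}_\infty$-spaces, being of no nontrivial cotype, contain $\ell_\infty^n$'s uniformly. That does not separate them, since $\ces \approx \mathfrak{L}(\ell_1,\ell_1) \cong \ell_\infty(\ell_1)$ also contains $\ell_\infty^n$'s uniformly. So I will use a complementation property instead. By representation \eqref{2} and Lemma~\ref{LEMMA : 1}, $\ces \approx \ell_\infty(\ell_1)$ and $\Ces \approx \ell_\infty(L_1)$. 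Hence $\ell_1$ is complemented in both spaces, through the first coordinate together with its projection.

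First, I check that the isomorphisms are available at this stage. The equivalences \eqref{2} come from Grosse-Erdmann's blocking technique and are announced in the introduction. If one prefers not to rely on them, a direct argument works. For $\ces$, the coordinates supported on the dyadic blocks $[2^k,2^{k+1})$ give the norm $\sup_k 2^{-k}\sum_{i\in[2^k,2^{k+1})}|x_i|$ up to a factor $2$. For the lower estimate one bounds $\frac1n\sum_{i\le n}|x_i|$ by the sum over the blocks below $n$, a geometric series. This identifies $\ces$ with $(\bigoplus_k \ell_1^{2^k})_{\ell_\infty}$, which contains a $1$-complemented copy of $\ell_1^{2^k}$ for every $k$. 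For $\Ces$ the analogous dyadic decomposition of $(0,1]$ (or $(0,\infty)$) produces $(\bigoplus_k L_1(2^{-k-1},2^{-k}])_{\ell_\infty}$, which contains $L_1$ complemented.

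Second, I derive the contradiction. Suppose $\ces \approx \mathcal{C}(K)$ (the same argument applies to $\Ces$). Then $\mathcal{C}(K)$ contains uniformly complemented copies of $\ell_1^n$, in fact a complemented copy of $\ell_1$. This is impossible:
\begin{enumerate}
\item[$\bullet$] a complemented subspace of a $\mathcal{C}(K)$-space is an $\mathcal{L}_\infty$-space or finite-dimensional (Lindenstrauss--Pe{\l}czy{\'n}ski \cite{LP68});
\item[$\bullet$] an infinite-dimensional space cannot be both $\mathcal{L}_1$ and $\mathcal{L}_\infty$ \cite{LP68}.
\end{enumerate}
A more elementary alternative: $\mathcal{C}(K)$ has the Dunford--Pettis property, this property passes to complemented subspaces, and $\ell_1$ has it too, so that route gives no contradiction. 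I therefore use Grothendieck's theorem instead. Every operator $\mathcal{C}(K)\to\ell_1$ is $2$-summing, hence weakly compact and completely continuous, hence compact, because weakly compact sets in $\ell_1$ are norm compact (Schur). A surjection of $\mathcal{C}(K)$ onto the infinite-dimensional space $\ell_1$ cannot be compact, which gives the contradiction.

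The main obstacle is purely bookkeeping: making sure that the complemented copy of $\ell_1$ (or of $L_1$, which contains $\ell_1$ complemented) in $\ces$ and $\Ces$ is justified at this point without circularity. The dyadic blocking step gives it directly, and that is the step I would write out carefully. Both estimates in the blocking are routine up to constant $2$. The Grothendieck/Schur compactness argument then finishes the proof in both cases.
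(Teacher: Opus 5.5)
Your argument is correct, and its first half matches the paper's: both proofs reduce everything to the fact that $\ell_1$ is complemented in $\ces$ and in $\Ces$. The routes diverge at the key step, namely why $\ell_1$ cannot be complemented in $\mathcal{C}(K)$.

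The paper dualizes. A complemented $\ell_1$ in $\mathcal{C}(K)$ would give a complemented copy of $\ell_\infty\supset c_0$ in $\mathscr{M}(K)\cong\mathcal{C}(K)^{*}$, contradicting the weak sequential completeness of $L$-spaces. This needs only the Riesz representation theorem and a textbook property of $L$-spaces.

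You use instead one of two heavier tools:
\begin{enumerate}
\item[$\bullet$] the structure theory of $\mathcal{L}_p$-spaces: complemented subspaces of $\mathcal{L}_\infty$-spaces are $\mathcal{L}_\infty$, and no infinite-dimensional space is both $\mathcal{L}_1$ and $\mathcal{L}_\infty$;
\item[$\bullet$] Grothendieck's theorem together with the Schur property: every operator $\mathcal{C}(K)\to\ell_1$ is compact, so none is onto.
\end{enumerate}
Both apply verbatim to arbitrary $\mathcal{L}_\infty$-spaces. The Grothendieck route also has a local version: since $\pi_2(\mathrm{id}_{\ell_1^n})=\sqrt{n}$, it rules out even uniformly complemented $\ell_1^n$'s.

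That local version is exactly what your self-contained blocking argument needs. As written, blocking only gives $1$-complemented copies of $\ell_1^{2^k}$. Upgrading these to a complemented copy of $\ell_1$ needs an extra step, such as the ultrafilter projection in the proof of Lemma~\ref{LEMMA : 2}, which is what representation \eqref{2} hides.

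Finally, you get the statement about $\ell_\infty$ only as the special case $K=\beta\mathbb{N}$. The paper deliberately proves it separately, via primeness of $\ell_\infty$, to avoid $\beta\mathbb{N}$ with the choiceless section in mind. Your $\mathcal{L}_\infty$ route would also apply to $\ell_\infty$ directly.
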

	\begin{proof}
		Even though the second part implies the first (as $\ell_{\infty} \cong \mathcal{C}(\beta\mathbb{N})$),
		this implication uses the existence of $\beta\mathbb{N}$, which is available in $\mathsf{ZFC}$ but {\bf not} in $\mathsf{ZF}$ (this distinction will be important in Section~\ref{SUBSECTION : Vath}).
		For this reason, we give separate proofs of both parts.
		
		Both spaces $\ces$ and $\Ces$ contain complemented subspaces isomorphic to $\ell_1$
		(for $\ces$ this follows from Claim~\ref{CLAIM D} below, and for $\Ces$ see \cite[p.~60]{KKM21}).
		Since $\ell_{\infty}$ is prime, that is, every infinite-dimensional complemented subspace of $\ell_{\infty}$
		is isomorphic to $\ell_{\infty}$ \cite{Lin67}, the space $\ces$ is not isomorphic to $\ell_{\infty}$.
		
		Now suppose that $\ell_1$ is isomorphic to a complemented subspace of $\mathcal{C}(K)$.
		Then $\ell_{\infty} = \ell_1^{*}$ is isomorphic to a complemented subspace of $\mathcal{C}(K)^{*} \cong \mathscr{M}(K)$.
		However, $\mathscr{M}(K)$ is weakly sequentially complete (as an abstract $L$-space; see, for example, \cite[Theorem~5.2.9]{AK06}),
		and this property passes to subspaces, whereas $\ell_{\infty}$ contains $c_0$, which is not weakly sequentially complete.
		This contradiction shows that neither $\ces$ nor $\Ces$ is isomorphic to a $\mathcal{C}(K)$-space.
	\end{proof}
	
	\begin{claimletters} \label{CLAIM B}
		{\it The space $\widetilde{\ell}_1$ is not isomorphic to $\ell_1$.}
	\end{claimletters}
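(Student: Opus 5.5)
Assume $\widetilde{\ell}_1 \approx \ell_1$ and derive a contradiction. The natural invariant to use is a property that fails in $\widetilde{\ell}_1$, detected through its dual. From the duality theory recalled in Section~\ref{SUBSECTION : aide} (the K\"othe dual of $\widetilde{\ell}_1$ is $\ces$), we use $(\widetilde{\ell}_1)^{*} \cong \ces$. The space $\widetilde{\ell}_1$ is separable: finitely supported sequences are dense, because the tail norm $\sum_{n>N}\widetilde{x}_n$ tends to zero. Hence the K\"othe dual coincides with the Banach dual. If $\widetilde{\ell}_1 \approx \ell_1$, taking duals would give $\ces \approx \ell_{\infty}$. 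This contradicts Claim~\ref{CLAIM A}, which was already proved above. So the whole argument reduces to the duality $(\widetilde{\ell}_1)^{*} = \ces$, with equivalent norms; isomorphism is all we need.

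To prove this duality directly, I would pair $x\in\widetilde{\ell}_1$ with $y\in\ces$ by $\sum x_ny_n$. The bound $\sum |x_n y_n| \leqslant \sum \widetilde{x}_n |y_n|$ is immediate. Write $\widetilde{x}_n = \sum_{k\geqslant n}(\widetilde{x}_k-\widetilde{x}_{k+1})$ and use Abel summation:
\begin{equation*}
\sum_n \widetilde{x}_n|y_n| = \sum_k (\widetilde{x}_k-\widetilde{x}_{k+1})\sum_{n\leqslant k}|y_n| \leqslant \norm{y}_{\ces}\sum_k k(\widetilde{x}_k-\widetilde{x}_{k+1}) = \norm{y}_{\ces}\norm{x}_{\widetilde{\ell}_1}.
\end{equation*}
The limit term vanishes because $\widetilde{x}_k\to0$. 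The last equality is Abel summation again, valid since $\sum_k\widetilde{x}_k<\infty$ forces $k\widetilde{x}_k\to 0$ for the decreasing sequence $\widetilde{x}_k$.

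For the converse, let $\varphi\in(\widetilde{\ell}_1)^{*}$ and set $y_n=\varphi(\boldsymbol{e}_n)$. Test $\varphi$ on $x=\sum_{n\leqslant N}\overline{\sign y_n}\,\boldsymbol{e}_n$. Then $\widetilde{x}_n = 1$ for $n\leqslant N$, so $\norm{x}_{\widetilde{\ell}_1}=N$, while $\varphi(x)=\sum_{n\leqslant N}|y_n|$. This gives $\norm{y}_{\ces}\leqslant\norm{\varphi}$. Density of the finitely supported sequences then identifies $\varphi$ with $y$, and the duality is isometric.

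The main obstacle is only bookkeeping in the Abel summation, namely justifying that the boundary term vanishes. If one prefers to avoid duality, an alternative is to show that $\widetilde{\ell}_1$ fails the Schur property. That route fails here, however, since the paper itself says $\widetilde{\ell}_1$ has the (strong) Schur property. So the dual argument through Claim~\ref{CLAIM A} is the route to take.
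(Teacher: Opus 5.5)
Your route is the paper's: identify $(\widetilde{\ell}_1)^{*}$ with $\ces$, then use Claim~\ref{CLAIM A} to rule out $\ces\approx\ell_\infty$; the only difference is that the paper cites Alexiewicz, whereas you verify the isometric duality by hand, and your verification works. One small fix in your density step: the $\widetilde{\ell}_1$-norm of the tail $x\chi_{(N,\infty)}$ is $N\widetilde{x}_{N+1}+\sum_{n>N}\widetilde{x}_n$, not $\sum_{n>N}\widetilde{x}_n$, but the extra term tends to zero by the fact $k\widetilde{x}_k\to 0$ that you already use (and for your test vector you only need $\norm{x}_{\widetilde{\ell}_1}\leqslant N$, not equality, which also covers the case where some $y_n=0$).
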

	\begin{proof}
		By Alexiewicz's result \cite[Theorem~1]{Ale57} (see also \cite{KMS07} and \cite{LM15a}; {\it cf.} Lemma~\ref{LEMMA : dual to ces_infty is t_1 in ZF+DC}),
		the dual of $\widetilde{\ell}_1$ can be identified with $\ces$,
		while the dual of $\ell_1$ is $\ell_{\infty}$.
		By Claim~\ref{CLAIM A}, $\ell_{\infty}$ is not isomorphic to $\ces$.
		Consequently, $\ell_1$ is not isomorphic to $\widetilde{\ell}_1$.
	\end{proof}
	
	\begin{claimletters} \label{CLAIM C}
		{\it The spaces $\Ces[0,1]$ and $\Ces[0,\infty)$ are isomorphic.
			More precisely, both are $4$-isomorphic to $(\bigoplus_{n=1}^{\infty} L_1[0,1])_{\ell_{\infty}}$.}
	\end{claimletters}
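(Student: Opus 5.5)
The plan is to use Grosse-Erdmann's blocking technique on dyadic intervals, which turns the supremum of averages into a supremum of $L_1$-norms over disjoint pieces. For $\mathbb{I} = [0,\infty)$, set $I_k \coloneqq [2^k, 2^{k+1})$ for $k \in \mathbb{Z}$. For $\mathbb{I} = [0,1]$, set $I_k \coloneqq [2^{-k-1}, 2^{-k})$ for $k \in \mathbb{N}_0$. In both cases the $I_k$ partition $\mathbb{I}$ up to a null set, and $\lambda(I_k)$ is comparable to the left endpoint of $I_k$. On the relevant index set ($\mathbb{Z}$ or $\mathbb{N}_0$) I introduce the block norm
\begin{equation*}
	\vertiii{f} \coloneqq \sup_{k} \, \frac{1}{\lambda(I_k)} \int_{I_k} \abs{f(t)}\,dt .
\end{equation*}

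\textbf{Step 1 (equivalence of norms).} I will show that $\frac{1}{2}\vertiii{f} \leqslant \norm{f}_{\Ces(\mathbb{I})} \leqslant 2\vertiii{f}$; I write the case $[0,\infty)$, and $[0,1]$ is identical after reindexing.

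For the lower bound, $\lambda(I_k) = 2^k$ and $I_k \subset [0,2^{k+1}]$. Hence
\begin{equation*}
	\frac{1}{2^k}\int_{I_k}\abs{f} \leqslant \frac{2}{2^{k+1}}\int_0^{2^{k+1}}\abs{f} \leqslant 2\norm{f}_{\Ces}.
\end{equation*}

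For the upper bound, fix $x>0$ and pick $k$ with $x \in [2^k, 2^{k+1})$. Using $\int_{I_j}\abs{f} \leqslant 2^j\vertiii{f}$ and summing the geometric series over $j \leqslant k$ gives
\begin{equation*}
	\frac{1}{x}\int_0^x \abs{f} \leqslant 2^{-k}\sum_{j\leqslant k}\int_{I_j}\abs{f} \leqslant 2^{-k}\cdot 2^{k+1}\vertiii{f} = 2\vertiii{f}.
\end{equation*}
For $[0,1]$ the sum runs over $j \geqslant k$, with $\lambda(I_j) = 2^{-j-1}$, and yields the same constant. The total distortion of the identity $(\Ces,\norm{\cdot}) \to (\Ces,\vertiii{\cdot})$ is therefore at most $4$.

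\textbf{Step 2 (isometry with the $\ell_\infty$-sum).} Let $\alpha_k \colon [0,1) \to I_k$ be the increasing affine bijection. I define
\begin{equation*}
	Jf \coloneqq \{ f\circ\alpha_k \}_{k}.
\end{equation*}
A change of variables gives $\int_0^1 \abs{f\circ\alpha_k} = \lambda(I_k)^{-1}\int_{I_k}\abs{f}$. So $J$ is an isometry of $(\Ces(\mathbb{I}),\vertiii{\cdot})$ onto $(\bigoplus_{k} L_1[0,1])_{\ell_\infty}$. It is surjective: given a bounded family $\{g_k\}$, the function $f \coloneqq \sum_k (g_k\circ\alpha_k^{-1})\chi_{I_k}$ has $\vertiii{f} = \sup_k\norm{g_k}_{L_1} < \infty$, hence $f \in \Ces(\mathbb{I})$ by Step 1. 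Since $\mathbb{Z}$ and $\mathbb{N}_0$ are both countably infinite, reindexing gives $(\bigoplus_{n=1}^\infty L_1[0,1])_{\ell_\infty}$ in both cases.

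Combining Steps 1 and 2, each $\Ces(\mathbb{I})$ is $4$-isomorphic to $(\bigoplus_{n=1}^\infty L_1[0,1])_{\ell_\infty}$. Consequently $\Ces[0,1] \approx \Ces[0,\infty)$, with Banach--Mazur distance at most $16$.

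There is no real obstacle. The only point requiring care is the bookkeeping in the geometric sums: the upper bound needs $\sum_{j \leqslant k}2^j = 2^{k+1}$ for $[0,\infty)$ (a sum over all negative $j$ as well), and the mirror-image estimate for $[0,1]$. With the stated partition these give exactly the factor $2$ on each side, hence the constant $4$.
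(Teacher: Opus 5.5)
Your proposal is correct and follows essentially the same route as the paper. The paper uses dyadic blocks $[2^k,2^{k+1})$, indexed by $\mathbb{Z}$ for $[0,\infty)$ and by the negative integers for $[0,1]$ (your $I_k$ after reindexing), proves the two-sided estimate with constant $2$ on each side via the geometric series, and then rescales each block to $[0,1]$ to get an isometry onto $(\bigoplus_{n=1}^{\infty} L_1[0,1])_{\ell_\infty}$. One cosmetic point: in the $[0,1]$ case the endpoint $x=1$ lies in none of your half-open $I_k$, but the same geometric sum handles it directly (or use continuity of $x\mapsto \frac{1}{x}\int_0^x\abs{f}$).
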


	This is due to various results of varying generality
	(in ascending order, see \cite[Theorem~9]{AM09}, \cite[Proposition~4.2]{ALM19} and \cite[Corollary~4.e.1]{KT24}).
	To the best of our knowledge, the shortest argument is given in \cite{AM09} ({\it cf.} \cite[Theorem~7.2]{AM14});
	we reproduce it (tracking the constants precisely) for the reader's convenience.

	\begin{proof}[Proof of Claim~\ref{CLAIM C}]
		Denote $\Delta_k \coloneqq [2^k,2^{k+1})$ for $k \in \mathbb{Z}$, and $\mathbb{Z}_{-} \coloneqq \{..., -2, -1\}$.
		For $f \in \Ces[0,\infty)$ put
		\begin{equation*}
			a_k(f) \coloneqq \lambda(\Delta_k)^{-1} \int_{\Delta_k} \abs{f(t)}dt \qquad (k \in \mathbb{Z}).
		\end{equation*}
		We have
		\begin{equation*}
			\frac{1}{x} \int_0^x \abs{f(t)}dt \leqslant 2^{-k} \sum_{j \leqslant k} 2^{j} a_j(f) \leqslant 2 \sup_{j \in \mathbb{Z}} a_j(f) \qquad (k \in \mathbb{Z})
		\end{equation*}
		whenever $x \in \Delta_k$, and
		\begin{equation*}
			a_k(f) \leqslant 2^{-k} \int_0^{2^{k+1}} \abs{f(t)}dt \leqslant 2 \norm{f}_{\Ces[0,\infty)} \qquad (k \in \mathbb{Z}).
		\end{equation*}
		Hence
		\begin{equation*}
			\frac{1}{2} \sup_{k \in \mathbb{Z}} a_k(f) \leqslant \norm{f}_{\Ces[0,\infty)} \leqslant 2 \sup_{k \in \mathbb{Z}} a_k(f),
		\end{equation*}
		and, in the same way, $$\frac{1}{2} \sup_{k \in \mathbb{Z}_{-}} a_k(f) \leqslant \norm{f}_{\Ces[0,1]} \leqslant 2 \sup_{k \in \mathbb{Z}_{-}} a_k(f).$$
		The functionals $f \mapsto \sup_k a_k(f)$ are the norms of the spaces $(\bigoplus_{k} L_1(\Delta_k, \lambda(\Delta_k)^{-1}d\lambda))_{\ell_{\infty}}$
		over $k \in \mathbb{Z}$ and $k \in \mathbb{Z}_{-}$, respectively, and each of these spaces is isometric to $(\bigoplus_{n=1}^{\infty} L_1[0,1])_{\ell_{\infty}}$
		(just rescale each $\Delta_k$ to $[0,1]$).
		The result follows.
	\end{proof}
	
	\begin{claimletters} \label{CLAIM D}
		{\it The space $\ces$ is $4$-isomorphic to $(\bigoplus_{n=0}^{\infty} \ell_1^{2^n})_{\ell_{\infty}}$, and hence $16$-isomorphic to $(\bigoplus_{n=1}^{\infty} \ell_1^{n})_{\ell_{\infty}}$.}
	\end{claimletters}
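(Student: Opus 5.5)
The plan is to mimic the proof of Claim~\ref{CLAIM C}, with dyadic blocks of indices in place of dyadic intervals. For $n \in \mathbb{N}_0$ put $D_n \coloneqq \{2^n, 2^n+1, \dots, 2^{n+1}-1\}$, so that $\mathrm{card}\, D_n = 2^n$ and the $D_n$ partition $\mathbb{N}$. For $x \in \mathbb{K}^{\mathbb{N}}$ set
\begin{equation*}
	a_n(x) \coloneqq 2^{-n} \sum_{k \in D_n} \abs{x_k} \qquad (n \in \mathbb{N}_0).
\end{equation*}
Consider the map $x \mapsto \{ 2^{-n} x\chi_{D_n} \}_{n=0}^{\infty}$, viewing $x\chi_{D_n}$ as an element of $\ell_1^{2^n}$. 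This is a linear bijection from $\mathbb{K}^{\mathbb{N}}$ onto $\prod_n \ell_1^{2^n}$, and the norm of its image in $(\bigoplus_{n=0}^{\infty} \ell_1^{2^n})_{\ell_{\infty}}$ is exactly $\sup_n a_n(x)$. It therefore suffices to prove the two-sided estimate
\begin{equation*}
	\tfrac12 \sup_{n} a_n(x) \leqslant \norm{x}_{\ces} \leqslant 2 \sup_n a_n(x),
\end{equation*}
which gives distortion at most $4$.

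For the lower bound, note that $D_n \subset [1, 2^{n+1}-1]$. Hence
\begin{equation*}
	a_n(x) \leqslant 2^{-n}\sum_{k=1}^{2^{n+1}-1}\abs{x_k} = \frac{2^{n+1}-1}{2^n} \cdot \frac{1}{2^{n+1}-1}\sum_{k=1}^{2^{n+1}-1}\abs{x_k} \leqslant 2\norm{x}_{\ces}.
\end{equation*}
For the upper bound, take $m$ with $2^n \leqslant m < 2^{n+1}$. Then
\begin{equation*}
	\frac1m\sum_{k=1}^m \abs{x_k} \leqslant 2^{-n}\sum_{j=0}^{n} 2^j a_j(x) \leqslant 2^{-n}(2^{n+1}-1)\sup_j a_j(x) \leqslant 2\sup_j a_j(x).
\end{equation*}
Together these give $\ces \approx_4 (\bigoplus_{n=0}^{\infty}\ell_1^{2^n})_{\ell_\infty}$.

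For the second part, I would compare $(\bigoplus_{n}\ell_1^{2^n})_{\ell_\infty}$ with $(\bigoplus_{n}\ell_1^{n})_{\ell_\infty}$ using Lemma~\ref{LEMMA : 3}. With $p=\infty$, $q=1$ and the unbounded sequences $a_n = 2^{n-1}$ and $b_n = n$, that lemma gives distance at most $4^{\abs{0-1}} = 4$. Composing the two isomorphisms yields the constant $4\cdot 4 = 16$.

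The step I would check most carefully is whether the proof of Lemma~\ref{LEMMA : 3} really works for $p = \infty$. Specifically, I need the isometry $\mathfrak{X} \oplus_\infty \mathsf{Z} \cong \mathfrak{X}$ for $\ell_\infty$-sums, which absorbs the leftover pieces. This holds because every $\ell_1^k$ occurs infinitely often in $\mathfrak{X}$ and sums are insensitive to reordering. I also need the $2$-isomorphism $\ell_1^{k}\oplus_1\ell_1^{d-k} \approx \ell_1^k \oplus_\infty \ell_1^{d-k}$, which holds blockwise. Both are fine, so the constant $16$ follows.
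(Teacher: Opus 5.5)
Your proposal is correct and follows essentially the same route as the paper. The paper uses the same dyadic blocks $\{2^n,\dots,2^{n+1}-1\}$ and the same two-sided estimate $\tfrac12\sup_n a_n(x)\leqslant\norm{x}_{\ces}\leqslant 2\sup_n a_n(x)$ to get the constant $4$. It then applies Lemma~\ref{LEMMA : 3} with $p=\infty$ and $q=1$ for a further factor $4$, giving $16$. Your extra check that the absorption step in that lemma's proof works for $\ell_\infty$-sums is sound, though the paper simply applies the lemma directly.
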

	
	Again, this result has many protagonists.
	We give priority to Theorem~4.1 in Grosse-Erdmann's Lecture Notes \cite{GE98}
	(see also \cite[Corollary~3.5]{ALM19}, \cite[Corollary~4.e.1]{KT24} and \cite[Corollary~3.4]{Wal20}).
	In a roundabout way, it can also be deduced from Lemmas~\ref{LEMMA : dual to ces_infty is t_1 in ZF+DC} and \ref{LEMMA : tandori_1 = direct sum}.
	
	\begin{proof}[Proof of Claim~\ref{CLAIM D}]
		Let $\Delta_n \coloneqq \{2^n, \dots, 2^{n+1}-1\}$ for $n \in \mathbb{N}_0$, so that $\#(\Delta_n) = 2^n$, and put $a_n(x) \coloneqq 2^{-n}\sum_{k \in \Delta_n} \abs{x_k}$.
		Exactly as in the proof of Claim~\ref{CLAIM C}, for $m \in \Delta_n$ we have
		$$\frac{1}{m}\sum_{k=1}^{m}\abs{x_k} \leqslant 2^{-n}\sum_{j=0}^{n} 2^{j}a_j(x) \leqslant 2\sup_{j} a_j(x),$$
		and $$a_n(x) \leqslant 2^{-n}\sum_{k=1}^{2^{n+1}-1}\abs{x_k} \leqslant 2\norm{x}_{\ces}.$$
		Hence $\ces \approx_4 (\bigoplus_{n=0}^{\infty} \ell_1^{2^n})_{\ell_{\infty}}$ (with the rescaled norms $2^{-n}\norm{\cdot}_{\ell_1^{2^n}}$ on the blocks).
		The second part follows from Lemma~\ref{LEMMA : 3} with $p = \infty$ and $q = 1$.
	\end{proof}
	
	\subsection{The spaces $\ces$ and $\Ces$ are isomorphic, still} \label{SUBSECTION : isomorphism Ces_infty and ces_infty}
	
	Given a measure space $(\Omega,\Sigma,\mu)$, let $\mathrm{Fin}(\Sigma)$ denote the collection of all $A \in \Sigma$
	with $\mu(A) < \infty$
	(as usual, we identify sets which differ by a set of measure zero).
	Then $(\mathrm{Fin}(\Sigma),d_{\mu})$, where $d_{\mu}(A,B) \coloneqq \mu(A \ominus B)$
	and $A \ominus B$ is the symmetric difference of $A$ and $B$, is a complete metric space.
	A measure space $(\Omega,\Sigma,\mu)$ is {\bf separable} if the metric space $(\mathrm{Fin}(\Sigma),d_{\mu})$ is separable.
	
	The time has come to put some pieces together. The main result of this section is the following.
	
	\begin{theorem} \label{THM : isomorphism direct sums}
		{\it Let $1 \leqslant p < \infty$, and let $\Omega = (\Omega,\Sigma,\mu)$ be a separable measure space such that $L_p(\Omega)$ is infinite-dimensional.
			Then the three spaces $( \bigoplus_{n=1}^{\infty} \ell_p^n )_{\ell_{\infty}}$, $( \bigoplus_{n=1}^{\infty} \ell_p )_{\ell_{\infty}}$
			and $( \bigoplus_{n=1}^{\infty} L_p(\Omega) )_{\ell_{\infty}}$ are all isomorphic.}
	\end{theorem}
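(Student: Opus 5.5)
Lemma~\ref{LEMMA : 2} already gives $( \bigoplus_{n=1}^{\infty} \ell_p^n )_{\ell_{\infty}} \approx ( \bigoplus_{n=1}^{\infty} \ell_p )_{\ell_{\infty}}$. So it remains to identify $( \bigoplus_{n=1}^{\infty} L_p(\Omega) )_{\ell_{\infty}}$ with one of these two spaces. The plan is to rewrite everything in operator language via Lemma~\ref{LEMMA : 1}, and then invoke Theorem~\ref{LEMMA : 6} with $\mathsf{X} = \ell_1$, which is a separable $\mathcal{L}_1$-space, and $p$ replaced by $1$.

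\textbf{Step 1: operator form.} By Lemma~\ref{LEMMA : 1},
\begin{equation*}
( \bigoplus_{n=1}^{\infty} L_p(\Omega) )_{\ell_{\infty}} \cong \mathfrak{L}(\ell_1, L_p(\Omega))
\quad\text{and}\quad
( \bigoplus_{n=1}^{\infty} \ell_p )_{\ell_{\infty}} \cong \mathfrak{L}(\ell_1,\ell_p).
\end{equation*}

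\textbf{Step 2: $L_p(\Omega)$ is a separable $\mathcal{L}_p$-space.} Every $L_p(\mu)$-space is an $\mathcal{L}_{p,1+\varepsilon}$-space (Lindenstrauss--Pe{\l}czy{\'n}ski); it is enough to approximate by simple functions over finite partitions into sets of finite measure, whose span is isometric to $\ell_p^m$. Separability of $L_p(\Omega)$ follows from separability of $(\mathrm{Fin}(\Sigma),d_\mu)$: indicators of a countable dense family of sets of finite measure have rational linear combinations that are dense, since $\norm{\chi_A-\chi_B}_{L_p}=d_\mu(A,B)^{1/p}$ and simple functions supported on sets of finite measure are dense for $p<\infty$. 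By hypothesis it is infinite-dimensional. The space $\ell_p$ is of course also a separable $\mathcal{L}_p$-space.

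\textbf{Step 3: apply Theorem~\ref{LEMMA : 6}.} Take $\mathsf{X}=\ell_1$ (so the inner index is $1 \leqslant p$) and $\mathsf{Y}=L_p(\Omega)$. Condition \ref{AF3} holds by the ``in particular'' part: for $p>1$ the space $L_p(\Omega)$ is reflexive, and for $p=1$ it is an $L_1(\mu)$-space, complemented in its bidual. Hence $\mathfrak{L}(\ell_1,L_p(\Omega)) \approx \mathfrak{L}(\ell_1,\ell_p)$, where the index pair $(1,p)$ replaces $(p,q)$ in that theorem. Combining this with Step 1 and Lemma~\ref{LEMMA : 2} finishes the proof.

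\textbf{Expected obstacle.} The only delicate point is bookkeeping. The ``$p$'' of Theorem~\ref{LEMMA : 6} must be taken equal to $1$, so the restriction $p\leqslant q$ becomes $1\leqslant p$, which always holds. We also have to make sure that ``separable measure space'' really yields a separable, infinite-dimensional $\mathcal{L}_p$-space, which is Step 2. A non-$\sigma$-finite $\mu$ causes no trouble, since $L_p$ only sees sets of finite measure.
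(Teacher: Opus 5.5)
Your proposal is correct and is essentially the paper's own proof. Lemma~\ref{LEMMA : 1} rewrites the $\ell_\infty$-sums as $\mathfrak{L}(\ell_1,\ell_p)$ and $\mathfrak{L}(\ell_1,L_p(\Omega))$, Lemma~\ref{LEMMA : 2} handles the $\ell_p^n$-sum, and Theorem~\ref{LEMMA : 6} with $\mathsf{X}=\ell_1$ (condition \ref{AF3} holding by reflexivity for $p>1$ and the $L$-summand property for $p=1$) gives $\mathfrak{L}(\ell_1,L_p(\Omega)) \approx \mathfrak{L}(\ell_1,\ell_p)$; the only cosmetic difference is that you verify separability of $L_p(\Omega)$ directly via a countable dense subset of $\mathrm{Fin}(\Sigma)$, where the paper cites Bennett--Sharpley.
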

	\begin{proof}
		By Lemmas~\ref{LEMMA : 1} and \ref{LEMMA : 2},
		\begin{equation*}
			( \bigoplus_{n=1}^{\infty} \ell_p^n )_{\ell_{\infty}}
			\approx ( \bigoplus_{n=1}^{\infty} \ell_p )_{\ell_{\infty}}
			\cong \mathfrak{L}(\ell_1,\ell_p)
			\quad\text{and}\quad
			( \bigoplus_{n=1}^{\infty} L_p(\Omega) )_{\ell_{\infty}} \cong \mathfrak{L}(\ell_1,L_p(\Omega)).
		\end{equation*}
		The space $L_p(\Omega)$ is a separable $\mathcal{L}_p$-space (this is where we need the separability of $(\Omega,\Sigma,\mu)$; see \cite[Chapter~I, Theorem~5.5]{BS88}),
		and it is complemented in its bidual (it is reflexive for $p > 1$, and an $L$-summand in its bidual for $p = 1$).
		Therefore, Theorem~\ref{LEMMA : 6}, applied to the pairs $(\ell_1, \ell_p)$ and $(\ell_1, L_p(\Omega))$, gives
		$\mathfrak{L}(\ell_1,\ell_p) \approx \mathfrak{L}(\ell_1,L_p(\Omega))$.
	\end{proof}
	
	With Theorem~\ref{THM : isomorphism direct sums} at hand, we can effortlessly reproduce the main result of \cite{ALM19} (see Theorem~5.1 {\it ibidem}).
	
	\begin{theorem}[Astashkin, Le{\' s}nik and Maligranda, 2019] \label{COR: Ces and ces isomorphic}
		{\it The spaces $\ces$ and $\Ces$ are isomorphic.}
	\end{theorem}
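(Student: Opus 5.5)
The plan is to chain the block decompositions of Claims~\ref{CLAIM C} and \ref{CLAIM D} through Theorem~\ref{THM : isomorphism direct sums} with $p=1$.

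\textbf{Step 1 ($\ces$ side).} By Claim~\ref{CLAIM D}, $\ces$ is $16$-isomorphic to $(\bigoplus_{n=1}^{\infty} \ell_1^{n})_{\ell_{\infty}}$.

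\textbf{Step 2 ($\Ces$ side).} By Claim~\ref{CLAIM C}, $\Ces$ (on either $\mathbb{I}=[0,1]$ or $\mathbb{I}=[0,\infty)$) is $4$-isomorphic to $(\bigoplus_{n=1}^{\infty} L_1[0,1])_{\ell_{\infty}}$.

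\textbf{Step 3 (connecting the two).} Apply Theorem~\ref{THM : isomorphism direct sums} with $p=1$ and $\Omega=[0,1]$ with Lebesgue measure. The required hypotheses are standard: Lebesgue measure on $[0,1]$ is separable, since finite unions of rational intervals are dense in $(\mathrm{Fin}(\Sigma),d_\lambda)$, and $L_1[0,1]$ is infinite-dimensional. The theorem then gives
\[
(\bigoplus_{n=1}^{\infty} \ell_1^n)_{\ell_{\infty}} \approx (\bigoplus_{n=1}^{\infty} L_1[0,1])_{\ell_{\infty}}.
\]
Composing the three isomorphisms yields $\ces\approx\Ces$.

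\textbf{Where the difficulty lies.} At this point there is essentially no obstacle, since all the substantive work was done earlier. The key input is Theorem~\ref{LEMMA : 6} with $p=q=1$, $\mathsf{X}=\ell_1$, $\mathsf{Y}=L_1[0,1]$. Its essential hypothesis is that $L_1[0,1]$ is complemented in its bidual, which holds because $L_1$ is an $L$-summand in $L_1^{**}$. That complementation is exactly what allows the projection $\pi$ from the weak$^*$ ultrafilter limit back into $\mathsf{Y}$.

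If one also wants an explicit Banach--Mazur bound, the constant from Theorem~\ref{THM : isomorphism direct sums} must be tracked through Theorem~\ref{THM : QPDM}(2). That bookkeeping is the only delicate point; for the purely isomorphic statement it is unnecessary.
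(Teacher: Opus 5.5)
Your proposal is correct and follows the paper's proof exactly: Claims~\ref{CLAIM D} and \ref{CLAIM C} reduce $\ces$ and $\Ces$ to $(\bigoplus_{n=1}^{\infty}\ell_1^n)_{\ell_\infty}$ and $(\bigoplus_{n=1}^{\infty}L_1[0,1])_{\ell_\infty}$, and Theorem~\ref{THM : isomorphism direct sums} with $p=1$ then connects the two. You are also right that the essential input is the complementation of $L_1[0,1]$ in its bidual, which is what makes Theorem~\ref{LEMMA : 6} applicable.
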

	\begin{proof}
		By Claims~\ref{CLAIM C} and \ref{CLAIM D}, the spaces $\ces$ and $\Ces$ are isomorphic to
		$( \bigoplus_{n=1}^{\infty} \ell_1^n )_{\ell_{\infty}}$ and $( \bigoplus_{n=1}^{\infty} L_1[0,1] )_{\ell_{\infty}}$, respectively.
		It is now enough to apply Theorem~\ref{THM : isomorphism direct sums}.
	\end{proof}
	
	\begin{remark}[Quantitative version of Theorem~\ref{COR: Ces and ces isomorphic}] \label{REMARK : d_BM(k,K)<=}
		We have all the tools to estimate the Banach--Mazur distance between $\ces$ and $\Ces$.
		Let $\mathsf{X} \coloneqq (\bigoplus_{n=1}^{\infty} L_1[0,1])_{\ell_{\infty}}$ and $\mathsf{Y} \coloneqq (\bigoplus_{n=0}^{\infty} \ell_1^{2^n})_{\ell_{\infty}}$.
		\begin{enumerate}
			\item[$\bullet$] By Claims~\ref{CLAIM C} and \ref{CLAIM D}, $\Ces \approx_4 \mathsf{X}$ and $\ces \approx_4 \mathsf{Y}$.
			\item[$\bullet$] Embedding $\ell_1^{2^n}$ into the $n^{\mathrm{th}}$ copy of $L_1[0,1]$ as the span of the characteristic functions of the dyadic intervals of length $2^{-n}$
			(which is the range of a conditional expectation, a norm one projection), we get $\mathsf{Y} \cembed{1} \mathsf{X}$.
			\item[$\bullet$] By \cite[Theorem~3.7]{ALM19} ({\it cf.} \cite{CKL77}), $L_1[0,1] \cembed{1} \mathsf{Y}$.
			Moreover, since $\ell_1^{2^m} \cembed{1} \ell_1^{2^n}$ for $m \leqslant n$ (again via conditional expectations), we have $\mathsf{Y} \cembed{1} (\bigoplus_{n \in \mathcal{N}} \ell_1^{2^n})_{\ell_{\infty}}$
			for every infinite set $\mathcal{N} \subset \mathbb{N}_0$.
			Splitting $\mathbb{N}_0$ into infinitely many infinite sets, we conclude that $\mathsf{X} = \ell_{\infty}(L_1[0,1]) \cembed{1} \mathsf{Y}$.
			\item[$\bullet$] $\mathsf{X} \cong \ell_{\infty}(\mathsf{X})$.
		\end{enumerate}
		Therefore, Theorem~\ref{THM : QPDM}\ref{2C} with $\lambda = \mu = h = \kappa = 1$ gives $d_{\mathrm{BM}}(\mathsf{X},\mathsf{Y}) \leqslant 81$, and hence
		\begin{equation*}
			d_{\mathrm{BM}}(\ces,\Ces) \leqslant 4 \cdot 81 \cdot 4 = 1296.
		\end{equation*}
		This is closely related to the problem of computing $d_{\mathrm{BM}}(\ell_{\infty},L_{\infty})$,
		which still seems to be open despite several recent attacks
		(see \cite{KL26}, \cite{KP25} and the references therein; we return to this in Section~\ref{SUBSECTION : Banach--Mazur distance}).
		\demo
	\end{remark}
	
	\begin{remark}[Extension of Theorem~\ref{COR: Ces and ces isomorphic} by $p$-convexification] \label{REMARK : K^p}
		{\it En route} to Theorem~\ref{COR: Ces and ces isomorphic} we used only the case $p = 1$ of Theorem~\ref{THM : isomorphism direct sums}.
		For a Banach lattice $\mathsf{X}$ and $1 \leqslant p < \infty$, the
		{\bf $p$-convexification} $\mathsf{X}^{(p)}$ of $\mathsf{X}$ is the space $\{f : \abs{f}^p \in \mathsf{X}\}$ endowed
		with the norm $\lVert f \rVert_{\mathsf{X}^{(p)}} \coloneqq \lVert \abs{f}^{p} \rVert^{1/p}_{\mathsf{X}}$.
		With this construction at hand, it is routine to introduce the spaces $\ces_p \coloneqq \ces^{(p)}$ and $\Ces_p \coloneqq \Ces^{(p)}$
		and to show, exactly as above, that $\ces_p$ is isomorphic to $\Ces_p$.
		We leave the easy details to the interested reader.
		
		Both scales $\{\ces_p\}_{1 \leqslant p < \infty}$ and $\{\Ces_p\}_{1 \leqslant p < \infty}$ are not just a formal whim;
		for example, they were used in \cite{Be96} and \cite{AM09} in factorisation results ({\it cf.} Example~1.c.5 and Corollary~7.b.5 in \cite{KT24}).
		Moreover, the behaviour of the Rademacher functions in $\Ces_p$ was investigated in \cite{AM15}.
		\demo
	\end{remark}
	
	\subsection{On $\Ces$ and spaces of regular Borel measures} \label{SUBSECTION : Ces_infty isomorphic to Radon measures}
	
	Recall that the {\bf density character} $\mathfrak{d}(\mathsf{X})$ of a Banach space $\mathsf{X}$ is the least cardinality of a dense subset of $\mathsf{X}$.
	Thus, $\mathsf{X}$ is separable if, and only if, $\mathfrak{d}(\mathsf{X}) = \aleph_0$.
	
	\begin{example}[Density character of $\ell_{\infty}$-sums of non-separable $\mathcal{L}_p$-spaces] \label{EXAMPLE : density of non-separable L_p's}
		Fix a cardinal $\kappa > \mathfrak{c}$ and $1 \leqslant p < \infty$, and consider the space $L_p(\{-1,1\}^{\kappa})$ with the Haar measure
		on the Cantor cube $\{-1,1\}^{\kappa}$.
		Since $\mathfrak{d}(L_p(\{-1,1\}^{\kappa})) = \kappa$, this is a non-separable $\mathcal{L}_p$-space.
		For $\mathfrak{X} = (\bigoplus_{n=1}^{\infty} \mathsf{X}_n)_{\ell_{\infty}}$, where $\{\mathsf{X}_n\}_{n=1}^{\infty}$ is a sequence
		of Banach spaces, it is readily seen that
		\begin{equation*}
			\mathfrak{d}\left( \mathfrak{X} \right)
			= \max \left\{ \mathfrak{c}, \,\, \sup_{n \in \mathbb{N}} \mathfrak{d}(\mathsf{X}_n) \right\}
		\end{equation*}
		(the first term appears because the density character of $\ell_{\infty}$ is $\mathfrak{c}$).
		Consequently, the space $(\bigoplus_{n=1}^{\infty} L_p(\{-1,1\}^{\kappa}) )_{\ell_{\infty}}$, whose density character is $\kappa$,
		is isomorphic neither to $(\bigoplus_{n=1}^{\infty} L_p[0,1] )_{\ell_{\infty}}$ nor to $( \bigoplus_{n=1}^{\infty} \ell_p )_{\ell_{\infty}}$,
		both of which have density character $\mathfrak{c}$.
		\demo
	\end{example}
	
	Let $K$ be a compact Hausdorff space.
	By $\mathscr{M}(K)$ we denote the space of all regular Borel measures on $K$ with finite variation,
	and by $\mathscr{C}(K)$ the Banach space of continuous functions
	on $K$ with the sup norm.
	By the Riesz--Markov--Kakutani representation theorem (see, for example, \cite[Theorem~4.1.1]{AK06}),
	$\mathscr{C}(K)^*$ can be naturally identified with $\mathscr{M}(K)$.
	
	\begin{example}[Density character of $\mathscr{M}(K)$] \label{DIGRESSION : M as sum of L1}
		Let $K$ be a compact Hausdorff space.
		It is known that $\mathscr{M}(K)$ is isometric to $( \bigoplus_{\gamma \in \Gamma} L_1(\mu_{\gamma}) )_{\ell_{1}}$ via
		$\mu \mapsto \{ d\mu/d\mu_{\gamma} \}_{\gamma \in \Gamma}$,
		where $\{\mu_{\gamma}\}_{\gamma \in \Gamma}$ is a maximal family of mutually singular probability measures on $K$
		(the existence of such a family relies on Zorn's lemma;
		see, for example, \cite[Proposition~4.3.8(iii), pp.~84--85]{AK06} and \cite[p.~27]{DLS11}).
		In particular, by \cite[Proposition~2.13]{DLS11},
		\begin{equation*}
			\mathscr{M}[0,1]
			\approx ( \bigoplus_{\gamma \in \mathbb{R}} L_1[0,1] )_{\ell_{1}}
			\approx L_1(\{-1,1\}^{\mathfrak{c}}),
		\end{equation*}
		and the density character of $\mathscr{M}[0,1]$ is $\mathfrak{c}$.
		\demo
	\end{example}
	
	It is known that $\mathscr{C}(K)$ is an $\mathcal{L}_{\infty,\lambda}$-space for every $\lambda > 1$, and thus
	its dual $\mathscr{M}(K)$ is an $\mathcal{L}_{1,\lambda}$-space (see, for example, \cite[p.~51]{Rya02}).
	However, as Example~\ref{DIGRESSION : M as sum of L1} shows, $\mathscr{M}(K)$ need not be separable, and
	Theorem~\ref{LEMMA : 6} tells us nothing about the isomorphism class of $( \bigoplus_{n=1}^{\infty} \mathscr{M}(K) )_{\ell_{\infty}} \cong \mathfrak{L}(\ell_1,\mathscr{M}(K))$.
	In the light of Example~\ref{EXAMPLE : density of non-separable L_p's}, the next result may come as a bit of a surprise.
	
	\begin{theorem}[Isomorphism classes of $\ell_{\infty}$-sums of $\mathscr{M}(K)$'s] \label{THM : suma L1 isomorficzna z miarami}
		{\it Let $K$ be an infinite compact metrizable space, and let $\mu$ be a finite regular Borel measure on $K$ such that $L_1(\mu)$ is infinite-dimensional.
			Then the spaces
			$( \bigoplus_{n=1}^{\infty} L_1(\mu) )_{\ell_{\infty}}$,
			$( \bigoplus_{n=1}^{\infty} \mathscr{M}(K) )_{\ell_{\infty}}$ and $\ces$
			are isomorphic.}
	\end{theorem}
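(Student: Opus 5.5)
We plan to connect all three spaces to $\mathfrak{L}(\ell_1,\ell_1)\cong\ell_\infty(\ell_1)$, which is isomorphic to $\ces$ by Claim~\ref{CLAIM D}, Lemma~\ref{LEMMA : 2} (with $p=1$) and Lemma~\ref{LEMMA : 1}:
\begin{equation*}
\ces\approx\bigl(\bigoplus_{n=1}^{\infty}\ell_1^n\bigr)_{\ell_\infty}\approx\bigl(\bigoplus_{n=1}^{\infty}\ell_1\bigr)_{\ell_\infty}\cong\mathfrak{L}(\ell_1,\ell_1).
\end{equation*}

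For the first space, $L_1(\mu)$ is separable: a finite regular Borel measure on a compact metrizable space lives on a countably generated $\sigma$-algebra, so the measure space is separable. By hypothesis $L_1(\mu)$ is infinite-dimensional. Theorem~\ref{THM : isomorphism direct sums} with $p=1$ then gives $(\bigoplus_{n=1}^{\infty}L_1(\mu))_{\ell_\infty}\approx(\bigoplus_{n=1}^{\infty}\ell_1^n)_{\ell_\infty}\approx\ces$.

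For the measure space, we avoid the non-separability of $\mathscr{M}(K)$ by dualising, exactly as in the chain displayed in Section~\ref{SUBSECTION : Grand unification}. By Lemma~\ref{LEMMA : 1}, the Riesz--Markov--Kakutani identification $\mathscr{M}(K)\cong\mathscr{C}(K)^*$, and Lemma~\ref{LEMMA : 5},
\begin{equation*}
\bigl(\bigoplus_{n=1}^{\infty}\mathscr{M}(K)\bigr)_{\ell_\infty}\cong\mathfrak{L}(\ell_1,\mathscr{C}(K)^*)\cong\mathfrak{L}(\mathscr{C}(K),\ell_\infty).
\end{equation*}
Since $K$ is compact metrizable, $\mathscr{C}(K)$ is separable. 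It is an $\mathcal{L}_{\infty}$-space, as recalled just before the statement, and it is infinite-dimensional because $K$ is infinite. Theorem~\ref{LEMMA : 6infty} therefore yields $\mathfrak{L}(\mathscr{C}(K),\ell_\infty)\approx\mathfrak{L}(c_0,\ell_\infty)$. Applying Lemma~\ref{LEMMA : 5} once more gives $\mathfrak{L}(c_0,\ell_\infty)=\mathfrak{L}(c_0,\ell_1^*)\cong\mathfrak{L}(\ell_1,c_0^*)=\mathfrak{L}(\ell_1,\ell_1)$, and this is isomorphic to $\ces$ by the first paragraph.

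All the heavy lifting is done by Theorem~\ref{LEMMA : 6infty}, so only routine hypothesis-checking remains. The one point needing slight care is the separability of $L_1(\mu)$ for an arbitrary finite regular Borel $\mu$ on $K$. We would argue it by noting that the Borel $\sigma$-algebra is generated by a countable base of open sets, so the finite unions of these sets are $d_\mu$-dense in $\mathrm{Fin}(\Sigma)$. Alternatively, continuous functions are dense in $L_1(\mu)$ by regularity, and $\mathscr{C}(K)$ is separable.
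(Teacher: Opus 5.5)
Your proposal is correct and follows the paper's proof essentially step for step: the same dualisation chain $(\bigoplus_{n=1}^{\infty}\mathscr{M}(K))_{\ell_\infty}\cong\mathfrak{L}(\mathscr{C}(K),\ell_\infty)\approx\mathfrak{L}(c_0,\ell_\infty)\cong\mathfrak{L}(\ell_1,\ell_1)$ through Theorem~\ref{LEMMA : 6infty}, and the same link to $\ces$ through Lemma~\ref{LEMMA : 1}, Lemma~\ref{LEMMA : 2} and Claim~\ref{CLAIM D}. The only cosmetic difference is in the $L_1(\mu)$ leg: you route it through Theorem~\ref{THM : isomorphism direct sums} and check separability of the measure space yourself, whereas the paper applies Theorem~\ref{LEMMA : 6} to the pair $(\ell_1,L_1(\mu))$ directly and cites the literature for separability.
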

	\begin{proof}
		The space $\mathcal{C}(K)$ is an infinite-dimensional $\mathcal{L}_\infty$-space ({\it cf.} \cite[Section~3.2]{Rya02}),
		which is separable because $K$ is metrizable (see, for example, \cite[Proposition~2.3(i)]{DLS11}).
		By Lemmas~\ref{LEMMA : 1} and \ref{LEMMA : 5} and Theorem~\ref{LEMMA : 6infty},
		\begin{align*}
			( \bigoplus_{n=1}^{\infty} \mathscr{M}(K) )_{\ell_{\infty}}
			& \cong \mathfrak{L}(\ell_1,\mathscr{M}(K))
			\cong \mathfrak{L}(\mathcal{C}(K),\ell_\infty)
			\approx \mathfrak{L}(c_0,\ell_\infty) \\
			& \cong \mathfrak{L}(\ell_1,\ell_1)
			\approx \mathfrak{L}(\ell_1,L_1(\mu))
			\cong (\bigoplus_{n=1}^{\infty} L_1(\mu) )_{\ell_{\infty}},
		\end{align*}
		where the penultimate isomorphism follows from Theorem~\ref{LEMMA : 6}, since
		$L_1(\mu)$ is a separable $\mathcal{L}_1$-space (see, for example, \cite[Proposition~2.21]{Rya02}) complemented in its bidual.
		Finally,
		\begin{equation*}
			\mathfrak{L}(\ell_1,\ell_1) \cong \ell_{\infty}(\ell_1) \approx \ces
		\end{equation*}
		by Lemmas~\ref{LEMMA : 1} and \ref{LEMMA : 2}, and Claim~\ref{CLAIM D}.
	\end{proof}
	
	The above result immediately gives the following (see \cite[Theorem~5.5]{ALM19}).
	
	\begin{theorem}[Astashkin, Le{\' s}nik and Maligranda, 2019] \label{COROLLARY : Michau}
		{\it The space $\Ces$ is isomorphic to $( \bigoplus_{n=1}^{\infty} \mathscr{M}[0,1] )_{\ell_{\infty}}$.}
	\end{theorem}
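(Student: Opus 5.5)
The plan is to chain together results already established, with no new construction needed. First apply Theorem~\ref{THM : suma L1 isomorficzna z miarami} with $K = [0,1]$, which is an infinite compact metrizable space, and with $\mu = \lambda$ the Lebesgue measure on $[0,1]$. This is a finite regular Borel measure, and $L_1(\lambda) = L_1[0,1]$ is infinite-dimensional. The theorem then gives
\begin{equation*}
	\bigl( \bigoplus_{n=1}^{\infty} \mathscr{M}[0,1] \bigr)_{\ell_{\infty}} \approx \bigl( \bigoplus_{n=1}^{\infty} L_1[0,1] \bigr)_{\ell_{\infty}} \approx \ces .
\end{equation*}

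Next, identify $\Ces$ with either of these spaces. The most direct route is Claim~\ref{CLAIM C}, which says that $\Ces = \Ces(\mathbb{I})$ is $4$-isomorphic to $(\bigoplus_{n=1}^{\infty} L_1[0,1])_{\ell_{\infty}}$ for both choices $\mathbb{I} = [0,1]$ and $\mathbb{I} = [0,\infty)$. Composing this with the isomorphism above yields $\Ces \approx (\bigoplus_{n=1}^{\infty} \mathscr{M}[0,1])_{\ell_{\infty}}$.

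As an alternative check, one can combine Theorem~\ref{COR: Ces and ces isomorphic} ($\ces \approx \Ces$) with the last isomorphism in Theorem~\ref{THM : suma L1 isomorficzna z miarami}. Both routes close the argument, since isomorphism is transitive.

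There is no real obstacle here; all the depth sits in Theorems~\ref{LEMMA : 6} and \ref{LEMMA : 6infty}, which feed into Theorem~\ref{THM : suma L1 isomorficzna z miarami}. The only points to verify are the hypotheses of that theorem: metrizability of $[0,1]$, which ensures that $\mathcal{C}[0,1]$ is separable so that Theorem~\ref{LEMMA : 6infty} applies, and the fact that $L_1[0,1]$ is a separable $\mathcal{L}_1$-space complemented in its bidual, so that Theorem~\ref{LEMMA : 6} applies. One should also note that $\mathscr{M}[0,1]$ is the space appearing in the statement via the Riesz--Markov--Kakutani identification $\mathcal{C}[0,1]^{*} \cong \mathscr{M}[0,1]$.
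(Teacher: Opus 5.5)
Your proposal is correct and follows the paper's proof: it combines Claim~\ref{CLAIM C} ($\Ces \approx (\bigoplus_{n=1}^{\infty} L_1[0,1])_{\ell_{\infty}}$) with Theorem~\ref{THM : suma L1 isomorficzna z miarami}, applied to $K=[0,1]$ with Lebesgue measure. Your verification of that theorem's hypotheses is exactly what is needed, and your alternative route through Theorem~\ref{COR: Ces and ces isomorphic} is also valid.
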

	\begin{proof}
		Recall that $\Ces \approx ( \bigoplus_{n=1}^{\infty} L_1[0,1] )_{\ell_{\infty}}$ (Claim~\ref{CLAIM C})
		and call Theorem~\ref{THM : suma L1 isomorficzna z miarami} on stage.
	\end{proof}
	
	\section{{\bf Complementary material}} \label{SECTION : complementary material}
	
	The results collected in this section complement those of Section~\ref{SECTION : Applications}.
	They exemplify the paradigm outlined in the introduction, that is, the search for analogies between the pairs $(\ces,\Ces)$ and $(\ell_{\infty},L_{\infty})$.
	First, in Section~\ref{SUBSECTION : Ces_infty and ces_infty are NOT isometric}, we show that the spaces $\ces$ and $\Ces$, although
	isomorphic (Theorem~\ref{COR: Ces and ces isomorphic}), are not isometric (Theorem~\ref{THEOREM : ces and Ces NOT isometric}).
	In Section~\ref{SUBSECTION : ces_infty preduals}, we show that $\ces$ has a unique isometric predual (Theorem~\ref{THEOREM : ces_infty has UNIQUE predual})
	and at least continuum many pairwise non-isomorphic preduals (Theorem~\ref{PROP : ces has uncountably many preduals} and Corollary~\ref{COROLLARY : c many preduals}).
	Section~\ref{SUBSECTION : Vath}, which concludes the main body of the paper, is a kind of culmination of the ideas contained therein.
	A number of independently interesting results (Lemmas~\ref{LEMMA : dual to ces_infty is t_1 in ZF+DC}, \ref{LEMMA : tandori_1 = direct sum} and \ref{PROPOSITION: Schur Tandori l1 ZF})
	lead us to the conclusion that in some models of set theory without the Axiom of Choice (for example, in Solovay's model) the spaces
	$\ces$ and $\Ces$ are no longer isomorphic (Theorem~\ref{THM : In ZF ces_infty and Ces_infty are NOT isomorphic}).
	
	\subsection{The spaces $\ces$ and $\Ces$ are not isometric} \label{SUBSECTION : Ces_infty and ces_infty are NOT isometric}
	
	As already mentioned, Pe{\l}czy{\' n}ski's celebrated result shows that the spaces $\ell_{\infty}$ and $L_{\infty}[0,1]$
	are isomorphic (see \cite[Proposition~4]{Pel60}; {\it cf.} \cite[Proposition~4.3.10]{AK06}).
	They are, however, {\it not} isometric.
	Indeed, $\ell_{\infty}$ is isometric to $\mathscr{C}(\beta\mathbb{N})$, where $\beta\mathbb{N}$
	is the {\v C}ech--Stone compactification of $\mathbb{N}$, while $L_{\infty}[0,1]$ is isometric to $\mathscr{C}(\mathfrak{L})$,
	where $\mathfrak{L}$ is the Stone space of the measure algebra of $[0,1]$
	(see, for example, Johnstone's monograph \cite{Joh82}).
	By the Banach--Stone theorem (see \cite[Problem~4.2, p.~98]{AK06}), a putative isometry between $\ell_{\infty}$ and $L_{\infty}[0,1]$
	would induce a homeomorphism between $\beta\mathbb{N}$ and $\mathfrak{L}$, which is impossible, because $\beta\mathbb{N}$ is separable, while $\mathfrak{L}$ is not.
	
	All this is folklore.
	Following the {\it modus operandi} of this section, we now show that the spaces $\ces$ and $\Ces$
	are not isometric either.
	Here, we work with real scalars.
	
	Recall that the norm of a Banach space $\mathsf{X}$ is {\bf Fr{\' e}chet differentiable} at $x_0 \in S(\mathsf{X})$ if there is $\varphi \in \mathsf{X}^{*}$ such that
	\begin{equation*}
		\lim_{h \rightarrow 0} \, \frac{\norm{x_0 + h} - \norm{x_0} - \varphi(h)}{\norm{h}} = 0.
	\end{equation*}
	Since the set of points of Fr{\' e}chet differentiability of the norm is preserved by isometries, Theorem~\ref{THEOREM : ces and Ces NOT isometric} below follows from the next two lemmas.
	
	The idea of looking at differentiability of norms comes from Exercises~8.30 and 8.31 on p.~268 of \cite{FHHMSPZ01}.
	
	As before, we write $\mathscr{H}$ for the {\bf Hardy operator}, which,
	depending on the context, is defined either as $\mathscr{H}x(n) \coloneqq \frac{1}{n} \sum_{i=1}^n x_i$ for sequences $x = \{x_n\}_{n=1}^{\infty} \in \mathbb{K}^{\mathbb{N}}$
	and $n\in\mathbb{N}$ or as $\mathscr{H}f(t) \coloneqq \frac{1}{t} \int_0^t f(s) ds$ for measurable functions $f$ and $t > 0$.
	
	\begin{lemma} \label{LEMMA : Frechet points of k}
		{\it 
			There is a dense set of Fr{\' e}chet differentiable points in $S(\ces)$.}
	\end{lemma}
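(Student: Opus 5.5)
The plan is to exhibit an explicit class of points of $S(\ces)$ at which the norm is \emph{locally linear}, and then to show that this class is dense. Write $\varphi_m(y)\coloneqq \mathscr{H}\abs{y}(m)=\frac1m\sum_{k\le m}\abs{y_k}$, so that $\norm{y}_{\ces}=\sup_m\varphi_m(y)$. Each $\varphi_m$ is $1$-Lipschitz with respect to $\norm{\cdot}_{\ces}$, and $\abs{h_k}\le k\norm{h}_{\ces}$ for every $k$.

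\emph{Good points.} Call $x\in S(\ces)$ \emph{good} if there are $n\in\mathbb{N}$ and $\gamma>0$ with the following three properties:
\begin{itemize}
\item $\varphi_n(x)=1$;
\item $\sup_{m\ne n}\varphi_m(x)\le 1-\gamma$, where the supremum also controls the tail, since $\limsup_m\varphi_m(x)\le 1-\gamma$;
\item $x_k\neq 0$ for all $k\le n$.
\end{itemize}
Let $\eta\coloneqq\min_{k\le n}\abs{x_k}$ and let $\norm{h}_{\ces}<\min\{\gamma/2,\eta/n\}$. Then $\varphi_m(x+h)<1-\gamma/2$ for $m\neq n$, while $\varphi_n(x+h)>1-\gamma/2$. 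Moreover, $\abs{h_k}<\abs{x_k}$ for $k\le n$, so $\abs{x_k+h_k}=\sign(x_k)(x_k+h_k)$. Hence
\begin{equation*}
\norm{x+h}_{\ces}=\frac1n\sum_{k\le n}\sign(x_k)(x_k+h_k)
\end{equation*}
exactly in a neighbourhood of $x$. So the norm is Fréchet differentiable at $x$, with derivative $\frac1n\sum_{k\le n}\sign(x_k)\boldsymbol{e}_k^{*}$.

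\emph{Density.} Given $x\in S(\ces)$ and $\varepsilon>0$, we must produce a good point within $O(\varepsilon)$ of $x$. The tools are the following.
\begin{itemize}
\item Coordinates $x_k=0$ with $k\le n$ can be replaced by tiny nonzero values. Changing $x_k$ by $t$ changes $\norm{\cdot}_{\ces}$ by at most $\abs{t}$.
\item A spike $c\,n\,\sign(x_n)\boldsymbol{e}_n$ has $\ces$-norm exactly $c$. It raises every $\varphi_m$ with $m\ge n$ and leaves $\varphi_m$ with $m<n$ untouched.
\item The tail $\{x_k\}_{k>n}$ can be multiplied by a factor $1-\delta$.
\end{itemize}
After perturbing, renormalise, which costs $O(\varepsilon)$. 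The averages with $m<n$ are handled by the spike: they stay $\le 1$, while $\varphi_n$ jumps to $\varphi_n(x)+c$.

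\emph{Main obstacle.} The delicate part is the averages $\varphi_m$ with $m$ slightly larger than $n$. Since $\sum_{n<k\le m}\abs{x_k}\le m-n\varphi_n(x)$, the required inequality $\varphi_m(z)<\varphi_n(z)-\gamma$ reduces to
\begin{equation*}
(1-\delta)\bigl((m-n)+n(1-\varphi_n(x))\bigr)<(m-n)\bigl(\varphi_n(x)+c\bigr)-m\gamma .
\end{equation*}
This can fail at $m=n+1$ when $n(1-\varphi_n(x))$ is not small. So the index $n$ must be chosen carefully, not merely as any $n$ with $\varphi_n(x)>1-\varepsilon$.

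My first attempt is a penalised maximisation: choose $n$ maximising $\psi(m)\coloneqq\varphi_m(x)-\varepsilon\, w_m$ for a suitable strictly increasing bounded weight $w_m$. Such a maximiser exists once the supremum of $\psi$ beats its limsup, and it forces $\varphi_m(x)\le\varphi_n(x)+\varepsilon(w_m-w_n)$ for all $m>n$. This is exactly the kind of one-sided control on later averages that makes the displayed inequality hold after a spike of size comparable to $\varepsilon$ and a tail contraction by $1-\varepsilon$.

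If the limsup obstruction prevents a maximiser from existing, I would first replace the tail of $x$ by $(1-2\varepsilon)x_k$ for $k$ beyond some large $N$, which lowers the limsup by $\ge\varepsilon$. I would then maximise over the finitely many relevant indices. Tuning the three parameters (spike $c$, penalty $w$, tail factor $\delta$) to close the inequality is the step I expect to require the most care.
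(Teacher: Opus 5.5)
Your proof that the norm is Fr\'echet differentiable at good points is correct, and it uses the same mechanism as the paper: a single strictly dominant average $\mathscr{H}\abs{x}(n)$ makes the norm affine near $x$. Your definition of good points is the right one, and both of its extra features are needed:
\begin{itemize}
\item at $x=(0,2,0,0,\dots)$ the average $\mathscr{H}\abs{x}(2)=1$ strictly dominates all others, yet $\norm{x+t\boldsymbol{e}_1}_{\ces}=1+\abs{t}/2$ for small $t$, so the non-vanishing condition cannot be dropped;
\item $(1,-1,0,0,\dots)\in S(\ces)$ lies at distance at least $1/2$ from every sequence of constant sign, so constant-sign sets such as the paper's $\pm A(i,\delta)$ are not dense in $S(\ces)$, and the signs are needed.
\end{itemize}
The paper calls density ``straightforward''. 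You rightly see that it is not, but your proposal stops exactly at that point.

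The gap is the density step, which you leave as a programme: $w$ is never specified and the inequality is never closed. Boundedness and monotonicity of $w$ are also not the relevant conditions. Maximality of $\psi$ at $n$ gives only
\[
\frac{1}{m-n}\sum_{n<k\leqslant m}\abs{x_k}\leqslant\varphi_n(x)+\varepsilon\,\frac{m}{m-n}\,(w_m-w_n)\qquad(m>n).
\]
So you need $\sup_{m>n}\frac{m}{m-n}(w_m-w_n)<\infty$, and in particular $(n+1)(w_{n+1}-w_n)=O(1)$. Without this, the control at $m=n+1$ is too weak for a spike of size $O(\varepsilon)$, exactly as in your obstruction. The tail factor does not rescue it: with $c,\delta=O(\varepsilon)$ the requirement becomes $c+\delta\varphi_n(x)>(1-\delta)\,\varepsilon\,\frac{m}{m-n}(w_m-w_n)$.

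The choice $w_m=1-1/m$ closes the argument, as follows.
\begin{itemize}
\item Take $N$ with $\varphi_N(x)>1-\varepsilon$, and let $x'$ be $x$ with $x_k$ replaced by $(1-2\varepsilon)x_k$ for $k>N$. Then $\limsup_m\varphi_m(x')\leqslant1-2\varepsilon$.
\item Hence $m\mapsto\varphi_m(x')+\varepsilon/m$ attains its maximum at some $n$. This gives $\varphi_n(x')>1-2\varepsilon$ and $\varphi_m(x')\leqslant\varphi_n(x')+\varepsilon(\frac1n-\frac1m)$ for all $m$.
\item Put $z\coloneqq x'+2\varepsilon n\sigma\boldsymbol{e}_n$, where $\sigma=\sign(x'_n)$, or $\sigma=1$ if $x'_n=0$. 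Then $\varphi_n(z)-\varphi_m(z)>2\varepsilon$ for $m<n$, and
\[
\varphi_n(z)-\varphi_m(z)\geqslant\frac{m-n}{m}\Bigl(2\varepsilon-\frac{\varepsilon}{n}\Bigr)\geqslant\frac{\varepsilon}{n+1}\qquad(m>n).
\]
No contraction after $n$ is needed.
\item We have $\norm{z}_{\ces}=\varphi_n(z)\in(1,1+2\varepsilon]$ and $\norm{z-x}_{\ces}\leqslant4\varepsilon$. Replace the vanishing coordinates among $z_1,\dots,z_{n-1}$ by values of total modulus below $\varepsilon/(4(n+1))$; this moves each $\varphi_m$ by less than that amount. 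Normalising then gives a good point within $7\varepsilon$ of $x$.
\end{itemize}
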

	\begin{proof}
		Let us start with defining the desired set. For $i \in \mathbb{N}$ and $\delta > 0$, denote by $A(i, \delta)$ the collection
		of all positive functions $x\in\ces$ such that $\mathscr{H} x (i) = 1$ and $\mathscr{H} x (n) < 1 - \delta$ for all $n \neq i$.
		It is straightforward to see, that the set $\bigcup_{i \in \mathbb{N}} \bigcup_{\delta > 0} \pm A(i,\delta)$ is dense in $\text{Ball}(\ces)$.
		
		We will show that all elements from each of $A(i, \delta)$ are Fr{\' e}chet differentiable.  To do this, fix $i_0 \in \mathbb{N}$, $\varepsilon > 0$ and $\delta > 0$.
		Take $x \in A(i,\delta)$ and $h \in \ces$ with $\varepsilon < \norm{h}_{\ces_{\infty}} < \delta/2$.
		We claim that
		\begin{equation} \tag{$\spadesuit$} \label{EQ : addytywnosc na H}
			\norm{x + h}_{\ces} = \mathscr{H}x(i_0) + \mathscr{H}h(i_0).
		\end{equation}
		Indeed, using the reverse $\triangle$-inequality,
		\begin{equation*}
			\mathscr{H}\abs{x + h}(i_0) \geqslant \mathscr{H}(\abs{x} + \abs{h})(i_0) \geqslant 1 - \frac{\delta}{2}.
		\end{equation*}
		Moreover, for $n \neq i_0$,
		\begin{equation*}
			\mathscr{H}\abs{x + h}(n)
			\leqslant \mathscr{H}\abs{x}(n) + \mathscr{H}\abs{h}(n)
			\leqslant 1 - \delta + \frac{\delta}{2}
			= 1 - \frac{\delta}{2}.
		\end{equation*}
		Thus, in view of the above inequalities, we infer that
		\begin{equation*}
			\norm{x + h}_{\ces} = \mathscr{H}\abs{x + h}(n) = \mathscr{H}(x + h)(i_0) = \mathscr{H}x(i_0) + \mathscr{H}h(i_0).
		\end{equation*}
		In consequence, our claim \eqref{EQ : addytywnosc na H} follows.
		Now, we have
		\begin{align*}
			\frac{\abs{\norm{x + h}_{\ces} - \norm{x}_{\ces} - (\boldsymbol{e}^*_i \circ \mathscr{H})(h)}}{\norm{h}_{\ces}}
			& \leqslant \frac{\mathscr{H}x(i_0) + \mathscr{H}h(i_0) - \mathscr{H}x(i_0) - \mathscr{H}h(i_0)}{\varepsilon} = 0.
		\end{align*}
		This means that Fr{\' e}chet derivative of the norm function $\norm{\cdot}_{\ces} \colon \ces \rightarrow \mathbb{R}$
		at a point $x$ is equal to $\boldsymbol{e}^*_i \circ \mathscr{H}$.

	\end{proof}
	
	\begin{lemma} \label{LEMMA : K nowhere Frechet}
		{\it The norm of $\Ces = \Ces(\mathbb{I})$ is nowhere Fr{\' e}chet differentiable.}
	\end{lemma}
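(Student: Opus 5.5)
To show that the norm of $\Ces$ is nowhere Fr{\' e}chet differentiable, I will argue by contradiction. Fix $f \in S(\Ces)$ and suppose $\varphi \in \Ces^{*}$ is the Fr{\' e}chet derivative of the norm at $f$. The standard criterion (Šmulyan) says that Fr{\' e}chet differentiability at $f$ is equivalent to the following: whenever $h_k \in S(\Ces)$ satisfy $\norm{f+h_k}+\norm{f-h_k}-2 = o(\norm{h_k})$ along a sequence with $\norm{h_k} = t_k \to 0$, the quotients must behave linearly. Concretely, it suffices to produce $t_k \to 0$ and unit vectors $h_k$ with
\begin{equation*}
	\norm{f + t_k h_k}_{\Ces} + \norm{f - t_k h_k}_{\Ces} - 2 \geqslant c\, t_k
\end{equation*}
for some fixed $c > 0$. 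This is incompatible with differentiability, because the expansions $\norm{f \pm t h} = 1 \pm t\varphi(h) + o(t)$, holding uniformly over $h \in S(\Ces)$, force the left-hand side to be $o(t)$.

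The construction of $h_k$ uses the continuity of $x \mapsto \mathscr{H}\abs{f}(x)$ on $(0,\infty)$ and the absolute continuity of the integral, which are exactly the features absent in the sequence case. Write $F(x) \coloneqq \mathscr{H}\abs{f}(x)$ and $s \coloneqq \sup_{x} F(x) = 1$. I will choose a point $x_0$ near which $F$ is close to $1$: either a point where $F(x_0) > 1 - \eta$, or, if the supremum is only approached as $x \to 0$ or $x \to \infty$ (for $\mathbb{I} = [0,\infty)$), a point taken far enough along that limit. I then pick a tiny interval $J = [a, a+\ell]$ with $a$ slightly less than $x_0$ and $\ell \ll x_0$, and set
\begin{equation*}
	h \coloneqq \sign(f)\, g,
\end{equation*}
where $g$ is a carefully chosen function supported in $J$ with a large $L_1$-mass relative to $x_0$. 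With this choice $f + th$ increases $F$ near $x_0$ by about $t\int_J g/x_0$. For $f - th$, I replace $g$ by an oscillating function $g = g^+ - g^-$ whose two parts have equal mass and are arranged so that $\abs{f - th} \geqslant \abs{f} + t g^{-}$-type gains occur in the other sign. The simplest version takes $h$ equal to $\pm$ bumps whose sign agrees with $f$ on one half of $J$ and disagrees with $f$ on the other half. Moreover, where $f$ vanishes on a positive-measure part of $J$, we have $\abs{f \pm th} = \abs{f} + t\abs{h}$ there, so both signs gain simultaneously. I will exploit this by choosing $J$ inside a region where $\abs{f}$ is small compared with $t\abs{h}$: since $\abs{f}$ is integrable near $x_0$, the set where $\abs{f} < \varepsilon$ has small-but-positive measure, and $h$ can be concentrated there with large height.

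The key estimate is then the following. With $\norm{h}_{\Ces} = 1$ and $h$ concentrated on a set $J' \subset [x_0 - \ell, x_0]$ where $\abs{f} \leqslant t\abs{h}/2$, both $\mathscr{H}\abs{f \pm th}(x_0)$ are at least $F(x_0) + t\int\abs{h}/(2x_0) - (\text{small})$. Normalising $h$ so that its Ces\`aro norm is attained at $x_0$, that is, $\int\abs{h} \approx x_0$ (this is possible by concentrating the mass just before $x_0$), gives gains of order $t/2$ on both sides, which yields the constant $c \approx 1/2$ after taking $\eta \to 0$.

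The main obstacle is the case where $\abs{f}$ is bounded below on a neighbourhood of every near-maximising point, because then no set where $\abs{f}$ is small is available. In that case I would fall back on the oscillating choice $h = \sign(f)(\chi_{J_1} - \chi_{J_2})$ with heights tuned so that, for small $t$, one sign raises $F$ at the right endpoint of $J_1$ while the other raises it at the right endpoint of $J_2$. The two peaks are separated, so the suprema add. Verifying uniformly that the gain is of order $t$ and not $o(t)$ is where the real care lies, and this is the step I expect to require the most fiddling with the choice of $J_1$ and $J_2$.
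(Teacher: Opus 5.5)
Your route is different from the paper's, and it is the sounder one. The paper tries to exhibit two distinct norming functionals at $f$: weak$^*$ cluster points $\mathcal{A}_{\mathrm{even}}$ and $\mathcal{A}_{\mathrm{odd}}$ of $g\mapsto\lambda(\Omega_n)^{-1}\int_{\Omega_n}\mathscr{H}g$, where the $\Omega_n$ are level sets of $\abs{\mathscr{H}f-1}$. If that worked it would give nowhere G\^ateaux differentiability, which the paper itself leaves open at the end. Its last step, however, asserts $\mathcal{A}_{\mathrm{even}}(\chi_\Omega)=1$ and $\mathcal{A}_{\mathrm{odd}}(\chi_\Omega)=0$, which treats $\mathscr{H}\chi_\Omega$ as if it were $\chi_\Omega$. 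Take $f(t)=2t$ on $[0,1]$, extended by $0$ when $\mathbb{I}=[0,\infty)$. Then $\mathscr{H}f(t)=t$ on $(0,1]$ and the $\Omega_n$ shrink to $\{1\}$. Since $\mathscr{H}g$ is continuous on $(0,\infty)$, the whole sequence $\mathcal{A}_n$ converges weak$^*$ to $g\mapsto\mathscr{H}g(1)$. So the two cluster points coincide, and both take the value $1-\ln 2$ at $\chi_\Omega$. Your \v{S}mulyan-type criterion avoids this and targets exactly the Fr\'echet statement. You need $\norm{f+th}+\norm{f-th}-2\geqslant ct$ for some $\norm{h}\leqslant 1$ and arbitrarily small $t>0$; the uniform expansion over the ball justifies this.

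What is wrong is your case analysis. The claim that $\{\abs{f}<\varepsilon\}$ has small but positive measure near $x_0$ is false in general; take $f\equiv 1$. Your \enquote{main obstacle}, which you try to handle with an oscillating $h$ and leave unverified, does not actually exist. You never need $\abs{f}$ small pointwise: the height of $h$ is of order $x_0/\ell\to\infty$, while $\int_J\abs{f}\to 0$ by absolute continuity. Concretely, let $f\in S(\Ces)$ and $t\in(0,1)$, and proceed as follows.
\begin{enumerate}
\item Pick $x_0>0$ with $\frac{1}{x_0}\int_0^{x_0}\abs{f}>1-\frac t4$. This uses only the definition of the supremum: no continuity, no attainment, and no special treatment of $x\to 0$ or $x\to\infty$.
\item Put $J\coloneqq[x_0-\ell,x_0]$ and choose $0<\ell\leqslant x_0/4$ with $\frac{2}{x_0}\int_J\abs{f}\leqslant\frac t4$.
\item Set $h\coloneqq\frac{x_0-\ell}{\ell}\chi_J$; no $\sign(f)$ is needed.
\end{enumerate}
Then $\norm{h}_{\Ces}=1-\ell/x_0\leqslant 1$. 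Since $\abs{f\pm th}\geqslant t\abs{h}-\abs{f}$ on $J$, for both signs
\[
\norm{f\pm th}_{\Ces}\geqslant\frac{1}{x_0}\int_0^{x_0}\abs{f\pm th}\geqslant\frac{1}{x_0}\int_0^{x_0}\abs{f}-\frac{2}{x_0}\int_J\abs{f}+t\Bigl(1-\frac{\ell}{x_0}\Bigr)>1+\frac t4 .
\]
Hence $\norm{f+th}+\norm{f-th}-2>t/2$ for every $f$, so you can delete both the case split and the fallback. Letting the margins tend to $0$ even gives $(2-\varepsilon)t$, so the norm is $2$-rough, by the same tall-spike mechanism that makes the norm of $L_1[0,1]$ nowhere Fr\'echet differentiable.
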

	\begin{proof}
		Take a positive, norm one function $f$ from $\Ces$.
		Our plan is to show that $f$ has at least two different supporting hyperplanes.
		To this end, let
		\begin{equation*}
			\Omega_n \coloneq \left\{ t > 0 : \frac{1}{n+1} < \abs{\mathscr{H}f(t) - 1} \leqslant \frac{1}{n} \right\} \quad (n \in \mathbb{N}),
		\end{equation*}
		with the proviso that if it happened that one of the $\Omega_n$'s has infinite measure, then instead of $\Omega_n$ we take any subset of it with measure one.
		Further, let us consider the functional $\mathcal{A}_{n} \in \Ces^{*}$ defined as
		\begin{equation*}
			\mathcal{A}_{n}(g) \coloneq \frac{1}{\lambda(\Omega_n)} \int_{\Omega_n} \mathscr{H}g(t)dt \quad (g \in \Ces, n \in \mathbb{N}).
		\end{equation*}
		Note that all $\mathcal{A}_{n}$'s are well-defined because we integrate bounded functions over sets of finite measure.
		It is clear that $\mathcal{A}_{n} \in \mathrm{Ball}(\Ces^{*})$ for every $n \in \mathbb{N}$.
		Next, due to Banach--Alaoglu's theorem, $\mathrm{Ball}(\Ces^{*})$ is compact in the weak$^*$ topology,
		so both sequences $\{\mathcal{A}_{2n}\}_{n=1}^{\infty}$ and $\{\mathcal{A}_{2n+1}\}_{n=1}^{\infty}$ have weak$^*$-cluster points,
		say $\mathcal{A}_{\mathrm{even}}$ and $\mathcal{A}_{\mathrm{odd}}$, respectively.
		Moreover, since $\lim_{n \rightarrow \infty} \mathcal{A}_{2n}(f) = 1$ and $\lim_{n \rightarrow \infty} \mathcal{A}_{2n+1}(f) = 1$, so
		\begin{equation*}
			\mathcal{A}_{\mathrm{even}}(f) = \mathcal{A}_{\mathrm{odd}}(f) = 1.
		\end{equation*}
		Thus, it is enough to show that the supporting hyperplanes generated by $\mathcal{A}_{\mathrm{even}}$ and $\mathcal{A}_{\mathrm{odd}}$ are distinct.
		In fact, by putting $\Omega \coloneq \bigcup_{n \in \mathbb{N}} \Omega_{2n}$, we see that $\mathcal{A}_{\mathrm{even}}(\chi_{\Omega}) = 1$,
		but $\mathcal{A}_{\mathrm{odd}}(\chi_{\Omega}) = 0$ (just note that the characteristic function $\chi_{\Omega}$ of $\Omega$
		belongs to $L_{\infty}$, and therefore even more so to $\Ces$).
		This observation finishes the proof.
	\end{proof}

	\begin{theorem} \label{THEOREM : ces and Ces NOT isometric}
		{\it The spaces $\ces$ and $\Ces$ are not isometric. }
	\end{theorem}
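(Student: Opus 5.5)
The proof will be a direct combination of Lemmas~\ref{LEMMA : Frechet points of k} and \ref{LEMMA : K nowhere Frechet}, using the invariance of Fr\'echet differentiability of the norm under isometries. Suppose, towards a contradiction, that $T \colon \ces \rightarrow \Ces$ is a surjective linear isometry. We work with real scalars, as agreed at the beginning of this subsection.

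The first step is to check the transfer of differentiability. Let $x_0 \in S(\ces)$ be a point at which $\norm{\cdot}_{\ces}$ is Fr\'echet differentiable with derivative $\varphi \in \ces^{*}$. Put $y_0 \coloneqq Tx_0$; then $y_0 \in S(\Ces)$, since $T$ preserves norms. Set $\psi \coloneqq \varphi \circ T^{-1} \in \Ces^{*}$. For $k \in \Ces$, write $h \coloneqq T^{-1}k$. Then
\begin{equation*}
	\frac{\norm{y_0 + k}_{\Ces} - \norm{y_0}_{\Ces} - \psi(k)}{\norm{k}_{\Ces}}
	= \frac{\norm{x_0 + h}_{\ces} - \norm{x_0}_{\ces} - \varphi(h)}{\norm{h}_{\ces}},
\end{equation*}
because $\norm{y_0+k}_{\Ces} = \norm{T(x_0+h)}_{\Ces} = \norm{x_0+h}_{\ces}$ and $\norm{k}_{\Ces} = \norm{h}_{\ces}$. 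Since $k \rightarrow 0$ exactly when $h \rightarrow 0$, the left-hand side tends to $0$. Hence the norm of $\Ces$ is Fr\'echet differentiable at $y_0$.

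The second step is the contradiction. By Lemma~\ref{LEMMA : Frechet points of k}, $S(\ces)$ contains at least one (indeed, a dense set of) Fr\'echet differentiability point $x_0$. By the first step, $Tx_0$ is then a Fr\'echet differentiability point of the norm of $\Ces$. This contradicts Lemma~\ref{LEMMA : K nowhere Frechet}, which says there are no such points. So no isometry exists, and the argument covers both $\mathbb{I} = [0,1]$ and $\mathbb{I} = [0,\infty)$.

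There is essentially no obstacle here beyond the two lemmas. The only point needing care is that Lemma~\ref{LEMMA : K nowhere Frechet} was verified only at positive norm-one functions. To reach an arbitrary $g \in S(\Ces)$, note that the lattice map $M_{\sign g}$ is an isometry of $\Ces$ sending $\abs{g}$ to $g$. By the first step applied to this isometry, differentiability at $g$ is equivalent to differentiability at $\abs{g}$, so the lemma covers all of $S(\Ces)$.
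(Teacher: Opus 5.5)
Your proof is correct and follows the paper's argument. The paper likewise combines Lemmas~\ref{LEMMA : Frechet points of k} and~\ref{LEMMA : K nowhere Frechet} with the invariance of Fr\'{e}chet differentiability points under surjective isometries, which it asserts without the verification you supply. Your reduction from an arbitrary $g \in S(\Ces)$ to $\abs{g}$ is a worthwhile addition, since the paper's lemma only treats positive $f$. However, the multiplier must be unimodular, say $\sigma = \sign g$ on $\{g \neq 0\}$ and $\sigma = 1$ elsewhere: $M_{\sign g}$ itself is not injective, hence not an isometry, when $g$ vanishes on a set of positive measure.
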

	\begin{proof}
		By Lemma~\ref{LEMMA : Frechet points of k}, the norm of $\ces$ is Fr{\' e}chet differentiable at some points of $S(\ces)$
		(even more, $\norm{\cdot}_{\ces}$ is Fr{\' e}chet differentiable on a dense subset of $S(\ces)$),
		while by Lemma~\ref{LEMMA : K nowhere Frechet} the norm of $\Ces$ is nowhere Fr{\' e}chet differentiable.
		The proof works in the same way for $\mathbb{I} = [0,1]$ and $\mathbb{I}=[0,\infty)$
		(however, see Question~\ref{QUESTION : Are K[0,1] and K[0,infty) ISOMETRIC?}).
	\end{proof}
	
	\subsection{Briefly about preduals of $\ces$} \label{SUBSECTION : ces_infty preduals}
	
	A Banach space $\mathsf{Y}$ is an {\bf isometric predual} of a Banach space $\mathsf{X}$ if $\mathsf{Y}^{*}$
	is isometric to $\mathsf{X}$, and $\mathsf{X}$ has a {\bf unique predual} if it has an isometric predual and all its isometric preduals
	are mutually isometric.
	
	In general, an isometric predual is not unique;
	for example, both $c_0$ and $c$ are isometric preduals of $\ell_1$, but they are not\footnote{To be pedantically precise,
	in the late 1960s Michael Cambern \cite{Cam68} showed that the Banach--Mazur distance between $c_0$ and $c$ is exactly $3$.} isometric.
	On the other hand, in the mid-1950s Alexander Grothendieck \cite[Remark~4]{Gro55} ({\it cf.} \cite[pp.~59--60]{BI80})
	noted that the space $\ell_{\infty}$ has a unique predual, namely $\ell_1$.
	As the next result shows, a similar situation occurs for $\ces$.
	
	\begin{theorem} \label{THEOREM : ces_infty has UNIQUE predual}
		{\it The space $\ces$ has a unique isometric predual, namely the space $\widetilde{\ell}_1$.}
	\end{theorem}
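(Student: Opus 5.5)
The plan is to derive uniqueness from the general principle that an \emph{L-embedded} space is the unique isometric predual of its dual. Recall that a space $\mathsf{Y}$ is L-embedded when $\mathsf{Y}^{**} = \mathsf{Y} \oplus_1 \mathsf{Y}_s$ for some subspace $\mathsf{Y}_s$. The relevant result is due to Grothendieck in the $\ell_1$ case and to Godefroy in general (see Godefroy's survey on existence and uniqueness of isometric preduals, or Harmand--Werner--Werner): if $\mathsf{Y}$ is L-embedded, then every isometric predual of $\mathsf{Y}^{*}$ is isometric to $\mathsf{Y}$. The theorem therefore splits into two steps.

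\textbf{Step 1: existence.} I first check that $\widetilde{\ell}_1$ is an isometric predual of $\ces$. I will invoke Alexiewicz's theorem \cite[Theorem~1]{Ale57}, which identifies the K\"othe dual of $\widetilde{\ell}_1$ isometrically with $\ces$; see also \cite{LM15a} and the later Lemma~\ref{LEMMA : dual to ces_infty is t_1 in ZF+DC}. Since $\widetilde{\ell}_1$ is separable, its K\"othe dual coincides with its Banach dual. This gives $\widetilde{\ell}_1^{\,*} \cong \ces$ isometrically. I will also record the converse K\"othe duality $\ces^{\times} \equiv \widetilde{\ell}_1$, which is needed in Step 2.

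\textbf{Step 2: $\widetilde{\ell}_1$ is L-embedded.} This is the main obstacle. I write $\ces^{*} = \ces^{\times} \oplus (\ces_o)^{\perp}$, the standard decomposition of the dual of a Banach sequence space into order continuous (K\"othe) functionals and singular functionals vanishing on $\ces_o$. Here $\ces^{\times} \equiv \widetilde{\ell}_1$, embedded canonically in the bidual. The decomposition is algebraic, and it remains to show that the norm is additive: $\norm{g+s} = \norm{g} + \norm{s}$ for $g \in \widetilde{\ell}_1$ and singular $s$. Only $\geqslant$ needs proof.

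Fix $\varepsilon > 0$ and proceed as follows.
\begin{enumerate}
\item[(i)] Since $g$ is order continuous, choose $x_1$ in the unit ball of $\ces$ supported on $\{1,\dots,N\}$ with $g(x_1) > \norm{g} - \varepsilon$.
\item[(ii)] Choose $y$ in the unit ball with $s(y) > \norm{s} - \varepsilon$.
\item[(iii)] Since $s$ kills finitely supported sequences, replace $y$ by the tail $x_2 \coloneqq y\chi_{[M,\infty)}$; this does not change $s(x_2)$, and $x_2$ still has $\ces$-norm at most $1$.
\end{enumerate}
The point is to choose $M$ much larger than $N$. For $n < M$ we have $\mathscr{H}\abs{x_1 + x_2}(n) = \mathscr{H}\abs{x_1}(n) \leqslant 1$. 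For $n \geqslant M$,
\[
\mathscr{H}\abs{x_1+x_2}(n) \leqslant \frac{N}{n}\,\norm{x_1}_{\ces} + \mathscr{H}\abs{x_2}(n) \leqslant \frac{N}{M} + 1 .
\]
Hence $\norm{x_1+x_2}_{\ces} \leqslant 1 + N/M$. Moreover $g(x_2)$ is small once $M$ is large, by order continuity of $g$, and $s(x_1) = 0$. It follows that
\[
(g+s)(x_1+x_2) \geqslant \norm{g} + \norm{s} - 3\varepsilon ,
\]
and letting $M \to \infty$ and then $\varepsilon \to 0$ gives additivity. One must check that $x_2$, chosen as a tail of $y$, still nearly norms $s$ and gives a small value of $g$ simultaneously. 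This works because $s$ is insensitive to finite modifications while $g(y\chi_{[M,\infty)}) \to 0$.

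\textbf{Step 3: conclusion.} By Step 2, $\widetilde{\ell}_1$ is an L-summand in its bidual $\ces^{*}$. By Godefroy's theorem, every isometric predual of $\widetilde{\ell}_1^{\,*} \cong \ces$ is isometric to $\widetilde{\ell}_1$. Together with Step 1, $\widetilde{\ell}_1$ is the unique isometric predual of $\ces$. If one prefers to avoid citing Godefroy, the fallback is to reproduce his argument directly: for any predual $\mathsf{Z}$ of $\ces$, the weak$^{*}$-continuous projection structure forces $\mathsf{Z} = \widetilde{\ell}_1$ inside $\ces^{*}$.
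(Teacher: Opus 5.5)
Your proof is correct, but it takes a different route from the paper's. The paper works on the dual side. It combines Lemma~\ref{LEMMA : Frechet points of k} (the norm of $\ces$ is Fr\'echet differentiable on a dense subset of $S(\ces)$) with a differentiability criterion from Godefroy's survey. The mechanism there is \v{S}mulian's lemma: a Fr\'echet derivative of a dual norm lies in every isometric predual, and here these derivatives are signed averages $\frac{1}{i}\sum_{k\leqslant i}\pm\boldsymbol{e}_k$, which span a dense subspace of $\widetilde{\ell}_1$.

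You work on the predual side instead. The estimate $\norm{x_1+x_2}_{\ces}\leqslant 1+N/M$ for far-apart supports says that the $\ces$-norm behaves asymptotically like an $\ell_\infty$-sum. It gives $\ces^{*}=\widetilde{\ell}_1\oplus_1(\ces_o)^{\perp}$, and your verification is sound, including the algebraic splitting via $g(n)\coloneqq\phi(\boldsymbol{e}_n)$ and the choice of $x_1$, $x_2$.

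The paper's route is shorter because the same lemma also yields Theorem~\ref{THEOREM : ces and Ces NOT isometric}. That lemma needs some care, though: one must restrict to points whose first $i$ coordinates are non-zero, since $2\boldsymbol{e}_2$ lies in $A(2,\delta)$ yet $\norm{2\boldsymbol{e}_2+t\boldsymbol{e}_1}_{\ces}=1+\abs{t}/2$. Your route bypasses this and gives more structure. Since $(\ces_o)^{*}\cong\widetilde{\ell}_1$ and $(\ces_o)^{**}\cong\ces$, your decomposition says exactly that $\ces_o$ is an M-ideal in its bidual. So the triple $(\ces_o,\widetilde{\ell}_1,\ces)$ behaves like $(c_0,\ell_1,\ell_\infty)$.

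One correction concerns Step~3. As far as I know, uniqueness of the isometric predual for arbitrary L-embedded spaces is not established in that generality in Godefroy's survey or in Harmand--Werner--Werner. The separable case is Pfitzner's theorem (2007), which does cover $\widetilde{\ell}_1$, so your conclusion stands with that citation.

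Alternatively, you can close the argument directly with Godefroy--Talagrand's property (X). Suppose $\phi=g+s\in\ces^{*}$ commutes with weak$^{*}$-sums of weakly unconditionally Cauchy series in $\ces$. The series $\sum_k u_k\boldsymbol{e}_k$ is such a series, with weak$^{*}$-sum $u$. This gives $\phi(u)=\sum_k u_k g(k)=g(u)$ for every $u\in\ces$, hence $s=0$. Property (X) then shows that $\widetilde{\ell}_1$ is even the strongly unique predual of $\ces$.
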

	\begin{proof}
		By a theorem of Andrzej Alexiewicz \cite{Ale57}, the natural isometric predual of $\ces$ can be identified with $\widetilde{\ell}_1$
		(see Theorem~3 {\it ibidem} or Lemma~\ref{LEMMA : dual to ces_infty is t_1 in ZF+DC} below; see also \cite{LM15a} for some generalisations).
		By Lemma~\ref{LEMMA : Frechet points of k}, the norm of $\ces$ is Fr{\' e}chet differentiable on a dense subset of $S(\ces)$.
		By Theorem~II.1 together with Example~II in Godefroy's survey \cite{God89}, $\ces$ has a unique predual.
	\end{proof}
	
	It seems natural --- although somewhat ambitious --- to estimate the number of isomorphic preduals of $\ces$.
	Two important ingredients of this endeavour are the class of Lindenstrauss spaces and the Szlenk index,
	which we now briefly recall.
	
	A {\bf Lindenstrauss space} is a Banach space whose dual is isometric to $L_1(\mu)$ for some measure $\mu$.
	In particular, a Banach space $\mathsf{X}$ is an {\bf isometric $\ell_1$-predual} if $\mathsf{X}^{*}$ is isometric to $\ell_1$.
	The space $\ell_1$ admits a multitude of non-isomorphic isometric preduals
	(see, for example, \cite{AH11}).
	The class of Lindenstrauss spaces coincides with the class of $\mathscr{L}_{\infty,\lambda}$-spaces for all $\lambda > 1$
	(see \cite{LL66} and \cite{Lin64}).
	For a comprehensive study of $L_1$-preduals we refer to Lacey's monograph \cite[Chapter~7]{EL74} and the references therein.
	
	Without going into details, the {\bf Szlenk index} $\textbf{Sz}$ is an ordinal-valued isomorphic invariant of Banach spaces
	introduced in 1968 by Wies{\l}aw Szlenk \cite{Szl68} to show that the class of separable reflexive Banach spaces has
	no universal element.
	(Parenthetically, Szlenk's discovery solved an old problem posed by Stefan Banach and Stanis{\l}aw Mazur in the Scottish Book.)
	For our modest needs, it is enough to know that $\textbf{Sz}$ describes the isomorphic classification of
	$\mathcal{C}(K)$-spaces for countable compact spaces $K$\footnote{Recall that, by the classical result of Mazurkiewicz and Sierpi{\' n}ski
		\cite{MS20}, every countable compact Hausdorff space is homeomorphic to $\alpha + 1$ for some $\alpha < \omega_1$.}
	(see Brooker's paper \cite{Bro13}, where the Szlenk index of $\mathcal{C}(\alpha + 1)$ is computed for every ordinal $\alpha \geqslant \omega$;
	{\it cf.} \cite{Sam83} and \cite{HL07}).
	The Szlenk index has much more to offer in the geometry of Banach spaces;
	see Lancien's survey \cite{Lan06}, Odell's beautiful paper \cite{Ode04}, and also \cite{Bro13} and \cite{DK16}.
	
	\begin{theorem} \label{PROP : ces has uncountably many preduals}
		{\it The space $\ces$ has at least $\aleph_1$ mutually non-isomorphic preduals.}
	\end{theorem}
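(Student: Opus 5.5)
We look for preduals among spaces of the form $\mathsf{Z}_\alpha \coloneqq \mathcal{C}(\omega^{\omega^\alpha}+1)$, $\alpha<\omega_1$. The key observation is that for a separable $\mathcal{L}_\infty$-space $\mathsf{Z}$, Theorem~\ref{LEMMA : 6infty} combined with \eqref{EQ : L(X,l_infty)} gives
\begin{equation*}
\ell_\infty(\mathsf{Z}^{*}) \approx \ell_\infty(\ell_1) \approx \ces,
\end{equation*}
the last isomorphism by Lemma~\ref{LEMMA : 2} and Claim~\ref{CLAIM D}. Moreover, $\ell_\infty(\mathsf{Z}^{*}) = \bigl(\ell_1(\mathsf{Z})\bigr)^{*}$ isometrically. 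Hence each space $\mathsf{P}_\alpha \coloneqq \ell_1(\mathsf{Z}_\alpha)$ is an isomorphic predual of $\ces$, that is, its dual is isomorphic to $\ces$. Since $\mathcal{C}(K)$ is a separable $\mathcal{L}_\infty$-space for every countable compact $K$, all these choices are legitimate.

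It then remains to show that the spaces $\mathsf{P}_\alpha$ are pairwise non-isomorphic for different $\alpha$ ranging over an uncountable set. We would use the Szlenk index $\mathbf{Sz}$, an isomorphic invariant. By Samuel's and Brooker's computations, $\mathbf{Sz}(\mathcal{C}(\omega^{\omega^\alpha}+1)) = \omega^{\alpha+1}$, so these indices are strictly increasing in $\alpha$. We then need the Szlenk index of the $\ell_1$-sum. Here I would invoke the known stability result that for $\ell_p$-sums ($1\leqslant p<\infty$) of copies of a separable Asplund space $\mathsf{Z}$ with $\mathbf{Sz}(\mathsf{Z})=\omega^{\beta}$, one has $\mathbf{Sz}(\ell_p(\mathsf{Z}))=\omega^{\beta}$ or $\omega^{\beta+1}$; this is Brooker's result on direct sums. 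The point is that it is in any case determined between $\omega^{\alpha+1}$ and $\omega^{\alpha+2}$. Passing to the subfamily of limit ordinals $\alpha$, or simply to every other ordinal, the Szlenk indices of $\mathsf{P}_\alpha$ are pairwise distinct. There are $\aleph_1$ such $\alpha<\omega_1$, which gives the claim.

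The main obstacle is precisely this Szlenk computation for $\ell_1(\mathsf{Z}_\alpha)$, in particular making sure the $\ell_1$-sum does not inflate the index. If citing Brooker's direct-sum theorem proves awkward, I would instead show directly that $\mathsf{Z}_\alpha$ is complemented in $\mathsf{P}_\alpha$, so $\mathbf{Sz}(\mathsf{P}_\alpha)\geqslant \omega^{\alpha+1}$. For the upper bound, I would use the weak$^*$-dentability-type estimate $\mathbf{Sz}(\ell_1(\mathsf{Z}))\leqslant \mathbf{Sz}(\mathsf{Z})\cdot\omega$, which gives $\omega^{\alpha+2}$. Spacing the ordinals $\alpha$ by at least two then suffices for distinctness. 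One should also double-check that $\mathsf{P}_\alpha$ is separable and Asplund, so that the index is countable; this holds because $\mathsf{Z}_\alpha^{*}\cong\ell_1$ and $\mathsf{P}_\alpha^{*}\cong\ell_\infty(\ell_1)$.
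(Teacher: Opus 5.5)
Your identification of the preduals is correct and matches the paper's: $\ell_1(\mathsf{Z}_\alpha)^{*} \cong \ell_\infty(\mathsf{Z}_\alpha^{*}) \approx \ell_\infty(\ell_1) \approx \ces$. For these $\mathsf{Z}_\alpha$ you do not even need Theorem~\ref{LEMMA : 6infty}, since $\mathsf{Z}_\alpha^{*} \cong \ell_1$ isometrically; the paper gets the same spaces via $(\ell_1\widetilde{\otimes}_\pi \boldsymbol{L})^* \cong \mathfrak{L}(\ell_1,\boldsymbol{L}^*)$.

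The gap is in the distinguishing step. The space $\mathsf{P}_\alpha = \ell_1(\mathsf{Z}_\alpha)$ contains a copy of $\ell_1$, so it is \emph{not} Asplund. Its dual $\ell_\infty(\ell_1)$ is non-separable, which is exactly what your own ``double-check'' computes, so that check refutes your claim rather than confirming it. The Szlenk index is monotone under passing to subspaces and $\mathbf{Sz}(\ell_1)=\infty$, so $\mathbf{Sz}(\mathsf{P}_\alpha)=\infty$ for \emph{every} $\alpha$. The Szlenk index therefore cannot tell these spaces apart at all. In particular, the bound $\mathbf{Sz}(\ell_1(\mathsf{Z}))\leqslant\mathbf{Sz}(\mathsf{Z})\cdot\omega$ is false. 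Direct-sum formulas of Brooker's type concern sums over $c_0$ or $\ell_p$ with $1<p<\infty$, which preserve the Asplund property; an $\ell_1$-sum never does.

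The missing idea is to pass from the $\ell_1$-sum to a single summand \emph{before} computing any Szlenk index. The paper does this as follows. Suppose $\mathsf{P}_\alpha\approx\mathsf{P}_\beta$ with $\alpha<\beta$. Then $\mathsf{Z}_\beta$ embeds in $\ell_1(\mathsf{Z}_\alpha)$. Since $\mathsf{Z}_\beta$ contains no copy of $\ell_1$ (its dual $\ell_1$ is separable), a result of Cembranos and Mendoza shows that $\mathsf{Z}_\beta$ embeds in $\mathsf{Z}_\alpha$ itself. Now the index is applied only to Asplund spaces, and $\omega^{\beta+1}=\mathbf{Sz}(\mathsf{Z}_\beta)\leqslant\mathbf{Sz}(\mathsf{Z}_\alpha)=\omega^{\alpha+1}$ is a contradiction.

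You can also prove this reduction directly by a gliding hump. Let $T\colon\mathsf{Z}_\beta\to\ell_1(\mathsf{Z}_\alpha)$ be an embedding, and let $P_n$ be the projection onto the first $n$ coordinates. If no $P_nT$ is bounded below, you can extract normalised $z_k$ whose images are small perturbations of disjointly supported blocks. Then $\{z_k\}$ is equivalent to the unit vector basis of $\ell_1$, which is impossible. So some $P_nT$ embeds $\mathsf{Z}_\beta$ into the $\ell_1$-sum of $n$ copies of $\mathsf{Z}_\alpha$. That sum is isomorphic to $\mathsf{Z}_\alpha$ by the Bessaga--Pe{\l}czy{\'n}ski classification. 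With this reduction in place, all $\alpha<\omega_1$ work and no thinning of the ordinals is needed.
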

	\begin{proof}
		We first show that every space of the form $( \bigoplus_{n=1}^{\infty} \boldsymbol{L} )_{\ell_{1}}$,
		where $\boldsymbol{L}$ is an isometric $\ell_1$-predual, is a predual of $\ces$.
		Recall the following \enquote{tensor-hom adjunction}:
		For Banach spaces $\mathsf{X}$ and $\mathsf{Y}$ one has $(\mathsf{X} \widetilde{\otimes}_{\pi} \mathsf{Y})^* \cong \mathfrak{L}(\mathsf{X},\mathsf{Y}^*)$
		(see, for example, \cite[Theorem~2.9, p.~22]{Rya02}).
		Using Lemmas~\ref{LEMMA : 1}, \ref{LEMMA : 3} and \ref{LEMMA : 2} together with Claim~\ref{CLAIM D}, we get
		\begin{equation*}
			\ces
			\approx (\bigoplus_{n=1}^{\infty} \ell_1^n)_{\ell_{\infty}}
			\approx \ell_{\infty}(\ell_1)
			\cong \mathfrak{L}(\ell_1,\ell_1)
			\cong \mathfrak{L}(\ell_1,\boldsymbol{L}^{*})
			\cong (\ell_1 \widetilde{\otimes}_{\pi} \boldsymbol{L})^*.
		\end{equation*}
		Moreover, $\ell_1 \widetilde{\otimes}_{\pi} \boldsymbol{L} \cong ( \bigoplus_{n=1}^{\infty} \boldsymbol{L} )_{\ell_{1}}$
		(see \cite[Example~2.6, p.~19]{Rya02}).
		In other words, $\ces \approx ( \bigoplus_{n=1}^{\infty} \boldsymbol{L} )_{\ell_{1}}^{*}$, as claimed.
		
		It remains to show that the family of spaces $(\bigoplus_{n=1}^{\infty} \boldsymbol{L} )_{\ell_{1}}$, where $\boldsymbol{L}$
		runs through the isometric $\ell_1$-preduals, contains at least $\aleph_1$ isomorphism classes.
		For every countable ordinal $\alpha$ endowed with the order topology, $\mathcal{C}(\alpha+1)^* \cong \ell_1$.
		By Bessaga and Pe{\l}czy{\' n}ski \cite[Theorem~1]{BP60}, for $0 \leqslant \alpha,\beta < \omega_1$ the spaces $\mathcal{C}(\omega^{{\omega}^{\alpha}}+1)$
		and $\mathcal{C}(\omega^{{\omega}^{\beta}}+1)$ are isomorphic if, and only if, $\alpha = \beta$.
		For $0 \leqslant \gamma < \omega_1$, let $\mathfrak{X}_{\gamma} \coloneqq ( \bigoplus_{n=1}^{\infty} \mathcal{C}(\omega^{{\omega}^{\gamma}}+1) )_{\ell_{1}}$,
		and fix $0 \leqslant \alpha < \beta < \omega_1$.
		Suppose, towards a contradiction, that $\mathfrak{X}_{\alpha}$ and $\mathfrak{X}_{\beta}$ are isomorphic.
		Since $\mathfrak{X}_{\beta}$ has a subspace isomorphic to $\mathcal{C}(\omega^{{\omega}^{\beta}}+1)$,
		so does $\mathfrak{X}_{\alpha}$.
		Since $\mathcal{C}(\omega^{{\omega}^{\beta}}+1)$ has no\footnote{Otherwise, $\mathcal{C}(\omega^{{\omega}^{\beta}}+1)^{*} \cong \ell_1$
			would have a quotient isomorphic to $\ell_{\infty} = \ell_1^{*}$, which is impossible, because $\ell_1$ is separable.} subspace isomorphic to $\ell_1$, Cembranos and Mendoza's result \cite[Corollary~2.2]{CM08}
		shows that $\mathcal{C}(\omega^{{\omega}^{\alpha}}+1)$ contains a subspace isomorphic to $\mathcal{C}(\omega^{{\omega}^{\beta}}+1)$.
		Thus
		\begin{equation*}
			\textbf{Sz}( \mathcal{C}(\omega^{{\omega}^{\beta}}+1) ) \leqslant \textbf{Sz}( \mathcal{C}(\omega^{{\omega}^{\alpha}}+1) ).
		\end{equation*}
		On the other hand, by Samuel's result \cite{Sam83}, the Szlenk index of $\mathcal{C}(\omega^{{\omega}^{\delta}}+1)$
		with $0 \leqslant \delta < \omega_1$ equals $\omega^{\delta +1}$
		(see \cite[Theorem~3.2]{HL07} for a direct proof, and \cite[Theorem~2.6]{Bro13} for some generalisations).
		Therefore,
		\begin{equation*}
			\textbf{Sz}( \mathcal{C}(\omega^{{\omega}^{\beta}}+1) ) = \omega^{\beta + 1} > \omega^{\alpha + 1} = \textbf{Sz}( \mathcal{C}(\omega^{{\omega}^{\alpha}}+1) ),
		\end{equation*}
		a contradiction.
	\end{proof}
	
	The above result can be improved with a slightly heavier arsenal.
	
	\begin{corollary} \label{COROLLARY : c many preduals}
		{\it The space $\ces$ has at least $\mathfrak{c}$ mutually non-isomorphic preduals.}
	\end{corollary}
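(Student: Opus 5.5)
The plan is to keep the argument of Theorem~\ref{PROP : ces has uncountably many preduals} intact and enlarge the family of isometric $\ell_1$-preduals. As shown there, every space $(\bigoplus_{n=1}^{\infty} \boldsymbol{L})_{\ell_1}$, with $\boldsymbol{L}$ an isometric $\ell_1$-predual, is a predual of $\ces$ up to isomorphism. It therefore suffices to find a family $\{\boldsymbol{L}_t\}_{t \in T}$ of isometric $\ell_1$-preduals with $\mathrm{card}\, T = \mathfrak{c}$ whose $\ell_1$-sums $\mathfrak{X}_t \coloneqq (\bigoplus_{n=1}^{\infty} \boldsymbol{L}_t)_{\ell_1}$ are pairwise non-isomorphic. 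The spaces $\mathcal{C}(\alpha+1)$ only give $\aleph_1$ classes, so under $\neg\mathsf{CH}$ we must leave the $\mathcal{C}(K)$-world. The \enquote{heavier arsenal} I would use is the existence of continuum many pairwise non-isomorphic separable Banach spaces whose duals are isometric to $\ell_1$. One can take, for example, the preduals of $\ell_1$ built by Alspach or the family from \cite{AH11}. Alternatively, one can take the $\mathcal{L}_\infty$-spaces of Bourgain--Delbaen type, which have separable dual $\ell_1$ and can be renormed, or chosen, to be isometric $\ell_1$-preduals (Lindenstrauss spaces with dual $\ell_1$).

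The second step is to turn \enquote{$\boldsymbol{L}_s \not\approx \boldsymbol{L}_t$} into \enquote{$\mathfrak{X}_s \not\approx \mathfrak{X}_t$}. I would copy the mechanism of the previous proof. Each $\boldsymbol{L}_t$ has separable dual, so it contains no copy of $\ell_1$. By Cembranos and Mendoza \cite[Corollary~2.2]{CM08}, an embedding $\boldsymbol{L}_s \hookrightarrow \mathfrak{X}_t$ then forces $\boldsymbol{L}_s \hookrightarrow \boldsymbol{L}_t$. Hence it is enough to choose the family so that it is pairwise \emph{incomparable under embeddings}, or at least separated by an isomorphic invariant that is monotone under subspaces and stable under $\ell_1$-sums. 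The natural candidate is again the Szlenk index, but it takes only $\aleph_1$ values. To get $\mathfrak{c}$ I would instead invoke a family of isometric $\ell_1$-preduals that are mutually non-embeddable. Such a family could come from spreading models or asymptotic structure, for instance Bourgain--Delbaen spaces whose asymptotic $\ell_p$-behaviour is prescribed by a parameter; each $\boldsymbol{L}_t$ would be asymptotically $c_0$, with distinct \enquote{upper estimates} indexed by $t$ from a continuum-sized set. A slicker route avoids embeddings altogether. It uses an invariant of $\mathfrak{X}$ that $\ell_1$-sums inherit directly from $\boldsymbol{L}$, for example the set of spreading models of normalized weakly null sequences. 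A weakly null sequence in an $\ell_1$-sum of spaces without $\ell_1$ can be perturbed, after passing to a subsequence, to live essentially in finitely many summands. So this set is the same for $\mathfrak{X}_t$ and $\boldsymbol{L}_t$, and it suffices to have $\mathfrak{c}$ many $\boldsymbol{L}_t$ with distinct spreading-model sets.

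The main obstacle is producing the continuum family with an invariant that survives passage to $\ell_1$-sums. Finding $\mathfrak{c}$ non-isomorphic $\ell_1$-preduals is documented; guaranteeing they remain distinguishable inside $(\bigoplus \boldsymbol{L}_t)_{\ell_1}$ is the delicate point. I would first try the embedding-incomparability route via \cite{CM08}, since it reuses the previous proof verbatim. Only if no suitable incomparable family is citable would I fall back on the spreading-model invariant described above.
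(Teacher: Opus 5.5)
Your skeleton is the one the paper uses: preduals of the form $(\bigoplus_{n}\boldsymbol{L})_{\ell_1}$, the absence of $\ell_1$ in $\boldsymbol{L}$, and \cite[Corollary~2.2]{CM08} to pull an embedding $\boldsymbol{L}_s\hookrightarrow\mathfrak{X}_t$ back to $\boldsymbol{L}_s\hookrightarrow\boldsymbol{L}_t$. The gap is that you never supply the ingredient that carries the weight. That ingredient is a concrete family of cardinality $\mathfrak{c}$ together with a proved separating statement, and you look for the family in the wrong class.

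Restricting to \emph{isometric} $\ell_1$-preduals is unnecessary. The corollary concerns isomorphic preduals, and $\boldsymbol{L}^{*}\approx\ell_1$ already gives $(\bigoplus_{n=1}^{\infty}\boldsymbol{L})_{\ell_1}^{*}\cong\ell_{\infty}(\boldsymbol{L}^{*})\approx\ell_{\infty}(\ell_1)\approx\ces$. The restriction is also harmful. The paper remarks that, to the authors' knowledge, the existence of continuum many pairwise non-isomorphic isometric $\ell_1$-preduals is open, so appealing to Alspach or \cite{AH11} for it is unsupported. Your alternative, that Bourgain--Delbaen spaces \enquote{can be renormed, or chosen, to be isometric $\ell_1$-preduals}, is false in general. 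Every infinite-dimensional isometric $\ell_1$-predual contains $c_0$, whereas the original Bourgain--Delbaen $\mathcal{L}_{\infty}$-spaces contain no copy of $c_0$, and renorming cannot create one.

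The paper's proof works with the Bourgain--Delbaen spaces $\mathsf{Y}_{a,b}$, with $a$ fixed and $b$ ranging over a continuum of admissible values. These are separable $\mathcal{L}_{\infty}$-spaces with separable dual, so $\mathsf{Y}_{a,b}^{*}\approx\ell_1$ by \cite[Theorem~2]{LS73}. That is all that is needed to make $\mathfrak{X}_b\coloneqq(\bigoplus_{n=1}^{\infty}\mathsf{Y}_{a,b})_{\ell_1}$ a predual of $\ces$.

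The separating fact is \cite[Theorem~5.10]{BD80}: $\mathsf{Y}_{a,c}$ does not embed into $\mathsf{Y}_{a,b}$ for $c>b$. This one-sided non-embeddability along a chain is enough, so you do not need pairwise incomparability. If $b<c$ and $\mathfrak{X}_b\approx\mathfrak{X}_c$, then $\mathsf{Y}_{a,c}\hookrightarrow\mathfrak{X}_c\approx\mathfrak{X}_b$. Then \cite{CM08} yields $\mathsf{Y}_{a,c}\hookrightarrow\mathsf{Y}_{a,b}$, a contradiction.

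Your spreading-model fallback also does not work as stated. A weakly null sequence in an $\ell_1$-sum may spread over several summands. For example, $x_n=(y_n,z_n)\in\boldsymbol{L}\oplus_1\boldsymbol{L}$ generates the spreading model with norm $\lVert\sum_i a_i\boldsymbol{e}_i\rVert_{E_1}+\lVert\sum_i a_i\boldsymbol{e}_i\rVert_{E_2}$, where $E_1$ and $E_2$ are the spreading models of $(y_n)$ and $(z_n)$. This need not be a spreading model of $\boldsymbol{L}$, so the sets of spreading models of $\mathfrak{X}_t$ and $\boldsymbol{L}_t$ need not coincide.
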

	\begin{proof}[Sketch of the proof]
		Bourgain and Delbaen \cite[Corollary~5.11]{BD80} constructed continuum\footnote{Let us note,
			{\it en passant}, that to the best of our knowledge the question whether there is a continuum of non-isomorphic isometric
			$\ell_1$-preduals is open.} many mutually non-isomorphic $\ell_1$-preduals;
		in the notation of \cite{BD80}, we denote them by $\mathsf{Y}_{a,b}$, where $0 < a,b < 1$ are real numbers satisfying
		certain technical conditions (see \cite[p.~166]{BD80}).
		Fix an admissible $a$ and consider the family
		\begin{equation*}
			\mathcal{Y} = \left\{ ( \bigoplus_{n=1}^{\infty} \mathsf{Y}_{a,b} )_{\ell_{1}} : b \text{ admissible} \right\}.
		\end{equation*}
		Since each $\mathsf{Y}_{a,b}$ is a separable $\mathcal{L}_{\infty}$-space with a separable dual, Lewis and Stegall's result
		\cite[Theorem~2]{LS73} shows that $\mathsf{Y}_{a,b}^{*} \approx \ell_1$, and hence, by Lemma~\ref{LEMMA : 1} and Claim~\ref{CLAIM D},
		\begin{equation*}
			( \bigoplus_{n=1}^{\infty} \mathsf{Y}_{a,b} )_{\ell_{1}}^{*} \cong \ell_{\infty}(\mathsf{Y}_{a,b}^{*}) \approx \ell_{\infty}(\ell_1) \approx \ces.
		\end{equation*}
		In particular, no $\mathsf{Y}_{a,b}$ contains a subspace isomorphic to $\ell_1$.
		The rest of the argument is the same as in the proof of Theorem~\ref{PROP : ces has uncountably many preduals}, with the Szlenk index replaced by
		the fact that $\mathsf{Y}_{a,c}$ does not embed into $\mathsf{Y}_{a,b}$ for $c > b$ \cite[Theorem~5.10]{BD80}.
	\end{proof}
	
	\subsection{Space oddities, after Martin V{\" a}th} \label{SUBSECTION : Vath}
	
	Recall that the {\bf Axiom of Dependent Choice} (briefly, $\mathsf{DC}$) asserts
	that if $R$ is a total relation on a non-empty set $S$ (that is, for every $x \in S$ there exists $y \in S$
	with $x R y$), then there exists a sequence $\{x_n\}_{n=1}^{\infty}$ in $S$ such that\footnote{Hence the name,
		as the available choices of $x_{n+1}$ depend on the previous choice of $x_n$.} $x_n R x_{n+1}$ for all $n \in \mathbb{N}$.
	Alternatively, one can define $\mathsf{DC}$ in terms of strategies for infinite games (see, for example, Sections~567A--567C in \cite{Fre15}).
	
	It is well known that $\mathsf{DC}$, although weaker than $\mathsf{AC}$, is stronger than the Axiom of Countable Choice $\mathsf{AC}_{\omega}$
	(roughly speaking, because $\mathsf{DC}$ allows one to choose elements recursively; {\it cf.} \cite[Theorem~5]{Kar20}).
	In $\mathsf{ZF}$, $\mathsf{DC}$ is equivalent to the Baire category theorem for complete metric spaces (see \cite[Theorem~18]{Kar20}).
	In some sense, $\mathsf{DC}$ is the main alternative to $\mathsf{AC}$:
	Starting from a model of $\mathsf{ZFC}$ with an inaccessible cardinal, Robert Solovay \cite{Sol70} constructed a model of $\mathsf{ZF}+\mathsf{DC}$ in which every set
	of reals is Lebesgue measurable (see also \cite[567L]{Fre15}).
	
	\begin{scholium} \label{SCHOLIUM : reinventing the wheel}
		To the erudite reader acquainted with the literature on Ces{\` a}ro spaces,
		Lemmas~\ref{LEMMA : dual to ces_infty is t_1 in ZF+DC},~\ref{LEMMA : tandori_1 = direct sum} and~\ref{PROPOSITION: Schur Tandori l1 ZF}
		may look like reinventing the wheel.
		They are not, because in this section we work {\bf without} the Axiom of Choice, and most of the available
		results concerning Ces{\` a}ro spaces depend\footnote{Sometimes it is a non-trivial task to decide whether
			a given argument uses some form of $\mathsf{AC}$ or not (we experienced this painfully while reading, for example, \cite{LM15a}).}
		on $\mathsf{AC}$ in one way or another.
		Apart from relying on the authors' word,
		we see no better way to show that an argument works without $\mathsf{AC}$ than to write out all the details.
		\demo
	\end{scholium}
	
	\begin{remark}[Measure theory without choice] \label{REMARK : measure theory without choice}
		Along the way, we need some measure theory without $\mathsf{AC}$.
		Chapter~56 \enquote{Choice and determinacy} of Fremlin's monograph \cite{Fre15} seems designed for this very purpose,
		so we stick to it (and refer there for detailed explanations).
		In short, $\mathsf{AC}_{\omega}$ is enough to make the Lebesgue measure behave neatly;
		in fact, \cite[Section~565]{Fre15} shows how to construct the Lebesgue measure without $\mathsf{AC}$.
		According to \cite[Definition~565D]{Fre15}, the Lebesgue measure on $\mathbb{R}^d$
		is then understood as the completion of a unique Borel-coded measure constructed in \S563H. Let us stop here.
		\demo
	\end{remark}
	
	\begin{lemma}[Duality] \label{LEMMA : dual to ces_infty is t_1 in ZF+DC}
		{\it Assume $\mathsf{ZF}$.
			Then $\ces^{\times} \equiv \widetilde{\ell}_1$.}
	\end{lemma}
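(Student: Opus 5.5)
We must show, in $\mathsf{ZF}$ alone, that $\ces^{\times} \equiv \widetilde{\ell}_1$. Here $\ces^{\times}$ is the set of all sequences $y$ with $\sum_n \abs{x_n y_n} < \infty$ for every $x \in \ces$, normed by $\norm{y}_{\ces^{\times}} = \sup\{\sum_n \abs{x_n y_n} : \norm{x}_{\ces} \leqslant 1\}$. The plan is a direct two-sided norm computation that never selects anything. All objects are explicit sequences, and all suprema are suprema of explicitly given sets of reals, so no form of choice enters. We may assume $y \geqslant 0$, since both norms depend only on $\abs{y}$.

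\emph{Step 1: $\norm{y}_{\ces^{\times}} \leqslant \norm{y}_{\widetilde{\ell}_1}$.} Fix $x \geqslant 0$ with $\norm{x}_{\ces} \leqslant 1$, so that $S_n \coloneqq \sum_{k\leqslant n} x_k \leqslant n$. Since $y_k \leqslant \widetilde{y}_k$, we have $\sum_{k\leqslant N} x_k y_k \leqslant \sum_{k \leqslant N} x_k \widetilde{y}_k$. Abel summation gives
\[
\sum_{k \leqslant N} x_k \widetilde{y}_k = \sum_{k<N} S_k(\widetilde{y}_k - \widetilde{y}_{k+1}) + S_N \widetilde{y}_N .
\]
Because $\widetilde{y}$ is non-increasing, every coefficient in this expression is non-negative. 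So we may replace $S_k$ by $k$, and this bounds the sum by $\sum_{k\leqslant N} \widetilde{y}_k$. Letting $N \to \infty$ proves the inequality, and in particular shows that $\widetilde{\ell}_1 \subset \ces^{\times}$.

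\emph{Step 2: $\norm{y}_{\widetilde{\ell}_1} \leqslant \norm{y}_{\ces^{\times}}$.} This is the main obstacle, because it is here that one is normally tempted to choose near-maximising indices or to use Hahn--Banach. Instead we build test sequences explicitly. Fix $N$, and let $y^{(N)} \coloneqq y\chi_{[1,N]}$. We will show that $\sum_{k \leqslant N} \widetilde{y^{(N)}}_k \leqslant \norm{y}_{\ces^{\times}}$.
\begin{enumerate}
\item[$\bullet$] Consider the ``record'' indices of the finite sequence $y_1,\dots,y_N$ taken from the right. Set $m_1 \coloneqq \min\{k \leqslant N : y_k = \max_{j \leqslant N} y_j\}$, then let $m_2$ be the least index beyond $m_1$ at which $\max_{m_1 < j \leqslant N} y_j$ is attained, and so on. This is a finite recursion using only the least-index rule, so it needs no choice.
\item[$\bullet$] On each block $(m_{i-1}, m_i]$ (with $m_0 \coloneqq 0$) we have $\widetilde{y^{(N)}}_k = y_{m_i}$. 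Hence $\sum_{k\leqslant N} \widetilde{y^{(N)}}_k = \sum_i (m_i - m_{i-1})\, y_{m_i}$.
\item[$\bullet$] Define $x \coloneqq \sum_i (m_i - m_{i-1}) \boldsymbol{e}_{m_i}$. Its partial sums satisfy $S_n = m_{i-1} \leqslant n$ for $m_{i-1} \leqslant n < m_i$, and $S_{m_i} = m_i$. Therefore $\norm{x}_{\ces} \leqslant 1$.
\item[$\bullet$] Consequently $\sum_k x_k y_k = \sum_i (m_i - m_{i-1})\, y_{m_i} = \sum_{k\leqslant N} \widetilde{y^{(N)}}_k$.
\end{enumerate}
Now we let $N\to\infty$. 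For each fixed $k$, $\widetilde{y^{(N)}}_k = \max_{k \leqslant j \leqslant N} y_j$ increases to $\widetilde{y}_k$. The monotone convergence theorem for series of non-negative terms is provable in $\mathsf{ZF}$, so this yields $\norm{y}_{\widetilde{\ell}_1} \leqslant \norm{y}_{\ces^{\times}}$.

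\emph{Step 3: $\ces^{\times} \subset \widetilde{\ell}_1$.} Let $y \in \ces^{\times}$. We must check that $\norm{y}_{\ces^{\times}}$ is finite, because Step~2 only bounds $\norm{y}_{\widetilde{\ell}_1}$ by this quantity. We avoid the closed graph theorem, whose usual proof relies on Baire category. Suppose $\norm{y}_{\ces^{\times}} = \infty$. Then Step~2 gives $\norm{y}_{\widetilde{\ell}_1} = \infty$, so the finite record-index test vectors $x^{(N)}$ from Step~2 satisfy $\sum_k x^{(N)}_k y_k \to \infty$. These vectors are canonically determined by $N$, so no choice is involved. Pick $N_1 < N_2 < \dots$, each being the least index with $\sum_k x^{(N_j)}_k y_k \geqslant 4^j$. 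Then set $x \coloneqq \sum_j 2^{-j} x^{(N_j)}$. Its $\ces$-norm is at most $1$, yet $\sum_k x_k y_k \geqslant 2^{-j}\cdot 4^j$ for every $j$, so the series diverges. This contradicts $y \in \ces^{\times}$. Combined with Steps~1 and~2, this gives $\ces^{\times} \equiv \widetilde{\ell}_1$.
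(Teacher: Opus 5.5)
Your proof is correct, and it takes a genuinely different route from the paper's.

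\textbf{The paper's route.} The paper never computes $\ces^{\times}$ directly. It first proves $(\widetilde{\ell}_1)^{\times} \equiv \ces$. For finitely supported $f$ it evaluates the supremum over the unit ball of $\widetilde{\ell}_1$ through the extreme points $\delta_k = \frac{1}{k}\chi_{[1,k]}$ of the set of non-increasing probability vectors in $\mathbb{R}^N$. It then extends to arbitrary $f$ by the Fatou property. Finally it concludes $\ces^{\times} \equiv (\widetilde{\ell}_1)^{\times\times} \equiv \widetilde{\ell}_1$ by appealing to the Fatou property of $\widetilde{\ell}_1$, i.e.\ a Lorentz--Luxemburg type double K{\"o}the duality, which it cites rather than proves.

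\textbf{How your route differs.} Your Step~1 is the same mechanism as the paper's extreme-point computation. Abel summation is exactly the decomposition of the non-increasing $\widetilde{y}$ into a positive combination of the $\chi_{[1,k]}$. Your Step~2, however, replaces the passage through the second K{\"o}the dual. It uses explicit extremal test vectors $\sum_i (m_i - m_{i-1})\boldsymbol{e}_{m_i}$ built from right-to-left record indices.

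\textbf{What your route buys.} The argument is self-contained and visibly choice-free, with no double-duality theorem needed. Your Step~3 also handles a point the paper does not discuss explicitly. Membership in a K{\"o}the dual is defined pointwise, so finiteness of the $\ces^{\times}$-norm has to be proved. Your condensation argument does this without the closed graph theorem and its Baire-category proof.

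\textbf{What the paper's route buys.} It produces the identity $(\widetilde{\ell}_1)^{\times} \equiv \ces$ itself, i.e.\ $\widetilde{\ell}_1$ as the natural isometric predual of $\ces$. That identity is what is used later in Theorem~\ref{THEOREM : ces_infty has UNIQUE predual}. From your side it is one easy step away. Your Step~1 inequality gives $\norm{x}_{(\widetilde{\ell}_1)^{\times}} \leqslant \norm{x}_{\ces}$. Testing against $\frac{1}{n}\chi_{[1,n]}$ gives the reverse inequality. The same condensation trick handles membership.

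\textbf{Two cosmetic slips in Step~3.} First, the implication $\norm{y}_{\ces^{\times}} = \infty \Rightarrow \norm{y}_{\widetilde{\ell}_1} = \infty$ comes from Step~1, not Step~2. Alternatively, simply start from $\norm{y}_{\widetilde{\ell}_1} = \infty$. Second, the least-index choice of the $N_j$ need not be strictly increasing. This is harmless, since only $\sum_j 2^{-j} \leqslant 1$ is used.
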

	
	\begin{remark}
		A much more general approach to the duality of Ces{\` a}ro spaces can be found in Le{\' s}nik and Maligranda's paper \cite{LM15a}
		(which, at least for rearrangement-invariant spaces, agrees with Sinnamon's results \cite{Si94}).
		In particular, a special case of \cite[Theorem~6]{LM15a} shows that the K{\" o}the dual of $\ces$ coincides,
		up to the equivalence of norms, with $\widetilde{\ell}_1$.
		Anyway, the argument below is elementary, choice-free, and gives the equality of norms.
		\demo
	\end{remark}
	
	\begin{proof}[Proof of Lemma~\ref{LEMMA : dual to ces_infty is t_1 in ZF+DC}]
		The argument presented below is almost verbatim the same as Aleksiewicz's proof of Theorem~1 from \cite{Ale57};
		in fact, most of the adjustments being adaptations to our notation.
		We present all the details mainly for the reader's convenience (but see Scholium~\ref{SCHOLIUM : reinventing the wheel}).
		
		Our goal is to show that $(\widetilde{\ell_1})^{\times} \equiv \ces$.
		Knowing this, and using the Fatou property of the space $\widetilde{\ell_1}$ (see, for example, \cite[Theorem~1]{LM15a}), we immediately get
		\begin{equation*}
			\ces^{\times}
			\equiv (\widetilde{\ell}_1)^{\times\times}
			\equiv \widetilde{\ell}_1.
		\end{equation*}
		This yields the desired conclusion.
		
		We will start by showing that for any function $f$ from $(\widetilde{\ell_1})^\times$ such that
		$\supp(f) \subset [1,N]$ for some $N\in\mathbb{N}$ it follows that
		\begin{equation} \tag{$\spadesuit$} \label{EQ : dual do tandori 1 = ces_infty}
			\norm{f}_{(\widetilde{\ell}_1)^\times} = \norm{f}_{\ces}.
		\end{equation}
		Fix $N \in \mathbb{N}$ and take $f = \sum_{n=1}^N f(n) \boldsymbol{e}_n$.
		Of course, we can - and we will - assume that $f$ is positive.
		Thus, we have
		\begin{align} \label{lemma_dual_of_tandoriDC_norm}
			\norm{f}_{(\widetilde{\ell}_1)^\times}
			& = \sup \left\{ \abs{\sum_{n=1}^N f(n)g(n)} : \norm{g}_{\widetilde{\ell}_1} = 1 \right\} \nonumber \\
			& = \sup \left\{ \sum_{n=1}^N \abs{f(n)g(n)} : \norm{g}_{\widetilde{\ell}_1} = 1 \right\} \nonumber \\
			& = \sup \left\{ \sum_{n=1}^N f(n) \widetilde{g}(n) : \norm{g}_{\widetilde{\ell}_1} = 1 \right\}.
		\end{align}
		Set
		\begin{equation*}
			\Omega \coloneqq \left\{ x = \sum_{n=1}^N x(n) \boldsymbol{e}_n
			: x(1) \geqslant x(2) \geqslant \dots \geqslant x(N) \geqslant 0 \text{ and } \sum_{n=1}^N x(n) = 1 \right\}.
		\end{equation*}
		Clearly, $\Omega$ is a closed and convex subset of $\mathbb{R}^N$.
		Hence, remembering about \eqref{lemma_dual_of_tandoriDC_norm},
		\begin{equation*}
			\norm{f}_{(\widetilde{\ell}_1)^\times} = \sup \left\{ \sum_{n=1}^N f(n)g(n) : g \in \Omega \right\}.
		\end{equation*}
		We will show that 
		\begin{equation}\label{lemma_dual_of_tandoriDC_ext}
			\text{Ext}(\Omega) = \left\{ \delta_k : 1 \leqslant k \leqslant N \right\},
		\end{equation}
		where
		\begin{equation*}
			\delta_k \coloneqq \frac{1}{k} \, \sum_{n=1}^k \boldsymbol{e}_n \in \Omega, \quad (k \in \mathbb{N}).
		\end{equation*}
		For $x = \sum_{n=1}^k x(k) \in \Omega$, we have
		\begin{equation*}
			x = \frac{x(k)}{k} \, \delta_k+\frac{x(k-1)-x(k)}{k-1} \, \delta_{k-1}+\dots+\frac{x_{1}-x_2}{1} \, \delta_{1}.
		\end{equation*}
		Observe also that
		\begin{equation*}
			\frac{x(k)}{k}+\frac{x(k-1)-x(k)}{k-1}+\dots+\frac{x(1)-x(2)}{1}=x(k)+x(k-1)+\dots+x(1)=1.
		\end{equation*}
		This, in turn, imply that for any $x\in\Omega$ there is $k = 1, 2, ..., N$ such that
		\begin{equation*}
			x = \sum_{n=1}^k \lambda(n)\delta_n \quad \left( \sum_{n=1}^k\lambda(n)=1 \right).
		\end{equation*}
		Since all $\delta_n$'s are linearly independent this means that the equality \eqref{lemma_dual_of_tandoriDC_ext} holds.
		Moving on, since the norm of a functional acting on a finite-dimensional Banach space is attained on a set of extreme points
		of its unit ball, we infer that
		\begin{align*}
			\norm{f}_{(\widetilde{\ell}_1)^\times}
			& = \sup \left\{ \sum_{n=1}^N f(n)g(n) : g \in \Omega \right\}
			= \sup \left\{ \sum_{n=1}^N f(n)g(n) : g \in \text{Ext}(\Omega) \right\} \\
			& = \max \left\{ \sum_{n=1}^k \frac{f(n)}{k} : 1 \leqslant k \leqslant N \right\}
			= \norm{f}_{\ces}.
		\end{align*}
		In consequence, \eqref{EQ : dual do tandori 1 = ces_infty} follows.
		
		It remains to explain how to improve \eqref{EQ : dual do tandori 1 = ces_infty} to cover the case of any function,
		that is, without any additional restrictions on support.
		To this end, take $f \in (\widetilde{\ell}_1)^\times$ and let $f_n \coloneqq \abs{f}\chi_{[1,n]}$ for $n \in \mathbb{N}$.
		Plainly, $f$ is a pointwise limit of a sequence $\{f_n\}_{n=1}^{\infty}$.
		Thus, it follows from the Fatou property of both spaces $(\widetilde{\ell_1})^\times$ and $\ces$
		in tandem with the previously proven \eqref{EQ : dual do tandori 1 = ces_infty} that
		\begin{equation*}
			\norm{f}_{(\widetilde{\ell}_1)^\times}
			= \sup \left\{ \norm{f_n}_{(\widetilde{\ell}_1)^\times} : n \in \mathbb{N} \right\}
			= \sup \left\{ \norm{f_n}_{\ces} : n \in \mathbb{N} \right\}
			= \norm{f}_{\ces}.
		\end{equation*}
		The proof is hereby completed.
	\end{proof}
	
	\begin{lemma}[Blocking technique] \label{LEMMA : tandori_1 = direct sum}
		{\it Assume $\mathsf{ZF}$.
			The space $\widetilde{\ell}_1$ can be renormed so that $\widetilde{\ell}_1 = (\bigoplus_{n=0}^{\infty} \ell_{\infty}^{\, 2^n})_{\ell_1(w)}$,
			where $w(n) \coloneqq 2^n$ for $n \in \mathbb{N}_0$; more precisely, $\widetilde{\ell}_1$ is $4$-isomorphic to $(\bigoplus_{n=0}^{\infty} \ell_{\infty}^{\, 2^n})_{\ell_1(w)}$.}
	\end{lemma}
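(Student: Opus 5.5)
We compare the norm of $\widetilde{\ell}_1$ directly with the blocked norm, in the spirit of Claim~\ref{CLAIM D}. As there, let $\Delta_n \coloneqq \{2^n,\dots,2^{n+1}-1\}$ for $n \in \mathbb{N}_0$, so that $\#(\Delta_n) = 2^n$. For a sequence $x$ put $b_n(x) \coloneqq \max_{k \in \Delta_n} \abs{x_k}$; each block is finite, so no choice is involved. The candidate renorming is
\begin{equation*}
	\vertiii{x} \coloneqq \sum_{n=0}^{\infty} 2^n b_n(x).
\end{equation*}
This is exactly the norm of $(\bigoplus_{n=0}^{\infty} \ell_{\infty}^{\, 2^n})_{\ell_1(w)}$ with $w(n) = 2^n$, once $x\chi_{\Delta_n}$ is identified with an element of $\ell_{\infty}^{\, 2^n}$. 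That identification is an explicit linear bijection of $\mathbb{K}^{\mathbb{N}}$ onto $\prod_n \mathbb{K}^{\Delta_n}$, so everything stays inside $\mathsf{ZF}$. It therefore remains to prove
\begin{equation*}
	\tfrac{1}{2}\vertiii{x} \leqslant \norm{x}_{\widetilde{\ell}_1} \leqslant 2\vertiii{x},
\end{equation*}
which gives the constant $4$.

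\emph{Upper bound.} Split the sum $\sum_k \widetilde{x}_k$ along the blocks. Since $\widetilde{x}$ is non-increasing, for $k \in \Delta_n$ we have
\begin{equation*}
	\widetilde{x}_k \leqslant \widetilde{x}_{2^n} = \sup_{j \geqslant n} b_j(x) \leqslant \sum_{j \geqslant n} b_j(x).
\end{equation*}
Hence
\begin{equation*}
	\norm{x}_{\widetilde{\ell}_1} \leqslant \sum_{n} 2^n \sum_{j \geqslant n} b_j(x) = \sum_j b_j(x) \sum_{n \leqslant j} 2^n \leqslant \sum_j 2^{j+1} b_j(x) = 2\vertiii{x}.
\end{equation*}

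\emph{Lower bound.} Here one uses the previous block. Every $k \in \Delta_{n-1}$ satisfies $k < 2^n$, so $\widetilde{x}_k \geqslant b_n(x)$. Consequently
\begin{equation*}
	\sum_{k \in \Delta_{n-1}} \widetilde{x}_k \geqslant 2^{n-1} b_n(x) \quad (n \geqslant 1),
\end{equation*}
while $\widetilde{x}_1 \geqslant b_0(x)$. Summing over $n \geqslant 1$, and using that the blocks $\Delta_{n-1}$ are disjoint, gives
\begin{equation*}
	\sum_{n \geqslant 1} 2^{n-1} b_n(x) \leqslant \norm{x}_{\widetilde{\ell}_1}.
\end{equation*}
The $n = 0$ term needs one extra line. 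Since $\widetilde{x}_1 \geqslant b_0(x)$ and $\widetilde{x}_1 \geqslant b_1(x)$, we get $b_0(x) + \sum_{n\geqslant1}2^{n-1}b_n(x) \leqslant 2\norm{x}_{\widetilde{\ell}_1}$. This yields $\vertiii{x} \leqslant 4\norm{x}_{\widetilde{\ell}_1}$ crudely. The main obstacle is the bookkeeping needed to get the sharp factor $2$ rather than $4$ in this direction. I would try to combine $b_0 \leqslant \widetilde{x}_1$ with the estimate $2^{n-1}b_n \leqslant \sum_{\Delta_{n-1}}\widetilde{x}_k$ more carefully, without double-counting $\widetilde{x}_1$. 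If that fails, I would accept a slightly worse constant, since only the isomorphism is used later.

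Finally, I would note that both inequalities are proved for arbitrary $x \in \mathbb{K}^{\mathbb{N}}$ with extended norms. The two spaces therefore coincide as sets, and no choice principle is used: only finite maxima, countable sums and suprema of real numbers appear.
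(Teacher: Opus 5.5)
Your renorming $\vertiii{\cdot}$ is exactly the paper's. Your estimate $\norm{x}_{\widetilde{\ell}_1}\leqslant 2\vertiii{x}$ is the same double sum $\sum_j b_j(x)\sum_{n\leqslant j}2^n$ that the paper routes through the embedding $\ell_1(w)\hookrightarrow\widetilde{\ell_1(w)}$.

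The gap is in the other direction. You only prove $\vertiii{x}\leqslant 4\norm{x}_{\widetilde{\ell}_1}$; bounding $b_0(x)\leqslant\widetilde{x}_1$ separately would give $3$. That yields an $8$-isomorphism (or a $6$-isomorphism), whereas the lemma explicitly asserts the constant $4$. Settling for a worse constant therefore does not prove the statement as written.

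The double counting comes from an off-by-one in your comparison blocks. For $n\geqslant1$, compare $b_n(x)$ with $B_n\coloneqq\{2^{n-1}+1,\dots,2^n\}$ instead of $\Delta_{n-1}$. Every $k\in B_n$ still satisfies $k\leqslant 2^n$, so $\widetilde{x}_k\geqslant\widetilde{x}_{2^n}\geqslant b_n(x)$. Moreover, the sets $B_n$ ($n\geqslant1$) partition $\{2,3,\dots\}$, which leaves $\widetilde{x}_1$ free to pay for $b_0(x)=\abs{x_1}$. Hence
\begin{equation*}
	\vertiii{x}=b_0(x)+2\sum_{n\geqslant1}2^{n-1}b_n(x)\leqslant 2\widetilde{x}_1+2\sum_{n\geqslant1}\sum_{k\in B_n}\widetilde{x}_k=2\norm{x}_{\widetilde{\ell}_1}.
\end{equation*}
Together with your upper bound, this gives the constant $4$. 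The factor $2$ here is asymptotically attained by $x=\chi_{\{1,\dots,2^m\}}$, for which $\vertiii{x}=2^{m+1}-1$ and $\norm{x}_{\widetilde{\ell}_1}=2^m$.

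Do not try to replace this with the paper's one-line dilation estimate for this direction. Read literally, its chain asserts $\sum_{n\geqslant1}2^n b_n(x)\leqslant\norm{x}_{\widetilde{\ell}_1}$, which fails for $x=\chi_{\{1,2,3,4\}}$ (there $6>4$). It double-counts $\widetilde{x}_1$ exactly as you did, so the shifted blocks are the clean repair.

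Everything else in your argument, including the $\mathsf{ZF}$ bookkeeping, is fine.
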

	
	\begin{remark} \label{REMARK : History of Block Form representations}
		The above result can be deduced from \cite[Corollary~3.5]{ALM19}, \cite[Theorem~3.2]{GE98}, \cite[Corollary~4.e.1]{KT24} and \cite[Corollary~3.2]{Wal20}.
		Below, we modify our own argument from \cite{KT24}, mainly because it gives the best\footnote{We do not know whether the constant $4$ can be improved.} known constant.
		The transformation of $\widetilde{\ell}_1$-like norms (or, more generally, of norms in \enquote{section form})
		into \enquote{direct sum} norms falls within the scope of the so-called {\bf blocking technique}
		(the encyclopaedic source on this subject is \cite{GE98}; {\it cf.} \cite{KT24}).
		\demo
	\end{remark}
	
	\begin{proof}[Proof of Lemma~\ref{LEMMA : tandori_1 = direct sum}]
		Essentially, we will present here a trimmed version of the proof of Theorem~4.d.1 from \cite{KT24}
		(the original argument maneuvers between some non-trivial facts from interpolation theory the use of which we completely bypass below).
		
		Let us define an (extended) norm $\vertiii{\cdot} \colon \mathbb{K}^{\mathbb{N}} \rightarrow \mathbb{R}$ as
		\begin{equation*}
			\vertiii{f}
			\coloneqq \sum_{n=0}^{\infty} 2^n M(f,n) \quad (f = \{ f(n) \}_{n=1}^{\infty} \in \mathbb{K}^{\mathbb{N}}),
		\end{equation*}
		where
		\begin{equation*}
			M(f,n) \coloneqq \sup \{ \abs{f(k)} : k \in \Delta_n \} \quad (n \in \mathbb{N}_0),
		\end{equation*}
		and $\Delta_n$ denote the $n^{\text{th}}$ dyadic interval of $\mathbb{N}$, that is, $\Delta_n \coloneqq [2^n,\dots,2^{n+1})$ for $n \in \mathbb{N}_0$.
		Of course,
		\begin{equation*}
			\left\{ f\in \mathbb{K}^{\mathbb{N}} : \vertiii{f} < \infty \right\}
			= (\bigoplus_{n=0}^{\infty} \ell_{\infty}^{\, 2^n})_{\ell_1(w)},
		\end{equation*}
		where $w(n) = 2^n$ for $n \in \mathbb{N}_0$.
		That said, all we need to show is that the above functional introduces on the space $\widetilde{\ell}_1$ an equivalent norm.
		
		To see this, take $f = \{ f(n) \}_{n=1}^{\infty}$ from $\widetilde{\ell}_1$.
		To proceed further, the essentially trivial observation that $\vertiii{f} = \norm{ \sum_{n=0}^{\infty} M(f,n) \chi_{\Delta_n}}_{\ell_1}$
		will be vital.
		Keeping this in mind, we have
		\begin{align*}
			\vertiii{f}
			& = \sum_{n=0}^{\infty} 2^n M(f,n)
			= \abs{f(1)} + \sum_{n=1}^{\infty} 2^n M(f,n) \\
			& \leqslant \norm{f}_{\ell_\infty}
			+ \lVert D_{1/2} \colon \ell_1 \rightarrow \ell_1 \rVert
			\cdot \sum_{n=1}^{\infty} 2^{n-1} M(f,n) \\
			& \leqslant  \lVert \text{id} \colon \widetilde{{\ell}_1} \rightarrow \ell_{\infty} \rVert \, \norm{f}_{\widetilde{\ell}_1}
			+ \sum_{n=1}^{\infty} \sup\limits_{k \geqslant n} \abs{f(k)}
			= 2 \norm{f}_{\widetilde{\ell}_1},
		\end{align*}
		where $D_{1/2}$ denote the dilation operator.
		This gives us the first half of our claim.
		
		Now, let us turn to the proof of the second inequality.
		We have
		\begin{align*}
			\norm{f}_{\widetilde{\ell_1(w)}}
			& = \sum_{n=0}^\infty \left(\sup\limits_{k \geqslant n} \abs{f(k)}\right) 2^{n}
			\leqslant \sum_{n=0}^\infty \left(\sum_{k=n}^\infty \abs{f(k)}\right) 2^{n}
			\leqslant \sum_{n=0}^\infty \abs{f(n)} \sum_{k=0}^n 2^{k} \\
			& = \sum_{n=0}^\infty \abs{f(n)}2^{n} \sum_{k=0}^n 2^{-n} 2^{k}
			= \sum_{n=0}^\infty \abs{f(n)} 2^{n} \sum_{k=0}^n 2^{-k+1} \\
			& \leqslant \left( \sum_{k=0}^\infty 2^{-k+1} \right)\left( \sum_{n=0}^\infty \abs{f(n)} 2^{n} \right)
			\leqslant 2 \sum_{n=0}^\infty \abs{f(n)} 2^{n}
			= 2\norm{f}_{\ell_1(w)}.
		\end{align*}
		This, of course, means that
		\begin{equation*}
			\ell_1(w) \overset{2}{\hookrightarrow} \widetilde{\ell_1(w)}.
		\end{equation*}
		Thus,
		\begin{align*}
			\norm{f}_{\widetilde{\ell_1}}
			& \leqslant \norm{ \sum_{n=0}^{\infty} M(f,n) \chi_{\Delta_n} }_{\widetilde{\ell_1}}
			\leqslant \norm{ \sum_{n=0}^{\infty} M(f,n) \boldsymbol{e}_n }_{\widetilde{\ell_1(w)}} \\
			& \leqslant \lVert \text{id} \colon \ell_1(w) \rightarrow \widetilde{\ell_1(w)} \rVert \, \norm{ \sum_{n=0}^{\infty} M(f,n) \boldsymbol{e}_n }_{\ell_1(w)} \\
			& \leqslant  2 \sum_{n=0}^{\infty} 2^n M(f,n)
			= 2 \vertiii{f}.
		\end{align*}
		The above inequality completes the proof.
	\end{proof}
	
	Recall that a Banach space $\mathsf{X}$ has the {\bf Schur property} if every weakly null sequence in $\mathsf{X}$ is norm null.
	We shall be interested in a quantitative variant of this property (implicitly introduced in Johnson and Odell's paper \cite{JO74};
	see \cite[before Theorem~3.3]{GKL96} for the explicit formulation):
	A Banach space $\mathsf{X}$ has the {\bf $c$-strong Schur property} if
	for every $\delta > 0$ and every $\varepsilon > 0$, every normalised and $\delta$-separated\footnote{A bounded sequence $\{x_n\}_{n=1}^{\infty}$
		in a Banach space $\mathsf{X}$ is {\bf $\delta$-separated} if $\norm{x_k - x_{\ell}}_{\mathsf{X}} > \delta$ for all $k \neq \ell$.}
	sequence in $\mathsf{X}$ has a subsequence $\{x_i\}_{i=1}^{\infty}$ such that
	\begin{equation*}
		\norm{\sum_{i=1}^n a_i x_i}_{\mathsf{X}}
		\geqslant \left( \frac{2c}{\delta} + \varepsilon \right)^{-1} \, \sum_{i=1}^n \abs{a_i}
	\end{equation*}
	for all $n \in \mathbb{N}$ and all scalars $a_1, ..., a_n$.
	We say that $\mathsf{X}$ has the {\bf strong Schur property} if it has the $c$-strong Schur property for some $c$.
	
	Every Banach space with the strong Schur property has the Schur property.
	Indeed, by Rosenthal's $\ell_1$-theorem\footnote{Which is a choiceless result (see \cite[Chapter~II, Section~19.E]{Kec95}).} (see, for example, \cite[Theorem~10.2.1]{AK06}),
	the Schur property of $\mathsf{X}$ is equivalent to the following:
	for every $\delta > 0$, every $\delta$-separated sequence in $\text{Ball}(\mathsf{X})$ has a subsequence equivalent
	to the unit vector basis of $\ell_1$.
	The converse fails (see \cite[Remark~4.3]{KO89}; see also \cite[p.~436]{Wnu09} for a more elaborate explanation).
	Note also that if $\mathsf{Y} \approx_d \mathsf{X}$ and $\mathsf{X}$ has the $c$-strong Schur property, then $\mathsf{Y}$ has the $cd^2$-strong Schur property.
	
	The next lemma describes a natural permanence property of unconditional sums of finite-dimensional Banach spaces.
	
	\begin{lemma}[Strong Schur property] \label{PROPOSITION: Schur Tandori l1 ZF}
		{\it Assume $\mathsf{ZF}$ and $\mathsf{DC}$.
			Let $\mathsf{X}$ be a Banach sequence space with the $c$-strong Schur property and the Fatou property,
			and let $\{\mathsf{F}_n\}_{n=1}^{\infty}$ be a sequence of finite-dimensional Banach spaces.
			Then the space $(\bigoplus_{n=1}^{\infty} \mathsf{F}_n)_{\mathsf{X}}$ has the $c$-strong Schur property.}
		
		{\it In particular, the space $\widetilde{\ell}_1$ has the $16$-strong Schur property.}
	\end{lemma}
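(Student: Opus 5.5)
The plan is to reduce the statement to the $c$-strong Schur property of $\mathsf{X}$. The reduction transfers a suitable subsequence of a separated sequence in $\mathsf{Z} \coloneqq (\bigoplus_{n=1}^{\infty} \mathsf{F}_n)_{\mathsf{X}}$ to a sequence in $\mathsf{X}$ with the same norms of linear combinations, up to $\varepsilon$. For ${\bf f}\in\mathsf{Z}$ write $N({\bf f}) \coloneqq \sum_n \norm{{\bf f}(n)}_{\mathsf{F}_n}\boldsymbol{e}_n \in \mathsf{X}$, so that $\norm{{\bf f}}_{\mathsf{Z}} = \norm{N({\bf f})}_{\mathsf{X}}$. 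The basic observation is the following. If ${\bf g}_1,\dots,{\bf g}_m\in\mathsf{Z}$ have pairwise disjoint block supports, then $N(\sum a_k{\bf g}_k) = \sum \abs{a_k}N({\bf g}_k) = \abs{\sum a_k N({\bf g}_k)}$. Hence disjointly block-supported vectors in $\mathsf{Z}$ and their images under $N$ in $\mathsf{X}$ span isometric spaces.

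Fix $\delta,\varepsilon>0$ and a normalised $\delta$-separated sequence $\{{\bf x}_k\}$ in $\mathsf{Z}$.

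\emph{Step 1 (coordinatewise limit).} Each $\mathsf{F}_n$ is finite-dimensional and every ${\bf x}_k(n)$ lies in a ball of $\mathsf{F}_n$ of radius at most $\norm{\boldsymbol{e}_n}_{\mathsf{X}}^{-1}$. A diagonal extraction therefore gives a subsequence with ${\bf x}_k(n)\to{\bf x}(n)$ for every $n$. This uses only Bolzano--Weierstrass in $\mathbb{K}^d$, which is choice-free via explicit bisection, together with $\mathsf{DC}$ to run the recursion. By the Fatou property, ${\bf x}\in\mathsf{Z}$ and $\norm{{\bf x}}_{\mathsf{Z}}\le 1$.

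\emph{Step 2 (gliding hump).} Put ${\bf u}_k \coloneqq {\bf x}_k-{\bf x}$; these vectors are coordinatewise null. By a further recursive extraction we choose integers $m_1<m_2<\dots$ and blocks ${\bf w}_k \coloneqq {\bf u}_k\chi_{(m_{k},m_{k+1}]}$ such that $\norm{{\bf u}_k-{\bf w}_k}_{\mathsf{Z}}\le \eta_k$, with $\sum\eta_k$ as small as we like. Here the head part is controlled by coordinatewise convergence.

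\emph{Step 3 (obstacle: the tail of ${\bf u}_k$ and overlap with ${\bf x}$).} The tail of ${\bf u}_k$ beyond $m_{k+1}$ cannot be made small by order continuity, since $\mathsf{X}$ is not assumed order continuous and ${\bf x}$ itself may have a non-negligible tail overlapping the humps. I expect this to be the main difficulty. I would first try to avoid needing smallness at all. Since only a \emph{lower} estimate is required and the lattice norm of $\mathsf{X}$ is monotone, I would aim for a comparison
\begin{equation*}
\norm{\sum a_k{\bf x}_k}_{\mathsf{Z}} \geqslant \norm{\sum a_k\xi_k}_{\mathsf{X}} - \varepsilon'\sum\abs{a_k},
\end{equation*}
where $\xi_k \coloneqq N({\bf x}\chi_{A})+N({\bf w}_k)$ and $A$ is a head of ${\bf x}$ disjoint from the humps. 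One then chooses the humps to start after the head $A$, and bounds the remaining overlap with ${\bf x}$ using the Fatou property (norms of heads converge to the full norm) instead of order continuity. With the disjointness in place one has $\abs{\sum a_k\xi_k} = \abs{\sum a_k}N({\bf x}\chi_A)+\sum\abs{a_k}N({\bf w}_k)$. The $\xi_k$ are almost normalised, and $\norm{\xi_k-\xi_l}_{\mathsf{X}} = \norm{N({\bf w}_k)+N({\bf w}_l)}_{\mathsf{X}} = \norm{{\bf w}_k-{\bf w}_l}_{\mathsf{Z}} \approx \norm{{\bf x}_k-{\bf x}_l}_{\mathsf{Z}} >\delta-\varepsilon'$.

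\emph{Step 4 (conclusion).} After renormalising, apply the $c$-strong Schur property of $\mathsf{X}$ to $\{\xi_k\}$, which is $(\delta-\varepsilon')$-separated. Letting $\varepsilon'\to0$ absorbs all perturbations into the slack $\varepsilon$, and yields the lower $\ell_1$-estimate with constant $(2c/\delta+\varepsilon)^{-1}$ for a subsequence of $\{{\bf x}_k\}$.

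For the \enquote{in particular} part, observe that $\ell_1(w)$ is isometric to $\ell_1$ via $\boldsymbol{e}_n\mapsto w(n)\boldsymbol{e}_n$. The space $\ell_1$ has the $1$-strong Schur property: this is the Rosenthal/Johnson--Odell estimate, whose proof involves only a subsequence extraction done with $\mathsf{DC}$. The space $\ell_1(w)$ also has the Fatou property. So $(\bigoplus_{n=0}^\infty\ell_\infty^{2^n})_{\ell_1(w)}$ has the $1$-strong Schur property by the first part. By Lemma~\ref{LEMMA : tandori_1 = direct sum} it is $4$-isomorphic to $\widetilde{\ell}_1$, so the remark before the lemma ($cd^2$ with $c=1$, $d=4$) gives the $16$-strong Schur property for $\widetilde{\ell}_1$.
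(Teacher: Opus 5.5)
There is a genuine gap, and it sits exactly where you expected one. Your outline matches the paper's: coordinatewise limit, gliding hump, transfer to $\mathsf{X}$ through $N$ on disjointly supported pieces, renormalisation, and $cd^2=16$ for $\widetilde{\ell}_1$. Your $\xi_k$ are the paper's $h_i$.

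The bound $\norm{{\bf u}_k-{\bf w}_k}_{\mathsf{Z}}\leqslant\eta_k$ in Step~2 needs the tail ${\bf u}_k\chi_{(m_{k+1},\infty)}$ to be small. Coordinatewise convergence gives no control there. Every quantitative claim in Step~3 rests on that bound: that $\xi_k$ is almost normalised, that $\norm{{\bf w}_k-{\bf w}_l}_{\mathsf{Z}}\approx\norm{{\bf x}_k-{\bf x}_l}_{\mathsf{Z}}$, and the comparison inequality.

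Your substitute does not work. The Fatou property only gives $\norm{{\bf y}\chi_{[1,m]}}\to\norm{{\bf y}}$, which does not make $\norm{{\bf y}\chi_{(m,\infty)}}$ small. Nor can the comparison follow from Fatou plus lattice monotonicity alone. In $\ell_\infty$ with $\mathsf{F}_n=\mathbb{K}$, take ${\bf x}_k=\chi_{[n_k,\infty)}$ with $n_1<n_2<\dots$. This sequence is normalised, $1$-separated and coordinatewise null, so ${\bf x}=0$ and $\xi_k=\chi_{[n_k,\infty)\cap(m_k,m_{k+1}]}$. As soon as these humps are non-empty (which they must be for $\xi_k$ to be almost normalised), $\norm{\xi_1-2\xi_2}_{\ell_\infty}=2$, whereas $\norm{{\bf x}_1-2{\bf x}_2}_{\ell_\infty}=1$. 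So the tails have to be controlled by a hypothesis on $\mathsf{X}$ that your sketch never invokes at this point.

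The missing idea is that the hypotheses already force $\mathsf{X}$ to be order continuous; this is the Claim inside the paper's proof. Suppose some positive $z\in\mathsf{X}$ had $\norm{z\chi_{[n,\infty)}}_{\mathsf{X}}>2\delta$ for all $n$.
\begin{enumerate}
\item The Fatou property and $\mathsf{DC}$ give disjoint finite blocks $z_i=z\chi_{[n_i+1,n_{i+1}]}$ with $\norm{z_i}_{\mathsf{X}}\geqslant\delta$.
\item These blocks are weakly null, since $\sum_i\varphi(z_i)\leqslant\varphi(z)$ for every positive $\varphi$ and every functional is a difference of positive ones.
\item This contradicts the Schur property, which the strong Schur property implies.
\end{enumerate}
You can also avoid duality. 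The normalised blocks $y_i=z_i/\norm{z_i}_{\mathsf{X}}$ are disjoint, so $\norm{y_i-y_j}_{\mathsf{X}}=\norm{y_i+y_j}_{\mathsf{X}}\geqslant1$, and they are $\delta'$-separated for every $\delta'<1$. But all their partial sums are dominated by $z/\delta$, which is incompatible with the lower $\ell_1$-estimate the $c$-strong Schur property would give.

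Once $\mathsf{X}$ is order continuous, every ${\bf f}\in\mathsf{Z}$ satisfies $\norm{{\bf f}\chi_{(m,\infty)}}_{\mathsf{Z}}=\norm{N({\bf f})\chi_{(m,\infty)}}_{\mathsf{X}}\to0$. You can then choose the head $A$ with $\norm{{\bf x}\chi_{\mathbb{N}\setminus A}}_{\mathsf{Z}}<\eta$ and the humps with $\norm{{\bf u}_k-{\bf w}_k}_{\mathsf{Z}}<\eta$. Writing ${\bf x}_k={\bf x}\chi_A+{\bf w}_k+{\bf r}_k$ with $\norm{{\bf r}_k}_{\mathsf{Z}}<2\eta$, your comparison follows from the triangle inequality. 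Steps~3--4 then become precisely the paper's argument.
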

	
	\begin{remark} \label{REMARK : Schur}
		It is routine to show that an $\ell_1$-sum of finite-dimensional Banach spaces has the Schur property
		(see \cite[p.~19]{Bou81}, \cite[pp.~60--61]{CI90} and \cite[p.~439]{Wnu09}).
		In $\mathsf{ZFC}$ one can even show that $(\bigoplus_{n=1}^{\infty} \mathsf{X}_n)_{\ell_1}$ has the Schur property if,
		and only if, each $\mathsf{X}_n$ has the Schur property (see, for example, \cite{Tan98}; we do not know whether this can be done without $\mathsf{AC}$).
		
		It was proved in \cite[Theorem~3.1]{ALM19} that $\widetilde{\ell}_1$ has the Schur property.
		The \enquote{in particular} part of Lemma~\ref{PROPOSITION: Schur Tandori l1 ZF} strengthens this result,
		also by removing additional set-theoretic assumptions
		(the argument in \cite{ALM19} makes significant use of the Banach--Alaoglu theorem which,
		although weaker than the Axiom of Choice, is equivalent in $\mathsf{ZF}$ to the ultrafilter lemma).
		\demo
	\end{remark}
	
	\begin{proof}[Proof of Lemma~\ref{PROPOSITION: Schur Tandori l1 ZF}]
		The argument is a variation on the well-known \enquote{gliding hump} theme;
		our reading of Kalenda's paper \cite{Kal25} influenced some parts of the proof.
		
		For $A \subset \mathbb{N}$, let $P_A {\bf f} \coloneqq {\bf f}\chi_A$ for ${\bf f} \in (\bigoplus_{n=1}^{\infty} \mathsf{F}_n)_{\mathsf{X}}$,
		and write $\norm{\cdot}$ for the norm of $(\bigoplus_{n=1}^{\infty} \mathsf{F}_n)_{\mathsf{X}}$.
		
		{\bf First step: Diagonal argument.}
		Fix $\delta \in (0,2]$ and $\varepsilon > 0$, and
		let\footnote{We use bold font to emphasise that the ${\bf f}_i$'s are vector-valued.} $\{ {\bf f}_i\}_{i=1}^{\infty}$
		be a normalised $\delta$-separated sequence in $(\bigoplus_{n=1}^{\infty} \mathsf{F}_n)_{\mathsf{X}}$.
		By Cantor's diagonal argument\footnote{Which does not require $\mathsf{AC}$ here, because all $\mathsf{F}_n$'s are finite-dimensional.}, a subsequence
		of $\{ {\bf f}_i\}_{i=1}^{\infty}$ converges pointwise
		to some ${\bf f}_{\bullet} \in \prod_{n=1}^{\infty} \mathsf{F}_n$.
		Since $\mathsf{X}$ has the Fatou property, ${\bf f}_{\bullet} \in \mathrm{Ball}((\bigoplus_{n=1}^{\infty} \mathsf{F}_n)_{\mathsf{X}})$.
		We still denote this subsequence by $\{ {\bf f}_i\}_{i=1}^{\infty}$.
		
		{\bf Second step: Gliding hump.}
		We start with the following simple claim.
		
		\begin{claimletters} \label{CLAIM : separable}
			{\it Let $\mathsf{Z}$ be a Banach sequence space with the Schur property and the Fatou property.
				Then $\mathsf{Z}$ is order continuous.}
		\end{claimletters}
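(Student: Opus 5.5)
I will argue by contradiction and build a copy of $c_0$ inside $\mathsf{Z}$. The Schur property excludes such a copy, since the unit vector basis of $c_0$ is weakly null but not norm null. Everything is done within $\mathsf{ZF}+\mathsf{DC}$; the only choices are a countable recursive selection of indices and blocks, which $\mathsf{DC}$ allows.

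Suppose some $x \in \mathsf{Z}$ is not order continuous; replacing $x$ by $\abs{x}$ we may assume $x \geqslant 0$. Then there is $\delta > 0$ with $\lVert x\chi_{[n,\infty)} \rVert_{\mathsf{Z}} \geqslant \delta$ for every $n \in \mathbb{N}$. By the Fatou property,
\begin{equation*}
	\lVert x\chi_{[n,\infty)} \rVert_{\mathsf{Z}} = \sup_{m > n} \lVert x\chi_{[n,m)} \rVert_{\mathsf{Z}}.
\end{equation*}
We can therefore choose recursively $1 = n_1 < n_2 < \dots$ such that each block $y_k \coloneqq x\chi_{[n_k,n_{k+1})}$ satisfies $\norm{y_k}_{\mathsf{Z}} \geqslant \delta/2$. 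The $y_k$ are disjointly supported, positive, and pointwise dominated by $x$.

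Next I verify that $\{y_k\}$ spans $c_0$. For any finite set $F$ and any scalars $\abs{a_k} \leqslant 1$, the ideal property gives
\begin{equation*}
	\Bigl\lVert \sum_{k \in F} a_k y_k \Bigr\rVert_{\mathsf{Z}} \leqslant \norm{x}_{\mathsf{Z}}.
\end{equation*}
Disjointness gives the lower bound
\begin{equation*}
	\Bigl\lVert \sum_{k} a_k y_k \Bigr\rVert_{\mathsf{Z}} \geqslant \max_k \abs{a_k}\,\norm{y_k}_{\mathsf{Z}} \geqslant (\delta/2) \max_k \abs{a_k}.
\end{equation*}
Hence $\{y_k\}$ is equivalent to the unit vector basis of $c_0$. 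Concretely, for every $x^* \in \mathsf{Z}^*$ we have $\sum_k \abs{x^*(y_k)} \leqslant 2\norm{x^*}\norm{x}$, by choosing the signs of the $a_k$ appropriately; this works in $\mathsf{ZF}$ with no appeal to Hahn--Banach. So $x^*(y_k) \to 0$, and $\{y_k\}$ is weakly null. Yet $\norm{y_k}_{\mathsf{Z}} \geqslant \delta/2$, which contradicts the Schur property. So every element of $\mathsf{Z}$ is order continuous, and $\mathsf{Z} = \mathsf{Z}_o$.

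The main obstacle is the block extraction, which is exactly where the Fatou property is used. Without it, $\norm{x\chi_{[n,\infty)}}$ need not be approximated by finite blocks; $\ell_\infty$-type behaviour located ``at infinity'' could defeat the construction. The Fatou property gives the displayed supremum identity, and the rest is routine. Note that the weak-null argument needs only the boundedness of the functionals, so no form of choice beyond $\mathsf{DC}$ enters.
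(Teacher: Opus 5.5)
Your proof is correct and is essentially the paper's argument. Both use the Fatou property to extract disjoint blocks $x\chi_{[n_k,n_{k+1})}$ whose norms are bounded below, show these blocks are weakly null, and contradict the Schur property. The only minor difference is how weak nullness is obtained. You use the sign trick $\sum_k \abs{x^{*}(y_k)} \leqslant \norm{x^{*}}\norm{x}_{\mathsf{Z}}$. The paper instead uses $\sum_i \langle z_i,\varphi\rangle \leqslant \langle z,\varphi\rangle$ for positive $\varphi$, together with the fact that every functional is a difference of two positive ones. Your variant avoids needing the lattice structure of $\mathsf{Z}^{*}$.
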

		\begin{proof}
			Suppose not.
			Then there are $\delta > 0$ and a positive $z \in \mathsf{Z}$ such that
			$\lVert z\chi_{[n,\infty)} \rVert_{\mathsf{Z}} > 2\delta$ for all $n \in \mathbb{N}$.
			By the Fatou property, $\lVert z\chi_{[n,m]} \rVert_{\mathsf{Z}} \rightarrow \lVert z\chi_{[n,\infty)} \rVert_{\mathsf{Z}}$ as $m \rightarrow \infty$,
			so we can construct (using $\mathsf{DC}$) an increasing sequence $\{ n_i \}_{i=1}^{\infty}$ with
			$\norm{z_i}_{\mathsf{Z}} \geqslant \delta$, where $z_i \coloneqq z\chi_{[n_i+1, n_{i+1}]}$.
			The sequence $\{ z_i \}_{i=1}^{\infty}$ is weakly null:
			For any positive functional $\varphi \in \mathsf{Z}^{*}$ we have $\sum_{i=1}^{\infty} \langle z_i, \varphi \rangle \leqslant \langle z, \varphi \rangle$,
			and every functional in $\mathsf{Z}^{*}$ is a difference of two positive ones.
			This contradicts the Schur property.
		\end{proof}
		
		Clearly, $({\bf f}_i - {\bf f}_{\bullet})(n) \rightarrow 0$ as $i \rightarrow \infty$ for every $n \in \mathbb{N}$.
		By Claim~\ref{CLAIM : separable}, the elements ${\bf f}_{\bullet}$ and ${\bf f}_i - {\bf f}_{\bullet}$ have order continuous norms.
		Hence, for any $\eta > 0$, passing to a further subsequence (still denoted by $\{ {\bf f}_i\}_{i=1}^{\infty}$),
		we find intervals $I_0 < I_1 < I_2 < \dots$ of $\mathbb{N}$ such that
		\begin{equation} \label{EQ : f bullet < eta}
			\norm{{\bf f}_{\bullet} - P_{I_0}{\bf f}_{\bullet}} < \eta
			\quad\text{and}\quad
			\norm{({\bf f}_i - {\bf f}_{\bullet}) - P_{I_i}({\bf f}_i - {\bf f}_{\bullet})} < \eta \quad (i \in \mathbb{N}).
		\end{equation}
		(For the second estimate, use pointwise convergence on the finite set $\{1,\dots,\max I_{i-1}\}$ and order continuity for the tail.)
		
		{\bf Third step: Projecting onto the base space.}
		Put
		\begin{equation*}
			{\bf c} \coloneqq P_{I_0}{\bf f}_{\bullet},\ {\bf g}_i \coloneqq P_{I_i}({\bf f}_i - {\bf f}_{\bullet})\text{ and }{\bf r}_i \coloneqq {\bf f}_i - {\bf c} - {\bf g}_i.
		\end{equation*}
		Note that $\norm{{\bf r}_i} < 2\eta$ by \eqref{EQ : f bullet < eta}. Let
		\begin{equation*}
			h_i \coloneqq \lfloor {\bf c} \rceil + \lfloor {\bf g}_i \rceil \in \mathsf{X},
			\quad\text{where}\quad
			\lfloor {\bf f} \rceil \coloneqq \sum_{n=1}^{\infty} \norm{{\bf f}(n)}_{\mathsf{F}_n} \boldsymbol{e}_n.
		\end{equation*}
		Since ${\bf c}$ and all ${\bf g}_i$'s are mutually disjoint, $\norm{h_i}_{\mathsf{X}} = \norm{{\bf c} + {\bf g}_i}$, and hence
		\begin{equation} \label{EQ : norm h_i}
			1 - 2\eta \leqslant \norm{h_i}_{\mathsf{X}} \leqslant 1 + 2\eta \quad (i \in \mathbb{N}).
		\end{equation}
		Moreover, for $i \neq j$,
		\begin{equation*}
			\norm{h_i - h_j}_{\mathsf{X}}
			= \norm{{\bf g}_i - {\bf g}_j}
			= \norm{({\bf f}_i - {\bf f}_j) - {\bf r}_i + {\bf r}_j}
			> \delta - 4\eta.
		\end{equation*}
		Finally, for all scalars $a_1,\dots,a_N$, disjointness gives
		\begin{equation} \label{EQ : h vs f}
			\norm{\sum_{i=1}^{N} a_i h_i}_{\mathsf{X}} = \norm{\sum_{i=1}^{N} a_i ({\bf c} + {\bf g}_i)}.
		\end{equation}
		
		{\bf Fourth step: The price of normalisation.}
		Passing to a subsequence, we may assume that
		\begin{equation*}
			\norm{h_i}_{\mathsf{X}} \rightarrow \gamma \in [1-2\eta,1+2\eta]\text{ and }\abs{\norm{h_i}_{\mathsf{X}} - \gamma} < \eta \qquad (i \in \mathbb{N}).
		\end{equation*}
		We have
		\begin{equation*}
			\norm{\frac{h_i}{\norm{h_i}_{\mathsf{X}}} - \frac{h_j}{\norm{h_j}_{\mathsf{X}}}}_{\mathsf{X}}
			\geqslant \frac{\norm{h_i - h_j}_{\mathsf{X}}}{\gamma} - \sum_{k \in \{i,j\}} \frac{\abs{\gamma - \norm{h_k}_{\mathsf{X}}}}{\gamma}
			> \frac{\delta - 6\eta}{1+2\eta} \eqqcolon S(\delta,\eta) \quad (i \neq j).
		\end{equation*}
		
		{\bf Fifth step: Punchline.}
		By the $c$-strong Schur property of $\mathsf{X}$, a further subsequence satisfies
		\begin{equation*}
			\norm{\sum_{i=1}^{N} b_i \frac{h_i}{\norm{h_i}_{\mathsf{X}}}}_{\mathsf{X}} \geqslant \left( \frac{2c}{S(\delta,\eta)} + \frac{\varepsilon}{2} \right)^{-1} \sum_{i=1}^{N} \abs{b_i}
		\end{equation*}
		for all scalars $\{b_n\}_{n=1}^{\infty}$.
		Using the above inequality for $b_i = a_i\norm{h_i}_{\mathsf{X}}$ along with \eqref{EQ : norm h_i} and \eqref{EQ : h vs f}, we get
		\begin{equation*}
			\norm{\sum_{i=1}^{N} a_i {\bf f}_i}
			\geqslant \norm{\sum_{i=1}^{N} a_i ({\bf c} + {\bf g}_i)} - 2\eta \sum_{i=1}^{N} \abs{a_i}
			\geqslant \left( \frac{1-2\eta}{\frac{2c}{S(\delta,\eta)}+\frac{\varepsilon}{2}} - 2\eta \right) \sum_{i=1}^{N} \abs{a_i}.
		\end{equation*}
		As $\eta \rightarrow 0$, the constant in brackets tends to $( \frac{2c}{\delta}+\frac{\varepsilon}{2} )^{-1} > ( \frac{2c}{\delta}+\varepsilon )^{-1}$,
		so choosing $\eta$ small enough, we obtain the required estimate.
		
		For the \enquote{in particular} part, note that $\ell_1(w) \cong \ell_1$ isometrically (via the multiplication map $x \mapsto xw$), so Lemma~\ref{LEMMA : tandori_1 = direct sum} shows that
		$\widetilde{\ell}_1$ is $4$-isomorphic to $(\bigoplus_{n=0}^{\infty} \ell_{\infty}^{\, 2^n})_{\ell_1}$.
		Since $\ell_1$ has the $1$-strong Schur property (see \cite[Example~5.9]{Kal25}), the first part shows that $(\bigoplus_{n=0}^{\infty} \ell_{\infty}^{\, 2^n})_{\ell_1}$ has the $1$-strong Schur property,
		and hence $\widetilde{\ell}_1$ has the $16$-strong Schur property.
	\end{proof}
	
	With the triptych of the above lemmas in hand, we can finally prove the main result of this section.
	In what follows, $\mathsf{PM}_{\omega}$ is the statement \enquote{there is no finitely additive probability measure
		on the power set of the natural numbers which vanishes on singletons}
	(we refer to V{\" a}th's paper \cite{Vat98} for more background and references; see also \cite[Theorem~36]{Kar20}).
	It holds, for example, in Solovay's model \cite{Sol70}.
	
	\begin{theorem} \label{THM : In ZF ces_infty and Ces_infty are NOT isomorphic}
		{\it Assume $\mathsf{ZF}$, $\mathsf{DC}$ and $\mathsf{PM}_{\omega}$.
			Then the spaces $\ces$ and $\Ces$ are not isomorphic.}
	\end{theorem}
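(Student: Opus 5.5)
We argue by contradiction, following the outline sketched in the introduction. Suppose $T\colon \ces \to \Ces$ is an isomorphism. Then the adjoint $T^{*}\colon \Ces^{*}\to\ces^{*}$ is an isomorphism as well; this step is purely algebraic and needs no choice. The plan is to show that, under $\mathsf{ZF}+\mathsf{DC}+\mathsf{PM}_{\omega}$, the space $\ces^{*}$ has the Schur property while $\Ces^{*}$ does not. Since the Schur property is an isomorphic invariant, this gives the contradiction.

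\textbf{Identifying $\ces^{*}$.} First we show that $\ces^{*}=\ces^{\times}$, i.e.\ every bounded functional on $\ces$ is given by a sequence. This is exactly where $\mathsf{PM}_{\omega}$ enters, following V\"ath \cite{Vat98}. Take $\varphi\in\ces^{*}$ and let $g(n)\coloneqq\varphi(\boldsymbol{e}_n)$. By Lemma~\ref{LEMMA : dual to ces_infty is t_1 in ZF+DC}, $g\in\widetilde{\ell}_1$: this uses the finitely supported test vectors together with the Fatou property. The functional $\psi\coloneqq\varphi-\langle\cdot,g\rangle$ then vanishes on all finitely supported sequences.

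We must show that $\psi=0$. For a positive $x\in\ces$ with $\psi(x)\neq0$, consider $A\mapsto\psi(x\chi_A)$ on subsets $A\subset\mathbb{N}$. Using the lattice structure, replace $\psi$ by $\abs{\psi}$; Riesz decompositions of order-bounded functionals on sequences need no choice. Normalising gives a finitely additive probability measure on $\mathcal{P}(\mathbb{N})$, and it vanishes on finite sets because $\psi$ kills $c_{00}$. This contradicts $\mathsf{PM}_{\omega}$. Hence $\ces^{*}\equiv\widetilde{\ell}_1$.

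By Lemma~\ref{PROPOSITION: Schur Tandori l1 ZF}, proved in $\mathsf{ZF}+\mathsf{DC}$, $\widetilde{\ell}_1$ has the strong Schur property, and hence the Schur property, via the choiceless Rosenthal theorem recalled above.

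\textbf{Non-Schur property of $\Ces^{*}$.} Next we exhibit a weakly null, non-norm-null sequence in $\Ces^{*}$. Take the Rademacher functions $r_n$ on $[0,1]$, extended by zero if $\mathbb{I}=[0,\infty)$, and regard them as functionals $\varphi_n(f)\coloneqq\int_0^1 f r_n$ on $\Ces$. Each is bounded, since $\int_0^1\abs{f}\leqslant\norm{f}_{\Ces}$, and $\norm{\varphi_n}\geqslant\varphi_n(r_n)/\norm{r_n}_{\Ces}\geqslant 1$.

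Weak nullity requires controlling $\Lambda(\varphi_n)$ for every $\Lambda\in\Ces^{**}$, which is awkward without choice. A better route is to find a complemented copy of $L_1[0,1]$, or of $\ell_2$, inside $\Ces^{*}$, since neither has the Schur property and complemented subspaces inherit it. Concretely, by Claim~\ref{CLAIM C} (which is choice-free), $\Ces\approx(\bigoplus_n L_1[0,1])_{\ell_\infty}$. So $\Ces^{*}$ contains the dual of one coordinate, $L_1[0,1]^{*}=L_\infty[0,1]$, as a complemented subspace via the coordinate projection and its adjoint. In $\mathsf{ZF}+\mathsf{DC}$ the identification $L_1^{*}=L_\infty$ holds by the usual Radon--Nikodym argument, which needs only countable choice.

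Finally, $L_\infty[0,1]$ fails the Schur property. The Rademacher sequence in $L_\infty$ is normalised, and its span is isometric to $\ell_1$, so that route is useless. Instead we use the weakly null sequence $\{r_n\chi\}$ tested against $L_\infty^{*}$; by V\"ath's theorem, under $\mathsf{PM}_{\omega}$ (and $\mathsf{DC}$) we have $L_\infty^{*}=L_1$. Then for each $h\in L_1$, $\int r_n h\to0$ by Riemann--Lebesgue for the Walsh system, so $\{r_n\}$ is weakly null in $L_\infty$ but has norm one. Since Schur passes to complemented subspaces, $\Ces^{*}$ fails Schur, completing the contradiction.

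\textbf{Main obstacle.} The delicate step is the duality in the first and last parts, namely showing $\ces^{*}=\widetilde{\ell}_1$ and $L_\infty^{*}=L_1$ from $\mathsf{PM}_{\omega}$. I would import V\"ath's argument for the latter and mimic it for the former, carefully avoiding Hahn--Banach-type choices.
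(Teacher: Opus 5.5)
Your proof is correct. Its skeleton (pass to duals, then Schur versus non-Schur) is the paper's, but both halves are carried out differently.

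For $\ces^{*}$, the paper simply quotes V{\"a}th's theorem that Banach function spaces carry no singular functionals. You reprove the needed case by hand: a nonzero $\psi$ killing $c_{00}$ gives, via $A\mapsto\psi(x\chi_A)$ and its positive or negative variation, a finitely additive probability on $\mathcal{P}(\mathbb{N})$ vanishing on singletons. This is V{\"a}th's mechanism specialised to sequences, and it is fine.

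The genuine divergence is the non-Schur half. The paper identifies $\Ces^{*}=\Ces^{\times}$ and $(\Ces^{\times})^{*}=\Ces^{\times\times}=\Ces$. It then checks that $\chi_{[1-1/n,1]}$ is a normalised weakly null sequence in $\Ces^{\times}$, using $\{h\in\Ces:\supp h\subset[0,1]\}\hookrightarrow L_1[0,1]$ and dominated convergence. You never describe $\Ces^{*}$ at all. You use only two facts:
\begin{enumerate}
\item[$\bullet$] $L_1[0,1]$ is complemented in $\Ces$ (Claim~\ref{CLAIM C}, via a coordinate projection $Q$ and inclusion $J$). Hence $L_\infty\hookrightarrow L_1^{*}\hookrightarrow\Ces^{*}$, where $Q^{*}$ is bounded below because $J^{*}Q^{*}=\mathrm{id}$, with no Hahn--Banach needed.
\item[$\bullet$] V{\"a}th's $L_\infty^{*}=L_1$, which makes the Rademacher sequence weakly null in $L_\infty$.
\end{enumerate}
The paper's route stays inside K{\"o}the duality and produces an explicit witness in $\Ces^{\times}$. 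Yours needs fewer identifications on the $\Ces$ side and is more modular. It shows that, in this model, the dual of any space containing a complemented copy of $L_1[0,1]$ fails the Schur property. So your contradiction really says that $L_1[0,1]$ is complemented in $\Ces$ but cannot be complemented in $\ces$, which pinpoints the choice-dependent ingredient of the $\mathsf{ZFC}$ isomorphism.

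Two small corrections to your write-up.
\begin{enumerate}
\item[$\bullet$] The Radon--Nikodym identification $L_1^{*}=L_\infty$ is not needed. The canonical isometric embedding $L_\infty\hookrightarrow L_1^{*}$, together with weak-to-weak continuity of bounded operators, suffices.
\item[$\bullet$] Your sentence calling the Rademacher sequence \enquote{useless} because its span is isometric to $\ell_1$ is wrong in this setting. An $\ell_1$-sequence is kept from being weakly null only by extending the coefficient-sum functional from its span to all of $L_\infty$, and that requires Hahn--Banach. Under $\mathsf{PM}_{\omega}$ no such extension exists, and your own final argument uses exactly $\{r_n\}$. Delete that sentence, and the unexplained $\chi$ in $\{r_n\chi\}$.
\end{enumerate}
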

	\begin{proof}
		Suppose that $\ces$ and $\Ces$ are isomorphic.
		Then their duals $\ces^{*}$ and $\Ces^{*}$ are isomorphic as well.
		By V{\" a}th's result \cite[Theorem~3.1]{Vat98}, under $\mathsf{ZF}$, $\mathsf{DC}$ and $\mathsf{PM}_{\omega}$
		there are no singular functionals on Banach function spaces\footnote{
		Recall that the dual $\mathsf{L}^*$ of a Banach lattice $\mathsf{L}$ decomposes into a direct sum $\mathsf{L}^{\times} \oplus \mathsf{L}^s$,
		where $\mathsf{L}^s \coloneqq \{ S \in \mathsf{L}^* \colon \langle x, S \rangle = 0 \text{ for all } x \in \mathsf{L}_o \}$ is the space of singular functionals and $\mathsf{L}_o$ is the order continuous part of $\mathsf{L}$.}.
		Hence $\ces^{*} = \ces^{\times}$ and $\Ces^{*} = \Ces^{\times}$, and so $\ces^{\times}$ and $\Ces^{\times}$ are isomorphic.
		We show that this is impossible by means of the Schur property.
		
		That $\ces^{\times}$ has the Schur property follows from Lemmas~\ref{LEMMA : dual to ces_infty is t_1 in ZF+DC}, \ref{LEMMA : tandori_1 = direct sum} and \ref{PROPOSITION: Schur Tandori l1 ZF}.
		To see that $\Ces^{\times}$ fails the Schur property, consider $f_n \coloneqq \chi_{[1-1/n,1]}$ for $n \in \mathbb{N}$.
		We claim that:
		\begin{enumerate}[label={\normalfont{(\Alph*)}}]
			\item\label{EQ : brak Schura 1} all $f_n$'s are normalised in $\Ces^{\times}$;
			\item\label{EQ : brak Schura 2} the sequence $\{f_n\}_{n=1}^{\infty}$ is weakly null in $\Ces^{\times}$.
		\end{enumerate}
	
		Indeed, $\norm{f_n}_{\Ces^{\times}} \leqslant \lVert \chi_{[0,1]} \rVert_{\Ces^{\times}} = 1$.
		Moreover, $g_n \coloneqq n \chi_{[1-1/n,1]}$ belongs to $\text{Ball}(\Ces)$, and $\int_0^1 f_n g_n\, d\lambda = 1$, so $\norm{f_n}_{\Ces^{\times}} = 1$ and \ref{EQ : brak Schura 1} follows.
		
		Next, let $\varphi \in (\Ces^{\times})^{*}$.
		Applying \cite[Theorem~3.1]{Vat98} to the Banach function space $\Ces^{\times}$ and using \cite{Vat97}, we get
		$(\Ces^{\times})^{*} = \Ces^{\times \times} = \Ces$,
		so there is $g \in \Ces$ with $\langle f, \varphi \rangle = \int f g\, d\lambda$ for $f \in \Ces^{\times}$.
		Therefore,
		\begin{equation} \label{EQ : weakly null part 1}
			\abs{\langle f_n, \varphi \rangle}
			\leqslant \int_{1-1/n}^1 \abs{g(t)} dt.
		\end{equation}
		Since $\left\{ h \in \Ces : \supp h \subset [0,1] \right\} \hookrightarrow L_1[0,1]$ (see, for example, \cite[Theorem~1(d)]{AM09}), we have $g\chi_{[0,1]} \in L_1[0,1]$,
		and the Lebesgue dominated convergence theorem (see \cite[Theorem~564F]{Fre15} and Remark~\ref{REMARK : measure theory without choice})
		shows that the right-hand side of \eqref{EQ : weakly null part 1} tends to zero.
		This gives \ref{EQ : brak Schura 2} and completes the proof.
	\end{proof}
	
	Combining Theorems~\ref{COR: Ces and ces isomorphic} and~\ref{THM : In ZF ces_infty and Ces_infty are NOT isomorphic} together with Shelah's result \cite{She84}, we obtain the following.
	
	\begin{corollary} \label{COR : independent of ZF}
		{\it Assume that $\mathsf{ZF}$ is consistent.
			Then the statement \enquote{$\ces$ and $\Ces$ are isomorphic} is independent of $\mathsf{ZF}+\mathsf{DC}$.}
	\end{corollary}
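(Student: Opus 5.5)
The corollary should follow by combining two consistency results, each contributing one direction of the independence. Assume $\mathsf{ZF}$ is consistent. Independence from $\mathsf{ZF}+\mathsf{DC}$ requires two things: that $\mathsf{ZF}+\mathsf{DC}+\enquote{\ces\approx\Ces}$ is consistent, and that $\mathsf{ZF}+\mathsf{DC}+\enquote{\ces\not\approx\Ces}$ is consistent.

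\emph{Positive direction.} By Gödel's classical relative consistency result, if $\mathsf{ZF}$ is consistent then so is $\mathsf{ZFC}$. In $\mathsf{ZFC}$ both $\mathsf{DC}$ and the statement $\ces\approx\Ces$ (Theorem~\ref{COR: Ces and ces isomorphic}) hold. Hence $\mathsf{ZF}+\mathsf{DC}+\enquote{\ces\approx\Ces}$ has a model, namely any model of $\mathsf{ZFC}$.

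\emph{Negative direction.} Here I would use Shelah's result \cite{She84}. It removes the inaccessible cardinal needed in Solovay's construction for one specific consequence: from a model of $\mathsf{ZF}$ alone (equivalently $\mathsf{ZFC}$) it produces a model of $\mathsf{ZF}+\mathsf{DC}$ in which every set of reals has the Baire property. The key step, and the one I expect to be the main obstacle, is to check that this property yields $\mathsf{PM}_\omega$ in such a model. I would argue as follows. A finitely additive probability measure $\nu$ on $\mathcal{P}(\mathbb{N})$ vanishing on singletons gives a set of reals $\{A\subset\mathbb{N}:\nu(A)\ge 1/2\}\subset 2^{\mathbb{N}}$. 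This set is invariant under finite modifications of $A$, so by the $0$--$1$ law (Baire category version) it is meagre or comeagre. The complementation map $A\mapsto\mathbb{N}\setminus A$ is a homeomorphism and interchanges the sets $\{\nu\ge1/2\}$ and $\{\nu\le1/2\}$. If $\{\nu\ge1/2\}$ were meagre, then $\{\nu\le 1/2\}$ would be meagre as well, yet the two sets cover $2^{\mathbb{N}}$, contradicting the Baire category theorem (available under $\mathsf{DC}$). So $\{\nu\ge1/2\}$ is comeagre, and by symmetry so is $\{\nu\le1/2\}$; hence $\{\nu=1/2\}$ is comeagre. The same argument for arbitrary thresholds gives the analogous $0$--$1$ law for any threshold $t\ne1/2$, forcing $\nu$ to be $1/2$ comeagrely and $\le t$ comeagrely for $t<1/2$, which is a contradiction. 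This is the standard argument that the Baire property of all sets of reals implies $\mathsf{PM}_\omega$; V{\" a}th \cite{Vat98} and \cite[Theorem~36]{Kar20} record this route, and I would cite it rather than reprove it.

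In Shelah's model $\mathsf{ZF}$, $\mathsf{DC}$ and $\mathsf{PM}_\omega$ all hold, so Theorem~\ref{THM : In ZF ces_infty and Ces_infty are NOT isomorphic} gives $\ces\not\approx\Ces$ there. Combining the two directions, neither the statement nor its negation is provable in $\mathsf{ZF}+\mathsf{DC}$, assuming only $\mathsf{Con}(\mathsf{ZF})$.
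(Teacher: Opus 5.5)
Your route is the paper's. The isomorphism is consistent by the $\mathsf{ZFC}$ Theorem~\ref{COR: Ces and ces isomorphic} together with $\mathsf{Con}(\mathsf{ZF})\Rightarrow\mathsf{Con}(\mathsf{ZFC})$. Its negation is consistent by Theorem~\ref{THM : In ZF ces_infty and Ces_infty are NOT isomorphic} applied in Shelah's model of $\mathsf{ZF}+\mathsf{DC}+$\enquote{every set of reals has the Baire property}. The paper does nothing beyond citing these ingredients, so if you cite the implication from the Baire property to $\mathsf{PM}_\omega$, your proof is complete.

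The self-contained sketch of that implication, however, fails at its final step.

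\begin{itemize}
\item Up to \enquote{$\{\nu=1/2\}$ is comeagre} the sketch is fine.
\item The threshold argument then yields nothing new. For $t<1/2$, the set $\{\nu\leqslant t\}$ is disjoint from the comeagre set $\{\nu=1/2\}$, so the $0$--$1$ law makes it meagre, not comeagre.
\item More fundamentally, you use only two properties of $\nu$: invariance under finite modifications, and $\nu(\mathbb{N}\setminus A)=1-\nu(A)$. The constant function $1/2$ has both, so no contradiction can follow from them. Finite additivity over disjoint sets has to enter.
\end{itemize}

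One way to finish uses the maps $\pi_j\colon 3^{\mathbb{N}}\to 2^{\mathbb{N}}$, $\pi_j(x)\coloneqq\{n : x(n)=j\}$ for $j=0,1,2$.

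\begin{itemize}
\item Each $\pi_j$ is a continuous open surjection, so it pulls comeagre sets back to comeagre sets.
\item Hence $\bigcap_{j}\pi_j^{-1}(\{\nu\geqslant 1/2\})$ is comeagre, and therefore non-empty by the Baire category theorem.
\item For $x$ in this intersection, the sets $\pi_0(x),\pi_1(x),\pi_2(x)$ partition $\mathbb{N}$. Finite additivity then gives $1=\nu(\mathbb{N})\geqslant 3/2$, a contradiction.
\end{itemize}
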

	
	\section{{\bf Loose ends}} \label{SECTION : Open questions}
	\addtocontents{toc}{\protect\setcounter{tocdepth}{0}}
	We list and briefly discuss some problems arising from this work.
	
	\subsection{On the Schur property} \label{SUBSECTION : On the Schur property}
	
	By Lemma~\ref{PROPOSITION: Schur Tandori l1 ZF}, the space $\widetilde{\ell}_1$ has the $16$-strong Schur property.
	
	\begin{question}
		{\it What is the best constant $c$ for which $\widetilde{\ell}_1$ has the $c$-strong Schur property? Is it equal to $1$?}
	\end{question}
	
	More generally, for a Banach sequence space $\mathsf{X}$, the {\bf Tandori sequence space} $\widetilde{\mathsf{X}}$ consists
	of those sequences $x = \{x_n\}_{n=1}^{\infty} \in \mathbb{K}^{\mathbb{N}}$ whose decreasing majorant $\widetilde{x} = \{ \sup_{k \geqslant n} \abs{x_k} \}_{n=1}^{\infty}$
	belongs to $\mathsf{X}$, with the norm $\norm{x}_{\widetilde{\mathsf{X}}} \coloneqq \lVert \widetilde{x} \rVert_{\mathsf{X}}$
	(see, for example, \cite{ALM19}, \cite{KT24} and \cite{LM15a}; {\it cf.} Section~\ref{SUBSECTION : aide}).
	The following simple observation slightly extends \cite[Theorem~3.1]{ALM19}.
	
	\begin{proposition}[Schur property of Tandori sequence spaces] \label{PROP : X Schur => Tandori X Schur}
		{\it Let $\mathsf{X}$ be a Banach sequence space such that $\norm{x}_{\mathsf{X}} \approx \norm{x}_{\ell_1}$ for all non-increasing non-negative sequences $x$.
			Then $\widetilde{\mathsf{X}} = \widetilde{\ell}_1$, and hence $\widetilde{\mathsf{X}}$ has the strong Schur property.}
	\end{proposition}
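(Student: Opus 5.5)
The whole statement reduces to one observation: the norm of $\widetilde{\mathsf{X}}$ only ever evaluates $\norm{\cdot}_{\mathsf{X}}$ on non-increasing non-negative sequences. Indeed, for every $x \in \mathbb{K}^{\mathbb{N}}$ the decreasing majorant $\widetilde{x} = \{\sup_{k \geqslant n}\abs{x_k}\}_{n=1}^{\infty}$ is non-negative and non-increasing, whether or not it is finite or belongs to $\mathsf{X}$. By hypothesis there are constants $0 < a \leqslant b$ with
\begin{equation*}
a\norm{y}_{\ell_1} \leqslant \norm{y}_{\mathsf{X}} \leqslant b\norm{y}_{\ell_1}
\end{equation*}
for all non-increasing non-negative $y$, with extended values allowed.

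I would first check that the hypothesis applies to extended values. If $\widetilde{x} \in \mathsf{X}$ but $\widetilde{x} \notin \ell_1$, the inequality must still make sense. The cleanest reading is that the equivalence is assumed for all such sequences, with the convention $\norm{y}_{\mathsf{X}} = \infty$ when $y \notin \mathsf{X}$. If one prefers to assume it only for $y \in \mathsf{X} \cap \ell_1$, I would truncate instead. Set $y^{(N)} \coloneqq \widetilde{x}\chi_{[1,N]}$, which is still non-increasing and non-negative and lies in both spaces, being finitely supported. Applying the inequality to $y^{(N)}$ and using the ideal property $\lVert y^{(N)} \rVert_{\mathsf{X}} \leqslant \norm{\widetilde{x}}_{\mathsf{X}}$ gives $\norm{\widetilde{x}}_{\ell_1} = \sup_N \lVert y^{(N)} \rVert_{\ell_1} \leqslant a^{-1}\norm{\widetilde{x}}_{\mathsf{X}}$. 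The reverse inclusion is the delicate direction: if $\widetilde{x} \in \ell_1$ one needs $\widetilde{x} \in \mathsf{X}$, and truncation alone gives this only with a Fatou-type property, or if the hypothesis is read for extended norms. I expect this small technical point to be the only obstacle. I would handle it by interpreting the hypothesis on all non-increasing non-negative sequences, as stated, with extended norms.

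Applying the equivalence to $y = \widetilde{x}$ then gives
\begin{equation*}
a\norm{x}_{\widetilde{\ell}_1} \leqslant \norm{x}_{\widetilde{\mathsf{X}}} \leqslant b\norm{x}_{\widetilde{\ell}_1} \qquad (x \in \mathbb{K}^{\mathbb{N}}).
\end{equation*}
Hence $\widetilde{\mathsf{X}} = \widetilde{\ell}_1$ as sets with equivalent norms, so they are $b/a$-isomorphic via the identity.

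Finally, by Lemma~\ref{PROPOSITION: Schur Tandori l1 ZF}, $\widetilde{\ell}_1$ has the $16$-strong Schur property. By the remark preceding that lemma, if $\mathsf{Y} \approx_d \mathsf{X}$ and $\mathsf{X}$ has the $c$-strong Schur property, then $\mathsf{Y}$ has the $cd^2$-strong Schur property. With $d = b/a$, it follows that $\widetilde{\mathsf{X}}$ has the $16(b/a)^2$-strong Schur property, and in particular the strong Schur property.
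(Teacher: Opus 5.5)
Your proposal is correct and is essentially the paper's proof. You apply the hypothesis to the non-increasing non-negative sequence $\widetilde{x}$ to get $\norm{x}_{\widetilde{\mathsf{X}}} = \norm{\widetilde{x}}_{\mathsf{X}} \approx \norm{\widetilde{x}}_{\ell_1} = \norm{x}_{\widetilde{\ell}_1}$, and then invoke Lemma~\ref{PROPOSITION: Schur Tandori l1 ZF} together with the $cd^2$ transfer of the strong Schur property under isomorphisms.

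Your extended-norm reading of the hypothesis matches the paper's conventions, and even under the weaker reading no Fatou property is needed. Since $N\widetilde{x}(N+1) \rightarrow 0$ for summable non-increasing $\widetilde{x}$, bounding $\widetilde{x}\chi_{(N,M]}$ by the non-increasing sequence $\widetilde{x}(N+1)\chi_{[1,N]} + \widetilde{x}\chi_{(N,M]}$ shows that the truncations $\widetilde{x}\chi_{[1,N]}$ are Cauchy in $\mathsf{X}$.
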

	\begin{proof}
		Since $\widetilde{x}$ is non-increasing and non-negative, $$\norm{x}_{\widetilde{\mathsf{X}}} = \norm{\widetilde{x}}_{\mathsf{X}} \approx \norm{\widetilde{x}}_{\ell_1} = \norm{x}_{\widetilde{\ell}_1}.$$
		The second part follows from Lemma~\ref{PROPOSITION: Schur Tandori l1 ZF}.
	\end{proof}
	
	The assumption of Proposition~\ref{PROP : X Schur => Tandori X Schur} is rather restrictive, and the following examples show that the general situation is unclear.
	
	\begin{example}[Nakano, 1951] \label{EXAMPLE : Nakano, 1951}
		We borrow this example from Nakano's paper \cite[p.~512]{Nak51} ({\it cf.} \cite{KM02}).
		Let $\ell_{\{p_n\}}$ be the {\bf Nakano sequence space} with
		\begin{equation*}
			p_n = 1 + \frac{1}{\log\log(n+4)} \qquad (n\in\mathbb{N}). 
		\end{equation*}
		This is a Banach sequence space with a semi-normalised unit vector basis $\{\boldsymbol{e}_n\}_{n=1}^{\infty}$.
		Since, for all $0 < c < 1$,
		\begin{equation*}
			\sum_{n=1}^{\infty} c^{\, p_n/(p_n-1)} = \infty,
		\end{equation*}
		the space $\ell_{\{p_n\}}$ is not isomorphic to $\ell_1$ (see \cite[Theorem~1]{Nak51}),
		and since $\lim_{n \rightarrow \infty} p_n = 1$, it has the Schur property (see \cite[Theorem~2]{Nak51}).
		Note that the assumption of Proposition~\ref{PROP : X Schur => Tandori X Schur} fails here: the sequence $\{1/n\}_{n=1}^{\infty}$ belongs to $\ell_{\{p_n\}}$ but not to $\ell_1$.
		\demo
	\end{example}
	
	\begin{example}[Kami{\' n}ska and Masty{\l}o, 2002] \label{EXAMPLE : Kaminska & Mastylo, 2002}
		Let $\ell_{M}(w)$ be the modular sequence space\footnote{In some circles these spaces are called {\bf Musielak--Orlicz sequence spaces}
			or, more precisely, {\bf weighted Orlicz sequence spaces}
			(see \cite[Section~4.d]{LT79}).}
		generated by the modular
		$\mathfrak{m}(x) \coloneqq \sum_{n=1}^{\infty} M(\abs{x_n}) w_n$,
		where $M(t) = t\log(1+t)$ for $t \geqslant 0$
		and $w = \{w_n\}_{n=1}^{\infty} \in \ell_1$ is a positive weight.
		By \cite[Theorem~5.1]{KM02}, $\ell_{M}(w)$ has the Schur property.
		However, the unit vector basis is {\bf not} semi-normalised in $\ell_{M}(w)$
		(one can easily choose $w \in \ell_1$ so that $\norm{\boldsymbol{e}_n}_{\ell_{M}(w)} \rightarrow 0$ as $n \rightarrow \infty$).
		\demo
	\end{example}
	
	\begin{question}
		{\it Let $\mathsf{X}$ be a Banach sequence space with the Schur property.
			Does $\widetilde{\mathsf{X}}$ have the Schur property?
			In particular, do the spaces $\widetilde{\ell_{\{p_n\}}}$ and $\widetilde{\ell_{M}(w)}$ from Examples~\ref{EXAMPLE : Nakano, 1951} and \ref{EXAMPLE : Kaminska & Mastylo, 2002} have the Schur property?}
	\end{question}
	
	\subsection{Operators between $\mathcal{L}_p$-spaces beyond Theorem~\ref{THEOREM A}}
	
	Theorem~\ref{THEOREM A} leaves two natural problems open.
	
	\begin{question} \label{QUESTION : p > q}
		{\it Let $1 \leqslant q < p < \infty$, and let $\mathsf{X}$ and $\mathsf{Y}$ be separable $\mathcal{L}_p$- and $\mathcal{L}_q$-spaces, respectively.
			Is $\mathfrak{L}(\mathsf{X},\mathsf{Y})$ isomorphic to either $\mathfrak{L}(L_p[0,1],L_q[0,1])$  or  $\mathfrak{L}(\ell_p,\ell_q)$?}
	\end{question}
	
	\begin{question} \label{QUESTION : general targets}
		{\it Let $\mathsf{X}$ be a separable $\mathcal{L}_{\infty}$-space and let $\mathsf{Y}$ be an $\mathcal{L}_{\infty}$-space which is complemented in its bidual
			{\normalfont (}for example, an injective Banach space{\normalfont )}.
			Is $\mathfrak{L}(\mathsf{X},\mathsf{Y})$ isomorphic to $\mathfrak{L}(c_0,\mathsf{Y})$?}
	\end{question}
	
	\subsection{A million miles away} \label{SUBSECTION : Banach--Mazur distance}
	
	In view of Theorem~\ref{COR: Ces and ces isomorphic} and Remark~\ref{REMARK : d_BM(k,K)<=},
	and of the growing interest in computing Banach--Mazur distances between various $\mathcal{C}(K)$-spaces
	(see \cite{KL26} and \cite{KP25}; {\it cf.} \cite{Thi16}), it is natural to ask the following.
	
	\begin{question}
		{\it What is the exact value of the Banach--Mazur distance between $\ces$ and $\Ces$?}
	\end{question}
	
	Let us emphasise once again that neither $\ces$ nor $\Ces$ is isomorphic to any $\mathcal{C}(K)$ (see Claim~\ref{CLAIM A}),
	which perhaps makes the situation even more interesting.
	We are fully aware that our estimate from Remark~\ref{REMARK : d_BM(k,K)<=} is far from being optimal and that we provide no lower bound,
	which is usually much more difficult to obtain.
	
	Bearing in mind Theorem~\ref{THEOREM : ces and Ces NOT isometric} and paraphrasing Korpalski and Plebanek's question from \cite{KP25},
	we ask one more question of a slightly different nature.
	
	\begin{question}
		{\it For which constants $c \geqslant 1$ is the statement $d_{\mathrm{BM}}(\ces, \Ces)=c$ consistent with $\mathsf{ZF}$?}
	\end{question}
	
	Finally, it follows from Leung's paper \cite{Leu93} that the weak $L_p$-spaces $\ell_{p,\infty}$ and $L_{p,\infty}$,
	where $1 < p < \infty$, are isomorphic (since $\ell_{p,p} \equiv \ell_p$ and $L_{p,p} \equiv L_p$, this is yet another relative of Pe{\l}czy{\' n}ski's result \cite{Pel58}).
	
	\begin{question}
		{\it What is the exact value of $d_{\mathrm{BM}}(\ell_{p,\infty},L_{p,\infty})$?}
	\end{question}

	A very rough estimate, based entirely on \cite{Leu93}, yields $d_{\mathrm{BM}}(\ell_{p,\infty},L_{p,\infty}) \leqslant 10^5$. 
	
	\subsection{The number of preduals of $\ces$}
	
	Corollary~\ref{COROLLARY : c many preduals} shows that $\ces$ has at least $\mathfrak{c}$ mutually non-isomorphic preduals.
	However, since Bourgain and Delbaen's construction \cite{BD80} relies on the Hahn--Banach theorem, the following question seems intriguing.
	
	\begin{question}
		{\it Assume $\mathsf{ZF}$. Does $\ces$ have $\mathfrak{c}$ mutually non-isomorphic preduals?}
	\end{question}
	
	\subsection{On isometric preduals of $\Ces$}
	
	Due to Theorem~\ref{THEOREM : ces_infty has UNIQUE predual}, $\ces$ has the unique isometric predual, so we can ask the following.
	
	\begin{question} \label{QUESTION : isometric preduals of Ces_infty}
		{\it Does $\Ces$ have an isometric predual? If so, is it unique?
			If not, how many mutually non-isomorphic isometric preduals does $\Ces$ have?}
	\end{question}
	
	\subsection{On isometric structure of $\Ces$}
	
	Since we only know that the spaces $\Ces[0,1]$ and $\Ces[0,\infty)$ are isomorphic (Claim~\ref{CLAIM C}), and {\bf not} that they are
	isometric, the above question should be understood as two separate questions --- one about $\Ces[0,1]$ and one about $\Ces[0,\infty)$.
	
	\begin{question} \label{QUESTION : Are K[0,1] and K[0,infty) ISOMETRIC?}
		{\it Are $\Ces[0,1]$ and $\Ces[0,\infty)$ isometric?}
	\end{question}
	
	Despite several attempts, we have been unable to answer\footnote{We believe that the answer should be \enquote{no}.} this question.
	
	The proof that $\ces$ and $\Ces$ are not isometrically isomorphic was based on the lack of Fr{\' e}chet differentiability of the $\Ces$-norm.
	Recall that the norm $\norm{\cdot}_{\mathsf{X}}$ of Banach space $\mathsf{X}$ is {\bf G{\^ a}teaux differentiable} at a point $x_0 \in \mathsf{X} \setminus \{0\}$ if the limit 
	\begin{equation*}
		\lim_{t \rightarrow 0} \, \frac{\norm{x_0 + th}_{\mathsf{X}} - \norm{x_0}_{\mathsf{X}}}{t}
	\end{equation*}
	exists for every $h \in \mathsf{X}$ and, moreover, is linear in $h$
	(see, for example, \cite[Definition~5.4.15]{Meg98}).
	\begin{question} 
		{\it Is the norm $\norm{\cdot}_{\Ces}$ of $\Ces$ G{\^ a}teaux differentiable at any points?  If so, how can the set of such points be described?}
	\end{question}
	
	\addtocontents{toc}{\protect\setcounter{tocdepth}{2}}

\end{document}